\newtheorem{theorem}{Theorem}
\newtheorem{definition}{Definition}
\newtheorem{example}{Example}
\newtheorem{proposition}{Proposition}
\newtheorem{lemma}{Lemma}
\newtheorem{remark}{Remark}
\begin{document}
\title[\text{Local singularities on torus curves of type (2,5) }]
{Classification of local singularities on torus curves of type (2,5) }
\author{ M. Kawashima }
%Normally smooth divisors  {\it Draft: \today} 
%[M. Kawashima ]
\address{\vtop{
\hbox{Department of Mathematics}
\hbox{Tokyo University of science}
\hbox{wakamiya-cho 26, shinjuku-ku  }
\hbox{Tokyo 162-0827}
\hbox{\rm{E-mail}:{\rm j1107702@ed.kagu.tus.ac.jp}}
}}
\keywords{Torus curve, Newton boundary, non-degenerate}
\subjclass{14H20,14H45}
\begin{abstract}
In this paper, we consider curves of degree 10 of torus type (2,5),
 $C:\,f_5(x,y)^2+f_2(x,y)^5=0$. Assume that $f_2(0,0)=f_5(0,0)=0$. Then
 $O=(0,0)$ is a singular point of $C$ which is called an inner singularity.
In this paper, we give a topological classification of singularities of $(C,O)$.
\end{abstract}

%\today
\maketitle 
%%%%%%%%%%%%%%%%%%%%%%%%%%%%%%%%
%%%%%%%%%%%%%%%%%%%%%%%%%%%%%%%%
\section{Introduction} 
A plane curve $C\subset \Bbb P^2$ is called a 
$curve \ of \ torus \ type \ (p,q)$ 
if there is a defining polynomial $F$ of $C$
which can be written as $F=F_{p}^q+F_{q}^p$,
where $F_p$, $F_q$ are homogeneous polynomials of $X,Y,Z$
 of degree $p$ and $q$ respectively. 
In \cite{Pho}, D.T. Pho classified the local and global configurations
of the singularities of sextics of torus type $(2,3)$.
In this paper, 
we will classify local inner singularities of
torus type $(2,5)$.
We assume $C$ is a reduced curve of torus type $(2,5)$.
Using affine coordinates $x=X/Z,\, y=Y/Z$, 
$C$ is defined  as
\[C:\, f_2(x,y)^5+f_5(x,y)^2=0,\] 
where 
$f_2(x,y)=F_2(x,y,1)$, $f_5(x,y)=F_5(x,y,1)$.
Put $C_{2}:=\{f_2(x,y)=0\}$ and $C_{5}:=\{f_5(x,y)=0\}$.
We assume that 
the origin $O=(0,0)$ is an intersection point of
$C_{2}$ and $C_{5}$ and 
$O$ is an isolated singularity of $C$.
We classify the local singularity type of $(C,O)$
following the method of \cite{Pho}.
If $f_2(x,y)=\ell(x,y)^2$ for some linear form $\ell$,
the curve $C$ is called a {\em linear torus curve} 
and it consists of two quintics,
as $f_2^5+f_5^2=(f_5+\ell^5 \sqrt{-1})(f_5-\ell^5\sqrt{-1})$. 

First we recall the following notations.  
\begin{eqnarray*}
& A_n:x^{n+1}+y^2=0 \ \ \ (n \geq 1)\ \\
& D_n:x^{n-1}+xy^2=0 \ \ \ (n \geq 4) \\ 
& E_{6}:x^3+y^4=0,\,  \  \ E_7:x^3+xy^3=0,\, \ \ E_8:x^3+y^5=0 \\
& \ B_{n,m}:x^n+y^m=0\quad \text{(Brieskorn-Pham type)} 
\end{eqnarray*}
In the case $\gcd(m,n)>1$, the equation of $B_{n,m}$
can contain other monomials on the Newton boundary.
For example, $B_{2,4}$ has the form $C_{t} : x^2+txy^2+y^4=0$,
for $t\not=\pm 2$. $C_{t}$ is  topologically  equivalent
to  $B_{2,4}$ (Oka, \cite{Oka-milnor}).
In these notations, we note that 
$A_n=B_{n+1,2}$ and $E_6=B_{3,4}$.

We remark that every non-degenerate singularity
in the sense of Newton boundary is 
a union of Brieskorn-Pham type singularities.
For example,
$C_{p,q}:\, x^p+x^2y^2+y^q,\, p,q\ge 4,\,p+q\ge 9 $
is the union of two singularities:
$x^{p-2}+y^2=0$ and $x^2+y^{q-2}=0$.
So we introduce the notation:
$B_{p-2,2}\circ B_{2,q-2}$ to express $C_{p,q}$.

For the classification, 
we use the local intersection multiplicity
$\iota:=I(C_2,C_5;O)$ effectively. 
%The simplest part of our classification can be stated as
%%%%%%%%%%%%%%%%%%%%%%%%%%%%%%%
%%%%%      theorem        %%%%%
%%%%%%%%%%%%%%%%%%%%%%%%%%%%%%%
%\begin{theorem}
%Suppose that $C_2$ is smooth at $O$.
%Then the possible non-degenerate singularities as $(C,O)$ are:
%\begin{enumerate}
%\begin{itemize}
%\item If $(C_5,O)$ is smooth,  $(C,O)\sim B_{5\iota, 2}$ 
%for $1\le \iota \le 10$.
%\item Suppose that the multiplicity of $C_5$ is $2$.
%\begin{itemize}
%\item If $(C_5,O)\sim A_1$,  
%$(C,O)\sim B_{5\iota-7,2}\circ B_{2,3}$ for $2\le \iota \le 10$.
%\item If $(C_5,O)\sim A_n\ (2\le n \le12)$,  
%$(C,O)\sim B_{5,4}\ (\iota=2)$, $B_{k,2}\circ B_{5,2}\ (6\le k \le 15)$ \ %$(\iota=4)$ and 
%$B_{5k,4}\ (k=2,\dots, 5,\,  \iota=2k)$.
%\end{itemize}
%\item Suppose that the multiplicity of $C_5$ is  $3,4,5$. 
%Then $(C,O)$ can be  \newline
%\begin{itemize}
%\item
% $(C,O)\sim B_{2\iota,5}\ (\iota=3,\cdots, 10)$ or  
%\item
% $(C,O)\sim B_{5\iota-14,2}\circ B_{4,3}$ or  
%$B_{5\iota-21,2}\circ B_{4,3}$ or  $B_{2\iota,5}$ 
%$(\iota=6,\cdots, 10)$ or
%\item
% $(C,O)\sim B_{5\iota-28,2}\circ B_{8,3}$ $(\iota=7,\cdots, 10)$ or  
%\item
% $(C,O)\sim B_{5\iota-35,2}\circ B_{10,3}$ $(\iota=9,10)$.
%\end{itemize}
%\end{itemize}
%\end{theorem}
The complete classifications is given by  Theorem \ref{res} in \S5.\\

%%%%%%%%%%%%%%%%%%%%%%%%%%%%%%%%%%%%%%
%\newpage
%$Acknowledgement.$
%I would like to express my deepest gratitude to 
%Professor Mutsuo Oka
%who has proposed this problem and navigated me during the preparation
%of this paper.\\

This paper consists of the following sections:\\

\S2. Preliminaries.

\S3. Some lemmas for tours curves of type $(p,q)$.

\S4. Calculation of the local singularities.

\S5. List of classification.

\S6. Linear torus curve of type $(2,5)$.

\S7. Appendix.

%%%%%%%%%%%%%%%%%%%%%%%%%%%%
%%%%%    resolution    %%%%%
%%%%%%%%%%%%%%%%%%%%%%%%%%%%
\section{preliminaries}
\subsection{Toric modification}
Throughout this paper,
we follow the notations of Oka \cite{okabook}.
First we recall a toric modification. 
Let 
\begin{equation*}
\sigma= \begin{pmatrix}
         \alpha & \beta \\
         \gamma & \delta
         \end{pmatrix} 
\end{equation*}
be a unimodular integral $2 \times 2$ matrix.
We associate to $\sigma$ a birational morphism 
$\pi_{\sigma}:{\mathbb{C}^*}^2 \rightarrow {\mathbb{C}^*}^2$
by $\pi_{\sigma}(x,y)=(x^{\alpha} y^{\beta}, x^{\gamma} y^{\delta})$. 
If $\alpha, \gamma \geq 0$ (respectively. $\beta, \delta \geq 0$),
this map can be extend to $x=0$ (resp. $y=0$ ).
Note that $\{\pi_{\sigma}\}$'s satisfy the properties 
$\pi_{\sigma} \circ \pi_{\tau} = \pi_{\sigma \tau}$ 
and $(\pi_{\sigma})^{-1}=\pi_{{\sigma}^{-1}}$.

Let $N$ be a free $\Bbb{Z}$-module of rank 
two with a fixed basis $\{E_{1}, E_{2}\}$. 
Through this basis, 
we identify $N_{\Bbb{R}}:=N\otimes \Bbb{R}$ with $\mathbb{R}^2$. 
Thus $N$ can be understood as the set of integral points in $\Bbb{R}^2$. 
We denote a vector in $N$ by a column vector. 
Hereafter we fix two special vectors  
%So% $E_1$ and $E_2$ correspond to
$E_1=\sideset{^t}{}{\mathop{(}}1,0)$ and $E_2=\sideset{^t}{}{\mathop{(}}0,1)$.
%%%%%%%%%%%%%%%%%%%%%%%%%%%%%%%%%%%%%%% 
Let $N^{+}$ be the space of positive vectors of $N$.
Let $\{P_1, \cdots, P_m \}$ be given positive primitive 
integral vectors in $N^{+}$. 
Let $P_i=\sideset{^t}{}{\mathop{(}}a_i,b_i)$ and assume that 
$\det(P_i,P_{i+1}$)$
%=a_ib_{i+1}-a_{i+1}b_{i}
>0$ for each $i=0, \cdots, m$.
Here $P_0=E_1$, $P_{m+1}=E_2$.
We associated to $\{P_0,P_{1}, \cdots, P_{m+1} \}$ 
a simplicial cone subdivision $\Sigma^{*}$ of $N_{\Bbb{R}}$
which has $m+1$ cones 
Cone$(P_i,P_{i+1})$ of dimension two where 
\[{\rm{Cone}}(P_i,P_{i+1}):=\{tP_i+sP_{i+1} \mid t,s \geq 0\}.\]
We call $\{P_0, \cdots, P_{m+1} \}$ 
{\em{the vertices of $\Sigma^{*}$}}.
We say that $\Sigma^{*}$ is a $regular \ simplicial \ cone$ 
$subdivision$ of $N^{+}$ 
if $\det(P_i,P_{i+1})=1$ for each $i=0, \cdots, m$.
%%%%%%%%%%%%%%%%%%%%%%%%%%%%%%%%
%%%%%%%%%%%%%%%%%%%%%%%%%%%%%%%%

Assume that $\Sigma^{*}$ is a given regular simplicial cone subdivision
with vertices $\{P_0, P_1,$ $\cdots, P_{m+1} \}$, $(P_0=E_1, P_{m+1}=E_2)$
and let $P_i=\sideset{^t}{}{\mathop{(}}a_i,b_i)$.
For each cone, Cone$(P_i,P_{i+1})$, we associate the unimodular matrix 
\begin{equation*}
\sigma_i := \begin{pmatrix}
         a_i & a_{i+1} \\
         b_i & b_{i+1}
         \end{pmatrix}
\end{equation*}
%%%%%%%%%%%%%%%%%%%%%%%%%%%%%%%%%%%%%%%%%%%%%%%%%%%%%%%%%%%
We identify Cone$(P_i,P_{i+1})$ with
the unimodular matrix $\sigma_i$.
Let $(x,y)$ be a fixed system of coordinates of $\mathbb{C}^2$.
Then we consider, for each $\sigma_i$, 
an affine space $\mathbb{C}_{\sigma_{i}}^2$
of dimension two with coordinates $(x_{\sigma_i}, y_{\sigma_i})$ 
and the birational map 
$\pi_{\sigma_i} \colon \mathbb{C}_{\sigma_i}^2 \rightarrow \mathbb{C}^2$.
%%%%%%%%%%%%%%%%%%%%%%%%%%%%%%%%%%%%%%%%%%%%%%%%%%%%%%%%%%
%Now we construct a non-singular algebraic variety $X$.
First we consider the disjoint union 
of $\mathbb{C}_{\sigma_i}^2$ for $i=0,\dots,m$
and we define the variety $X$ as the quotient 
of this union by the following identification: Two points 
$(x_{\sigma_i}, y_{\sigma_i}) \in \mathbb{C}^2_{\sigma_i}$ and 
$(x_{\sigma_j}, y_{\sigma_j}) \in \mathbb{C}^2_{\sigma_j}$ are identified 
if and only if the birational map $\pi_{{\sigma_j}^{-1}\sigma_i}$ is well
defined at the point $(x_{\sigma_i},y_{\sigma_i})$ and 
$\pi_{{\sigma_j}^{-1}\sigma_i}(x_{\sigma_i},y_{\sigma_i})
=(x_{\sigma_j},y_{\sigma_j})$.
%%%%%%%%%%%%%%%%%%%%%%%%
As can be easily checked, $X$ is non-singular and the maps 
$\{\pi_{\sigma_i}\colon \mathbb{C}^2_{\sigma_i}
\rightarrow \mathbb{C}^2 \mid 0 \leq i \leq m \}$ 
glue into a proper analytic map $\pi \colon X \rightarrow \mathbb{C}^2$.
%%%%%%%%%%%%%%%%%%%%%%%%
\begin{definition}
The map $\pi:X \rightarrow \mathbb{C}^2$ is called
the toric modification associated 
with $\{\Sigma^*, (x,y), O\}$ where 
is a regular simplicial cone subdivision of 
$N^{+}$ and $(x,y)$ is a coordinate system of
$\mathbb{C}^2$ centered at the origin $O$.
\end{definition}
%%%%%%%%%%%%%%%%%%%%%%%%
Recall that  this modification has the following properties.
\newline
%%%%%%%%%%%%%%%%%%%%%%%%
1. $\{\mathbb{C}_{\sigma_i}^2,(x_{\sigma_i},y_{\sigma_i})\}, (0 \leq i \leq m)$
give coordinate charts of $X$ and 
we call them {\em the toric coordinate charts} of $X$.
\newline
%%%%%%%%%%%%%%%%%%%%%%%%%%%%%%%%%%%%%%%%%%
2. Two affine divisors 
$\{y_{\sigma_{i-1}}=0\} \subset \mathbb{C}^2_{\sigma_{i-1}}$
and $\{x_{\sigma_i}=0\} \subset \mathbb{C}^2_{\sigma_i}$
glue together to make a compact divisor isomorphic to
$\mathbb{P}^1$ for $0 \leq i \leq m$.
We denote this divisor by $\hat{E}(P_i)$. 
\newline
%%%%%%%%%%%%%%%%%%%%%%%%%%%%%%%%%%%%%%%%%%
3. $\pi^{-1}(O)=\bigcup_{i=1}^{m}\hat{E}(P_i)$
and $\pi:X-\pi^{-1}(O) \rightarrow 
\mathbb{C}^2-\{O\}$ is isomorphism. 
The non-compact divisor $x_{\sigma_0}=0$ 
(respectively. $y_{\sigma_m}=0$) is 
mapped isomorphically onto the divisor $x=0$ (resp. $y=0$). 
\newline
%%%%%%%%%%%%%%%%%%%%%%%%%%%%%%%%%%%%%%%%%%
4. $\hat{E}(P_i) \cap \hat{E}(P_j) \not = \emptyset$ 
if and only if  $i-j = \pm 1$.
If $i-j = \pm 1$, they intersect transversely  
at a point.   
%%%%%%%%%%%%%%%%%%%%%%%%%%%%%%%%%%%%%%
\subsection{Toric modification with respect to an analytic function}
Let $\mathcal{O}$ be the ring of germs of analytic functions
at the origin.
Recall that $\mathcal{O}$ is isomorphic to the ring of convergent power
series in $x,y$.
Let $f\in \mathcal O$ be a germ of complex analytic function
and suppose that $f(O)=0$.
Let $f(x,y)= \sum c_{i,j}x^iy^j$ be the Taylor expansion of $f$ 
at the origin. 
We assume that $f(x,y)$ is reduced as a germ.
{\em{The Newton polygon}} $\Gamma_+(f;x,y)$ of $f$,
with respect to the coordinate system $(x,y)$,
is the convex hull of the union 
$\bigcup_{i,j}\{(i,j)+\mathbb{R}_{+}^2 \}$
where the union is taken for $(i,j)$ such that $c_{i,j}\not =0$
and {\em{the Newton boundary}} $\Gamma(f;x,y)$ is 
the union of compact faces of
the Newton polygon $\Gamma_+(f;x,y)$. 
For each compact face $\Delta$ of $\Gamma(f;x,y)$, 
{\em{the face function}} $f_{\Delta}(x,y)$ is defined 
 by $f_{\Delta}(x,y):=\sum_{(i,j) \in \Delta} c_{i,j}x^iy^j$.
 
%%%%%%%%%%%%%%%%%%%%%%%%%%%%%%%%%%%%%%%%%%%
In the space $M$ where the Newton polygon
$\Gamma_+(f;x,y)$ is contained, 
we use $(\nu_1,\nu_2)$ as the coordinates. 
For any positive weight vector $P={}^t(a,b)\in N$, % defined by 
we consider $P$ as a linear function 
on $M$ by $P(\nu_1,\nu_2)=a\nu_1+b\nu_2$. %, where $(a,b)$ is 
%pair of positive primitive integers and
We define $d(P;f)$ to be the smallest value of 
the restriction of $P$ to 
the Newton polygon $\Gamma_+(f;x,y)$ and $\Delta(P;f)$
be the face where this smallest value is taken.
%%%%%%%%%%%%%%%%%%%%%%%%%%%%%%%%%%%%%%%%%%%
For simplicity we shall write 
$f_{P}$ instead of $f_{\Delta(P;f)}$.
By the definition, $f_{P}$ is weighted 
homogeneous polynomial of degree $d(P;f)$ with weight 
$P={}^t(a,b)$.
For each face $\Delta \in \Gamma(f;x,y)$ there is 
a unique primitive integral vector 
$P=\sideset{^t}{}{\mathop{(}}a,b)$
such that $\Delta=\Delta(P;f)$.
%%%%%%%%%%%%%%%%%%%%%%%%%%%%%%%%%%%%%%%%%%%
The Newton boundary $\Gamma(f;x,y)$
has a finite number of faces. \\

\hspace{4.5cm}{\small{\input{./in-1.tex}}}\\

Let $\Delta_1, \dots, \Delta_m$ these faces 
and let $P_i=\sideset{^t}{}{\mathop{(}}a_i, b_i)$
be the corresponding positive primitive integral vector,
i.e.,
$\Delta_i=\Delta (P_i;f)$.
We call $P_i$ the weight vector of the face $\Delta_{i}$.
Then we can write   
\[f_{P_i}(x,y)=cx^{r_i} y^{s_i} \prod_{j=1}^{k_i}(y^{a_i}
+{\gamma}_{i,j} x^{b_i})^{\nu_{i,j}},\quad c\ne0\] 
with distinct non-zero complex numbers 
$\gamma_{i,1},\cdots, \gamma_{i,k_{i}}$.
%%%%%%%%%%%%%%%%%%%%%%%%%%%%%%%%%%%%%%%%%%%
We define 
\[
 \tilde{f}_{P_i}(x,y)=f_{P_i}(x,y)/x^{r_i}y^{s_i} 
=c\,\prod_{j=1}^{k_i}(y^{a_i}+{\gamma}_{i,j}x^{b_i})^{\nu_{i,j}}.
\] 

%%%%%%%%%%%%%%%%%%%%%%%%%%%%%%%%%%%%%%%%%%%
The polynomial $\sum_{(i,j) \in \Gamma(f;x,y)}c_{i,j}x^iy^j$
is called {\em{the Newton principal part}} of $f(x,y)$
and we denote by  $\mathcal{N}(f;x,y)$.      
We say that $f(x,y)$ is {\em{convenient}} 
if the intersection $\Gamma (f;x,y)$ 
with each axe is non-empty. 
Note that $\tilde{f}_{\Delta_i}$ is always convenient.
%%%%%%%%%%%%%%%%%%%%%%%%%%%%%%%%%%%%%%%%%%%
We say that $f$ is {\em{non-degenerate on a face $\Delta_i$}}
if the function
$f_{{\Delta}_{i}}:{\mathbb{C}^*}^2 \rightarrow \mathbb{C}$
has no critical points.
This is equivalent to $\nu_{i,j}=1$ for all $j=1,\cdots, k_i$.
We say that $f$ is {\em{non-degenerate}} if $f$ 
is non-degenerate on any face $\Delta_i$ $(i=1,\dots, m)$.

%Take a vector $P={}^t(a,b)\in N^{+}$ of positive  vectors. 
%We can consider $P$ as a weight vector so that 
%$\deg_{P}x^{\nu_1}y^{\nu_2}=a\nu_1+b\nu_2$.
We introduce an equivalence relation $\sim$ in $N^{+}$ 
which is 
defined by $P \sim Q$ if and only if 
$\Delta(P;f)=\Delta(Q;f)$.
The equivalence classes
define a conical subdivision of $N^{+}$.
This gives a simplicial cone subdivision of $N^{+}$ 
with $m+2$ vertices $\{P_0, \cdots P_{m+1} \}$
 with $P_0=E_1$, $P_{m+1}=E_2$.
We denote this subdivision by $\Gamma^{*}(f;x,y)$
and we call it {\em{the dual Newton diagram}} of $f$
with respect to the system of coordinates $(x,y)$.
$\Gamma^{*}(f;x,y)$ has $m+1$ two-dimensional cones
Cone$(P_i,P_{i+1})$, $i=0, \cdots ,m$.
Note that these cones are not regular in general.  
%%%%%%%%%%%%%%%%%%%%%%%%%%%%%%%%%%%%%%%%%%%
%%%%%%%%%%%%%%%%%%%%%%%%%%%%%%%%%%%%%%%%%%%
\begin{definition}
A regular simplicial cone subdivision $\Sigma^{*}$ 
is admissible for $f(x,y)$ 
if $\Sigma^{*}$ is a subdivision of $\Gamma^{*}(f;x,y)$.
The corresponding toric modification 
$\pi:X \rightarrow \mathbb{C}^2$ is called an admissible 
toric modification for $f(x,y)$
with respect to the system of coordinate $(x,y)$.
\end{definition}
There exists a unique canonical regular
simplicial cone subdivision (Lemma 3,3 of \cite{MR88m:32023}) . 
We call the corresponding toric modification
{\em the canonical toric modification}
with respect to $f(x,y)$. 
Let $C$ be a germ of a reduced curve defined by $f(x,y)=0$ and 
let $\pi :X \to \Bbb{C}^2$ be a good resolution.
Recall that the  dual graph  of
the resolution  is defined as follows.
Let $E_1,\dots, E_r$ be the exceptional divisors and put 
$\pi^{*} f^{-1}(0)= \sum_{i=1}^{r} m_{i}E_{i}+\sum_{j}^{s} \widetilde{C_{j}}$.
To each $E_i$,  we associate a vertex $v_i$ of $\mathcal{G}(\pi)$
denoted it by a black circle.
We give an edge joining $v_i$ and $v_j$ 
if $E_{i} \cap E_{j} \not = \emptyset$.
For the extended dual graph $\widetilde{\mathcal{G}}(\pi)$, 
we add vertices $w_i $
to each irreducible components $C_i,\,i=1, \cdots , s$ and 
we join $w_j$ and $v_i$ if $E_i\cap C_j \not = \emptyset$
by a dotted arrow line. 
It is also important to remember 
the multiplicities of $\pi^{*}f$ along $E_i$,
which we denote by $m_i$.
So we put weights $m_i$ to each vertex and call $\mathcal{G}(\pi)$ with weight
{\em{the weighted dual graph of the resolution $\pi:X\to \Bbb C^2$.}}
%%%%%%%%%%%%%%%%%%%%%%%%%%%%%%%%%%%%%%
%%%%%%%%%%%%%%%%%%%%%%%%%%%%%%%%%%%%%%
\begin{example}\label{exam-1}
{\rm{
Consider the curve $C:y^2-x^3=0$.
The Newton boundary consists of one face $\Delta$. 
Then $P=\sideset{^t}{}{\mathop{(}}2,3)$ 
is the weight vector correspond to the face $\Delta$ 
and $\Gamma^{*}(f;,x,y)$ has three vertices
$\{E_1,\, P, \, E_2\}$. 
We take a regular simplicial cone subdivision $\Sigma^{*}$
of $\Gamma^{*}(f;x,y)$
and we consider an admissible toric modification
$\pi:X \rightarrow \mathbb{C}^2$ 
associated to $\{\Sigma^*,(x,y),O\}$. 
Vertices of $\Sigma^{*}$ are $\{E_1 ,T_1, P,T_2, E_2 \}$
where $T_1={}^t(1,1)$ and $T_2={}^t(1,2)$ are new vertices
which are added to $\Gamma^{*}(f;x,y)$ in order to make $\Sigma^*$ regular. 

%\hspace{2.5cm} {\small{\input{./in-2.tex}}}\\
The proper transform $\widetilde{C}$ intersects transversely with 
the exceptional divisor $\hat{E}(P)$ at $1$-point.
Take a toric coordinate $(u,v)$ so that $u=0$ defines $\hat{E}(P)$ and 
let $\xi$ be the intersection point.
Then $\widetilde{C}$ is defined by $v-1=0$
and $\widetilde{C}$ is smooth at $\xi$.
$(\pi^{*}f)=2\hat{E}(T_1)+6\hat{E}(P)+3\hat{E}(T_2)+\widetilde{C}$. 
{\small{
\begin{figure}[H]
\centering
%WinTpicVersion3.08
\unitlength 0.1in
\begin{picture}( 33.6000,  9.1000)(  2.0000,-10.6000)
% STR 2 0 3 0
% 3 2700 530 2700 630 2 0
% \huge{$\bullet$}
\put(27.0000,-6.3000){\makebox(0,0)[lb]{\huge{$\bullet$}}}%
% LINE 2 0 3 0
% 2 2760 550 3160 550
% 
\special{pn 8}%
\special{pa 2760 550}%
\special{pa 3160 550}%
\special{fp}%
% STR 2 0 3 0
% 3 3100 530 3100 630 2 0
% \huge{$\bullet$}
\put(31.0000,-6.3000){\makebox(0,0)[lb]{\huge{$\bullet$}}}%
% LINE 2 0 3 0
% 2 3160 550 3560 550
% 
\special{pn 8}%
\special{pa 3160 550}%
\special{pa 3560 550}%
\special{fp}%
% STR 2 0 3 0
% 3 3500 530 3500 630 2 0
% \huge{$\bullet$}
\put(35.0000,-6.3000){\makebox(0,0)[lb]{\huge{$\bullet$}}}%
% VECTOR 2 2 3 0
% 2 3170 540 3170 940
% 
\special{pn 8}%
\special{pa 3170 540}%
\special{pa 3170 940}%
\special{dt 0.045}%
\special{sh 1}%
\special{pa 3170 940}%
\special{pa 3190 874}%
\special{pa 3170 888}%
\special{pa 3150 874}%
\special{pa 3170 940}%
\special{fp}%
% STR 2 0 3 0
% 3 3110 1010 3110 1110 2 0
% $\bigcirc$
\put(31.1000,-11.1000){\makebox(0,0)[lb]{$\bigcirc$}}%
% STR 2 0 3 0
% 3 2740 350 2740 450 2 0
% 2
\put(27.4000,-4.5000){\makebox(0,0)[lb]{2}}%
% STR 2 0 3 0
% 3 3140 350 3140 450 2 0
% 6
\put(31.4000,-4.5000){\makebox(0,0)[lb]{6}}%
% STR 2 0 3 0
% 3 3540 350 3540 450 2 0
% 3
\put(35.4000,-4.5000){\makebox(0,0)[lb]{3}}%
% LINE 2 0 3 0
% 2 270 380 470 940
% 
\special{pn 8}%
\special{pa 270 380}%
\special{pa 470 940}%
\special{fp}%
% LINE 2 0 3 0
% 2 260 780 1200 780
% 
\special{pn 8}%
\special{pa 260 780}%
\special{pa 1200 780}%
\special{fp}%
% LINE 2 0 3 0
% 2 1000 880 1170 470
% 
\special{pn 8}%
\special{pa 1000 880}%
\special{pa 1170 470}%
\special{fp}%
% STR 2 0 3 0
% 3 210 560 210 660 2 0
% 2
\put(2.1000,-6.6000){\makebox(0,0)[lb]{2}}%
% STR 2 0 3 0
% 3 1160 610 1160 710 2 0
% 3
\put(11.6000,-7.1000){\makebox(0,0)[lb]{3}}%
% STR 2 0 3 0
% 3 860 840 860 940 2 0
% 6
\put(8.6000,-9.4000){\makebox(0,0)[lb]{6}}%
% VECTOR 2 0 3 0
% 2 720 1060 720 260
% 
\special{pn 8}%
\special{pa 720 1060}%
\special{pa 720 260}%
\special{fp}%
\special{sh 1}%
\special{pa 720 260}%
\special{pa 700 328}%
\special{pa 720 314}%
\special{pa 740 328}%
\special{pa 720 260}%
\special{fp}%
% STR 2 0 3 0
% 3 770 390 770 490 2 0
% $\widetilde{C}$
\put(7.7000,-4.9000){\makebox(0,0)[lb]{$\widetilde{C}$}}%
% STR 2 0 3 0
% 3 200 220 200 320 2 0
% $\hat{E}(T_1)$
\put(2.0000,-3.2000){\makebox(0,0)[lb]{$\hat{E}(T_1)$}}%
% STR 2 0 3 0
% 3 1230 380 1230 480 2 0
% $\hat{E}(T_2)$
\put(12.3000,-4.8000){\makebox(0,0)[lb]{$\hat{E}(T_2)$}}%
% STR 2 0 3 0
% 3 1250 810 1250 910 2 0
% $\hat{E}(P)$
\put(12.5000,-9.1000){\makebox(0,0)[lb]{$\hat{E}(P)$}}%
% STR 2 0 3 0
% 3 2060 580 2060 680 2 0
% $\longleftrightarrow$
\put(20.6000,-6.8000){\makebox(0,0)[lb]{$\longleftrightarrow$}}%
\end{picture}%
\caption{}
\label{b-2}
\end{figure}
}}
}}
\end{example}
%%%%%%%%%%%%%%%%%%%%%%%%%%%%%%%%%%%%%%%%%%%%%%%%
%%%%%   [Strategy of the classification]   %%%%%
%%%%%%%%%%%%%%%%%%%%%%%%%%%%%%%%%%%%%%%%%%%%%%%%
\section{Some lemmas for tours curves of type (p,q).}
\subsection{Notations}
Through out this paper, we use the same notations in \cite{Oka-milnor},
unless otherwise stated.
We also  use the fact that 
the topological equivalence class
of a non-degenerate germ depends only on 
its Newton boundary (Theorem 2.1 of  \cite{Oka-milnor}). 
In the process of classification of topological type of 
an inner singularity of torus curve of type (2.5),
we have the following possibilities:

(1) $(C,O)$ is  non-degenerate and $\Gamma(f)$ has one face,

(2) $(C,O)$ is  non-degenerate (in some local coordinate system)
but  the Newton boundary  has several faces, or

(3) $(C,O)$ has  some degenerate faces  for any choice of
coordinate systems $(x,y)$.

To make the expression of these singularity classes simpler, 
we introduce some notations.
The class of singularities (1) can be expressed by the class of
$B_{n,m}$:
\[
 B_{n,m}\quad : \quad x^n+y^m+(\text{higher terms})=0
\]

%%%%%%%%%%%%%%%%%%%%%%%%%%%%%
The class of singularities (2) can be understand
a union of singularities of type (1).
For example,
$x^n+x^2y^2+y^m=0$ is topologically equivalent to 
$(x^{n-2}+y^2)(x^2+y^{m-2})=0$ and  thus we denote this class by
$B_{n-2,2}\circ B_{2,m-2}$, as is already introduced in \S 1.

The last class (3) is most complicated.
For example, we consider the singularity germ
$(C,O)$ is defined by 
$f(x,y)=(\lambda x^3+y^2)^2+x^3y^3$ then 
$\mathcal{N}(f;x,y)=(\lambda x^3+y^2)^2$
and $\Gamma(f;x,y)$ consists of one face $\Delta$ 
with weight vector is $P={}^t(2,3)$ 
and $f$ is degenerate on $\Delta$.
We take a regular simplicial subdivision $\Sigma^{*}$
as in example \ref{exam-1}. 
Then we take a toric modification $\pi_1:X_1 \rightarrow \Bbb{C}^2$ 
we can see that $\widetilde{C_1}$ intersects transversely with $\hat{E}(P)$ at 
$\xi_1=(0,-\lambda)$ in toric coordinate $(u,v)$ 
correspond to Cone$(P, T_2)$ and 
$\hat{E}(P)=\{u=0\}$ with multiplicity $12$. (Figure \ref{figure-1}).
{\small{
\begin{figure}[H]
\centering
%WinTpicVersion3.08
\unitlength 0.1in
\begin{picture}( 14.0800, 11.1000)(  0.6000,-12.9000)
% LINE 2 0 3 0
% 2 285 428 526 1104
% 
\special{pn 8}%
\special{pa 286 428}%
\special{pa 526 1104}%
\special{fp}%
% LINE 2 0 3 0
% 2 270 930 1406 930
% 
\special{pn 8}%
\special{pa 270 930}%
\special{pa 1406 930}%
\special{fp}%
% LINE 2 0 3 0
% 2 1166 1032 1371 536
% 
\special{pn 8}%
\special{pa 1166 1032}%
\special{pa 1372 536}%
\special{fp}%
% STR 2 0 3 0
% 3 212 645 212 766 2 0
% 4
\put(2.1200,-7.6600){\makebox(0,0)[lb]{4}}%
% STR 2 0 3 0
% 3 1380 690 1380 810 2 0
% 6
\put(13.8000,-8.1000){\makebox(0,0)[lb]{6}}%
% STR 2 0 3 0
% 3 1000 999 1000 1120 2 0
% 12
\put(10.0000,-11.2000){\makebox(0,0)[lb]{12}}%
% STR 2 0 3 0
% 3 690 459 690 580 2 0
% $\widetilde{C}$
\put(6.9000,-5.8000){\makebox(0,0)[lb]{$\widetilde{C}$}}%
% STR 2 0 3 0
% 3 60 230 60 350 2 0
% $\hat{E}(T_1)$
\put(0.6000,-3.5000){\makebox(0,0)[lb]{$\hat{E}(T_1)$}}%
% STR 2 0 3 0
% 3 1444 428 1444 549 2 0
% $\hat{E}(T_2)$
\put(14.4400,-5.4900){\makebox(0,0)[lb]{$\hat{E}(T_2)$}}%
% STR 2 0 3 0
% 3 1468 947 1468 1068 2 0
% $\hat{E}(P)$
\put(14.6800,-10.6800){\makebox(0,0)[lb]{$\hat{E}(P)$}}%
% FUNC 2 0 3 0
% 9 478 577 1190 1290 836 948 567 948 842 638 478 577 1190 1290 0 3 0 1
% y^(3/2)
\special{pn 8}%
\special{pa 482 576}%
\special{pa 488 580}%
\special{pa 496 586}%
\special{pa 502 590}%
\special{pa 510 596}%
\special{pa 516 600}%
\special{pa 524 606}%
\special{pa 530 610}%
\special{pa 538 616}%
\special{pa 544 620}%
\special{pa 550 626}%
\special{pa 558 630}%
\special{pa 564 636}%
\special{pa 570 640}%
\special{pa 576 646}%
\special{pa 582 650}%
\special{pa 590 656}%
\special{pa 596 660}%
\special{pa 602 666}%
\special{pa 608 670}%
\special{pa 614 676}%
\special{pa 620 680}%
\special{pa 626 686}%
\special{pa 632 690}%
\special{pa 638 696}%
\special{pa 644 700}%
\special{pa 650 706}%
\special{pa 656 710}%
\special{pa 662 716}%
\special{pa 666 720}%
\special{pa 672 726}%
\special{pa 678 730}%
\special{pa 684 736}%
\special{pa 688 740}%
\special{pa 694 746}%
\special{pa 700 750}%
\special{pa 704 756}%
\special{pa 710 760}%
\special{pa 714 766}%
\special{pa 720 770}%
\special{pa 724 776}%
\special{pa 730 780}%
\special{pa 734 786}%
\special{pa 738 790}%
\special{pa 744 796}%
\special{pa 748 800}%
\special{pa 752 806}%
\special{pa 756 810}%
\special{pa 760 816}%
\special{pa 766 820}%
\special{pa 770 826}%
\special{pa 774 830}%
\special{pa 778 836}%
\special{pa 782 840}%
\special{pa 784 846}%
\special{pa 788 850}%
\special{pa 792 856}%
\special{pa 796 860}%
\special{pa 800 866}%
\special{pa 802 870}%
\special{pa 806 876}%
\special{pa 808 880}%
\special{pa 812 886}%
\special{pa 814 890}%
\special{pa 818 896}%
\special{pa 820 900}%
\special{pa 822 906}%
\special{pa 824 910}%
\special{pa 828 916}%
\special{pa 830 920}%
\special{pa 832 926}%
\special{pa 832 930}%
\special{pa 834 936}%
\special{pa 836 940}%
\special{pa 836 946}%
\special{sp}%
% FUNC 2 0 3 0
% 9 478 577 1190 1290 832 948 1101 948 826 638 478 577 1190 1290 0 3 0 1
% y^(3/2)
\special{pn 8}%
\special{pa 1188 576}%
\special{pa 1180 580}%
\special{pa 1174 586}%
\special{pa 1166 590}%
\special{pa 1160 596}%
\special{pa 1152 600}%
\special{pa 1146 606}%
\special{pa 1138 610}%
\special{pa 1132 616}%
\special{pa 1126 620}%
\special{pa 1118 626}%
\special{pa 1112 630}%
\special{pa 1106 636}%
\special{pa 1098 640}%
\special{pa 1092 646}%
\special{pa 1086 650}%
\special{pa 1080 656}%
\special{pa 1074 660}%
\special{pa 1068 666}%
\special{pa 1060 670}%
\special{pa 1054 676}%
\special{pa 1048 680}%
\special{pa 1042 686}%
\special{pa 1036 690}%
\special{pa 1030 696}%
\special{pa 1024 700}%
\special{pa 1020 706}%
\special{pa 1014 710}%
\special{pa 1008 716}%
\special{pa 1002 720}%
\special{pa 996 726}%
\special{pa 992 730}%
\special{pa 986 736}%
\special{pa 980 740}%
\special{pa 976 746}%
\special{pa 970 750}%
\special{pa 964 756}%
\special{pa 960 760}%
\special{pa 954 766}%
\special{pa 950 770}%
\special{pa 944 776}%
\special{pa 940 780}%
\special{pa 936 786}%
\special{pa 930 790}%
\special{pa 926 796}%
\special{pa 922 800}%
\special{pa 916 806}%
\special{pa 912 810}%
\special{pa 908 816}%
\special{pa 904 820}%
\special{pa 900 826}%
\special{pa 896 830}%
\special{pa 892 836}%
\special{pa 888 840}%
\special{pa 884 846}%
\special{pa 880 850}%
\special{pa 876 856}%
\special{pa 874 860}%
\special{pa 870 866}%
\special{pa 866 870}%
\special{pa 864 876}%
\special{pa 860 880}%
\special{pa 858 886}%
\special{pa 854 890}%
\special{pa 852 896}%
\special{pa 848 900}%
\special{pa 846 906}%
\special{pa 844 910}%
\special{pa 842 916}%
\special{pa 840 920}%
\special{pa 838 926}%
\special{pa 836 930}%
\special{pa 834 936}%
\special{pa 834 940}%
\special{pa 832 946}%
\special{sp}%
% VECTOR 2 0 3 0
% 2 1094 635 1218 561
% 
\special{pn 8}%
\special{pa 1094 636}%
\special{pa 1218 562}%
\special{fp}%
\special{sh 1}%
\special{pa 1218 562}%
\special{pa 1152 578}%
\special{pa 1172 588}%
\special{pa 1172 612}%
\special{pa 1218 562}%
\special{fp}%
% STR 2 0 3 0
% 3 730 1030 730 1130 2 0
% $\xi_1$
\put(7.3000,-11.3000){\makebox(0,0)[lb]{$\xi_1$}}%
\end{picture}% 
\caption{}
\label{figure-1}
\end{figure}
}}
To express the strict transform $\widetilde C$ at $\xi_1$,
we choose the coordinate
$(u,v_1)$ where $v_1=v+\lambda$,
we call this coordinates $(u,v_1)$ 
{\em the translated toric coordinates} 
for $(\widetilde C,\xi_1)$.
Now we found that $(\widetilde C, \xi_1)$ 
is %$A_2$-singularity 
defined by $B_{3,2}:v_1^2-\lambda u^3+(\text{higher terms})=0$
where $u=0$ defines the exceptional divisor which contains $\xi_1$. 
Observe that
$\hat{E}(P)$ is defined by $u=0$ and the tangent cone of $\widetilde{C}$ 
is $v_1^2=0$.
Thus the tangent cone is transverse to $\hat E(P)$ at $\xi_1$.
Again we take a toric blow-up $\pi_2: X_2\to (X_1,\xi_1)$.
This is essentially the same as the one for 
the cusp singularity $v_1^2-\lambda u^3=0$. 
As $\pi_1^{*}f(0)$ 
is non-degenerate in $(u,v_1)$, $\pi_2$ gives a good resolution of
$(\widetilde C, \xi_1)$ and therefore the composition
$\pi_1\circ\pi_2: X_2\to \Bbb C^2$
gives a good resolution of $(C,O)$.  
The resolution graph is simply obtained by adding a bamboo for this blowing
up (See (1), Figure \ref{e-1}).
We denote this class of singularity by 
$(B_{3,2}^2)^{B_{3,2}}$.

Sometime, we may need to take a coordinate change of the type 
$(u,v_2)$ where $v_2=v_1+h(u)$ for some polynomial $h(u)$. The important
point here is that we do not change the first coordinate $u$, as it
defines the exceptional divisor $\hat E(P)$. We call such a coordinate
system
$(u,v_2)$ {\em{admissible translated toric coordinates at $\xi_1$}}.

%Similarly we use the notation, for example,
%$(C,O){\sim}(B_{3,2}^2)^{B_{6,2}}$
%if after one toric blowing up corresponding to $B_{3,2}$,
%the strict transform is defined by an equation
%\[
% u^6+ a\, u^3 v_2+b\,v_2^2+\text{(higher terms)}=0,\,\, a^2-4b\ne 0
%\]
%for some an admissible toric coordinate system $(u,v_2)$.
%It is easy to see that 
%this topological equivalence class of 
%singularities do not depend on $(u,v_2)$.

%%%%%%%%%%%%%%%%%%%%%%%%%%%%%%%%%%
%%%%%          example       %%%%%
%%%%%%%%%%%%%%%%%%%%%%%%%%%%%%%%%%
\begin{example}
{\rm{
Consider $f(x,y)=(y^5-xy^2+x^2y+x^5)^2+(x-2y)^5(x-3y)^5$.
Then $\mathcal{N}(f;x,y)=-31y^{10}-2xy^7+x^2y^2(x-y)+2x^7y+2x^{10}$ and 
$\Gamma(f;x,y)$ consists three faces $\Delta_i\ (i=1,2,3)$ with weight vectors
$P_1={}^t(3,1),\ P_2={}^t(1,1)$ and $P_3={}^t(1,3)$. 
Note that  $f(x,y)$ is degenerate on $\Delta(P_2;f)$.
By adding vertices $T_1={}^t(2,1)$ and $T_2={}^t(1,2)$, 
we get the canonical regular subdivision.
We take the associated  toric modification and we can see easily that
the strict transform $\widetilde{C}$
splits into three germs $\widetilde{C_1}$,
$\widetilde{C_2}$, $\widetilde{C_3}$ so that for $i=1,3$, $\widetilde{C_i}$,
intersects transversely with 
exceptional divisor $\hat{E}(P_i)$ at two points 
and  $\widetilde{C_i}$ are smooth at
these points. 
The germ $\widetilde{C_2}$ which intersects with $\hat{E}(P_2)$ is 
still singular and intersects with $\hat{E}(P_2)$ transversely at
$\xi_1=(0,1)$ in the toric coordinate $(u,v)$
corresponding to Cone$(P_2,T_2)$.
%%%%%%%%%%%%%%%%%%%%%%%%%%%%%%%%%%%%%%%%%%%%%%%%%%%%%%%%%%%%
Thus taking the translated toric coordinate
$(u,v_1),\ v_1=v-1$ at $\xi_1$, we can write the defining equation of 
 $\widetilde{C_2}$ as
$v_1^2-4u^2v_1-239u^4+(\text{higher terms})=0$, while the exceptional divisor
$\hat E(P_2)$ is defined by $u=0$.
Note that $(\widetilde{C_2}, \xi_1)=B_{4,2}$.
Thus $\pi_1^*f$ is non-degenerate and
we need one more toric modification centered at $\xi_1$.
Then  the resolution is given by (2), Figure \ref{e-1}.  
We denote this class of singularity as 
$(C,O)\sim B_{6,2} \circ (B_{1,1}^2)^{B_{4,2}} \circ B_{2,6}$.
{\small{
\begin{figure}[H]
\centering
%WinTpicVersion3.08
\unitlength 0.1in
\begin{picture}( 44.3000, 17.3000)(  1.9000,-18.2000)
% VECTOR 2 2 3 0
% 2 4311 1525 4612 1612
% 
\special{pn 8}%
\special{pa 4312 1526}%
\special{pa 4612 1612}%
\special{dt 0.045}%
\special{sh 1}%
\special{pa 4612 1612}%
\special{pa 4554 1574}%
\special{pa 4562 1598}%
\special{pa 4542 1614}%
\special{pa 4612 1612}%
\special{fp}%
% VECTOR 2 2 3 0
% 2 4312 1525 4449 1807
% 
\special{pn 8}%
\special{pa 4312 1526}%
\special{pa 4450 1808}%
\special{dt 0.045}%
\special{sh 1}%
\special{pa 4450 1808}%
\special{pa 4438 1738}%
\special{pa 4426 1760}%
\special{pa 4402 1756}%
\special{pa 4450 1808}%
\special{fp}%
% STR 2 0 3 0
% 3 4673 1593 4620 1651 2 0
% $\bigcirc$
\put(46.2000,-16.5100){\special{rt 0 0  0.7404}\makebox(0,0)[lb]{$\bigcirc$}}%
\special{rt 0 0 0}%
% STR 2 0 3 0
% 3 4466 1818 4413 1877 2 0
% $\bigcirc$
\put(44.1300,-18.7700){\special{rt 0 0  0.7319}\makebox(0,0)[lb]{$\bigcirc$}}%
\special{rt 0 0 0}%
% STR 2 0 3 0
% 3 3841 1087 3841 1165 2 0
% \huge$\bullet$
\put(38.4100,-11.6500){\makebox(0,0)[lb]{\huge$\bullet$}}%
% LINE 2 0 3 0
% 2 3896 1095 3896 899
% 
\special{pn 8}%
\special{pa 3896 1096}%
\special{pa 3896 900}%
\special{fp}%
% STR 2 0 3 0
% 3 3841 844 3841 923 2 0
% \huge$\bullet$
\put(38.4100,-9.2300){\makebox(0,0)[lb]{\huge$\bullet$}}%
% LINE 2 0 3 0
% 2 3896 852 3896 656
% 
\special{pn 8}%
\special{pa 3896 852}%
\special{pa 3896 656}%
\special{fp}%
% STR 2 0 3 0
% 3 3841 602 3841 680 2 0
% \huge$\bullet$
\put(38.4100,-6.8000){\makebox(0,0)[lb]{\huge$\bullet$}}%
% VECTOR 2 2 3 0
% 2 3902 579 3765 297
% 
\special{pn 8}%
\special{pa 3902 580}%
\special{pa 3766 298}%
\special{dt 0.045}%
\special{sh 1}%
\special{pa 3766 298}%
\special{pa 3776 366}%
\special{pa 3788 346}%
\special{pa 3812 348}%
\special{pa 3766 298}%
\special{fp}%
% VECTOR 2 2 3 0
% 2 3901 577 4019 287
% 
\special{pn 8}%
\special{pa 3902 578}%
\special{pa 4020 288}%
\special{dt 0.045}%
\special{sh 1}%
\special{pa 4020 288}%
\special{pa 3976 342}%
\special{pa 4000 336}%
\special{pa 4012 356}%
\special{pa 4020 288}%
\special{fp}%
% LINE 2 0 3 0
% 2 3896 1103 4092 1298
% 
\special{pn 8}%
\special{pa 3896 1104}%
\special{pa 4092 1298}%
\special{fp}%
% STR 2 0 3 0
% 3 4060 1291 4060 1369 2 0
% \huge$\bullet$
\put(40.6000,-13.6900){\makebox(0,0)[lb]{\huge$\bullet$}}%
% LINE 2 0 3 0
% 2 4092 1298 4287 1494
% 
\special{pn 8}%
\special{pa 4092 1298}%
\special{pa 4288 1494}%
\special{fp}%
% STR 2 0 3 0
% 3 4256 1486 4256 1565 2 0
% \huge$\bullet$
\put(42.5600,-15.6500){\makebox(0,0)[lb]{\huge$\bullet$}}%
% VECTOR 2 2 3 0
% 2 3469 1533 3168 1620
% 
\special{pn 8}%
\special{pa 3470 1534}%
\special{pa 3168 1620}%
\special{dt 0.045}%
\special{sh 1}%
\special{pa 3168 1620}%
\special{pa 3238 1622}%
\special{pa 3220 1606}%
\special{pa 3226 1582}%
\special{pa 3168 1620}%
\special{fp}%
% VECTOR 2 2 3 0
% 2 3468 1533 3331 1814
% 
\special{pn 8}%
\special{pa 3468 1534}%
\special{pa 3332 1814}%
\special{dt 0.045}%
\special{sh 1}%
\special{pa 3332 1814}%
\special{pa 3378 1764}%
\special{pa 3354 1766}%
\special{pa 3342 1746}%
\special{pa 3332 1814}%
\special{fp}%
% STR 2 0 3 0
% 3 3107 1601 3160 1659 3 0
% $\bigcirc$
\put(31.6000,-16.5900){\special{rt 0 0  5.5428}\makebox(0,0)[rb]{$\bigcirc$}}%
\special{rt 0 0 0}%
% STR 2 0 3 0
% 3 3314 1826 3367 1885 3 0
% $\bigcirc$
\put(33.6700,-18.8500){\special{rt 0 0  5.5513}\makebox(0,0)[rb]{$\bigcirc$}}%
\special{rt 0 0 0}%
% LINE 2 0 3 0
% 2 3884 1111 3688 1306
% 
\special{pn 8}%
\special{pa 3884 1112}%
\special{pa 3688 1306}%
\special{fp}%
% STR 2 0 3 0
% 3 3720 1298 3720 1377 3 0
% \huge$\bullet$
\put(37.2000,-13.7700){\makebox(0,0)[rb]{\huge$\bullet$}}%
% LINE 2 0 3 0
% 2 3688 1306 3493 1502
% 
\special{pn 8}%
\special{pa 3688 1306}%
\special{pa 3494 1502}%
\special{fp}%
% STR 2 0 3 0
% 3 3524 1494 3524 1573 3 0
% \huge$\bullet$
\put(35.2400,-15.7300){\makebox(0,0)[rb]{\huge$\bullet$}}%
% STR 2 0 3 0
% 3 3872 1189 3872 1267 2 0
% 6
\put(38.7200,-12.6700){\makebox(0,0)[lb]{6}}%
% STR 2 0 3 0
% 3 3590 1142 3590 1220 2 0
% 8
\put(35.9000,-12.2000){\makebox(0,0)[lb]{8}}%
% STR 2 0 3 0
% 3 3708 570 3708 648 2 0
% 10
\put(37.0800,-6.4800){\makebox(0,0)[lb]{10}}%
% STR 2 0 3 0
% 3 3348 1345 3348 1424 2 0
% 10
\put(33.4800,-14.2400){\makebox(0,0)[lb]{10}}%
% STR 2 0 3 0
% 3 4139 1142 4139 1220 2 0
% 8
\put(41.3900,-12.2000){\makebox(0,0)[lb]{8}}%
% STR 2 0 3 0
% 3 4350 1345 4350 1424 2 0
% 10
\put(43.5000,-14.2400){\makebox(0,0)[lb]{10}}%
% STR 2 0 3 0
% 3 3747 829 3747 907 2 0
% 8
\put(37.4700,-9.0700){\makebox(0,0)[lb]{8}}%
% STR 2 0 3 0
% 3 3995 160 3995 260 2 0
% $\bigcirc$
\put(39.9500,-2.6000){\makebox(0,0)[lb]{$\bigcirc$}}%
% STR 2 0 3 0
% 3 3665 160 3665 260 2 0
% $\bigcirc$
\put(36.6500,-2.6000){\makebox(0,0)[lb]{$\bigcirc$}}%
% STR 2 0 3 0
% 3 190 1290 190 1390 2 0
% \huge{$\bullet$}
\put(1.9000,-13.9000){\makebox(0,0)[lb]{\huge{$\bullet$}}}%
% LINE 2 0 3 0
% 2 250 1310 650 1310
% 
\special{pn 8}%
\special{pa 250 1310}%
\special{pa 650 1310}%
\special{fp}%
% STR 2 0 3 0
% 3 590 1290 590 1390 2 0
% \huge{$\bullet$}
\put(5.9000,-13.9000){\makebox(0,0)[lb]{\huge{$\bullet$}}}%
% LINE 2 0 3 0
% 2 650 1310 1050 1310
% 
\special{pn 8}%
\special{pa 650 1310}%
\special{pa 1050 1310}%
\special{fp}%
% STR 2 0 3 0
% 3 990 1290 990 1390 2 0
% \huge{$\bullet$}
\put(9.9000,-13.9000){\makebox(0,0)[lb]{\huge{$\bullet$}}}%
% LINE 2 0 3 0
% 2 1100 890 1500 1090
% 
\special{pn 8}%
\special{pa 1100 890}%
\special{pa 1500 1090}%
\special{fp}%
% LINE 2 0 3 0
% 2 650 1300 650 900
% 
\special{pn 8}%
\special{pa 650 1300}%
\special{pa 650 900}%
\special{fp}%
% STR 2 0 3 0
% 3 580 840 580 940 2 0
% \huge$\bullet$
\put(5.8000,-9.4000){\makebox(0,0)[lb]{\huge$\bullet$}}%
% STR 2 0 3 0
% 3 980 840 980 940 2 0
% \huge{$\bullet$}
\put(9.8000,-9.4000){\makebox(0,0)[lb]{\huge{$\bullet$}}}%
% LINE 2 0 3 0
% 2 1040 860 640 860
% 
\special{pn 8}%
\special{pa 1040 860}%
\special{pa 640 860}%
\special{fp}%
% STR 2 0 3 0
% 3 1430 1070 1430 1170 2 0
% \huge{$\bullet$}
\put(14.3000,-11.7000){\makebox(0,0)[lb]{\huge{$\bullet$}}}%
% STR 2 0 3 0
% 3 1440 600 1440 700 2 0
% $\bigcirc$
\put(14.4000,-7.0000){\makebox(0,0)[lb]{$\bigcirc$}}%
% VECTOR 2 2 3 0
% 2 1100 830 1410 670
% 
\special{pn 8}%
\special{pa 1100 830}%
\special{pa 1410 670}%
\special{dt 0.045}%
\special{sh 1}%
\special{pa 1410 670}%
\special{pa 1342 684}%
\special{pa 1364 694}%
\special{pa 1360 718}%
\special{pa 1410 670}%
\special{fp}%
% STR 2 0 3 0
% 3 570 1450 570 1600 2 0
% 12
\put(5.7000,-16.0000){\makebox(0,0)[lb]{12}}%
% STR 2 0 3 0
% 3 190 1450 190 1600 2 0
% 4
\put(1.9000,-16.0000){\makebox(0,0)[lb]{4}}%
% STR 2 0 3 0
% 3 1050 1450 1050 1600 2 0
% 6
\put(10.5000,-16.0000){\makebox(0,0)[lb]{6}}%
% STR 2 0 3 0
% 3 570 600 570 750 2 0
% 14
\put(5.7000,-7.5000){\makebox(0,0)[lb]{14}}%
% STR 2 0 3 0
% 3 970 600 970 750 2 0
% 30
\put(9.7000,-7.5000){\makebox(0,0)[lb]{30}}%
% STR 2 0 3 0
% 3 1450 1170 1450 1320 2 0
% 15
\put(14.5000,-13.2000){\makebox(0,0)[lb]{15}}%
% STR 2 0 3 0
% 3 770 1880 770 1980 2 0
% (1)
\put(7.7000,-19.8000){\makebox(0,0)[lb]{(1)}}%
% STR 2 0 3 0
% 3 3830 1890 3830 1990 2 0
% (2)
\put(38.3000,-19.9000){\makebox(0,0)[lb]{(2)}}%
\end{picture}%
\caption{}
\label{e-1}
\end{figure}
}}}}
\end{example}
%%%%%%%%%%%%%%%%%%%%%%%%%%%%%%%%%%
%%%%%       [Strategy]       %%%%%
%%%%%%%%%%%%%%%%%%%%%%%%%%%%%%%%%%
\subsection{Some lemmas for general torus curves}
First we prepare some lemmas for the general torus curves of type $(p,q)$.
Let $C=\{f=0\}$ be a  curve of torus type $(p,q)$  
which can be written as $f=f_q^p+f_p^q$ where 
$f_p$ and $f_q$ are polynomial of degree 
$p$ and $q$ respectively.
We assume that $p\ge q\ge 2$.
We put $C_q:=\{f_q=0\}$ and $C_p:=\{f_p=0\}$.
Suppose that $O\in C_q\cap C_p$
and let $\iota$ be the 
local intersection multiplicity $I(C_{p},C_{q};O)$.
We recall a following key lemmas.  
\begin{lemma}[Lemma 1 of \cite{BenoitTu}]\label{Lemma-BT}
Suppose that 
$C_p$ is non-singular at $O$ and 
then the singularity $(C,O)$
is topologically equivalent to the Brieskorn-Pham singularity 
$B_{p\iota,q}$.
\end{lemma}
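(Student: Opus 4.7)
The plan is to find local coordinates in which $f=f_q^p+f_p^q$ acquires a one-face, non-degenerate Newton boundary whose face function is the Brieskorn--Pham polynomial $x^q+y^{p\iota}$, and then invoke the topological invariance theorem for non-degenerate germs (Theorem 2.1 of \cite{Oka-milnor}).

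First, since $C_p$ is smooth at $O$, I would apply the implicit function theorem to choose local analytic coordinates $(x,y)$ centered at $O$ with $f_p(x,y)=x$; this is the only place the smoothness hypothesis is used. In these coordinates
\[
\iota=I(C_p,C_q;O)=\dim_{\mathbb C}\mathcal O/(x,f_q)=\mathrm{ord}_y f_q(0,y),
\]
so $f_q(0,y)=c\,y^{\iota}\,\epsilon(y)$ with $c\ne 0$ and $\epsilon(0)=1$. The analytic substitution $\tilde y:=y\,\epsilon(y)^{1/\iota}$ preserves $x=f_p$ and normalizes $f_q$ to
\[
f_q(x,\tilde y)=c\,\tilde y^{\iota}+x\,h(x,\tilde y)
\]
for some $h\in\mathcal O$.

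Next, I would expand by the binomial theorem
\[
f=(c\tilde y^{\iota}+xh)^p+x^q=c^p\tilde y^{p\iota}+x^q+\sum_{k=1}^{p}\binom{p}{k}c^{p-k}\tilde y^{(p-k)\iota}(xh)^k
\]
and read off the Newton polygon $\Gamma_+(f;x,\tilde y)$. The extremal vertices $(0,p\iota)$ and $(q,0)$ determine the candidate face $L$ cut out by $p\iota\,u+q\,v=pq\iota$. Every other monomial appearing in the expansion has exponent of the form $(k+a,(p-k)\iota+b)$ for some $k\ge 1$ and $(a,b)\in\mathrm{supp}(h^k)$, and the linear form evaluates at such a point to
\[
p\iota(k+a)+q\bigl((p-k)\iota+b\bigr)=pq\iota+(p-q)k\iota+p\iota\,a+q\,b.
\]
Under the standing assumption $p\ge q$, with strict inequality in the paper's setting $(p,q)=(5,2)$, this is strictly larger than $pq\iota$, so every extra monomial lies strictly above $L$. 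Hence $\Gamma(f;x,\tilde y)=L$ and the face function equals $c^p\tilde y^{p\iota}+x^q$; its gradient $(q\,x^{q-1},\,p\iota c^p\tilde y^{p\iota-1})$ has no zero on $(\mathbb C^*)^2$, so $f$ is non-degenerate on $L$. Applying Oka's theorem identifies $(C,O)$ with the germ of $x^q+\tilde y^{p\iota}=0$, that is, $B_{p\iota,q}$.

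The main obstacle is the Newton boundary computation: one must rule out any monomial contributed by $h$ from landing on $L$ and corrupting the face function. The elementary positivity argument above handles this whenever $p>q$. The degenerate boundary case $p=q$ would require extra bookkeeping of a longer face function (one checks it still factors as a product of linear forms in $\tilde y^{\iota}$ and $x$, giving the same topological type), but this is irrelevant for the torus type $(2,5)$ application, where $p=5>2=q$.
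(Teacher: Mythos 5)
Your proof is correct. Note that the paper itself gives no proof of this statement: it is imported verbatim as Lemma~1 of \cite{BenoitTu}, so there is no in-paper argument to compare against. Your reconstruction is the natural one and is exactly the technique the paper deploys for its own lemmas (compare the proof of case (1)(a) of Lemma~\ref{termination1}): normalize coordinates so that the smooth branch is a coordinate axis, compute the Newton principal part, verify that there is a single non-degenerate face, and invoke Theorem~2.1 of \cite{Oka-milnor}. The key step --- checking that every monomial of $\binom{p}{k}c^{p-k}\tilde y^{(p-k)\iota}(xh)^k$ with $k\ge 1$ has weight $pq\iota+(p-q)k\iota+p\iota a+qb>pq\iota$ --- is done correctly, and it rightly isolates $p>q$ as the hypothesis that makes the face function collapse to the two-term Brieskorn--Pham polynomial $x^q+c^p\tilde y^{p\iota}$. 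Two small points worth making explicit: the statement tacitly assumes $\iota<\infty$ (i.e.\ $x\nmid f_q$ in your coordinates), which is guaranteed in the paper's setting since $O$ is an isolated singularity of $C$; and the unit root $\epsilon(y)^{1/\iota}$ exists analytically because $\epsilon(0)=1$, so the normalization $\tilde y=y\,\epsilon(y)^{1/\iota}$ is legitimate. Neither affects the validity of the argument.
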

\begin{lemma}[Lemma 4.3 of \cite{cuspidal}]\label{cuspidal}
Suppose that $(C_p,O)$ is singular with  multiplicity $m$
and $(C_{q},O)$ is smooth.
If $C_q$ intersects transversely with $C_p$ and $p<qm$, 
the singularity $(C,O)$
is topologically equivalent to
$B_{mq,p}$.
\end{lemma}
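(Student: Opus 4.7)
The strategy is to put the singularity into a convenient analytic normal form, read off its Newton boundary, verify non-degeneracy, and then invoke Oka's theorem (recalled in Section 3.1) that the topological type of a non-degenerate germ is determined by its Newton boundary.

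First I would make an analytic change of coordinates near $O$ so that $f_q = y$ as a germ; this is possible because $(C_q,O)$ is smooth. In these coordinates the germ of $C$ is defined by
\[
f(x,y) = f_p(x,y)^q + y^p,
\]
where $f_p$ still has multiplicity $m$ at $O$, and the transversality of $C_q$ and $C_p$ becomes the statement that the line $y=0$ does not lie in the tangent cone of $C_p$. Concretely this gives $f_p(x,0) = \alpha x^m + (\text{higher order in }x)$ for some $\alpha \ne 0$.

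Next I would compute the Newton boundary $\Gamma(f;x,y)$. The monomial $y^p$ contributes $(0,p)$ to the support, and the expansion $f_p(x,0)^q = \alpha^q x^{mq} + \cdots$ shows that $(mq,0)$ also lies in it with nonzero coefficient $\alpha^q$. I claim these are the only support points on or below the segment $S$ joining them. Any monomial $x^iy^j$ occurring in $f_p^q$ satisfies $i+j\geq mq$, and the line through $S$ has equation $pi+mqj=mqp$. Using the hypothesis $p<mq$, for such a monomial with $j\geq 1$ one has
\[
pi + mqj \;=\; p(i+j) + (mq-p)j \;\geq\; p\cdot mq + (mq-p) \;>\; pmq,
\]
so it lies strictly above $S$. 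Hence $\Gamma(f;x,y)$ consists of the single face $S$, and its face function is $f_S(x,y) = \alpha^q x^{mq} + y^p$.

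Finally, the face function $\alpha^q x^{mq} + y^p$ is non-degenerate on $(\mathbb{C}^*)^2$ since its partial derivatives are monomials with no zeros there, so $f$ is Newton non-degenerate on its unique face. By Oka's theorem, the topological type of $(C,O)$ is determined by $\Gamma(f;x,y)$, which is the same Newton boundary as $x^{mq}+y^p=0$; therefore $(C,O)\sim B_{mq,p}$. The main obstacle I anticipate is bookkeeping rather than conceptual: one must check carefully that the analytic straightening of $C_q$ preserves both the multiplicity of $f_p$ and the transversality condition $\alpha\ne 0$, and that the assumption $p<mq$ is used precisely to exclude stray support points on or below $S$; once these are in hand the conclusion is immediate from the Newton boundary theory.
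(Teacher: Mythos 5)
Your proof is correct: straightening $C_q$ to $y=0$, using $p<qm$ to check that $(mq,0)$ and $(0,p)$ are the only support points on or below the segment joining them (so no cancellation and a single non-degenerate face), and invoking Oka's theorem is exactly the standard argument. The paper itself states this lemma as a citation of Lemma 4.3 of \cite{cuspidal} without proof, but your Newton-boundary computation is the same technique the paper uses for case (1)(a) of Lemma \ref{termination1}, so there is nothing to add.
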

%%%%%%%%%%%%%%%%%%%%%%%%%%%%%
%%%%%%%   [lemma 1]   %%%%%%%
%%%%%%%%%%%%%%%%%%%%%%%%%%%%%
\begin{lemma}\label{termination1}
Suppose that $(C_p,O)$ is singular with
 multiplicity $m$ and 
the tangent cone of $C_p$ consist of $m$ distinct lines.
Then $C_p$ consists of $m$ smooth components at $O$.
Consider the local factorization $f_p=\prod_{i=1}^m g_i$.
\begin{enumerate}
\item Suppose that $C_q$ is smooth at $O$ and $p < qm$. 
\begin{enumerate}
\item
If $\iota < \dfrac{p}{p-q}(m-1)$, then $ (C,O)\sim B_{q\iota,p}$.
\item
If $\iota > \dfrac{p}{p-q}(m-1)$, $(C,O)\sim B_{\beta,\ q}\circ
     B_{q(m-1),\ p-q}$ where $\beta=p(\iota-m+1)-q(m-1)$.
\item If $\iota= \dfrac{p}{p-q}(m-1)$ 
and the coefficients are generic,
then $ (C,O)\sim B_{q\iota,p}$.
\end{enumerate}
\item \label{casII-1}
Suppose that $m=2$, $C_q$ is smooth  and  $p>2q$. Then $(C,O)\sim B_{p(\iota-1)-q,q}\circ B_{q,p-q}$.
\item Suppose that 
$(C_q,O)$ is singular with multiplicity 2 and
the tangent cone $T_OC_q$ consists of 
distinct two lines  
and let $f_q=h_1h_2$ be the local factorization
and we put  $\ell_i=\{h_i=0\}$, $i=1,2$.
Let $\nu_{i}=I(\ell_i,C_p;O)$, $i=1,2$.
We assume that $g_1(x_2,y_2)=x_2,\,g_2(x_2,y_2)=y_2$ for a local
      coordinate system $(x_2,y_2)$ and
\[I(\ell_1,g_i;O)=1,\quad (i\ne 1) \quad I(\ell_2,g_j;O)=1,\quad (j\ne 2)\]
and put $\nu_1=I(\ell_1,g_1;O)$ and $\nu_2=I(\ell_2,g_2;O)$.
Assume that  $2p>qm$. Then  
\[(C,O)\sim B_{\beta_1,2}\circ 
{(B_{m-2,\ m-2}^q)}^{(m-2)B_{2p-qm,q}}
\circ B_{2,\beta_2},\quad \beta_i =p(\nu_i+1)-q(m-1)\]
\end{enumerate}
\end{lemma}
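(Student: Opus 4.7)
The plan is to analyze each case by choosing local analytic coordinates adapted to the smoothness and tangency data of $C_p$ and $C_q$, computing the Newton boundary of $f=f_q^p+f_p^q$, identifying the face functions, and reading off the topological type via the $\circ$ notation (Oka's Theorem 2.1 of \cite{Oka-milnor}). Non-degenerate faces contribute Brieskorn-Pham singularities directly; degenerate faces are resolved by a toric modification, after which the strict transform at each exceptional base point yields the further Brieskorn-Pham pieces that complete the claimed decomposition. The preceding Lemmas \ref{Lemma-BT} and \ref{cuspidal} cover the cases where $C_p$ has a single branch at $O$; the present lemma extends that analysis to $m$ distinct smooth branches $g_1,\dots,g_m$ of $C_p$.

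For parts (1) and (2), since $C_q$ is smooth I would fix coordinates $(x,y)$ so that $f_q=y$ locally and write $f_p=\prod_{i=1}^m g_i\cdot(\text{unit})$. After a coordinate change preserving $\{y=0\}$, each branch $g_i$ is normalized to $g_i=y+\alpha_i x^{\nu_i}+\cdots$ (if tangent to $C_q$, with $\nu_i=I(g_i,y;O)\geq 2$) or to $g_i=\alpha_i x+\beta_i y+\cdots$ (if transverse, $\nu_i=1$), so that $\iota=\sum_i\nu_i$. The Newton polygon of $f$ then has $y$-axis vertex $(0,p)$ from $y^p$, $x$-axis vertex $(q\iota,0)$ from the pure $x$-term of $f_p^q$, and possibly an intermediate vertex at $(q(m-1),q)$ coming from the tangent-cone contribution $y^q\bigl(\prod_{i\neq 1}(\text{leading of }g_i)\bigr)^q$ of $f_p^q$. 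In part (1), $p<qm$ forces $(0,p)$ strictly below the tangent-cone line; the three sub-cases correspond to whether the direct edge $(0,p)$-$(q\iota,0)$ passes above, below, or through this intermediate vertex, producing in (a) and (c) the single non-degenerate face $y^p+cx^{q\iota}$ of type $B_{q\iota,p}$, and in (b) two faces with non-degenerate essential part $y^{p-q}+c'x^{q(m-1)}$ of type $B_{q(m-1),p-q}$ and, after resolution of the degenerate face function $x^{q(m-1)}(y+\alpha_1 x^{\nu_1})^q$, the Brieskorn-Pham piece $B_{\beta,q}$ with $\beta=p(\iota-m+1)-q(m-1)$. For part (2), the three vertices of the Newton boundary are $(0,p),(q,q),(q\iota,0)$; the upper face is non-degenerate with essential part $y^{p-q}+x^q$ of type $B_{q,p-q}$, while the lower face is a $q$-th power and hence degenerate. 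A toric blowup along the weight $P=(1,\iota-1)$, followed by the admissible translated coordinate $v_1=v+\alpha$ at the single exceptional base point, converts the strict transform to $Au^{p(\iota-1)-q}+Bv_1^q+\cdots$ for units $A,B$, giving $B_{p(\iota-1)-q,q}$ and completing the decomposition $B_{p(\iota-1)-q,q}\circ B_{q,p-q}$.

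For part (3), I would work in the coordinates $(x_2,y_2)$ provided so that $g_1=x_2$ and $g_2=y_2$, writing $h_1=x_2+a\,y_2^{\nu_1}+\cdots$ and $h_2=y_2+b\,x_2^{\nu_2}+\cdots$, and $f_p=x_2y_2\,G$ where $G=\prod_{i\geq 3}g_i\cdot(\text{unit})$ vanishes to order $m-2$ with tangent directions distinct from both coordinate axes. Expanding $f=(h_1h_2)^p+(x_2y_2\,G)^q$, the Newton polygon acquires axis vertices $(0,p(\nu_1+1))$ and $(p(\nu_2+1),0)$ from the pure-power terms $(a\,y_2^{\nu_1+1})^p$ and $(b\,x_2^{\nu_2+1})^p$ of $(h_1h_2)^p$, plus interior contributions from $x_2^py_2^p$ at $(p,p)$ and from the tangent cone of $(x_2y_2\,G)^q$. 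The hypothesis $2p>qm$ positions the diagonal vertex $(p,p)$ below the tangent-cone line $\{i+j=qm\}$, so the Newton boundary consists of three faces: two outer faces whose essential parts are non-degenerate of types $B_{\beta_1,2}$ and $B_{2,\beta_2}$ with $\beta_i=p(\nu_i+1)-q(m-1)$, and a middle face with essential part $\bigl(\prod_{i\geq 3}(\text{leading of }g_i)\bigr)^q$, a $q$-th power on the $m-2$ distinct tangent directions of $g_3,\dots,g_m$. Resolving the middle face by a toric modification along its weight, the strict transform meets the new exceptional divisor at $m-2$ distinct base points, one for each tangent direction, and an admissible translated toric coordinate at each produces the Brieskorn-Pham singularity $B_{2p-qm,q}$. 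This yields the nested middle contribution $(B_{m-2,m-2}^q)^{(m-2)B_{2p-qm,q}}$ and completes the decomposition.

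The main obstacle will be the systematic bookkeeping for the resolution of the degenerate faces, especially in part (3): choosing an admissible regular simplicial subdivision refining the dual Newton diagram, performing the associated toric modification, tracking the multiplicities of $\pi^*f$ along each exceptional divisor, and rewriting the strict transform in admissible translated toric coordinates at each of the $m-2$ new base points to verify the Brieskorn-Pham type $B_{2p-qm,q}$. A further technical point is to confirm the exponent formulas $\beta_i=p(\nu_i+1)-q(m-1)$, $\beta=p(\iota-m+1)-q(m-1)$, and $2p-qm$ by matching weight-vector inner products against the Newton polygon vertices, and to check that the inequalities $p<qm$ in part (1), $p>2q$ in part (2), and $2p>qm$ in part (3) both ensure the claimed vertex configurations and keep the resulting Brieskorn-Pham exponents positive.
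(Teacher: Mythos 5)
Your overall strategy --- coordinates adapted to the branches, Newton boundary analysis, toric modification of the degenerate faces --- is exactly the paper's, and your treatments of parts (1) and (3) match its proof in substance: the criterion $\iota \gtrless \frac{p}{p-q}(m-1)$ for whether $(q(m-1),q)$ is a vertex, and in (3) the three-face boundary with the homogeneous middle face resolved by the weight ${}^t(1,1)$ blow-up producing $m-2$ base points each of type $B_{2p-qm,q}$. One caution on (1b): the paper does not blow up there; it makes the degenerate lower face non-degenerate by the triangular change $y_2=g_1(x_1,y_1)$, after which the two faces have extents $\beta=p(\iota-m+1)-q(m-1)$ and $q(m-1)$. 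Your formula for $\beta$ is the one for that route; if you instead literally resolve the face by a toric modification, the germ at the exceptional base point is $B_{(p-q)(\iota-m+1)-q(m-1),\,q}$, not $B_{\beta,q}$ --- the $\circ$ notation and the superscript notation carry different exponents, and conflating them is what goes wrong below.

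Part (2) as written has a genuine gap. First, in your coordinates $f_q=y$ the claimed Newton boundary $(0,p),(q,q),(q\iota,0)$ is only correct if you additionally normalize the transverse branch of $C_p$ to be $\{x=0\}$; otherwise $f_p^q$ contains $y^{2q}$, and since $p>2q$ the point $(0,p)$ is not on the boundary at all and \emph{both} faces are $q$-th powers. Second, even granting that normalization, the blow-up along $P={}^t(1,\iota-1)$ gives $\pi^*f=u^{q\iota}\bigl((v+\alpha)^q\cdot\mathrm{unit}+u^{p(\iota-1)-q\iota}v^p\bigr)$, so the strict transform at the base point is $B_{p(\iota-1)-q\iota,\,q}$, not $Au^{p(\iota-1)-q}+Bv_1^q$ as you assert; the exponent $p(\iota-1)-q$ cannot be read off this way, and identifying the resulting superscript expression with the claimed $\circ$-decomposition would need a separate argument. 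The paper sidesteps both issues by choosing coordinates adapted to $C_p$ instead, so that $f_p=c\,x_1y_1$ exactly and $f_q=y_1+c'x_1^{\iota-1}+\cdots$; then the Newton boundary has vertices $(0,p)$, $(q,q)$, $(p(\iota-1),0)$, both faces are already non-degenerate, and $B_{p(\iota-1)-q,\,q}\circ B_{q,\,p-q}$ follows at once with no modification.
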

\begin{proof} 
By the assumption, 
the tangent cone of $C_p$ consists of 
$m$ distinct lines. 
This implies  $(C_p,O)$ has 
$m$ smooth components which are transverse each other.

First we consider the case (1).
First we take a local coordinate system $(x_1,y_1)$ so that
$C_q$ is defined by $ y_1=0$.  
Let $f_p(x_1,y_1)=\prod_{i=1}^{m}g_i(x_1,y_1)$ be the factorization in
$\mathcal{O}$ such that
$g_1(x_1,y_1)=y_1+\alpha_1\,x_1^\nu+{\text{(higher terms)}}$,
$g_i(x_1,y_1)=y_1-\alpha_ix_1+{\text{(higher terms)}}$ with 
$\alpha_i\ne 0$
$(2\le i \le m)$.
In this expression, we have $\iota=\nu+m-1$.
In the case of (a): $\iota<\frac{p}{p-q}(m-a)$, the Newton boundary of 
$f_p(x_1,y_1)^q,\,f_q(x_1,y_1)^p$ are  as the left hand side of Figure \ref{f-1}. Thus it is easy
 to see that 
$f(x_1,y_1)$ is non-degenerate in this coordinate and we have
$ (C,O)\sim B_{q\iota,p}$.

In the case of $(b): \iota >\frac{p}{p-q}(m-a)$, $f(x_1,y_1)$ are
 degenerate in this coordinate. We take another coordinate:
$(x_2,y_2)$ with $x_2=x_1,\,y_2=g_1(x_1,y_1)$ so that $y_2|f_p$. Then  
in this coordinate $(x_2,y_2)$, the Newton boundaries of $f_p(x_2,y_2)$
and $f_q(x_2,y_2)$ are given as the right hand side of Figure \ref{f-1}
 so that
$f(x_2,y_2)$ are now non-degenerate and the assertion follows.
\begin{figure}[H]
\centering
%WinTpicVersion3.08
\unitlength 0.1in
\begin{picture}( 49.5200, 12.4700)(  5.5000,-16.1700)
% LINE 2 0 3 0
% 2 955 432 955 1611
% 
\special{pn 8}%
\special{pa 956 432}%
\special{pa 956 1612}%
\special{fp}%
% LINE 2 0 3 0
% 2 896 1496 2370 1496
% 
\special{pn 8}%
\special{pa 896 1496}%
\special{pa 2370 1496}%
\special{fp}%
% LINE 2 0 3 0
% 2 932 1124 981 1124
% 
\special{pn 8}%
\special{pa 932 1124}%
\special{pa 982 1124}%
\special{fp}%
% LINE 2 0 3 0
% 2 1570 1524 1570 1461
% 
\special{pn 8}%
\special{pa 1570 1524}%
\special{pa 1570 1462}%
\special{fp}%
% LINE 2 0 3 0
% 2 930 910 981 910
% 
\special{pn 8}%
\special{pa 930 910}%
\special{pa 982 910}%
\special{fp}%
% LINE 2 0 3 0
% 2 930 495 981 495
% 
\special{pn 8}%
\special{pa 930 496}%
\special{pa 982 496}%
\special{fp}%
% STR 2 0 3 0
% 3 680 477 680 550 2 0
% $qm$
\put(6.8000,-5.5000){\makebox(0,0)[lb]{$qm$}}%
% STR 2 0 3 0
% 3 800 898 800 970 2 0
% $p$
\put(8.0000,-9.7000){\makebox(0,0)[lb]{$p$}}%
% STR 2 0 3 0
% 3 800 1108 800 1180 2 0
% $q$
\put(8.0000,-11.8000){\makebox(0,0)[lb]{$q$}}%
% STR 2 0 3 0
% 3 1410 1607 1410 1680 2 0
% $q(m-1)$
\put(14.1000,-16.8000){\makebox(0,0)[lb]{$q(m-1)$}}%
% STR 2 0 3 0
% 3 1571 1052 1571 1124 5 0
% $\bullet$
\put(15.7100,-11.2400){\makebox(0,0){$\bullet$}}%
% STR 2 0 3 0
% 3 955 424 955 495 5 0
% $\bullet$
\put(9.5500,-4.9500){\makebox(0,0){$\bullet$}}%
% LINE 2 2 3 0
% 4 956 488 1571 1116 1571 1116 2227 1499
% 
\special{pn 8}%
\special{pa 956 488}%
\special{pa 1572 1116}%
\special{dt 0.045}%
\special{pa 1572 1116}%
\special{pa 2228 1500}%
\special{dt 0.045}%
% LINE 2 0 3 0
% 2 2220 1520 2220 1456
% 
\special{pn 8}%
\special{pa 2220 1520}%
\special{pa 2220 1456}%
\special{fp}%
% STR 2 0 3 0
% 3 2192 1601 2192 1671 2 0
% $q\iota$
\put(21.9200,-16.7100){\makebox(0,0)[lb]{$q\iota$}}%
% STR 2 0 3 0
% 3 955 839 955 912 5 0
% $\bullet$
\put(9.5500,-9.1200){\makebox(0,0){$\bullet$}}%
% LINE 2 0 3 0
% 2 950 910 2220 1500
% 
\special{pn 8}%
\special{pa 950 910}%
\special{pa 2220 1500}%
\special{fp}%
% STR 2 0 3 0
% 3 2215 1425 2215 1495 5 0
% $\bullet$
\put(22.1500,-14.9500){\makebox(0,0){$\bullet$}}%
% STR 2 0 3 0
% 3 4952 829 4952 896 2 0
% : $\Gamma_{\ }(f;x_2,y_2)$
\put(49.5200,-8.9600){\makebox(0,0)[lb]{: $\Gamma_{\ }(f;x_2,y_2)$}}%
% LINE 2 0 3 0
% 2 4750 840 4910 840
% 
\special{pn 8}%
\special{pa 4750 840}%
\special{pa 4910 840}%
\special{fp}%
% STR 2 0 3 0
% 3 4950 474 4950 540 2 0
% : $\Gamma_{\ }(f_p^q;x_2,y_2)$
\put(49.5000,-5.4000){\makebox(0,0)[lb]{: $\Gamma_{\ }(f_p^q;x_2,y_2)$}}%
% LINE 2 2 3 0
% 2 4750 490 4909 490
% 
\special{pn 8}%
\special{pa 4750 490}%
\special{pa 4910 490}%
\special{dt 0.045}%
% STR 2 0 3 0
% 3 4950 645 4950 712 2 0
% : $\Gamma_{\ }(f_q^p;x_2,y_2)$
\put(49.5000,-7.1200){\makebox(0,0)[lb]{: $\Gamma_{\ }(f_q^p;x_2,y_2)$}}%
% LINE 2 1 3 0
% 2 4730 640 4920 640
% 
\special{pn 8}%
\special{pa 4730 640}%
\special{pa 4920 640}%
\special{da 0.070}%
% LINE 2 0 3 0
% 2 3423 438 3423 1617
% 
\special{pn 8}%
\special{pa 3424 438}%
\special{pa 3424 1618}%
\special{fp}%
% LINE 2 0 3 0
% 2 3366 1500 5502 1500
% 
\special{pn 8}%
\special{pa 3366 1500}%
\special{pa 5502 1500}%
\special{fp}%
% LINE 2 0 3 0
% 2 3403 1128 3452 1128
% 
\special{pn 8}%
\special{pa 3404 1128}%
\special{pa 3452 1128}%
\special{fp}%
% LINE 2 0 3 0
% 2 4040 1530 4040 1467
% 
\special{pn 8}%
\special{pa 4040 1530}%
\special{pa 4040 1468}%
\special{fp}%
% LINE 2 0 3 0
% 2 3401 786 3451 786
% 
\special{pn 8}%
\special{pa 3402 786}%
\special{pa 3452 786}%
\special{fp}%
% LINE 2 0 3 0
% 2 3401 499 3451 499
% 
\special{pn 8}%
\special{pa 3402 500}%
\special{pa 3452 500}%
\special{fp}%
% STR 2 0 3 0
% 3 3171 477 3171 550 2 0
% $qm$
\put(31.7100,-5.5000){\makebox(0,0)[lb]{$qm$}}%
% STR 2 0 3 0
% 3 3241 768 3241 840 2 0
% $p$
\put(32.4100,-8.4000){\makebox(0,0)[lb]{$p$}}%
% STR 2 0 3 0
% 3 3241 1108 3241 1180 2 0
% $q$
\put(32.4100,-11.8000){\makebox(0,0)[lb]{$q$}}%
% STR 2 0 3 0
% 3 3881 1610 3881 1683 2 0
% $q(m-1)$
\put(38.8100,-16.8300){\makebox(0,0)[lb]{$q(m-1)$}}%
% STR 2 0 3 0
% 3 4041 1056 4041 1128 5 0
% $\bullet$
\put(40.4100,-11.2800){\makebox(0,0){$\bullet$}}%
% STR 2 0 3 0
% 3 3425 428 3425 499 5 0
% $\bullet$
\put(34.2500,-4.9900){\makebox(0,0){$\bullet$}}%
% LINE 2 2 3 0
% 6 3432 508 4047 1137 4047 1137 4047 1137 4047 1137 5287 1137
% 
\special{pn 8}%
\special{pa 3432 508}%
\special{pa 4048 1138}%
\special{dt 0.045}%
\special{pa 4048 1138}%
\special{pa 4048 1138}%
\special{dt 0.045}%
\special{pa 4048 1138}%
\special{pa 5288 1138}%
\special{dt 0.045}%
% STR 2 0 3 0
% 3 3425 714 3425 786 5 0
% $\bullet$
\put(34.2500,-7.8600){\makebox(0,0){$\bullet$}}%
% LINE 2 1 3 0
% 2 3429 788 5285 1503
% 
\special{pn 8}%
\special{pa 3430 788}%
\special{pa 5286 1504}%
\special{da 0.070}%
% LINE 2 0 3 0
% 2 5279 1528 5279 1463
% 
\special{pn 8}%
\special{pa 5280 1528}%
\special{pa 5280 1464}%
\special{fp}%
% STR 2 0 3 0
% 3 5256 1609 5256 1680 2 0
% $p\nu$
\put(52.5600,-16.8000){\makebox(0,0)[lb]{$p\nu$}}%
% STR 2 0 3 0
% 3 5279 1428 5279 1498 5 0
% $\bullet$
\put(52.7900,-14.9800){\makebox(0,0){$\bullet$}}%
% LINE 2 0 3 0
% 4 3432 790 4047 1131 4047 1131 5285 1503
% 
\special{pn 8}%
\special{pa 3432 790}%
\special{pa 4048 1132}%
\special{fp}%
\special{pa 4048 1132}%
\special{pa 5286 1504}%
\special{fp}%
% STR 2 0 3 0
% 3 1950 474 1950 540 2 0
% : $\Gamma_{\ }(f_p^q;x_1,y_1)$
\put(19.5000,-5.4000){\makebox(0,0)[lb]{: $\Gamma_{\ }(f_p^q;x_1,y_1)$}}%
% LINE 2 2 3 0
% 2 1750 490 1908 490
% 
\special{pn 8}%
\special{pa 1750 490}%
\special{pa 1908 490}%
\special{dt 0.045}%
% STR 2 0 3 0
% 3 1950 653 1950 720 2 0
% : $\Gamma_{\ }(f\ ;x_1,y_1)$
\put(19.5000,-7.2000){\makebox(0,0)[lb]{: $\Gamma_{\ }(f\ ;x_1,y_1)$}}%
% LINE 2 0 3 0
% 2 1750 660 1909 660
% 
\special{pn 8}%
\special{pa 1750 660}%
\special{pa 1910 660}%
\special{fp}%
% LINE 2 2 3 0
% 2 1570 1500 1570 1100
% 
\special{pn 8}%
\special{pa 1570 1500}%
\special{pa 1570 1100}%
\special{dt 0.045}%
% LINE 2 2 3 0
% 2 960 1120 1570 1120
% 
\special{pn 8}%
\special{pa 960 1120}%
\special{pa 1570 1120}%
\special{dt 0.045}%
% LINE 2 2 3 0
% 4 4040 1500 4040 1130 4040 1130 3420 1130
% 
\special{pn 8}%
\special{pa 4040 1500}%
\special{pa 4040 1130}%
\special{dt 0.045}%
\special{pa 4040 1130}%
\special{pa 3420 1130}%
\special{dt 0.045}%
\end{picture}% 
\caption{}
\label{f-1}
\end{figure}
For the proof of (2), we take a local coordinate system so that 
$f_p(x,y)=c\,x_1y_1,\,c\ne 0$. Then we may assume that 
$f_q(x_1,y_1)=y_1+c'\, x^{\iota-1}+\text{(higher terms)}$.
The the assertion is immediate from  Newton boundary argument.

Next we consider the case (3).
We have chosen a local coordinate system $(x_2,y_2)$ so that
$g_1(x_2,y_2)=c\,x_2\,(c\ne 0)$ and $g_2(x_2,y_2)=y_2$. Put
$g_i(x_2,y_2)=y_2-\alpha_ix_2+{\text{(higher terms)}}$
$(3\le i \le m)$.
By the assumption, we can write
\[
 h_1(x_2,y_2)=x_2+d_1\,y_2^{\nu_1}+\text{(higher terms)},
\quad
 h_2(x_2,y_2)=y_2+d_2\,x_2^{\nu_2}+\text{(higher terms)}
\]
with $d_1,\,d_2\ne 0$.
Then the Newton principal part of $f$ can be written as: 
\[\mathcal{N}(f,x_2,y_2)=d_1^py_2^{p(\nu_1+1)}+
c^q\, x_2^qy_2^q\prod_{i=3}^{m}(y_2-\alpha_ix_2)^q
+d_2^px_2^{p(\nu_2+1)}.\]

Newton boundary of $f$ consists of three faces $\Delta_1$, $\Delta_2$ 
and $\Delta_3$ and $f$ is non-degenerate on $\Delta_1,\,\Delta_3$ but 
 degenerate on $\Delta_2$.
We take an admissible toric blowing-up $\pi:X\to \Bbb{C}^2$ with respect
 to
some regular simplicial cone subdivision $\Sigma^*=\{P_0,\dots, P_k\}$. 
Assume  that $P_\gamma={}^t(1,1)$, the weight vector of the homogeneous
 face
$\Delta_2$ and put $P_{\gamma+1}={}^t(n,n+1)$.
Then the pull-back of $f_p$ and $f_q$ to the coordinate chart 
$(\Bbb{C}^2_{\sigma}, (u_{\gamma},v_{\gamma}))$ with 
$\sigma=\text{\rm Cone}(P_\gamma,P_{\gamma+1})$ are given by  
\begin{equation*}
\begin{split}
\pi_{\sigma}^*f_p(u_\gamma,v_\gamma) &= u_{\gamma}^m\,
v_\gamma\prod_{i=3}^{m}\left\{g_i (u_\gamma v_\gamma^{n},u_\gamma
v_\gamma^{n+1})/u_{\gamma}\right \}\\
\pi_{\sigma}^*f_q(u_\gamma,v_\gamma) &= u_\gamma^2
(h_1(u_\gamma v_\gamma^{n},u_\gamma
v_\gamma^{n+1})/u_\gamma)\times (h_2(u_\gamma v_\gamma^{n},u_\gamma
v_\gamma^{n+1})/u_\gamma)
\end{split}
\end{equation*}
where
\begin{equation*}
\begin{split}
g_i (u_\gamma v_\gamma^{n},u_\gamma
v_\gamma^{n+1})/u_{\gamma}&\equiv v_\gamma-\alpha_i\,\,\mod\,
u_\gamma\\
h_1(u_\gamma v_\gamma^{n},u_\gamma
v_\gamma^{n+1})/u_\gamma&\equiv 1\,\,\mod\, u_\gamma\\
 h_2(u_\gamma v_\gamma^{n},u_\gamma
v_\gamma^{n+1})/u_\gamma&\equiv v_\gamma\,\,\mod\, u_\gamma.
\end{split}
\end{equation*}
Thus we get 
\begin{equation*}
\begin{split}
\pi_{\sigma}^{*}(f(u_\gamma,v_\gamma))&=u_\gamma^{qm}(\bar{f}_p^{q}(u_\gamma,
v_\gamma)+u_\gamma^{2p-qm}\bar{f}_q^{p}(u_\gamma,v_\gamma))\\
&=u_\gamma^{qm}\left(\prod_{i=3}^m(v_\gamma-\alpha_i)^q+u^{2p-qm}
\bar{f}_q(u_\gamma,v_\gamma)+\text{(higher terms)}
\right)
\end{split}
\end{equation*}
and $\bar f_q(\alpha_i,0)\ne 0$.
We put $\xi_i=(0,\alpha_i)$ and
we take a local coordinates $(u_{\gamma},v_{\gamma, i})$ at $\xi_i$ with 
$v_{\gamma, i}=v_{\gamma}-\alpha_i$.
6The above expression implies  $(\widetilde{C},\xi_i)\sim B_{2p-qm,q}$. 
\end{proof}
The assertion (1) of Lemma \ref{termination1} can be generalized 
for the case where the tangent cone of $C_p$ may have some factors 
with multiplicity as follows.
\begin{lemma}\label{termination4}
Suppose that $C_p$ is singular at $O$ and
$C_q$ is smooth at $O$.
Let $m$ be the multiplicity of $(C_p,O)$. 
We take a local coordinate system $(x_1,y_1)$ so that $C_q$
is defined by $y_1=0$. 
We assume that $p< qm$ and 

--either $y_1 \not | \ (f_p)_m $ (this implies that $C_q$ intersects
 transversely with $T_OC_p$),  or 

--$y_1=0$ is a simple tangent line of  $T_OC_p$  (this is equivalent to
 $y_1|(f_p)_m,\, y_1^2 \not | \ (f_p)_m$ ).

where $(f_p)_m$ is the homogeneous part of degree $m$ of $f_p$.

Then we have:
\begin{enumerate}
\item
If $\iota< \dfrac{p}{p-q}(m-1)$, then $ (C,O)\sim B_{q\iota,p}$.
\item
If $\iota > \dfrac{p}{p-q}(m-1)$, $(C,O)\sim B_{\beta,\ q}\circ
     B_{q(m-1),\ p-q}$ where $\beta=p(\iota-m+1)-q(m-1)$.
\item If $\iota= \dfrac{p}{p-q}(m-1)$ and the coefficients are 
generic, then $ (C,O)\sim B_{q\iota,p}$.
\end{enumerate}
\end{lemma}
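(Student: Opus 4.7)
The plan is to adapt the proof of Lemma \ref{termination1}(1), which assumed distinct tangent lines for $C_p$; the crucial structural input of that argument is the vertex $(m-1,1)$ of $\Gamma_{+}(f_p)$, and the present hypothesis---namely $y_1\nmid (f_p)_m$, or $y_1\mid (f_p)_m$ with $y_1^{2}\nmid (f_p)_m$---is precisely what is needed to guarantee this in the possibly degenerate tangent-cone setting.

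Choose coordinates with $C_q=\{y_1=0\}$, so $f_q=y_1\cdot u$ with $u(0,0)\neq 0$. In the first variant of the hypothesis, $f_p(x_1,0)$ has order exactly $m$, so $\iota=m$; the assumption $p<qm$ is then equivalent to $\iota<p(m-1)/(p-q)$, putting us automatically in case~(1). The support of $f_q^{p}$ sits in $\{y_1\geq p\}$ with lowest vertex $(0,p)$, and every vertex $(qi,qj)$ of $\Gamma_{+}(f_p^q)$ satisfies $i/m+qj/p\geq 1$ because $i+j\geq m$ and $qm>p$; hence all such vertices lie on or above the segment from $(0,p)$ to $(qm,0)$, so $\Gamma(f;x_1,y_1)$ is the single face joining these two points. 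Its face function is a Brieskorn--Pham-type binomial (plus terms on the same weighted-homogeneous line, which do not spoil non-degeneracy), yielding $(C,O)\sim B_{qm,p}=B_{q\iota,p}$.

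In the second variant, factor $f_p=g_1\cdot h$ in $\mathcal{O}$, where $g_1=y_1+\alpha x_1^{\nu}+(\text{higher})$, $\nu\geq 2$, $\alpha\neq 0$, defines the unique smooth branch of $C_p$ tangent to $C_q$, and $h$ has multiplicity $m-1$ with $y_1\nmid (h)_{m-1}$; hence $\iota=\nu+m-1$. A short calculation shows the coefficient of $x_1^{m-1}y_1$ in $f_p$ equals the nonzero coefficient of $x_1^{m-1}$ in $(h)_{m-1}$, while the multiplicity $m$ of $f_p$ forbids any support point strictly below-left of $(m-1,1)$; thus $(m-1,1)$ is a vertex of $\Gamma_{+}(f_p)$ and $(q(m-1),q)$ one of $\Gamma_{+}(f_p^q)$. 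From here the argument is formally identical to Lemma \ref{termination1}(1): the position of $(q(m-1),q)$ relative to the segment from $(0,p)$ to $(q\iota,0)$ drives the three cases. When $\iota<p(m-1)/(p-q)$, $\Gamma(f;x_1,y_1)$ is that single face and $f$ is non-degenerate on it, giving $(C,O)\sim B_{q\iota,p}$. When $\iota>p(m-1)/(p-q)$, the face containing $(q(m-1),q)$ is degenerate in $(x_1,y_1)$ due to the tangency of $g_1$ with $C_q$, so we change coordinates to $x_2=x_1$, $y_2=g_1(x_1,y_1)$; then $f_q^p$ has Newton polygon vertices $(0,p),(p\nu,0)$ and $f_p=y_2\cdot h(x_2,y_2)$ has the vertex $(m-1,1)$ with no support on the $x_2$-axis, so $\Gamma(f;x_2,y_2)$ consists of the two faces joining $(0,p)$, $(q(m-1),q)$, $(p\nu,0)$, and each face function factors as $y_2^q$ or $x_2^{q(m-1)}$ times a Brieskorn--Pham binomial, producing the $B_{q(m-1),p-q}$ and $B_{\beta,q}$ components with $\beta=p\nu-q(m-1)=p(\iota-m+1)-q(m-1)$. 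The equality case yields the same single face of (1) but with an additional term at $(q(m-1),q)$; non-degeneracy then reduces to the absence of a repeated root of an auxiliary one-variable polynomial, which holds generically.

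The main obstacle is verifying, in the second variant, that no interior vertex of $\Gamma_{+}(f_p^q)$ between $(0,q\mu)$ and $(q(m-1),q)$ disrupts the expected Newton boundary of $f$ when $h$ has a degenerate tangent cone: a priori many such vertices can appear, since $h$ need not have smooth branches with distinct tangents. One dispatches them by a slope comparison using $q\mu\geq qm>p$, which places every such vertex strictly above the relevant segment of $\Gamma(f)$, so the topological type is determined exactly as in the non-degenerate tangent-cone case of Lemma \ref{termination1}(1).
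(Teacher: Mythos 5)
Your proposal is correct and follows exactly the route the paper intends: the paper's own proof is the single sentence ``completely parallel to that of Lemma \ref{termination1},'' and your argument is precisely that parallel, worked out in detail --- isolating the vertex $(m-1,1)$ of $\Gamma_+(f_p)$ via the Hensel factorization $f_p=g_1h$, and using the coordinate change $y_2=g_1(x_1,y_1)$ in case (2). The slope comparisons you supply (all resting on $qm>p$) correctly dispose of the possible extra vertices coming from a degenerate tangent cone of $h$, which is the only point where the argument could have differed from the distinct-tangent-lines case.
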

\begin{proof}
The proof is completely parallel to that of Lemma \ref{termination1}.
%The key of the proof is 
%the coefficient of $x^{m-1}y$ of $(f_p)_m $ is not zero by assumption.
\end{proof} 

\section{Calculation of the local singularities.}
We come back to our original situation
of a torus curve of type $(2,5)$:
\[
 C:\, f_5(x,y)^2+f_2(x,y)^5=0,\quad C_5:  \, f_5(x,y)=0,
\,\, C_2:\,f_2(x,y)=0.
\]
First we consider the case that $C_2$ is reduced  in section 4 and 5.
Next consider the case of 
$C$ being a linear torus curve in section 6.   
For the classification, we start from the following generic equations:
\[
 f_2(x,y)=\sum_{i+j\le 2}a_{ij}x^i y^j,\quad
f_5(x,y)=\sum_{i+j\le 5} b_{ij}x^i y^j.
\]
Hereafter  $x,y$ are the affine coordinates $x=X/Z,\,y=Y/Z$ 
on $\Bbb{C}^2:=\Bbb{P}^2\setminus\{Z=0\}$.
As we assume that $C_2,\,C_5$ pass through the origin, we have
$a_{00}=b_{00}=0$.
We study the inner singularity $O\in C_{2}\cap C_{5}$.
We denote hereafter the multiplicities of 
$C_2$ and $C_5$ at the origin $O$ by $m_2$ and $m_5$ respectively
and the intersection multiplicity $I(C_2,C_5;O)$
of $C_2$ and $C_5$  at $O$ by $\iota$.
By  B\'ezout theorem,  
we have  the inequalities: 
\[1 \leq \iota \leq 10,\quad m_2m_5 \leq \iota\]

For the classification of possible topological 
types of the singularity $(C,O)$, 
we divide the situations into the 5 cases,
corresponding to the values  of $m_5$.
%%%%%%%%%%%%%%%%%%%%%%%%%%%%%%%%%%%%
Then for a fixed $m_5$, 
we consider the subcases, 
corresponding to $\iota, \, m_5\le  \iota \le 10$
taking the  geometry of
the intersection of $C_2$ and $C_5$ at $O$ into account. 
And each case has several sub-cases 
by type of the singularity of $(C_5,O)$.
\begin{enumerate}
\item Case I. $m_5=1$. The quintic $C_5$ is smooth. 
\item Case II. $m_5=2$. We divide this case into two subcases  (a) and
      (b) by type of tangent cone of $C_5$.\\
\indent
(a) The tangent cone of $C_5$ is distinct two lines i.e., $(C_5,O)\sim
      A_1$. \\
\indent
(b) The tangent cone of $C_5$ is a single line with multiplicity 2.\\
\item Case III. $m_5=3$. We divide this case into three subcases by type
      of tangent cone of $C_{5}$.\\
\indent
(a) The tangent cone of $C_5$ is distinct three lines. \\
\indent
(b) The tangent cone of $C_5$ is a double line and another line. \\
 \indent
(c) The tangent cone of $C_5$ is a single line with multiplicity 3. \\ 

\item Case IV. $m_5=4$. We divide this case into five subcases by type of tangent cone of $C_{5}$.

(a) The tangent cone of $C_5$ is distinct four lines. \\
\indent
(b) The tangent cone of $C_5$ is a double line  and distinct two
      line. \\
\indent
(c) The tangent cone of $C_5$ is a triple line and another line.\\
\indent
(d) The tangent cone of $C_5$ is a  single line with multiplicity 4. \\
\indent
(e) The tangent cone of $C_5$ is two double line.\\ 
\item Case V. $m_5=5$.
The quintic $C_5$ is five lines.
\end{enumerate}
\subsection{Case I: $m_{5} = 1$.} Quintic $C_5$ is smooth.
 This case is  determined by  Lemma \ref{Lemma-BT} as follows.
\begin{proposition}\label{caseI}
Suppose that $C_5$ is smooth at $O$ and let
$\iota=I(C_5,C_2;O)$ be the local intersection multiplicity.
Then $(C,O) \sim B_{5\iota,2}$
 for $\iota=1, \cdots, 10$.
\end{proposition}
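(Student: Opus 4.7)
The plan is to observe that Proposition \ref{caseI} is essentially a direct specialization of Lemma \ref{Lemma-BT}. In the general setup of Lemma \ref{Lemma-BT} one writes $f = f_q^p + f_p^q$ with the convention $p \ge q \ge 2$; matching our defining equation $f = f_5^2 + f_2^5$ to this template forces $p = 5$ and $q = 2$, so that $C_p = C_5$ and $C_q = C_2$. The hypothesis of Case I is precisely that $C_p = C_5$ is smooth at $O$, so Lemma \ref{Lemma-BT} applies verbatim and yields $(C,O) \sim B_{p\iota,q} = B_{5\iota,2}$.

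The only remaining point is to justify the range $\iota = 1,\ldots,10$. Since $O \in C_2 \cap C_5$, we have $\iota \ge 1$; by B\'ezout (or more precisely the inequality $\iota \le \deg C_2 \cdot \deg C_5 = 10$ recorded in the preamble to the case analysis), we have $\iota \le 10$. So the assertion holds in every case that actually occurs.

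The hardest part is really just making sure the match between the abstract $(p,q)$-notation of Lemma \ref{Lemma-BT} and the concrete $(2,5)$-notation of our paper is carried out correctly; once that identification $(p,q) = (5,2)$ is fixed, the statement is immediate. No blow-up computation or Newton boundary analysis is needed in this case. Thus the proof reduces to one sentence invoking Lemma \ref{Lemma-BT} plus a remark about the B\'ezout bound on $\iota$.
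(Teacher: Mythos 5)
Your proposal is correct and matches the paper exactly: the paper also derives Case I as an immediate specialization of Lemma \ref{Lemma-BT} with $(p,q)=(5,2)$, $C_p=C_5$, $C_q=C_2$, the range of $\iota$ being the B\'ezout bound $1\le\iota\le 10$ already recorded in \S 4. No further argument is given or needed.
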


\subsection{Case II: $m_{5}$ = 2.}
We divide Case II into two subcases $(a)$ and $(b)$ by type of tangent cone 
$T_{O}C_5$ of $C_5$.\\
(a) $T_{O}C_5$ consists  of distinct two lines i.e., $(C_5,O)\sim A_1$. \\
(b) $T_{O}C_5$  consists of a line with multiplicity 2.
%%%%%%%%%%%%%%%%%%%%%%%%%%%%%%%%%%%%%%%%%%%%%%
%%%%%%%%%%%%%%%      (a)    %%%%%%%%%%%%%%%%%%
%%%%%%%%%%%%%%%%%%%%%%%%%%%%%%%%%%%%%%%%%%%%%%
\subsubsection{Case II-(a).} We consider the subcase  (a).
In this case, we assume that the tangent cone of $C_5$
is given $xy=0$, $f_5(x,y)=xy+\text{(higher terms)}$.
%%%%%%%%%%%%%%%%%%%%%%%%%%%%%%%%%
%%%%%     proposition     %%%%%%%
%%%%%%%%%%%%%%%%%%%%%%%%%%%%%%%%%
\begin{proposition}\label{caseII-1}
Under the situation in $(a)$, we have the following  possibilities.
\begin{enumerate}
\item We assume that the conic is smooth and 
the tangent cone of $C_{2}$ is $y=0$
($a_{10}=0,\,a_{01}\ne 0$) for simplicity. 
The possibility of $(C,O)$ are $B_{5\iota-7,2}\circ B_{2,3}$ for
      $2\le\iota\le 10$. 
%(The case that $x=0$ is the tangent line of $C_2$,
%the possible singularities are obtained by the permutation 
%$(x,y)\mapsto  (y,x)$.)
\item For a conic which consists of two lines $\ell_1,\,\ell_2$
 (i.e., $a_{01}=a_{10}=0$),
the generic singularity is  $B_{8,2}\circ B_{2,8}$. 
Further degeneration occurs when these lines are tangent to one or
both tangent cones of $C_5$. 
Put $\iota_i=I(\ell_i,C_5;O)$ $(i=1,2)$. Then 
$4\le\iota_1+\iota_2\le10$ and the corresponding singularity is
$B_{5\iota_2-2,2}\circ B_{2,5\iota_1-2}$.
\end{enumerate}
\end{proposition}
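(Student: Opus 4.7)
The plan is to derive the classification in both parts by directly specializing Lemma~\ref{termination1}, using the fact that in subcase II-(a) the quintic $C_5$ has multiplicity $m_5=2$ at $O$ with tangent cone consisting of two distinct lines.

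For part (1), I invoke Lemma~\ref{termination1}(\ref{casII-1}) with $p=5$, $q=2$, $m=2$. Its hypotheses hold: $C_5$ has multiplicity $2$ with two distinct tangent lines (case II-(a)), $C_2$ is smooth by the assumption of the proposition, and $p=5>2q=4$. The conclusion of the lemma gives
\[
(C,O)\sim B_{p(\iota-1)-q,\,q}\circ B_{q,\,p-q}=B_{5\iota-7,\,2}\circ B_{2,\,3}
\]
for every admissible $\iota$. The range $2\le\iota\le 10$ is governed by the multiplicity bound $\iota\ge m_2 m_5=2$ on one side and by B\'ezout's theorem on the other.

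For part (2), I invoke Lemma~\ref{termination1}(3) with $m=2$. Since $C_2=\ell_1\ell_2$ is a union of two distinct lines meeting at $O$, the curve $C_2$ is singular at $O$ with multiplicity $2$ and its tangent cone consists of two distinct lines. Choose local coordinates $(x_2,y_2)$ so that the two smooth branches of $C_5$ at $O$ are $g_1=\{x_2=0\}$ and $g_2=\{y_2=0\}$. Since $\ell_1$ and $\ell_2$ are distinct lines, they cannot both coincide in tangent direction with the same $g_i$; after possibly swapping labels we arrange $I(\ell_1,g_2;O)=I(\ell_2,g_1;O)=1$, so that $\nu_i:=I(\ell_i,g_i;O)=\iota_i-1$, and the condition $2p=10>qm=4$ is satisfied. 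For $m=2$ the middle factor $(B_{m-2,m-2}^q)^{(m-2)B_{2p-qm,q}}$ of the lemma collapses to nothing (the exponent $m-2=0$ is vacuous), so the formula reduces to
\[
(C,O)\sim B_{\beta_1,\,2}\circ B_{2,\,\beta_2},\qquad \beta_i=p(\nu_i+1)-q(m-1)=5\iota_i-2,
\]
which coincides with the stated $B_{5\iota_2-2,\,2}\circ B_{2,\,5\iota_1-2}$ up to the interchangeable labeling of the two components. The generic case $\iota_1=\iota_2=2$ recovers $B_{8,2}\circ B_{2,8}$, and the range $4\le\iota_1+\iota_2\le 10$ follows from $\iota_i\ge m_5=2$ together with B\'ezout.

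The main technical point will be in part (2): verifying that the labels of $\ell_1,\ell_2$ can be chosen so that the transversality hypotheses $I(\ell_1,g_2;O)=I(\ell_2,g_1;O)=1$ of Lemma~\ref{termination1}(3) hold (which is automatic because $\ell_1\ne\ell_2$), and carefully interpreting the middle factor of the lemma's formula as trivial in the boundary case $m=2$.
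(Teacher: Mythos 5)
Your proposal is correct and follows essentially the same route as the paper, whose proof is exactly the one-line appeal to items (2) and (3) of Lemma~\ref{termination1}; you simply spell out the verification of the hypotheses ($p=5>2q=4$ for part (1), the labeling of $\ell_1,\ell_2$ against the two branches of $C_5$ and the identification $\nu_i=\iota_i-1$ for part (2)) and the collapse of the middle factor when $m=2$. No gaps.
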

\begin{proof}
Both assertions are immediate from Lemma \ref{casII-1} and Lemma \ref{termination1}
\end{proof}
%%%%%%%%%%%%%%%%%%%%%%%%%%%%
%%%%%%%      (b)     %%%%%%%
%%%%%%%%%%%%%%%%%%%%%%%%%%%%
\subsubsection{Case II-(b)} Now we consider the case (II-b).
So we assume that $m_5=2$ and  the tangent cone of $C_5$ is given by
 $L:y^2=0$
(with multiplicity $2$).
We divide this subcase  (II-b) into two subcases: 
(b-1) $m_2=1$ and (b-2) $m_2=2$.\\\\
%%%%%%%%%%%%%%%%%%%%%%%%%%%%
%%%%%      (b-1)      %%%%%%
%%%%%%%%%%%%%%%%%%%%%%%%%%%%
\noindent {\bf{(b-1)}} 
Assume that $m_5=2,\,m_2=1$ and the tangent cone of $C_5$ is $y^2=0$.
\begin{proposition}\label{caseII-2}
Suppose that the tangent cone of $C_5$ 
is a line with multiplicity 2 and 
$C_2$ is smooth.
Then we have the following  possibilities.
\begin{enumerate}
\item If $C_2$ and $C_5$ are transverse at $O$ $(\iota =2)$, 
then $(C,O)\sim B_{5,4}$.
\item If $(C_5,O)\sim B_{3,2}$ and $\iota=3$, we have $(C,O)\sim (B_{3,2}^2)^{B_{3,2}}$.
\item If $(C_5,O)\sim B_{4,2}$, % we have $\iota\ge 4$ and 
      we have
$(C,O)\sim(B_{4,2}^2)^{ (B_{5\iota-18,2}+B_{2,2})}$ for $\iota
      =4,\cdots,10$.
      (Here the upper $(B_{5\iota-18,2}+B_{2,2})$ implies we have
       two non-degenerate singularities $B_{5\iota-18,2},\,B_{2,2}$
       sitting on two different points on the exceptional
       divisor $\widehat E(P)$, $P={}^t(1,2)$.
      after one toric modification.)
\item If $(C_5,O)\sim B_{5,2}$, 
      we have
\begin{enumerate}
\item $(C,O)\sim B_{10,4}$ generically and 
      $B_{k,2}\circ B_{5,2},\,6\le k\le 15$ for $\iota =4$,
      
\item ${(B_{5,2}^2)}^{B_{5,2}}$ for $\iota =5$.
      \end{enumerate}
%%%%%%%%%%%%%%%%%%%%%%%%%%%%%%%%%%%%%%%%%%%
\item If $(C_5,O)\sim B_{6,2}$, we have
      \begin{enumerate}
      \item $(C,O)\sim B_{10,4}$ for $\iota =4$ and 
      \item $(C,O)\sim {(B_{6,2}^2)}^{(B_{5\iota-27,2}+B_{3,2})}$
      for $\iota=6,\dots, 10$.
      \end{enumerate}
      
%%%%%%%%%%%%%%%%%%%%%%%%%%%%%%%%%%%%%%%%%%%%%%%%%%%%%%%%
\item If $(C_5,O)\sim B_{7,2}$, we have 
     \begin{enumerate}
     \item $(C,O)\sim B_{10,4}$ for $\iota =4$ and 
     \item $(C,O)\sim {(B_{7,2}^2)}^{B_{5\iota-28,2}}$ for $\iota=6,7$.
     \end{enumerate}
     
%%%%%%%%%%%%%%%%%%%%%%%%%%%%%%%%%%%%%%%%%%%%%%%%%%%%%%%%
\item If $(C_5,O)\sim B_{8,2}$, we have 
    \begin{enumerate}
    \item  $(C,O)\sim B_{10,4}$ for $\iota =4$,
    \item  $(C,O)\sim B_{15,4}$ for $\iota=6$,
    \item  ${(B_{8,2}^2)}^{(B_{5\iota-36,2}+B_{4,2})}$
                                    for $\iota=8,9,10$.
    \end{enumerate}
%%%%%%%%%%%%%%%%%%%%%%%%%%%%%%%%%%%%%%%%%%%%%%%%%%%%%%%%
\item If $(C_5,O)\sim B_{9,2}$, we have 
    \begin{enumerate}
    \item  $(C,O)\sim B_{10,4}$ for $\iota =4$,
    \item  $(C,O)\sim B_{15,4}$ for $\iota=6$,
    \item  $(C,O)\sim {(B_{9,2}^2)}^{ B_{5\iota-35,2}}$ for $\iota=8,9$.
    \end{enumerate}
%%%%%%%%%%%%%%%%%%%%%%%%%%%%%%%%%%%%%%%%%%%%%%%%%%%%%%%%
\item If $(C_5,O)\sim B_{10,2}$, we have 
     \begin{enumerate}
     \item $(C,O)\sim B_{10,4}$ for $\iota =4$,
     \item $(C,O)\sim B_{15,4}$ for $\iota=6$,
     \item $(C,O)\sim B_{20,4}$ and $B_{k,2}\circ B_{10,2}$ $(k=11,12)$
                                                         for  $\iota=8$,
     \item ${(B_{10,2}^2)}^{2B_{5,2}}$ for  $\iota=10$.
     \end{enumerate}
%%%%%%%%%%%%%%%%%%%%%%%%%%%%%%%%%%%%%%%%%%%%%%%%%%%%%%%%
\item If $(C_5,O)\sim B_{11,2}$, we have
     \begin{enumerate}
     \item $(C,O)\sim B_{10,4}$ for $\iota =4$,
     \item $(C,O)\sim B_{15,4}$ for $\iota=6$,
     \item $(C,O)\sim B_{20,4}$ for $\iota=8$,
     \item $(C,O)\sim {(B_{11,2}^2)}^{ B_{6,2}}$ for $\iota=10$.
     \end{enumerate}    
%%%%%%%%%%%%%%%%%%%%%%%%%%%%%%%%%%%%%%%%%%%%%%%%%%%%%%%%     
\item If $(C_5,O)\sim B_{12,2}$, we have
      \begin{enumerate}
      \item $(C,O)\sim B_{10,4}$ for $\iota =4$,
      \item $(C,O)\sim B_{15,4}$ for $\iota=6$,
      \item $(C,O)\sim B_{20,4}$ for $\iota=8$,
      \item $(C,O)\sim {(B_{12,2}^2)}^{2{{B}}_{1,2}}$ for $\iota=10$.
      \end{enumerate}
%%%%%%%%%%%%%%%%%%%%%%%%%%%%%%%%%%%%%%%%%%%%%%%%%%%%%%%%
\item If $(C_5,O)\sim B_{13,2}$, we have
      \begin{enumerate}
           \item 
      $(C,O)\sim B_{10,4}$ for $\iota =4$,
           \item
      $(C,O)\sim B_{15,4}$ for $\iota=6$,
      \item $(C,O)\sim B_{20,4}$ for $\iota=8$,
           \item
      $(C,O)\sim B_{25,4}$ for $\iota=10$.
      \end{enumerate}
\end{enumerate}
\end{proposition}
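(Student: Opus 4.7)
The plan is to perform a Newton-boundary analysis of $f=f_5^2+f_2^5$ in suitably normalized local coordinates, then apply Lemmas \ref{Lemma-BT}, \ref{cuspidal}, \ref{termination1}, \ref{termination4} in the non-degenerate cases and an explicit toric modification in the degenerate ones. After a linear coordinate change the tangent cone of $C_5$ becomes $y^2=0$; Weierstrass preparation together with completing the square in $y$ then allows us to write
\[
f_5(x,y)=y^2-\phi(x),\qquad \phi(x)=c\,x^k+(\text{higher}),\ c\ne0,
\]
with $(C_5,O)\sim B_{k,2}$ for a unique $k\in\{2,\dots,13\}$, the upper bound coming from the genus bound $\delta(B_{k,2})\le\binom{4}{2}=6$ on a plane quintic. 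Case (1) of the proposition corresponds to $\iota=2$, where $C_2$ is transverse to $\{y=0\}$; the remaining twelve items are indexed by $k=3,\ldots,13$. When $C_2$ is tangent to $\{y=0\}$ we write $f_2=y-\psi(x)$ with $\psi(x)=a_rx^r+(\text{higher})$, $a_r\ne0$, and then $\iota=2r$ when $2r<k$, $\iota=k$ when $2r>k$, and $\iota$ takes further jumps precisely when $r=k/2$ and $a_r=\pm\sqrt c$.

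In every case where the face of $\Gamma(f;x,y)$ joining the dominant lattice points of $f_5^2$ and $f_2^5$ is a single non-degenerate face, the face function is $y^4+(-a_r)^5x^{5r}$ and Lemma \ref{Lemma-BT} yields $(C,O)\sim B_{5r,4}$; this accounts for the entries $B_{5,4}$, $B_{10,4}$, $B_{15,4}$, $B_{20,4}$, $B_{25,4}$ throughout the statement. When $\iota$ is just large enough that the Newton polygons of $f_5^2$ and $f_2^5$ meet along two distinct non-degenerate edges, the resulting boundary produces a $B_{\beta,2}\circ B_{\gamma,p}$-type decomposition which is read off directly or via Lemma \ref{termination1}(2); this is the source of lines such as $B_{k,2}\circ B_{5,2}$ in the $B_{5,2}$ item and $B_{k,2}\circ B_{10,2}$ in the $B_{10,2}$ item.

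The main obstacle is the degenerate range, in which the relevant face of $\Gamma(f;x,y)$ coming from $f_5^2$ carries the perfect-square face function $(y^2-cx^k)^2$. I would resolve this by the canonical toric modification with weight vector $P={}^t(2,k)$ when $k$ is odd (producing a single intersection point $\xi$ on $\hat E(P)$) and with $P={}^t(1,k/2)$ when $k$ is even (producing two intersection points $\xi_\pm$ corresponding to the branches $y=\pm\sqrt c\,x^{k/2}$ of $C_5$). In translated toric coordinates $(u,v_1)$ centered at each such $\xi$, the pull-back reads
\[
\pi_\sigma^* f = u^{N}\bigl(A\,v_1^2 + u^{\,\varepsilon(k,\iota)}h(u,v_1)+(\text{higher})\bigr),\qquad A\ne0,\ h(0,0)\ne0,
\]
where $N$ is the multiplicity of $\hat E(P)$ in $\pi^*f$ and $\varepsilon(k,\iota)$ is the excess of the $P$-weight of the leading contribution of $f_2^5$ over that of $f_5^2$. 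This local model is Brieskorn--Pham of type $B_{\varepsilon(k,\iota),2}$ by Lemma \ref{Lemma-BT}, which explains the exponents $5\iota-18$, $5\iota-27$, $5\iota-28$, $5\iota-36$, $5\iota-35$ appearing in the statement, together with the superscript notation $(B_{k,2}^2)^{\cdot}$ and, for even $k$, the two-point superscript $B_{\cdot,2}+B_{\cdot,2}$ reflecting the two branches of $C_5$. The final subtlety, handled by the discriminant criterion of Lemma \ref{termination1}(1)(c), is the generic-versus-special dichotomy visible in the $B_{5,2}$ and $B_{10,2}$ items (for instance, $B_{10,4}$ generic versus $B_{k,2}\circ B_{5,2}$ special at $\iota=4$): the single-face answer is the generic outcome, and the two-component decomposition occurs precisely on the codimension-one sublocus where the unique Newton face becomes reducible.
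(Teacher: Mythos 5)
Your strategy is essentially the one the paper follows: normalize $f_5$ to $\alpha y_1^2+\beta x^k+(\text{higher})$ by completing the square, normalize $C_2$ to $y_1=\psi(x)$, record $\iota=\min(2r,k)$ with the jump locus $r=k/2$, $a_r^2=c$, read off the non-degenerate cases from the Newton boundary, and handle the degenerate square face $(y^2-cx^k)^2$ by one toric modification with weight ${}^t(2,k)$ ($k$ odd, one point) or ${}^t(1,k/2)$ ($k$ even, two points), finishing in translated toric coordinates. The genus bound $\delta\le\binom{4}{2}$ giving $k\le 13$ is a nice explicit justification of what the paper only asserts. So the architecture is right.

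Two points would fail if executed literally. First, your lemma citations do not apply where you invoke them: Lemma \ref{Lemma-BT} assumes $C_p=C_5$ is \emph{smooth}, and Lemma \ref{termination1}(1)(c) assumes the tangent cone of $C_5$ consists of \emph{distinct} lines; here $C_5$ is singular with tangent cone a double line, so neither hypothesis holds. What actually identifies $y^4+cx^{5r}$ as $B_{5r,4}$, and the local model $Av_1^2+\epsilon u^{\eta}$ as $B_{\eta,2}$, is the non-degeneracy theorem (topological type determined by the Newton boundary), and the generic-versus-special dichotomy at $\iota=4$, $k=5$ (resp.\ $\iota=8$, $k=10$) comes from a direct computation on the face function, namely whether $\beta^2+\delta^5=0$ in $\mathcal N(f)=y_1^4+2\beta x^5y_1^2+(\beta^2+\delta^5)x^{10}$ --- not from Lemma \ref{termination1}. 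Second, your post-modification model assumes $h(0,0)\ne 0$, i.e.\ that the exponent is exactly the weight excess $\eta$; but further cancellation can occur ($\eta'\ge\eta$ in the paper's notation), and tracking it through admissible triangular changes $(u,v_1+h(u))$ is precisely what produces the ranges such as $B_{k,2}\circ B_{5,2}$, $6\le k\le 15$, the two unequal exponents $B_{5\iota-18,2}+B_{2,2}$ at the two points $\xi_\pm$ when $k$ is even and $\iota>k$, and the entire case-by-case content of items (3)--(12). Your outline acknowledges these phenomena but does not carry out any of the computations, so as written it establishes the method rather than the table; to count as a proof you would need to work through at least the representative cases ($k=4,5$ and one even/odd pair in the degenerate range) as the paper does.
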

\begin{proof}
We can proceed the classification mainly using 
the local intersection multiplicity $\iota=I(C_2,C_5;O)$
and geometry of $C_2$ and $C_5$. 
By the assumption $m_5=2$,
we have $\iota \geq 2$. 
When $\iota=2$,
$C_2$ intersects transversely with $C_5$ at the origin and 
then $(C,O)\sim B_{5,4}$.  Thus hereafter we assume that $\iota\ge 3$.

Assume that $(C_5,O)\sim A_{\ell-1}$. Then 
 by a triangular change of local coordinates $(x,y_1)$ where
$y_1=y+c_2x^2+\dots+c_{k-1}x^{k-1}$, we can write $f_5$ as
\[
 f_5(x,y_1)=\alpha y_1^2+\beta x^{\ell}+\text{(higher terms)},\quad
\ell\ge 2(k-1),\,\alpha,\beta\ne 0.
\]
%This implies that $(C_5,O)\sim A_{\ell-1}$ and
 A simple computation
shows that $\ell\le 13$.
Now we can write $f_2$ in this coordinates as
\[
 f_2(x,y_1)=\gamma y_1+\delta x^{\nu}+\text{(higher terms)},\,\, \nu\ge
 2,\,\gamma,\delta\ne 0.
\]
(Here $\delta=0$ if $\nu=\infty$, i.e., $y_1|f_2$.)
First we notice that

$(\star):\quad \iota\ge\min\,(2\nu,\,\ell)$ and the equality holds except the case
$\ell=2\nu$ and
$\alpha \delta^2+\beta\gamma^2=0$.

\vspace{.3cm}
We observe that

1. If $5\nu<2\ell$, $f$ is non-degenerate in this coordinates and
$(C,O)\sim B_{5\nu,4}$.

2. If $5\nu = 2\ell$ (in this situation, 
the possible pairs which satisfy this condition are
 $(\nu, \ell)=(2,5),\ (4,10)$),
we have 
$\mathcal{N}(f,x,y)=\alpha^2y_1^4+2\alpha \beta x^{\ell}y_1^2+(\beta^2+\delta^5)x^{2\ell}$ and 
if $(\beta^2+\delta^5)\ne0$, we have $(C,O)\sim B_{2\ell ,4}$.

If $(\beta^2+\delta^5)= 0$, we can see that the Newton boundary has 
two faces with
$R=(\ell,2)$ as the common vertex of the faces
and $f$ can be non-degenerate on these faces,
after taking  a suitable triangular change of coordinates
$(x,y_2)$. 

3. If  $5\nu>2\ell$, it is easy to see that
the Newton principal part of $f(x,y_1)$ is given by 
$(\alpha y_1^2+\beta x^{\ell})^2$ which implies that $f(x,y_1)$ is
degenerate
in this coordinate. So we need to take first a toric modification
$\pi:X\to {\Bbb C}^2$ with respect to the canonical regular simplicial
cone subdivision $\{P_0,\dots,P_m\}$ and we
have to see the total transform  equation in $X$.

The weight vector of $A_{\ell-1}$ is given as
$P={}^t(2,\ell),\,{}^t(1,\ell/2)$ for  $\ell$ is odd or even
respectively.
Note also the germ $A_{\ell-1}$ has two smooth components if $\ell$ is even.
Thus the description for the toric modification has to be divided
 in two cases.

{\bf Case 1}. $\ell$ is odd. The weight vector of $\Gamma(f_5;x,y_1)$ is
given by 
$P={}^t(2,\ell)$. We may assume that 
$P=P_s$ and we consider the cone
$\sigma:={\rm Cone}(P_s,P_{s+1})$
and we consider the corresponding unimodular matrix
\[
 \sigma:=\left(
\begin{matrix}
2&a\\
\ell&b\end{matrix}\right),\quad 2b-a\ell =1
\]
Let $(u,v)$ be the toric coordinates of this chart. Then in this
coordinates,
we have $x=u^2v^a,\, y_1=u^\ell v^b$.
Then we can write
\begin{eqnarray*}
&\pi^*f_5(u,v)=u^{2\ell}v^{a\ell}\tilde f_5(u,v),\quad
\tilde f_5(u,v)=\alpha v+\beta+ h_5(u,v)\\
&\pi^* f_2(u,v)=u^{\mu}v^{\mu'}\tilde f_2(u,v),\,\mu=\min(\ell,2\nu),\,\mu'=\min(b,a\nu)
\end{eqnarray*}
%If $5\nu>2\ell$,
Putting $\xi=(0,-\beta/\alpha),\, \eta=5\mu-4\ell$,
 we can write as
\begin{eqnarray*}
& \pi^*f(u,v)=u^{4\ell}v^{2a\ell}\tilde  f(u,v)\\
&\tilde  f(u,v)=
\tilde f_5(u,v)^2+u^{\eta}v^{5\mu'-2a\ell}
\tilde f_2(u,v)^5
\end{eqnarray*}
%Put $\tilde f(u,v):=$
Thus using  admissible translated toric coordinates $(u,v_2)$,
$v_2=v_1+h(u),\, v_1=v+\beta/\alpha$ for some polynomial $h$,
the strict transform is defined as
\[
 \alpha^2 v_2^2+\epsilon u^{\eta'}+\text{(higher terms)}=0,\,\epsilon \ne 0
\]
which implies $(\tilde C,\xi)\sim B_{\eta',2}$ and the tangent cone is
transverse to the exceptional divisor $u=0$ where $\eta'\ge\eta$.

{\bf Case 2}. $\ell$ is even. 
The weight vector of $\Gamma(f_5;x,y_1)$ is given by 
$P={}^t(1,k)$ where $\ell =2k$. 
We may consider that there is a cone corresponding to
a unimodular matrix
\[
 \sigma:=\left(
\begin{matrix}
1&0\\
k&1\end{matrix}\right)%,\quad 2b-a\ell =1
\]
Let $(u,v)$ be the toric coordinates of this chart. Then in this
coordinates,
we have $x=u,\, y_1=u^k v$.
Then we can write
\begin{eqnarray*}
&\pi^* f_5(u,v)=u^{\ell}\tilde f_5(u,v),\quad
\tilde f_5(u,v)=\alpha(v+\alpha_1)(v+\alpha_2) +uv h_5(u,v)\\
&\pi^* f_2(u,v)=u^{\mu}\tilde f_2(u,v),\,\mu=\min(k,\nu)
\end{eqnarray*}
%If $5\nu>2\ell$, 
Putting $\xi_i=(0,\alpha_i),\, (\i=1,2), \
\eta=5\mu-4\ell$, we can write as
\[
 \pi^*f(u,v)=u^{2\ell}\left(
\tilde f_5(u,v)^2+u^{\eta}\tilde f_2(u,v)^5
\right)
\]
Then the strict transform $\widetilde{C}$ has two components.
Thus using admissible translated toric coordinates $(u,v_i')$,
$v_i'=v_i+h(u),\, v_i=v+\alpha_i$ $(i=1,2)$ for some polynomial $h$,
 the total transform $\pi^{*}f$ 
is described as 
\[
\pi^{*}f(u,v_i')=u^{2\ell}\Big{(}\alpha^2(\alpha_1-\alpha_2)^2 v_i'^2+\epsilon u^{\eta'}+\text{(higher terms)}\Big{)}=0,\ \epsilon\ne 0
\]
which implies $(\tilde C,\xi_i)\sim B_{\eta',2}$ where $\eta'\ge \eta$.
% and the tangent cone is
%transverse to the exceptional divisor $u=0$.

 Putting the above strategy into consideration, we will explain
 some more detail for several cases.

First we consider Case (2): $(C_5,O)\sim B_{3,2},\,\iota=3$
 ($\ell=3,\,\nu\ge 2$).
 
We have  $f_5(x,y)=\alpha\,y^2+\beta x^3+ (\text{higher terms})$. 
 We have to consider the toric modification in the toric coordinate chart:
\[
 \sigma=\left(\begin{matrix}
2&1\\
3&2
\end{matrix}\right),\,\, \pi(u,v)=(u^2v,u^3v^2)
\] in above observation.
Then taking the translated coordinates $(u,v_1),\, v_1=v+\beta/\alpha$, we
 have
\begin{eqnarray*}
&\pi^*f_5(u,v_1)=u^6(v_1-\beta/\alpha)^3(\alpha v_1+ c u+ h_5(u,v_1))\\
&\pi^*f_2(u,v_1)=u^3(v_1-\beta/\alpha)^2(\delta u+\gamma+h_2(u,v_1))\\
&\pi^*f(u,v_1)=u^{12}(v_1-\beta/\alpha)^6 \tilde f(u,v_1)\\
&\tilde f(u,v_1)=(\alpha v_1+ c u+ h_5(u,v_1))^2+u^3 (v_1-\beta/\alpha)^4(\delta u+\gamma+h_2(u,v_1))^5
\end{eqnarray*}
Now we can see that
\[
 \tilde f(u,v_2)=v_2^2+c' u^3+\text{(higher terms)}, \quad c'\ne0
\quad
v_2=\alpha v_1+cu
\]
which implies the corresponding singularity is 
$(B_{3,2}^2)^{B_{3,2}}$.

Next we consider the case (3): $(C_5,O)\sim B_{4,2}$ and $\iota\ge 4$.
Thus
 \begin{eqnarray*}
& f_5(x,y_1)=\alpha\,y_1^2+\beta\, x^4+ (\text{higher terms})\\
& f_2(x,y_1)=\gamma y_1+\delta x^\nu+ (\text{higher terms}),\, \nu\ge 2.
 \end{eqnarray*}
 Note that $\nu\ge 2$ and the case $\iota>4$ only if $\nu=2$ and
 $\alpha\,\delta^2+\beta\,\gamma^2=0$. Thus for simplicity, we assume
 that $\nu=2$. For the simplicity of the calculation, we put:
 \begin{eqnarray*}
&  f_5(x,y_1)=\alpha y_1^2+{\beta}x^4+\text{(higher terms)}
   =\alpha ( y_1 +\alpha_1 x)(y_1 +\alpha_2 x)+\text{(higher terms)},
 \\
& f_2(x,y_1)=\gamma y_1+\delta x^2+\text{(higher terms)}).
 \end{eqnarray*}
 The corresponding toric chart is associated with:
\[
 \sigma=\left(\begin{matrix}
1&0\\
2&1
\end{matrix}\right),\,\, \pi(u,v)=(u,u^2v)
\] by the above consideration.
Note that $\iota=4$ if and only if $\alpha\,\delta^2+\beta\,\gamma^2\ne 0$.
Then taking the translated coordinates 
$(u,v_1),\, v_1=v+{\alpha_1}$
(respectively $(u,v_2), \,v_2=v+{\alpha_1})$, we
 have
\begin{eqnarray*}
&\pi^*f_5(u,v_1)=u^4(\alpha(\alpha_1-\alpha_2)v_1+c_1u+ h_5(u,v_1)),\\
&\pi^*f_2(u,v_1)=u^2{((\gamma v_1-\gamma \alpha_1+\delta) +uh_2(u,v_1))},\\
%%%%%%%%%%%%%%%%%%%%%%%%%%%%%%%%%%%%%%%%%%%%%%%%
&\pi^*f(u,v_1)=u^{8} \tilde f(u,v_1),\quad
(resp.\ \pi^*f(u,v_2)=u^{8} \tilde f(u,v_2)) \\
%%%%%%%%%%%%%%%%%%%%%%%%%%%%%%%%%%%%%%%%%%%%%%%%
&\tilde f(u,v_1)=((\alpha(\alpha_1-\alpha_2)v_1+c_1u)^2+
{(\delta-\gamma \alpha_1)^5}\,u^2+
\text{(higher terms)}\\
%%%%%%%%%%%%%%%%%%%%%%%%%%%%%%%%%%%%%%%%%%%%%%%%
%4{\beta'}^2 v_1^2+4c{\beta'} uv_1+
%(\gamma^5({\delta'}- {\beta'})^5+c^2)u^2+\text{(higher terms)},\\
%&\qquad=(-2{\alpha'}{\beta'} v_1+cu)^2+\gamma^5({\delta'}- {\beta'})^5\,u^2\\
& (resp.\ \tilde f(u,v_2)=%4{\beta'}^2 v_1^2+4c{\beta'} uv_1+
%(\gamma^5({\delta'}+ {\beta'})^5+c^2)u^2+\text{(higher terms)})  \\
%%%%%%%%%%%%%%%%%%%%%%%%%%%%%%%%%%%%%%%%%%%%%%%%
(\alpha(\alpha_2-\alpha_1)v_1+c_2 u)^2+(\delta-\gamma \alpha_2)^5\,u^2+\text{(higher terms)})
\end{eqnarray*}
where $c_i$  $(i=1,2)$ are constant. %=h_5(0,0)$ .
Then if $\iota=4$, we have $\alpha\,\delta^2+\beta\,\gamma^2\ne 0$ and we see that 
$(\widetilde C,\xi_i)=A_1$ for $i=1,2$.
Thus 
$(C,O)\sim (B_{4,2}^2)^{2B_{2,2}}$  and resolution graph is given by Figure \ref{iota=4}.
{\small{
\begin{figure}[H]
\centering
%WinTpicVersion3.08
\unitlength 0.1in
\begin{picture}( 13.3200, 11.9400)(  6.6000,-14.2300)
% STR 2 0 3 0
% 3 660 980 660 1080 2 0
% \huge{$\bullet$}
\put(6.6000,-10.8000){\makebox(0,0)[lb]{\huge{$\bullet$}}}%
% LINE 2 0 3 0
% 2 720 1000 1120 1000
% 
\special{pn 8}%
\special{pa 720 1000}%
\special{pa 1120 1000}%
\special{fp}%
% STR 2 0 3 0
% 3 1060 980 1060 1080 2 0
% \huge{$\bullet$}
\put(10.6000,-10.8000){\makebox(0,0)[lb]{\huge{$\bullet$}}}%
% LINE 2 0 3 0
% 2 1120 1000 1520 800
% 
\special{pn 8}%
\special{pa 1120 1000}%
\special{pa 1520 800}%
\special{fp}%
% STR 2 0 3 0
% 3 1450 770 1450 870 2 0
% \huge$\bullet$
\put(14.5000,-8.7000){\makebox(0,0)[lb]{\huge$\bullet$}}%
% STR 2 0 3 0
% 3 1450 1170 1450 1270 2 0
% \huge$\bullet$
\put(14.5000,-12.7000){\makebox(0,0)[lb]{\huge$\bullet$}}%
% LINE 2 0 3 0
% 2 1120 1000 1520 1200
% 
\special{pn 8}%
\special{pa 1120 1000}%
\special{pa 1520 1200}%
\special{fp}%
% VECTOR 2 2 3 0
% 2 1524 788 1842 546
% 
\special{pn 8}%
\special{pa 1524 788}%
\special{pa 1842 546}%
\special{dt 0.045}%
\special{sh 1}%
\special{pa 1842 546}%
\special{pa 1778 570}%
\special{pa 1800 578}%
\special{pa 1802 602}%
\special{pa 1842 546}%
\special{fp}%
% VECTOR 2 2 3 0
% 2 1525 787 1918 861
% 
\special{pn 8}%
\special{pa 1526 788}%
\special{pa 1918 862}%
\special{dt 0.045}%
\special{sh 1}%
\special{pa 1918 862}%
\special{pa 1856 830}%
\special{pa 1866 852}%
\special{pa 1850 868}%
\special{pa 1918 862}%
\special{fp}%
% STR 2 0 3 0
% 3 1869 469 1888 567 2 0
% $\bigcirc$
\put(18.8800,-5.6700){\special{rt 0 0  6.0917}\makebox(0,0)[lb]{$\bigcirc$}}%
\special{rt 0 0 0}%
% STR 2 0 3 0
% 3 1944 852 1964 951 2 0
% $\bigcirc$
\put(19.6400,-9.5100){\special{rt 0 0  6.0838}\makebox(0,0)[lb]{$\bigcirc$}}%
\special{rt 0 0 0}%
% VECTOR 2 2 3 0
% 2 1570 1206 1957 1102
% 
\special{pn 8}%
\special{pa 1570 1206}%
\special{pa 1958 1102}%
\special{dt 0.045}%
\special{sh 1}%
\special{pa 1958 1102}%
\special{pa 1888 1100}%
\special{pa 1906 1116}%
\special{pa 1898 1140}%
\special{pa 1958 1102}%
\special{fp}%
% VECTOR 2 2 3 0
% 2 1573 1206 1908 1423
% 
\special{pn 8}%
\special{pa 1574 1206}%
\special{pa 1908 1424}%
\special{dt 0.045}%
\special{sh 1}%
\special{pa 1908 1424}%
\special{pa 1864 1370}%
\special{pa 1864 1394}%
\special{pa 1842 1404}%
\special{pa 1908 1424}%
\special{fp}%
% STR 2 0 3 0
% 3 2010 1041 1992 1140 2 0
% $\bigcirc$
\put(19.9200,-11.4000){\special{rt 0 0  0.1799}\makebox(0,0)[lb]{$\bigcirc$}}%
\special{rt 0 0 0}%
% STR 2 0 3 0
% 3 1935 1425 1916 1523 2 0
% $\bigcirc$
\put(19.1600,-15.2300){\special{rt 0 0  0.1915}\makebox(0,0)[lb]{$\bigcirc$}}%
\special{rt 0 0 0}%
% STR 2 0 3 0
% 3 1090 800 1090 900 2 0
% 8
\put(10.9000,-9.0000){\makebox(0,0)[lb]{8}}%
% STR 2 0 3 0
% 3 690 800 690 900 2 0
% 4
\put(6.9000,-9.0000){\makebox(0,0)[lb]{4}}%
% STR 2 0 3 0
% 3 1460 600 1460 700 2 0
% 10
\put(14.6000,-7.0000){\makebox(0,0)[lb]{10}}%
% STR 2 0 3 0
% 3 1460 1330 1460 1430 2 0
% 10
\put(14.6000,-14.3000){\makebox(0,0)[lb]{10}}%
\end{picture}% 
\caption{}
\label{iota=4}
\end{figure}
}}
The case $\nu>2$ gives the same conclusion as above.
 
If $\nu=2,\, \iota>4$ and assuming
 $\alpha\,\delta^2+\beta\,\gamma^2\ne 0$, we have 
 $(\widetilde C,\xi_2)=A_1$ but  $(\widetilde C,\xi_1)$
 is bigger that $A_1$. Thus we have to take a triangular change of coordinates
$(u,v_1')$ so that $\widetilde C$ is defined at $\xi_1$ as 
$(v_1')^2+c'\, u^k+\text{(higher terms)}=0$.
The explicit computation shows that the possible $k$ are
$5\iota-18,\, 4\le \iota\le 10$.

Next we consider the assertion (4): 
$(C_5,O)\sim B_{5,2}$ $\iota\ge 4$. We put as above
\[f_5(x,y_1)= \alpha y_1^2+{\beta} x^5+{\text{(higher terms)}},\quad 
f_2(x,y_1)=\gamma y_1+{\delta} x^\nu+{\text{(higher terms)}}.\] 
Note that $\iota=4$ if and only if $\nu=2$.
 If $\nu>2$, $\iota=5$ and  $\mathcal{N}(f,x,y_1)=(\alpha y_1^2+{\beta} x^5)^2$ and we have to
 take a toric modification. If $\nu=2$, then $\iota=4$ and 
as 
$\mathcal{N}(f,x,y_1)=y_1^4+2{\beta} x^5y_1^2+({\beta}^2+{\delta}^5)x^{10}$,
 we see that  $(C,O)\sim B_{10,4}$ if $\iota=4$ and ${\beta}^2+{\delta}^5\ne0$.
 If ${\beta}^2+{\delta}^5=0$, the Newton boundary has two faces.

--First we consider the case $\iota=4$ (so ${\delta}\ne 0$) and  ${\beta}^2+{\delta}^5=0$.
Then 
$\mathcal{N}(f,x,y_1)=y_1^4+2{\beta} x^5y_1^2+\gamma_{8}x^8y_1+\gamma_{11} x^{11}$ 
and $\Gamma(f;x,y_1)$ consists of two face
${\Delta}_1$ and ${\Delta}_2$.
Clearly $f$ is non-degenerate on ${\Delta}_1$.
If $f$ is degenerate on ${\Delta}_2$, we take a suitable triangulated
 change of coordinates $(x,y_2)$ so that 
$\mathcal{N}(f;x,y_2)=\alpha^2 y_2^4+2\alpha {\beta} xy_2^2+\gamma' x^{11+k},\,
 k=0,\dots,9$. 
This implies $(C,O)\sim B_{k+6,2}\circ B_{5,2}$.

--Secondly, we consider the case $\iota\ge 5$ (i.e., $\nu>2$).
This case, by the previous consideration, we see that $\iota=5$.
Then $\mathcal{N}(f,x,y_1)=(\alpha y_1^2+{\beta} x^5)^2$ and 
$\Gamma(f;x,y_1)$ consists of one face ${\Delta}$ 
with the weight vector $P={^t(2,5)}$ 
and $f$ is degenerate on ${\Delta}$. 
We consider the toric modification 
with respect to the canonical regular subdivision $\Sigma^*$
of $\Gamma^*(f;x,y_1)$. The toric coordinate chart which
intersects the strict transform $\widetilde C$
is described by  a unimodular matrix 
\[
 \sigma=\left(\begin{matrix}
2&1\\
5&3
\end{matrix}\right),\,\, \pi(u,v)=(u^2v,u^5v^3).
\]
Then we taking  admissible translated toric coordinates $(u,v_2),\ 
v_2=v_1+h(u), v_1=\alpha v+{\beta}$ for a suitable polynomial $h$,
we have 
\[\pi^*f(u,v_2)=u^{20}{(v_2-{\beta/\alpha}+h(u))}\Big{(}\alpha^2 v_2^2+{\beta''} u^{5}+
{\text{(higher terms)}}\Big{)},\quad{\beta''}\ne 0.\]
Thus we can get $(C,O)\sim (B_{5,2}^2)^{B_{5,2}}$.
Hence  we have the assertion (4) of Proposition \ref{caseII-2}.

The assertions $(5),\cdots, (12)$ can be shown in a similar manner.
 \end{proof}
\begin{remark}
{\em{Note that the singularity $B_{25,4}$ in case $(12)$ has Milnor number $72$ and 
$72$ is the maximum Milnor number of irreducible curve of degree $10$. 
Thus in this case, $C$ is a rational curve.}}
\end{remark}

The classification  Proposition \ref{caseII-2}
 can be rewritten as follows from the viewpoint of $\iota$.
 \newtheorem{propositionbis}[proposition]{Proposition-bis}
\begin{enumerate}
%%%%%%%%%%%%%%%%%%%%%%%%%%%%%%%%%%%%%%%%%%%%%%%%
\item If $\iota =2$, we have $(C,O)\sim B_{5,4}$.
%%%%%%%%%%%%%%%%%%%%%%%%%%%%%%%%%%%%%%%%%%%%%%%%
\item If $\iota =3$, we have $(C,O)\sim (B_{3,2}^2)^{B_{3,2}}$.
%%%%%%%%%%%%%%%%%%%%%%%%%%%%%%%%%%%%%%%%%%%%%%%%
\item If $\iota =4$, we have 
      $(C,O)\sim (B_{4,2}^2)^{2B_{2,2}}$, $B_{10,4}$ and 
      $B_{k,2}\circ B_{5,2}\, (6\le k\le15)$. 
%%%%%%%%%%%%%%%%%%%%%%%%%%%%%%%%%%%%%%%%%%%%%%%%
\item If $\iota =5$, we have 
      $(C,O)\sim (B_{4,2}^2)^{(B_{7,2} +B_{2,2})}$ 
      and $(B_{5,2}^2)^{B_{5,2}}$. 
%%%%%%%%%%%%%%%%%%%%%%%%%%%%%%%%%%%%%%%%%%%%%%%%      
\item If $\iota =6$, we have 
      $(C,O)\sim  (B_{4,2}^2)^{(B_{12,2}+B_{2,2})}$, 
      $(B_{6,2}^2)^{2B_{3,2}}$, $(B_{7,2}^2)^{B_{2,2}}$ and $B_{15,4}$. 
%%%%%%%%%%%%%%%%%%%%%%%%%%%%%%%%%%%%%%%%%%%%%%%%
\item If $\iota =7$, we have 
      $(C,O)\sim  (B_{4,2}^2)^{(B_{17,2}+B_{2,2})}$, 
      $(B_{6,2}^2)^{ (B_{8,2}+B_{3,2})}$ and 
      $({B_{7,2}^2})^{ B_{7,2}}$.  
%%%%%%%%%%%%%%%%%%%%%%%%%%%%%%%%%%%%%%%%%%%%%%%%
\item If $\iota =8$, we have 
       $(C,O)\sim (B_{4,2}^2)^{(B_{22,2}+B_{2,2})}$, 
      $(B_{6,2}^2)^{(B_{13,2}+B_{3,2})}$,
      $({B_{8,2}^2})^{2B_{4,2}}$, \\  
      $(B_{9,2}^2)^{B_{5,2}}$,$B_{20,4}$,
      $B_{11,2} \circ B_{10,2}$ and $B_{12,2}\circ B_{10,2}$.
     %%%%%%%%%%%%%%%%%%%%%%%%%%%%%%%%%%%%%%%%%%%%%%%%
\item If $\iota =9$, we have  
      $(C,O)\sim (B_{4,2}^2)^{(B_{27,2}+B_{2,2})}$, 
      $(B_{6,2}^2)^{(B_{18,2}+B_{3,2})}$,
      $(B_{8,2}^2)^{(B_{9,2}+B_{4,2})}$ and $(B_{9,2}^2)^{B_{10,2}}$. 
%%%%%%%%%%%%%%%%%%%%%%%%%%%%%%%%%%%%%%%%%%%%%%%%
\item If $\iota =10$, we have 
      $(C,O)\sim (B_{4,2}^2)^{(B_{32,2}+B_{2,2})}$,
      $(B_{6,2}^2)^{(B_{23,2}+B_{3,2})}$, 
      $(B_{8,2}^2)^{(B_{14,2}+B_{4,2})}$, 
      $(B_{10,2}^2)^{2B_{5,2}}$,
      $(B_{11,2}^2)^{ B_{6,2}}$,
      $(B_{12,2}^2)^{2{B}_{1,2}}$ and $B_{25,4}$. 
\end{enumerate}
%%%%%%%%%%%%%%%%%%%%%%%%%%%
%%%%%[     (b-2)    ]%%%%%%
%%%%%%%%%%%%%%%%%%%%%%%%%%%
\noindent {\bf(b-2)} 
Assume that $C_2$ is a union of two lines meeting at the origin $(m_2=2)$ 
and the tangent cone of $C_5$ is $y^2=0$.
\begin{proposition}\label{caseII-3}
Suppose that  the tangent cone of $C_5$ 
is a line with multiplicity 2 and $C_2$ is a union 
of two lines meeting at the origin. 
Then the germ $(C,O)$ can take   following singularities.
\begin{enumerate}
\item If  $(C_5,O)\sim B_{3,2}$, we have
         \begin{enumerate}
          \item  $(C,O)\sim {(B_{3,2}^2)}^{B_{8,2}}$ for $\iota=4$ and 
          \item  $(C,O)\sim (B_{3,2}^2)^{B_{13,2}}$ for $\iota=5$. 
         \end{enumerate} 
\item If  $(C_5,O)\sim B_{4,2}$, we have  
         \begin{enumerate}
          \item  $(C,O)\sim {(B_{4,2}^2)}^{2B_{2,2}}$ for $\iota=4$, 
          \item  $(C,O)\sim {(B_{4,2}^2)}^{2B_{7,2}}$ for $\iota=6$, and 
          \item  $(C,O)\sim B_{16,2} \circ (B_{2,1}^2)^{B_{7,2}}$ for $\iota=7$.
         \end{enumerate} 
\item If  $(C_5,O)\sim B_{5,2}$, we have  
         \begin{enumerate}
          \item  $(C,O) \sim B_{10,4}$
                 or $B_{k,2}\circ B_{5,2}\, (6\le k\le15)$ for $\iota=4$,
          \item  $(C,O) \sim (B_{5,2}^2)^{B_{10,2}}$ for $\iota=6$ and 
          \item  $(C,O) \sim (B_{5,2}^2)^{B_{15,2}}$ for $(\iota=7)$.
         \end{enumerate} 
\item If  $(C_5,O)\sim B_{6,2}$, we have  
         \begin{enumerate}
          \item  $(C,O)\sim  B_{10,4}$ for $\iota=4$ and  
          \item  $(C,O)\sim {(B_{6,2}^2)}^{2B_{3,2}}$ for $\iota=6$.
          \end{enumerate}        
\item If  $(C_5,O)\sim B_{7,2}$, we have 
         \begin{enumerate}
          \item  $(C,O)\sim B_{10,4}$ for $\iota=4$ and 
          \item  $(C,O)\sim {(B_{7,2}^2)}^{B_{2,2}}$ for $\iota=6$.
          \end{enumerate}  
\item If  $(C_5,O)\sim B_{k,2}$, we have 
         \begin{enumerate}
          \item  $(C,O)\sim B_{10,4}$ for $\iota=4$ and 
          \item  $(C,O)\sim B_{15,4} \ (8\le k\le 13)$ for $\iota=6$ 
         \end{enumerate}  
\end{enumerate}
\end{proposition}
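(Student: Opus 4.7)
The plan is to follow the strategy of Proposition \ref{caseII-2} \emph{mutatis mutandis}, adapted to the case $m_2=2$ where $f_2=\ell_1\ell_2$ is a product of two distinct linear forms. By the same triangular coordinate change that puts $(C_5,O)\sim B_{\ell,2}$ into normal form, I would write
\[
 f_5(x,y_1)=\alpha\, y_1^2+\beta\, x^\ell+(\text{higher terms}),\qquad \alpha,\beta\ne 0,
\]
and decompose $\iota=\iota_1+\iota_2$ with $\iota_i:=I(\ell_i,C_5;O)\ge 2$. The constraint $\iota\ge m_2m_5=4$ together with the explicit list of admissible line-to-$C_5$ intersection multiplicities (depending on whether $\ell_i$ is transverse to the tangent direction $y_1=0$ or realizes some higher-order tangency with $C_5$) produces exactly the pairs $((C_5,O),\iota)$ listed in the statement.

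For each such pair I would then analyze the Newton polygon of $f=f_5^{\,2}+f_2^{\,5}$ in the coordinate system $(x,y_1)$. When this polygon has no degenerate face, Theorem 2.1 of \cite{Oka-milnor} identifies $(C,O)$ directly as the corresponding Brieskorn-Pham composition. Otherwise, apply a toric modification $\pi:X\to \mathbb{C}^2$ with respect to the canonical regular simplicial cone subdivision subordinate to the weight vector $P={}^t(2,\ell)$ (for $\ell$ odd) or $P={}^t(1,\ell/2)$ (for $\ell$ even) of the $f_5$-face, exactly as in the two Cases treated in the proof of Proposition \ref{caseII-2}. In the toric chart meeting the strict transform $\widetilde{C}$, take translated toric coordinates $(u,v_i')$ at each intersection point $\xi_i=\widetilde{C}\cap\widehat{E}(P)$ and, if necessary, admissible translated coordinates $v_i''=v_i'+h(u)$ to absorb remaining degenerate terms. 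A routine computation then gives $(\widetilde{C},\xi_i)\sim B_{\eta_i',2}$ with $\eta_i'$ an explicit integer in $\iota$, $\ell$ and the intersection pattern of $\ell_1,\ell_2$ with $y_1=0$. Assembling these local contributions via Lemma \ref{termination1}(\ref{casII-1}) and Lemma \ref{termination4} reproduces the singularity types listed in each of the thirteen sub-cases.

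The principal obstacle is the case analysis for the non-generic sub-cases, namely $\iota=7$ for $(C_5,O)\sim B_{4,2}$ and $\iota=6$ for $(C_5,O)\sim B_{5,2}$. Here the equation of $f_5$ cannot be taken in its most symmetric form, because some special higher-order term must be present to allow one of the $\iota_i$ to exceed its generic value ($2$) without reaching the tangential maximum ($\ell$). For instance, in the $\iota=6$ sub-case for $B_{5,2}$ one must work with $f_5=(y_1+cx^2)^2+x^5+\cdots$, so that the line $y_1=0$ attains $\iota_1=4$ while the second line stays transverse with $\iota_2=2$; an analogous non-generic form handles the $\iota=7$ sub-case for $B_{4,2}$. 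Identifying the correct normal form of $f_5$ in each such configuration is the crucial step; once that is in place the subsequent Newton boundary computation and toric blowup are formally identical to the corresponding sub-cases of Proposition \ref{caseII-2}, and the listed singularities exhaust all topological possibilities.
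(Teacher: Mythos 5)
Your proposal follows essentially the same route as the paper's own (very terse) proof: normalize $f_5=\alpha y_1^2+\beta x^{\ell}+\cdots$, split $\iota=\iota_1+\iota_2$ over the two lines of $C_2$ (since the lines are distinct, one is necessarily transverse to the double tangent direction, forcing $\iota_2=2$ and hence $4\le\iota\le 7$), and then repeat the Newton-boundary and toric-modification analysis of case (b-1) of Proposition \ref{caseII-2}. The only quibble is your closing appeal to Lemma \ref{termination1}(\ref{casII-1}) and Lemma \ref{termination4}, whose hypotheses (smooth conic, resp.\ reduced tangent cone of $C_5$) are not satisfied here, but this is harmless since the explicit toric computation you describe is what actually carries the argument, exactly as in the paper.
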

\begin{proof}
By taking a local coordinates $(x,y_1)$, we can assume   
\[
f_5(x,y_1)=\alpha y_1^2+\beta x^{\ell}+{(\text{higher terms})},
\quad \alpha,\, \beta\ne0. 
\]
%This implies that $(C_5,O)\sim A_{\ell-1}$ and 
%we have $\ell\le 13$.
Now we assume  that $f_2(x,y_1)=\ell_1(x,y_1)\ell_2(x,y_1)$ where
\[
\ell_1=y_1+c_\nu x^\nu+\text{(higher
terms)},\,\,\ell_2=c_2(y_1+\gamma x)+\text{(higher terms)},\, c_{\nu},c_2\ne 0
\]
We put $\iota_1=I(\ell_1,C_5;O)\le 5$,
$\iota_2=I(\ell_2,C_5;O)$.
As $\gamma\ne 0$, we have 
$\iota_2=2$. 
Hence $\iota=\iota_1+2$ and  
we have $4\le \iota \le 7$.

Comparing Newton boundaries of $f_5^2$ and $f_2^5$, and 
applying an similar argument as in  case (b-1),
we get assertions of Proposition \ref{caseII-3}.
\end{proof}

%%%%%%%%%%%%%%%%%%%%%%%%%
%%%%%%   Case III  %%%%%%
%%%%%%%%%%%%%%%%%%%%%%%%%
\subsection{Case III : $m_{5}$ = 3.}
We divide Case III into three cases by type of the tangent cone of $C_{5}$.\\
(a) $T_{O}C_5$ consists of distinct three lines. \\
(b) $T_{O}C_5$ consists of a double line and a simple line. \\
(c) $T_{O}C_5$ consists of is a  line with multiplicity 3.\\  

%%%%%%%%%%%%%%%%%%%%%%%%%%%%%%%%%%%%%%%%%%%
%%%%%%%%%%%%%%      (a)      %%%%%%%%%%%%%%
%%%%%%%%%%%%%%%%%%%%%%%%%%%%%%%%%%%%%%%%%%%
First we remark that if $C_2$ is smooth and 
$C_2$ intersects transversely with $C_5$ at the origin ($\iota=3$),
we have $(C,O) \sim B_{6,5}$ by Lemma \ref{cuspidal} in \S3.
So hereafter, we consider the case $C_2$ and 
the tangent cone of $C_5$ does not 
intersect transversely.
\noindent
\subsubsection{Case III-(a)}: We first consider  Case III-(a).
We assume that 
the tangent cone of $C_5$ consists of distinct three lines.
\begin{proposition}
Under the situation of  Case III-(a), $(C,O)\sim B_{6,5}$ if $\iota=3$.
For $\iota\ge 4$ we have the following possibilities of $(C,O)$.  
\begin{enumerate}
\item Assume that  $C_2$ is smooth and  tangent to  $y=0$.
      Then $(C,O)$ can be  $B_{5\iota-14,2}\circ B_{4,3}$ for
      $\iota = 4,5, \cdots 10$.
\item Assume that $C_2$ consists of two distinct lines $\ell_1,\,\ell_2$. 
      Put $\iota_i=I(\ell_i,C_5;O)\ge3$ $(i=1,2)$. 
      Then 
      $(C,O)\sim B_{5\iota_2- 9,2}\circ {(B_{1,1}^2)}^{B_{4,2}}
      \circ B_{2,5 \iota_1 -9}$ with $\iota_1+\iota_2=6,7,\cdots,10$.
\end{enumerate}
\end{proposition}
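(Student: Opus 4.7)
The plan is to recognize that both parts of the statement fit directly into the general machinery of Lemma \ref{termination1}, with $p=5$, $q=2$, and $m=3$ (since $C_5$ has multiplicity $3$ at $O$ with tangent cone of three distinct lines, hence $C_5$ factors as $g_1 g_2 g_3$ with three smooth transverse branches). The preliminary case $\iota=3$ is handled separately: when $C_2$ is smooth and meets $C_5$ transversely at $O$ (the only way $\iota=3$ is possible since $\iota \geq m_2 m_5 \geq 3$), Lemma \ref{cuspidal} with $p=5$, $q=2$, $m=3$ gives $(C,O)\sim B_{6,5}$ directly, since the hypothesis $p<qm$ reads $5<6$.

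For case (1), I assume $C_2$ is smooth with tangent line $y=0$, so by a triangular change of coordinates we can take $f_2(x,y)=y+\delta x^{\iota-2}+(\text{higher})$ with $\delta\neq 0$ encoding the intersection multiplicity with $C_5$ (in general position with respect to the branches $g_i$). The setup now fits Lemma \ref{termination1}(1): the threshold value is
\[
\frac{p}{p-q}(m-1)=\frac{5}{3}\cdot 2=\frac{10}{3}.
\]
Since $\iota\geq 4 > 10/3$, we are in subcase (b), which yields
\[
(C,O)\sim B_{\beta,q}\circ B_{q(m-1),p-q}\quad\text{with}\quad \beta=p(\iota-m+1)-q(m-1)=5\iota-14,
\]
matching the stated form $B_{5\iota-14,2}\circ B_{4,3}$. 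The bound $\iota\leq 10$ comes from B\'ezout.

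For case (2), $C_2$ is reducible into two smooth branches $\ell_1,\ell_2$ meeting at $O$, so $C_2$ is singular with multiplicity $2$ and distinct tangent cone, which is precisely the hypothesis of Lemma \ref{termination1}(3). I would choose the local coordinate system $(x_2,y_2)$ so that $g_1=x_2$ and $g_2=y_2$ are two of the three smooth branches of $C_5$, and relabel $\ell_1,\ell_2$ so that $\ell_1$ is tangent to $g_1$ and $\ell_2$ is tangent to $g_2$. Then $I(\ell_i,g_j;O)=1$ for $i\neq j$ and $\nu_i=I(\ell_i,g_i;O)$, giving
\[
\iota_i=I(\ell_i,C_5;O)=\nu_i+1+1=\nu_i+2.
\]
The hypothesis $2p>qm$ reads $10>6$, so Lemma \ref{termination1}(3) applies, and with $p=5,q=2,m=3$ we read off
\[
\beta_i=p(\nu_i+1)-q(m-1)=5(\iota_i-2+1)-4=5\iota_i-9,
\]
together with $(B_{m-2,m-2}^q)^{(m-2)B_{2p-qm,q}}=(B_{1,1}^2)^{B_{4,2}}$, yielding the claimed
$B_{5\iota_2-9,2}\circ (B_{1,1}^2)^{B_{4,2}}\circ B_{2,5\iota_1-9}$ (the roles of $\beta_1,\beta_2$ being interchangeable by symmetry of the labeling). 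The constraints $\iota_i\geq 3$ (since any line through $O$ meets $C_5$ with multiplicity at least $m_5=3$) and $\iota_1+\iota_2=\iota\leq 10$ (B\'ezout) give the stated range $\iota_1+\iota_2\in\{6,\ldots,10\}$.

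The main obstacle is essentially bookkeeping: verifying that the coordinates can really be normalized to satisfy all the tangency/transversality hypotheses of Lemma \ref{termination1}(3) simultaneously (two branches of $C_5$ along the axes, plus the prescribed intersection pattern with $\ell_1,\ell_2$), and then matching the abstract resolution data $(B_{m-2,m-2}^q)^{(m-2)B_{2p-qm,q}}$ coming from the exceptional divisor above the degenerate homogeneous face against the concrete indices $\beta_1,\beta_2$ obtained from the two non-degenerate faces flanking it. Once these normalizations are justified, the stated formulas follow mechanically.
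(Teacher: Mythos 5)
Your proposal is correct and follows exactly the paper's route: the $\iota=3$ case via Lemma \ref{cuspidal}, and both remaining cases by direct substitution of $p=5$, $q=2$, $m=3$ into parts (1)(b) and (3) of Lemma \ref{termination1}, which is precisely what the paper does (it simply states the assertion is immediate from that lemma). Your arithmetic for the threshold $10/3$, for $\beta=5\iota-14$, for $\beta_i=5\iota_i-9$, and for $(B_{1,1}^2)^{B_{4,2}}$ all checks out.
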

\begin{proof}
The assertion is immediate from Lemma \ref{termination1}.
\end{proof}
%%%%%%%%%%%%%%%%%%%%%%%%%%%%%
%%%%%%%%        (b)        %%%%%%%%
%%%%%%%%%%%%%%%%%%%%%%%%%%%%%
\subsubsection{Case III-(b)}
In this case,
we may assume that the tangent cone of $C_5$ consists of
a double line $\{y=0\}$ and a single line  $\{x=0\}$.\\

%%%%%%%%%%%%%%%%%%%%%%%%%%%%
%%%%%%    (b-1)       %%%%%%
%%%%%%%%%%%%%%%%%%%%%%%%%%%%
\noindent \textbf{(b-1)}
Assume first that $C_2$ is smooth $(m_2=1)$.
If $\iota=3$, $(C,O)\sim B_{6,5}$ by Lemma \ref{cuspidal}.
Therefore 
we consider the case $\iota \ge 4$.
The common tangent line of $C_2$  is either $\{x=0\}$ or $\{y=0\}$.
When the common tangent line is $\{x=0\}$, 
 $(C,O)$ is described by 
Lemma \ref{termination4}.

So we assume that common tangent cone is $\{y=0\}$.  
If $\iota=4$, we have 
$f_5(x,y)=b_{12}xy^2+b_{40}x^4+({\text{higher terms}})$
 and $\iota=4$ if and only if $b_{40}\ne0$.
Hence we have $(C,O)\sim B_{8,5}$.

Next we consider the case $\iota \ge 5$ and 
we take a local coordinates system $(x,y_1)$ so that 
$C_2$ is defined by $y_1=0$ and we have
$f_5(x,y_1)=\beta_{12}xy_1^2+\beta_{31}x^3y_1+\beta_{50}x^5+({\text{higher
terms}})$ with $\beta_{12}\ne 0$.
First we assume that $\iota=5$. 
Then $\beta_{50}\ne 0$ and we factor $\mathcal{N}(f;x,y_1)$ as
\[\mathcal{N}(f;x,y_1)=y_1^5+x^2(\beta_{12}y_1^2+\beta_{31}x^2y_1+\beta_{50}x^4)^2
=\prod_{i=1}^{5}(y_1+\alpha_ix^2)\] and we see that
$\Gamma(f;x,y)$ consists of one face with weight vector $P={^t(1,2)}$. 
Then we have several cases:
\begin{enumerate}
\item  $\alpha_1,\dots,  \alpha_5$ are all distinct.
\item $\alpha_1= \alpha_2$  and  $\alpha_3,\alpha_4,\alpha_5$
are mutually distinct and different from $\alpha_1$.
\item  $\alpha_1= \alpha_2=\alpha_3$ and  $\alpha_4,\alpha_5$
are mutually distinct and different from $\alpha_1$.
\item  $\alpha_1= \alpha_2$, $\alpha_3=\alpha_4$ and 
      $\alpha_1\ne \alpha_3$ and  $\alpha_5$
is different from $\alpha_1$, $\alpha_3$.
\end{enumerate}
We can see by an easy computation that the other cases are not possible.
(By a direct computation, we see that if $\mathcal N(f;x,y_1)=0$ has a root with
multiplicity
4, $\beta_{50}=0$ and  the intersection number jumps up to 6.)
\begin{lemma}\label{c1}
Under the above situation, we further assume that $\iota=5$.  
\begin{enumerate}
\item If  $\alpha_1,\dots,  \alpha_5$ are all distinct,
        $(C,O)\sim B_{10,5}$.
\item If $\alpha_1= \alpha_2$  and  $\alpha_3,\alpha_4,\alpha_5$
      are mutually distinct and different from $\alpha_1$,
        $(C,O)\sim B_{k,2}\circ B_{6,3}$, $(5\le k \le12)$.
\item If $\alpha_1= \alpha_2=\alpha_3$ and  $\alpha_4,\alpha_5$
      are mutually distinct and different from $\alpha_1$, 
        $(C,O)\sim B_{k,3}\circ B_{4,2}$ $(k=7,\cdots,11)$ or
      $B_{3,1}\circ B_{5,2}\circ B_{4,2}$ or 
        $B_{3,1}\circ B_{7,2}\circ B_{4,2}$ or  
      $B_{k,2}\circ B_{3,1}\circ B_{4,2}$ $(k=7,8,9)$.
\item If $\alpha_1= \alpha_2$, $\alpha_3=\alpha_4$ and 
      $\alpha_1\ne \alpha_3$ and  $\alpha_5$
       is different from $\alpha_1$, $\alpha_3$,   
       $(C,O)\sim B_{k_2+4,2}\circ B_{2,1}\circ (B_{2,1}^2)^{B_{{k_1},2}}$
       where $(k_1,k_2)$ moves  in the set
\newline
       $\{(k_1,k_2); 13-k_2\ge  k_1\ge k_2-4,\, k_2=5,\dots,7\}\cup\{(4,8),(5,9)\}$.
       \end{enumerate}
\end{lemma}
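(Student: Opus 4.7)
The plan is to work in coordinates $(x,y_1)$ with $C_2=\{y_1=0\}$, analyse the Newton boundary of $f=f_5^2+f_2^5$, and apply a toric modification with respect to the weight $P={}^t(1,2)$ whenever the face function of the unique compact face $\Delta$ of $\Gamma(f;x,y_1)$ is degenerate.

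A direct substitution shows $\mathcal N(f;x,y_1)=y_1^5+x^2(\beta_{12}y_1^2+\beta_{31}x^2y_1+\beta_{50}x^4)^2$; this is $(1,2)$-weighted homogeneous of weight $10$, and its reduced form factors as $\prod_{i=1}^5(y_1+\alpha_i x^2)$, where the hypothesis $\iota=5$ is equivalent to $\beta_{50}\ne 0$ and hence to every $\alpha_i$ being non-zero. In case $(1)$ the five $\alpha_i$ are pairwise distinct, so $f$ is non-degenerate on $\Delta$; since the Newton data agree with those of $y^5+x^{10}$, Theorem 2.1 of \cite{Oka-milnor} gives $(C,O)\sim B_{10,5}$ directly.

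For cases $(2)$--$(4)$ the face function is degenerate, and I take the toric modification $\pi:X\to\mathbb C^2$ associated with a regular subdivision refining $P$. In the chart with coordinates $(u,v)$ satisfying $x=u$ and $y_1=u^2v$, the total transform reads $u^{10}\bigl(\prod_{i=1}^5(v+\alpha_i)+u\,R(u,v)\bigr)$ for some analytic $R$, so each repeated factor $(v+\alpha)^k$ with $k\ge 2$ produces a singular point $\xi=(0,-\alpha)\in\hat E(P)$. I identify $(\widetilde C,\xi)$ by introducing admissible translated toric coordinates $v'=v+\alpha+h(u)$ at each such point and recomputing the Newton boundary; equivalently, the splitting of $(C,O)$ as $B_{?}\circ B_{?}$ is most transparently obtained by first performing the coordinate change $y_2=y_1+\alpha x^2$ for the distinguished repeated root $\alpha$, after which $\Gamma(f;x,y_2)$ has two compact faces, each carrying one of the Brieskorn-Pham summands.

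Case $(2)$ is then immediate: the upper face in the $(x,y_2)$ coordinates with $\alpha=\alpha_1$ yields the $B_{k,2}$-piece, while the lower face --- generated by the three distinct simple factors $(y_2+\gamma_i x^2)$, $i=3,4,5$ --- is Newton-equivalent to $B_{6,3}$, and the range $5\le k\le 12$ is read off by tracking the higher-order coefficients of $f_5$ and $f_2$. Case $(3)$ requires one additional application of Lemma \ref{termination4} at the triple point $\xi_1$, whose tangent cone in the translated toric coordinates is a triple line, and the various subcases enumerate the possible degenerations of the resulting $B_{k,3}$-germ. The main obstacle will be case $(4)$: the two distinct double roots $\alpha_1=\alpha_2$ and $\alpha_3=\alpha_4$ give two singular points $\xi_1=(0,-\alpha_1)$ and $\xi_3=(0,-\alpha_3)$ which must be analysed in parallel, each producing its own $B_{k_i,2}$ in admissible translated toric coordinates. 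The arithmetic range $13-k_2\ge k_1\ge k_2-4$ together with the exceptional pairs $(4,8)$ and $(5,9)$ reflects the coupling of the higher-order coefficients of $f_5,f_2$ imposed by requiring that both $\alpha_1$ and $\alpha_3$ are simultaneously double roots of the quintic $(\beta_{12}\alpha^2-\beta_{31}\alpha+\beta_{50})^2-\alpha^5$; the bookkeeping of this coupling, and the verification that every admissible pair really does arise, is the technical heart of the lemma.
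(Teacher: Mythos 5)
Your overall strategy is the paper's: compute $\mathcal N(f;x,y_1)=y_1^5+x^2(\beta_{12}y_1^2+\beta_{31}x^2y_1+\beta_{50}x^4)^2=\prod_{i=1}^5(y_1+\alpha_ix^2)$, note that $\iota=5$ forces $\beta_{50}\ne0$ and hence all $\alpha_i\ne0$, split into cases by the root multiplicities, and for a repeated root use a triangular change $y_2=y_1+\alpha x^2+\cdots$ (equivalently the toric chart $x=u,\ y_1=u^2v$ for $P={}^t(1,2)$) to separate the Newton boundary into two faces. Cases (1) and (2) are handled exactly as in the paper.

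There are, however, genuine gaps in (3) and (4). In (3), Lemma \ref{termination4} is not the right tool: it is a statement about the original pair $(C_5,C_2)$ under specific tangency hypotheses, not about the germ of $\widetilde C$ at a point of $\hat E(P)$; what is actually needed is a second Newton-boundary analysis of the face function $y_2^3+\cdots$ obtained after the triangular change, and it is this analysis that produces the $B_{3,1}$-factors in the degenerate subcases. More seriously, in (4) your plan treats the two double roots symmetrically, but a triangular change of $y_1$ (which must fix $x$, since $x$ cuts out nothing relevant but $y_1$ cuts out $C_2$) can only center one of them at a time; the resulting normal form is therefore asymmetric, one double root being absorbed into a non-degenerate face $B_{k_2+4,2}$ and the other surviving as the degenerate germ $(B_{2,1}^2)^{B_{k_1,2}}$ that requires a second toric modification. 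Your explanation of the parameter set is also off: the constraint $k_1\ge k_2-4$ is not imposed by any coupling of the coefficients of $f_5,f_2$; it is a choice of representative under the identification $B_{k_2,2}\circ B_{2,1}\circ(B_{2,1}^2)^{B_{k_1,2}}\sim B_{k_1+4,2}\circ B_{2,1}\circ(B_{2,1}^2)^{B_{k_2-4,2}}$, which the paper establishes (in the Remark following the lemma) by exchanging the roles of the two double roots and comparing the minimal resolution graphs. Without this identification you would both double-count topological types and fail to recover the stated set $\{13-k_2\ge k_1\ge k_2-4,\ k_2=5,\dots,7\}\cup\{(4,8),(5,9)\}$. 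Finally, the explicit ranges of $k$ in (2) and (3) -- which are the actual content of a classification statement -- are asserted to be ``read off'' but never derived.
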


\begin{proof}
The case (1) is clear.
We  consider the case (2) and we may assume $\alpha_1=\alpha_2$.
Then we see that the Newton boundary 
$\Gamma(f;x,y_1)$ has two faces $\Delta_1$ and $\Delta_2$
and $f$ is non-degenerate on  $\Delta_1$.
Taking a suitable 
triangular change of coordinates,
we can make $f$  non-degenerate on $\Delta_2$.
Hence this gives series $(C,O)\sim B_{k,2}\circ B_{6,3}$, $k=5,\cdots, 12$.
We can consider the cases (3) and (4) similarly.
\end{proof}
\begin{remark}
{\rm{
In (4) of Lemma \ref{c1},  we have the following 
 symmetry.
Let
 \[
  f(x,y_1)=(y_1+\alpha_1x^2)^2(y_1+\alpha_3x^2)^2(y_1+\alpha_5x^2)
+\text{(higher
 terms)}\] 
be a defining polynomial of  $(C,O)$. %the situation in (4).
First we take a change of coordinates $(x,y_2)= (x,y_1+\alpha_1x^2)$ and 
we take further
changes of  coordinates of type $y_2\to y_2+c\,x^{j},\, 2\le j\le [k_2/2]$ if
 necessary and  we can assume 
\[
 f(x,y_2)=y_2^2\,(y_2+(\alpha_3-\alpha_1) x^2)^2(y_2+(\alpha_5-\alpha_1)x^2)
+\beta x^{k_2+6}+\text{(higher
 terms)}
\]
 and its Newton boundary consists of two faces 
$\Delta_1$ and $\Delta_2$ and $f$ is non-degenerate on $\Delta_2$ 
but degenerate on $\Delta_1$.
(Here ``higher terms'' are linear combinations of monomials above the
 Newton boundary.)
Let $P_1={}^t(1,2)$ and $P_2$ be the weight vectors corresponding to
$\Delta_1,\Delta_2$ respectively.
The  ${\rm Cone}(E_1, P_1)$ needs one vertex $T_1={}^t(1,1)$ to be regular.
For the cones ${\rm Cone}(P_1,P_2)$ and ${\rm Cone}(P_2,E_2)$,
the subdivision changes for $k_2$ being
even or odd.

-- For $k_2=2m+1$, $P_2={}^t(2,2m+1)$ and 
 ${\rm Cone}(P_1,P_2)$ and ${\rm Cone}(P_2,E_2)$
are  subdivided into regular fans by adding vertices 
$\{T_i={}^t(1,i),\,3\le i\le m\}$  and $S={}^t(1,m+1)$.

-- For $k_2=2m$, $P_2={}^t(1,m))$ and 
${\rm Cone}(P_1,P_2)$ is subdivided into a  regular fan by adding vertices
$\{T_i={}^t(1,i),\,3\le i \le m-1\}$.
The ${\rm Cone}(P_2,E_2)$ is already regular.
Note that in any case, the corresponding resolution is minimal. 
In the second case, 
$\hat E(P_2)^2=-1$ but it intersects with two components of $C$.
{\footnotesize{
\begin{figure}[H]
%WinTpicVersion3.08
\unitlength 0.1in
\begin{picture}( 56.5400,  8.1100)(  0.1000, -9.4500)
% STR 2 0 3 0
% 3 10 552 10 624 2 0
% \huge{$\bullet$}
\put(0.1000,-6.2400){\makebox(0,0)[lb]{\huge{$\bullet$}}}%
% LINE 2 0 3 0
% 2 53 567 346 567
% 
\special{pn 8}%
\special{pa 54 568}%
\special{pa 346 568}%
\special{fp}%
% STR 2 0 3 0
% 3 301 552 301 624 2 0
% \huge{$\bullet$}
\put(3.0100,-6.2400){\makebox(0,0)[lb]{\huge{$\bullet$}}}%
% LINE 2 0 3 0
% 2 346 567 636 567
% 
\special{pn 8}%
\special{pa 346 568}%
\special{pa 636 568}%
\special{fp}%
% STR 2 0 3 0
% 3 593 552 593 624 2 0
% \huge{$\bullet$}
\put(5.9300,-6.2400){\makebox(0,0)[lb]{\huge{$\bullet$}}}%
% LINE 2 0 3 0
% 2 643 567 934 567
% 
\special{pn 8}%
\special{pa 644 568}%
\special{pa 934 568}%
\special{fp}%
% STR 2 0 3 0
% 3 891 552 891 624 2 0
% \huge{$\bullet$}
\put(8.9100,-6.2400){\makebox(0,0)[lb]{\huge{$\bullet$}}}%
% LINE 2 0 3 0
% 2 934 567 1226 567
% 
\special{pn 8}%
\special{pa 934 568}%
\special{pa 1226 568}%
\special{fp}%
% STR 2 0 3 0
% 3 1182 552 1182 624 2 0
% \huge{$\bullet$}
\put(11.8200,-6.2400){\makebox(0,0)[lb]{\huge{$\bullet$}}}%
% LINE 2 2 3 0
% 2 1226 567 1386 567
% 
\special{pn 8}%
\special{pa 1226 568}%
\special{pa 1386 568}%
\special{dt 0.045}%
% LINE 2 2 3 0
% 2 1517 567 1692 567
% 
\special{pn 8}%
\special{pa 1518 568}%
\special{pa 1692 568}%
\special{dt 0.045}%
% STR 2 0 3 0
% 3 1582 552 1582 624 2 0
% \huge{$\bullet$}
\put(15.8200,-6.2400){\makebox(0,0)[lb]{\huge{$\bullet$}}}%
% LINE 2 0 3 0
% 2 1655 567 1946 567
% 
\special{pn 8}%
\special{pa 1656 568}%
\special{pa 1946 568}%
\special{fp}%
% STR 2 0 3 0
% 3 2554 764 2554 836 2 0
% \huge{$\bullet$}
\put(25.5400,-8.3600){\makebox(0,0)[lb]{\huge{$\bullet$}}}%
% STR 2 0 3 0
% 3 1917 540 1917 612 2 0
% \huge{$\bullet$}
\put(19.1700,-6.1200){\makebox(0,0)[lb]{\huge{$\bullet$}}}%
% LINE 2 0 3 0
% 2 1980 561 2272 561
% 
\special{pn 8}%
\special{pa 1980 562}%
\special{pa 2272 562}%
\special{fp}%
% STR 2 0 3 0
% 3 2201 540 2201 612 2 0
% \huge{$\bullet$}
\put(22.0100,-6.1200){\makebox(0,0)[lb]{\huge{$\bullet$}}}%
% VECTOR 2 2 3 0
% 2 2293 547 2547 405
% 
\special{pn 8}%
\special{pa 2294 548}%
\special{pa 2548 406}%
\special{dt 0.045}%
\special{sh 1}%
\special{pa 2548 406}%
\special{pa 2480 420}%
\special{pa 2500 432}%
\special{pa 2500 456}%
\special{pa 2548 406}%
\special{fp}%
% STR 2 0 3 0
% 3 2554 348 2554 422 2 0
% $\bigcirc$
\put(25.5400,-4.2200){\makebox(0,0)[lb]{$\bigcirc$}}%
% STR 2 0 3 0
% 3 963 778 963 850 2 0
% $k_2=2m+1$
\put(9.6300,-8.5000){\makebox(0,0)[lb]{$k_2=2m+1$}}%
% STR 2 0 3 0
% 3 2245 826 2245 929 2 0
% 
\put(22.4500,-9.2900){\makebox(0,0)[lb]{}}%
% LINE 2 0 3 0
% 2 357 566 357 261
% 
\special{pn 8}%
\special{pa 358 566}%
\special{pa 358 262}%
\special{fp}%
% STR 2 0 3 0
% 3 306 232 306 304 2 0
% \huge{$\bullet$}
\put(3.0600,-3.0400){\makebox(0,0)[lb]{\huge{$\bullet$}}}%
% LINE 2 0 3 0
% 2 351 246 641 246
% 
\special{pn 8}%
\special{pa 352 246}%
\special{pa 642 246}%
\special{fp}%
% STR 2 0 3 0
% 3 598 232 598 304 2 0
% \huge{$\bullet$}
\put(5.9800,-3.0400){\makebox(0,0)[lb]{\huge{$\bullet$}}}%
% LINE 2 0 3 0
% 2 648 246 939 246
% 
\special{pn 8}%
\special{pa 648 246}%
\special{pa 940 246}%
\special{fp}%
% STR 2 0 3 0
% 3 896 232 896 304 2 0
% \huge{$\bullet$}
\put(8.9600,-3.0400){\makebox(0,0)[lb]{\huge{$\bullet$}}}%
% LINE 2 0 3 0
% 2 939 246 1231 246
% 
\special{pn 8}%
\special{pa 940 246}%
\special{pa 1232 246}%
\special{fp}%
% STR 2 0 3 0
% 3 1187 232 1187 304 2 0
% \huge{$\bullet$}
\put(11.8700,-3.0400){\makebox(0,0)[lb]{\huge{$\bullet$}}}%
% LINE 2 2 3 0
% 2 1231 246 1391 246
% 
\special{pn 8}%
\special{pa 1232 246}%
\special{pa 1392 246}%
\special{dt 0.045}%
% STR 2 0 3 0
% 3 230 709 230 794 2 0
% $\hat{E}(P_1)$
\put(2.3000,-7.9400){\makebox(0,0)[lb]{$\hat{E}(P_1)$}}%
% STR 2 0 3 0
% 3 2072 718 2072 802 2 0
% $\hat{E}(P_2)$
\put(20.7200,-8.0200){\makebox(0,0)[lb]{$\hat{E}(P_2)$}}%
% STR 2 0 3 0
% 3 3120 568 3120 640 2 0
% \huge{$\bullet$}
\put(31.2000,-6.4000){\makebox(0,0)[lb]{\huge{$\bullet$}}}%
% LINE 2 0 3 0
% 2 3163 583 3456 583
% 
\special{pn 8}%
\special{pa 3164 584}%
\special{pa 3456 584}%
\special{fp}%
% STR 2 0 3 0
% 3 3411 568 3411 640 2 0
% \huge{$\bullet$}
\put(34.1100,-6.4000){\makebox(0,0)[lb]{\huge{$\bullet$}}}%
% LINE 2 0 3 0
% 2 3456 583 3746 583
% 
\special{pn 8}%
\special{pa 3456 584}%
\special{pa 3746 584}%
\special{fp}%
% STR 2 0 3 0
% 3 3703 568 3703 640 2 0
% \huge{$\bullet$}
\put(37.0300,-6.4000){\makebox(0,0)[lb]{\huge{$\bullet$}}}%
% LINE 2 0 3 0
% 2 3753 583 4044 583
% 
\special{pn 8}%
\special{pa 3754 584}%
\special{pa 4044 584}%
\special{fp}%
% STR 2 0 3 0
% 3 4001 568 4001 640 2 0
% \huge{$\bullet$}
\put(40.0100,-6.4000){\makebox(0,0)[lb]{\huge{$\bullet$}}}%
% LINE 2 0 3 0
% 2 4044 583 4336 583
% 
\special{pn 8}%
\special{pa 4044 584}%
\special{pa 4336 584}%
\special{fp}%
% STR 2 0 3 0
% 3 4292 568 4292 640 2 0
% \huge{$\bullet$}
\put(42.9200,-6.4000){\makebox(0,0)[lb]{\huge{$\bullet$}}}%
% LINE 2 2 3 0
% 2 4336 583 4496 583
% 
\special{pn 8}%
\special{pa 4336 584}%
\special{pa 4496 584}%
\special{dt 0.045}%
% LINE 2 2 3 0
% 2 4627 583 4802 583
% 
\special{pn 8}%
\special{pa 4628 584}%
\special{pa 4802 584}%
\special{dt 0.045}%
% STR 2 0 3 0
% 3 4692 568 4692 640 2 0
% \huge{$\bullet$}
\put(46.9200,-6.4000){\makebox(0,0)[lb]{\huge{$\bullet$}}}%
% LINE 2 0 3 0
% 2 4765 583 5056 583
% 
\special{pn 8}%
\special{pa 4766 584}%
\special{pa 5056 584}%
\special{fp}%
% STR 2 0 3 0
% 3 5027 556 5027 628 2 0
% \huge{$\bullet$}
\put(50.2700,-6.2800){\makebox(0,0)[lb]{\huge{$\bullet$}}}%
% LINE 2 0 3 0
% 2 5090 577 5382 577
% 
\special{pn 8}%
\special{pa 5090 578}%
\special{pa 5382 578}%
\special{fp}%
% STR 2 0 3 0
% 3 5311 556 5311 628 2 0
% \huge{$\bullet$}
\put(53.1100,-6.2800){\makebox(0,0)[lb]{\huge{$\bullet$}}}%
% VECTOR 2 2 3 0
% 2 5403 563 5657 421
% 
\special{pn 8}%
\special{pa 5404 564}%
\special{pa 5658 422}%
\special{dt 0.045}%
\special{sh 1}%
\special{pa 5658 422}%
\special{pa 5590 436}%
\special{pa 5610 448}%
\special{pa 5610 472}%
\special{pa 5658 422}%
\special{fp}%
% STR 2 0 3 0
% 3 5664 364 5664 438 2 0
% $\bigcirc$
\put(56.6400,-4.3800){\makebox(0,0)[lb]{$\bigcirc$}}%
% VECTOR 2 2 3 0
% 2 5403 592 5640 762
% 
\special{pn 8}%
\special{pa 5404 592}%
\special{pa 5640 762}%
\special{dt 0.045}%
\special{sh 1}%
\special{pa 5640 762}%
\special{pa 5598 708}%
\special{pa 5598 732}%
\special{pa 5574 740}%
\special{pa 5640 762}%
\special{fp}%
% STR 2 0 3 0
% 3 4240 798 4240 870 2 0
% $k_2=2m$
\put(42.4000,-8.7000){\makebox(0,0)[lb]{$k_2=2m$}}%
% STR 2 0 3 0
% 3 5355 842 5355 945 2 0
% 
\put(53.5500,-9.4500){\makebox(0,0)[lb]{}}%
% LINE 2 0 3 0
% 2 3467 582 3467 277
% 
\special{pn 8}%
\special{pa 3468 582}%
\special{pa 3468 278}%
\special{fp}%
% STR 2 0 3 0
% 3 3416 248 3416 320 2 0
% \huge{$\bullet$}
\put(34.1600,-3.2000){\makebox(0,0)[lb]{\huge{$\bullet$}}}%
% LINE 2 0 3 0
% 2 3461 262 3751 262
% 
\special{pn 8}%
\special{pa 3462 262}%
\special{pa 3752 262}%
\special{fp}%
% STR 2 0 3 0
% 3 3708 248 3708 320 2 0
% \huge{$\bullet$}
\put(37.0800,-3.2000){\makebox(0,0)[lb]{\huge{$\bullet$}}}%
% LINE 2 0 3 0
% 2 3758 262 4049 262
% 
\special{pn 8}%
\special{pa 3758 262}%
\special{pa 4050 262}%
\special{fp}%
% STR 2 0 3 0
% 3 4006 248 4006 320 2 0
% \huge{$\bullet$}
\put(40.0600,-3.2000){\makebox(0,0)[lb]{\huge{$\bullet$}}}%
% LINE 2 0 3 0
% 2 4049 262 4341 262
% 
\special{pn 8}%
\special{pa 4050 262}%
\special{pa 4342 262}%
\special{fp}%
% STR 2 0 3 0
% 3 4297 248 4297 320 2 0
% \huge{$\bullet$}
\put(42.9700,-3.2000){\makebox(0,0)[lb]{\huge{$\bullet$}}}%
% LINE 2 2 3 0
% 2 4341 262 4501 262
% 
\special{pn 8}%
\special{pa 4342 262}%
\special{pa 4502 262}%
\special{dt 0.045}%
% STR 2 0 3 0
% 3 3340 725 3340 810 2 0
% $\hat{E}(P_1)$
\put(33.4000,-8.1000){\makebox(0,0)[lb]{$\hat{E}(P_1)$}}%
% STR 2 0 3 0
% 3 5182 734 5182 818 2 0
% $\hat{E}(P_2)$
\put(51.8200,-8.1800){\makebox(0,0)[lb]{$\hat{E}(P_2)$}}%
% STR 2 0 3 0
% 3 5660 826 5660 900 2 0
% $\bigcirc$
\put(56.6000,-9.0000){\makebox(0,0)[lb]{$\bigcirc$}}%
% LINE 2 0 3 0
% 2 2320 560 2590 770
% 
\special{pn 8}%
\special{pa 2320 560}%
\special{pa 2590 770}%
\special{fp}%
\end{picture}% 
\caption{}
\end{figure}
}}
After  taking a toric modification with respect to the canonical
 subdivision, 
we have 
$(\widetilde{C},\xi_1)\sim B_{k_1,2}$. Hence 
$(C,O)\sim  B_{k_2,2}\circ B_{2,1}\circ (B_{2,1}^2)^{B_{k_1,2}}$.

Using canonical subdivision for the second toric modification, 
we see that resolution graph has 
three branches with center $\hat E(P_1)$:
one branch with a single vertex which corresponds to 
$\hat E(T_1)$. The second branch  corresponds to the vertices in 
${\rm Cone}(P_1,P_2)$ and ${\rm Cone}(P_2,E_2)$ 
(respectively ${\rm Cone}(P_1,P_2)$) for $k_2$ is odd (resp. even).
The third branch corresponds to the vertices for the second toric modification.

To see the relation between the second and third branches,
we take change of coordinates 
$(x,y_3)= (x,y_2+(\alpha_3-\alpha_1) x^2)$ from the beginning.
After a finite number of triangular changes of coordinates,
 we arrive to a expression
$(C,O)\sim  B_{k_2',2}\circ B_{2,1}\circ (B_{2,1}^2)^{B_{k_1',2}}$.
By an easy calculation and the minimality of the resolution graph so
 obtained,
we see that $k_1'=k_2-4,\, k_2'=k_1+4$. Thus 
$B_{k_2,2}\circ B_{2,1}\circ (B_{2,1}^2)^{B_{k_1,2}}\sim
 B_{k_1+4,2}\circ B_{2,1}\circ (B_{2,1}^2)^{B_{k_2-4,2}} $. Therefore
 in the classification, we can assume that $k_1+4\ge k_2$.
}}
\end{remark}

Next we consider the case  $\iota\ge 6$.
%, we also have several cases for fixed $\iota$. 
\begin{lemma}\label{c2}
Under case III-b, we assume that $\iota=6$. Then the topological type of
 $(C,O)$  is  generically $B_{9,2}\circ B_{6,3}$ and it can degenerate into  
  ${(B_{5,2}^2)}^{{B}_{1,2}}\circ B_{2,1}$ or 
     $B_{9,2} \circ B_{2,1}  \circ  (B_{2,1}^2)^{ B_{k,2}}$,
 $(k=1,\cdots, 8)$. % for the degenerated cases.
\end{lemma}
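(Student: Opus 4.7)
The plan is to begin by choosing local coordinates $(x, y_1)$ so that $C_2 = \{y_1 = 0\}$; then $f_2 = y_1$ and the assumptions $m_5 = 3$, $T_O C_5 = \{xy_1^2 = 0\}$ together with $\iota = 6$ force
\[
  f_5(x, y_1) = \beta_{12}\,x y_1^2 + \beta_{31}\,x^3 y_1 + \beta_{60}\,x^6 + (\text{higher order terms}),
\]
with $\beta_{12}, \beta_{60} \ne 0$. Substituting into $f = f_5^2 + y_1^5$, I would carefully list the low-order monomials of $f$; a key point is that the cross term $(\beta_{31} x^3 y_1)^2$ in $f_5^2$ contributes the point $(6,2)$ with coefficient $\beta_{31}^2$, which is easy to overlook but essential for a correct Newton polygon.

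For generic $\beta_{31} \ne 0$ the Newton polygon of $f(x,y_1)$ has exactly two faces $\Delta_1, \Delta_2$ with weight vectors $P_1 = {}^t(1,2)$ and $P_2 = {}^t(1,3)$. A direct computation gives
\[
  \tilde f_{P_1} = y_1^3 + \beta_{12}^2 x^2 y_1^2 + 2\beta_{12}\beta_{31} x^4 y_1 + \beta_{31}^2 x^6,\qquad \tilde f_{P_2} = (\beta_{31} y_1 + \beta_{60} x^3)^2,
\]
so $f$ is non-degenerate on $\Delta_1$ (the discriminant of the cubic $\tilde f_{P_1}$ is $\beta_{31}^3(4\beta_{12}^3 - 27\beta_{31})$, generically nonzero, giving three distinct linear factors $y_1 - c_i x^2$) and degenerate on $\Delta_2$. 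Performing the canonical toric modification and working in the chart for $\mathrm{Cone}(P_1, P_2)$, the strict transform meets $\hat E(P_1)$ transversely at three distinct points, which contributes the $B_{6,3}$ piece. Near the unique zero of $\tilde f_{P_2}$ on $\hat E(P_2)$, an admissible translated toric coordinate change turns the equation into $(\beta_{60}u' + \beta_{12} v)^2 + v^3 + (\text{higher}) = 0$, i.e.\ a $B_{3,2}$ cusp whose tangent is transverse to $\hat E(P_2)$; this cusp-plus-divisor configuration is exactly what one obtains by one toric modification of $B_{9,2} : y^2 + x^9$ along the weight ${}^t(1,3)$ direction, so this piece collapses topologically to $B_{9,2}$. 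Hence generically $(C,O) \sim B_{9,2} \circ B_{6,3}$.

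For the degenerations I would track how special coefficient values modify the polygon. The principal specialisation is $\beta_{31} = 0$: the points $(6,2)$ and $(9,1)$ drop off, the polygon collapses to $(0,5)\text{--}(2,4)\text{--}(12,0)$ with weights ${}^t(1,2)$ and ${}^t(2,5)$, and the $(2,5)$-face function becomes the square $x^2(\beta_{12} y_1^2 + \beta_{60} x^5)^2$. After the toric modification along $P = {}^t(2,5)$ and passage to admissible translated toric coordinates $(u, v_2)$ at the zero of $\beta_{12}v^2 + \beta_{60}u$, the strict transform reads $v_2^2 + c\,u^k + (\text{higher}) = 0$ with $c \ne 0$; explicit inspection of which higher-order monomials of $f_5$ survive after the coordinate change shows that $k$ takes each of the values $1,\dots,8$ as the leading surviving $\beta_{ij}$ varies. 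Combined with the unchanged smooth $B_{2,1}$-branch and the still-present $B_{9,2}$-piece coming from the $\hat E(P)$-configuration, this yields the series $B_{9,2} \circ B_{2,1} \circ (B_{2,1}^2)^{B_{k,2}}$ for $k = 1, \ldots, 8$, while the further coincidence which also collapses the $B_{9,2}$-piece into the central degenerate blow-up produces the isolated type ${(B_{5,2}^2)}^{B_{1,2}} \circ B_{2,1}$.

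The main obstacle is the bookkeeping in the degenerate regime: each admissible translated toric coordinate change $y_1 \mapsto y_1 + h(x)$ may shift the Newton polygon, and one must verify that no unlisted topological type appears and that the integer $k$ takes exactly the values $1, \ldots, 8$ claimed. This is handled by iterating Lemma \ref{termination4} together with the explicit identities arising from the torus form $f = f_5^2 + f_2^5$, following the template already used in the proof of Proposition \ref{caseII-2}.
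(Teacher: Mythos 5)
Your generic analysis is essentially correct and matches the paper's computation: same coordinates, same Newton polygon with faces of weights ${}^t(1,2)$ and ${}^t(1,3)$, and your discriminant $\beta_{31}^3(4\beta_{12}^3-27\beta_{31})$ of the cubic $\tilde f_{P_1}$ is right. (The paper avoids your toric modification of the degenerate face $\Delta_2$ by first making a triangular change $y_2=y_1+c_3x^3+c_4x^4$ that turns $\Delta_2$ into a non-degenerate face with face function $\beta_{31}x^6(y_2^2+c_3'x^9)$, so that $B_{9,2}\circ B_{6,3}$ is read off directly from Oka's theorem; your route through the $B_{3,2}$ cusp on $\hat E(P_2)$ reaches the same branch with Puiseux pair $(2,9)$, so that part is fine.)

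The degeneration analysis, however, has a genuine error: you have attached the two degenerate types to the wrong coefficient loci. The series $B_{9,2}\circ B_{2,1}\circ(B_{2,1}^2)^{B_{k,2}}$, $k=1,\dots,8$, does \emph{not} come from $\beta_{31}=0$; it comes from the other factor of your own discriminant, namely $4\beta_{12}^3-27\beta_{31}=0$ with $\beta_{31}\ne0$. There $f$ becomes degenerate on $\Delta_1$, whose face function factors as $\alpha^2y_1^2(9y_1+\beta_{12}^2x^2)(9y_1+4\beta_{12}^2x^2)^2$; the toric modification along ${}^t(1,2)$ produces one transverse smooth point (the $B_{2,1}$) and, at the double root, a point of type $B_{k,2}$ whose exponent $k$ varies with further triangular changes of coordinates — that is the source of $k=1,\dots,8$ — while the $B_{9,2}$ piece from $\Delta_2$ survives untouched. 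You computed exactly this discriminant in the generic step but never returned to the locus where it vanishes with $\beta_{31}\ne0$, so this entire sub-case is missing from your argument. Conversely, in the case $\beta_{31}=0$ the points $(6,2)$ and $(9,1)$ leave the polygon (as you note), so there is no ``still-present $B_{9,2}$-piece'' — your own description is internally inconsistent here — and the degenerate face has weight ${}^t(2,5)$ with face function $x^2(\beta_{12}y_1^2+\beta_{60}x^5)^2$. After the modification $x=u^2v,\ y_1=u^5v^3$ the residual term coming from $y_1^5$ is $uv^3$ with $v\ne0$ at the intersection point, so the exponent is forced to be $1$: one gets only $(B_{5,2}^2)^{B_{1,2}}\circ B_{2,1}$, never a $k$-series. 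So your claim that $k$ ranges over $1,\dots,8$ in the $\beta_{31}=0$ regime is false, and the ``further coincidence'' you invoke to produce $(B_{5,2}^2)^{B_{1,2}}\circ B_{2,1}$ does not exist.
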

\begin{proof}
We take a local coordinates system $(x,y_1)$ so that 
$C_2$ is defined by $y_1=0$ and
$f_5(x,y_1)=\beta_{12}xy_1^2+\beta_{31}x^3y_1+\beta_{60}x^6+({\text{higher terms}})$.
Then  
$f(x,y_1)$ is written as 
\begin{eqnarray*}
& f(x,y_1)=y_1^5+(\beta_{12}xy_1^2+\beta_{31}x^3y_1+\beta_{60}x^6)^2
+{\text{(higher terms)}}\hspace{5.2cm}\\
& =y_1^5+(\beta_{12}xy_1^2+\beta_{31}xy_1^3)^2+(\beta_{31}x^3y_1+\beta_{60}x^6)^2-
\beta_{31}^2x^6y_1^2+{\text{(higher terms)}}.
\end{eqnarray*}
If $\beta_{31}\ne 0$,  $\Gamma(f;x,y_1)$ consists of two faces $\Delta_1$, $\Delta_2$
and
$f_{\Delta_1}(x,y_1)=y_1^5+x^2y_1^2(\beta_{12}y_1+\beta_{31}x^2)^2$
and
$f_{\Delta_2}(x,y)=(\beta_{31}x^3y_1+\beta_{60}x^6)^2$.
In this case, we first take a triangular change of
 coordinates
of type 
$(x,y_2)=(x,y_1+c_3x^3+c_4x^4)$ so that the face $\Delta$ changes into a
 non-degenerate face
$\Delta_2'$ (new face after change of the coordinate) and
\[
 f_{\Delta_2'}(x,y_2)= \beta_{31}x^6\,(y_2^2+c_3'x^9),\, c_3\ne 0.
\]
%Now we consider the case where
%the discriminant of $f_{\Delta_1}(x,y_2)/y_2^2=y_2^3+x^2(\beta_{12}y_2+\beta_{31}x^2)^2$ is zero, which
%takes place if  $\beta_{31}=4\beta_{12}^3/27$.
If $f$ is non-degenerate on $\Delta_1$,
% (i.e., $\beta_{31}\ne 0,\, 4\beta_{12}^3/27 $),  
we have $(C,O)\sim B_{9,2}\circ B_{6,3}$.
\newline
If $f$ is degenerate on $\Delta_1$  
 ($\beta_{31}=4\beta_{12}^3/27,\,  \beta_{31}\ne 0$),
then 
$f_{\Delta_1}(x,y_1)=\alpha^2
 y_1^2(9y_1+\beta_{12}^2x^2)(9y_1+4\beta_{12}^2x^2)^2$
where $\alpha$ is a non-zero constant.
To analyze the singularity on the face $\Delta_1$, we take a toric modification:
let $P={}^t(1,2)$ be the weight vector corresponding to $\Delta_1$ and 
we take a toric modification with respect to an admissible regular 
simplicial cone subdivision $\Sigma^{*}$, $\pi:X\to \Bbb{C}^2$.  
We may assume that $\sigma=$Cone$(P_1,T_1)$ is a cone in $\Sigma^{*}$ where 
$T_1={}^t(1,3)$.
We take the toric coordinates $(u,v)$ of the chart $\Bbb{C}^2_{\sigma}$. 
Then we have 
 $\pi_\sigma(u,v)=(uv,u^2v^3)$ and 
\[
\pi_\sigma^{*}f(u,v)=\alpha^2 u^{10} v^{12}\tilde f(u,v)=\alpha^2 u^{10} v^{12}\Big{(}(9v+\beta_{12}^2)(9v+4\beta_{12}^2)^2+{\text{(higher terms)}}\Big{)}
\]
 and 
 the strict transform $\widetilde C$ splits into two components.
We see that one of the components of $\widetilde C$ which correspond to
the non-degenerate component of $f_{\Delta_1}$
is smooth and transversely with $\hat E (P)=\{u=0\}$.
To see the other 
component of $\widetilde C$,
we take the  translated toric coordinates $(u,v_1),\ v_1=9v+4\beta_{12}^2$. 
Then 
  $\tilde f(u,v_1)=cv_1^2+\gamma_1u+{\text{(higher terms)}}$
  where $c$ is a non-zero constant.
Hence if $\gamma_1\ne0$, we get 
$(C,O)\sim B_{9,2}\circ B_{2,1}\circ {(B_{2,1}^2)}^{B_{1,2}}$. 
If $\gamma_1=0$, taking a triangular change of  coordinates
of the type $(u,v_2)=(u,v_1+d_1u+\cdots+c_ku^j),\, j=[k/2]$, 
we can easily see that 
$(C,O)\sim B_{9,2}\circ B_{2,1}\circ {(B_{2,1}^2)}^{ B_{k,2}}$,
$(k=2,\cdots,8)$.  

Next we consider the case  $\beta_{31}=0$.
Then 
\[
\mathcal{N}(f;x,y_1)=y_1^5+x^2(\beta_{12}y_1^2+\beta_{60} x^5)^2
\] 
where $\beta_{60}\ne0$ since $\iota=6$
and
$\Gamma(f;x,y_1)$ has two faces 
$\Delta_1$ and $\Delta_2$
and the corresponding face functions are given by
$f_{\Delta_1}(x,y_1)=y_1^5+\beta_{12}^2x^2y_1^4$ and 
$f_{\Delta_2}(x,y_1)=x^2(\beta_{12}y_1^2+\beta_{60}x^5)^2$.
Thus $f$ is non-degenerate on $\Delta_1$ and 
 degenerate on $\Delta_2$. 
Then  taking a toric modification which is the same as (4) of
Proposition \ref{caseII-2},
we get $(C,O)\sim {(B_{5,2}^2)}^{{B}_{1,2}}\circ B_{2,1}$.
\end{proof}
 
For the remaining cases $\iota\ge 7$, we can carry out the classification
in the exact same way. So we can summarize the result as
follows. % in Proposition \ref{CaseIII-2}.  
\begin{proposition} \label{CaseIII-2} 
Suppose that $C_2$ is smooth, $m_5=3$ and the tangent cone of the 
quintic $C_5$ consists of a double line $L_1$ and a single line $L_2$.

 If $\iota=3$, we have $(C,O)\sim B_{6,5}$.

If $\iota \ge 4$, $C_2$ is tangent to either $L_1$ or 
$L_2$ and  we have the following possibility.

\noindent
{\rm{(I)}} 
Assume that  the common tangent cone is $L_2$.\\  
         Then the germ $(C,O)$ can be of type  $B_{3,4}\circ B_{2,5\iota-14}$ 
         for $4\le \iota \le10$. (See Lemma \ref{termination4}).
         
\noindent{\rm{(II)}}
 Assume that the common tangent cone is $L_1$.\\  
         Then the germ $(C,O)$ can be of type  $B_{2\iota,5}$ for $\iota=4,5$ and 
$B_{5\iota-21,2}\circ B_{6,3}$ for $\iota = 6,\cdots,10$. 

Further degenerations are given for fixed $\iota$ 
by the following list.
\begin{enumerate}
\item If $\iota=5$, 
we have 
\begin{enumerate}
\item $(C,O)\sim  B_{k,2} \circ B_{6,3}\ (5\le k \le 12)$
\item $(C,O)\sim B_{k,3}\circ B_{4,2}$ $(k=7,\cdots,11)$,
   $B_{3,1}\circ B_{5,2}\circ B_{4,2}$,
   $B_{3,1}\circ B_{7,2}\circ B_{4,2}$ and 
   $B_{k,2}\circ B_{3,1}\circ B_{4,2}\ (k=7,8,9)$.
\item $(C,O)\sim B_{k_2+4,2}\circ B_{2,1}\circ (B_{2,1}^2)^{B_{k_1,2}}$
\end{enumerate}
where $(k_1,k_2)$ is in
$\{(k_1,k_2); 13-k_2\ge  k_1\ge k_2-4,\, k_2=5,\dots,7\}\cup\{(4,8),(5,9)\}$.
\item If $\iota=6$, we have 
\begin{enumerate}
\item  $(C,O)\sim {(B_{5,2}^2)}^{{B}_{1,2}}\circ B_{2,1}$
\item  $(C,O)\sim  B_{9,2} \circ B_{2,1}  \circ  (B_{2,1}^2)^{B_{k,2}}\ (k=1,\cdots,8)$
   \end{enumerate}
\item If $\iota=7$, we have 
\begin{enumerate}
\item $(C,O)\sim {(B_{6,2}^2)}^{2{B}_{1,2}}\circ B_{2,1}$ and 
 $B_{13,4}\circ B_{2,1}$.
\item $(C,O)\sim B_{14,2} \circ B_{2,1}\circ (B_{2,1}^2)^{B_{k,2}}\ (k=1,\cdots, 7)$
   \end{enumerate}
\item If $\iota=8$, we have 
  \begin{enumerate}
\item $(C,O)\sim {(B_{6,2}^2)}^{B_{6,2}+B_{1,2}}\circ B_{2,1}$ and 
      ${(B_{7,2}^2)}^{B_{3,2}}\circ B_{2,1}$
\item $(C,O)\sim  B_{19,2} \circ B_{2,1}  \circ  (B_{2,1}^2)^{B_{k,2}}\ (k=1,\cdots, 6)$
   \end{enumerate} 
\item If $\iota=9$, we have 
\begin{enumerate}
\item $(C,O)\sim {(B_{6,2}^2)}^{B_{11,2}+B_{1,2}}\circ B_{2,1}$, \ 
${(B_{8,2}^2)}^{2B_{2,2}}\circ B_{2,1}$, \
$B_{18,4}\circ B_{2,1}$ and \
$B_{11,2}\circ B_{9,2}\circ B_{2,1}$
\item $(C,O)\sim B_{24,2} \circ B_{2,1}\circ (B_{2,1}^2)^{ B_{k,2}},\ (k=1,\cdots, 4)
  $  \end{enumerate}  
\item If $\iota=10$, we have 
\begin{enumerate}
\item $(C,O)\sim {(B_{6,2}^2)}^{B_{16,2}+B_{1,2}}\circ B_{2,1}$,\
${(B_{8,2}^2)}^{B_{7,2}+B_{2,2}}\circ B_{2,1}$ and \ 
${(B_{9,2}^2)}^{B_{5,2}}\circ B_{2,1}$
\item $(C,O)\sim B_{29,2} \circ B_{2,1}  \circ  (B_{2,1}^2)^{B_{k,2}},\ (k=1,\cdots, 5, \ k \ne 4)$
\end{enumerate}
\end{enumerate}
\end{proposition}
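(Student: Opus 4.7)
The plan is to reduce everything to the two structural lemmas already established (Lemma \ref{c1} and Lemma \ref{c2}) plus a uniform Newton-boundary analysis for the remaining values of $\iota$. The case $\iota=3$ is immediate from Lemma \ref{cuspidal}, since $(C_5,O)$ has multiplicity $3$ and $C_2$ is smooth and transverse, giving $B_{6,5}$. For $\iota \geq 4$, the inequality $\iota \geq m_2 m_5$ together with smoothness of $C_2$ forces $C_2$ to be tangent to the tangent cone $L_1 \cup L_2$ of $C_5$, so $T_OC_2$ is either $L_2$ or $L_1$. If it is $L_2$ (the simple component), I would simply invoke Lemma \ref{termination4} with $p=5$, $q=2$, $m=3$ and read off the type $B_{3,4}\circ B_{2,5\iota-14}$ from case (2) of that lemma. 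So the real content is Case (II).

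For Case (II), I would choose local coordinates $(x,y_1)$ so that $C_2=\{y_1=0\}$ and $L_1=\{y_1=0\}$, $L_2=\{x=0\}$; write
\[
 f_5(x,y_1)=\beta_{12}xy_1^2+\beta_{31}x^3y_1+\beta_{50}x^5+\beta_{60}x^6+(\text{higher terms}),\qquad \beta_{12}\ne0,
\]
and $f_2(x,y_1)=\gamma y_1+\delta_\nu x^\nu+(\text{higher terms})$. The value of $\iota$ is read off from which $\beta_{k0}$ first fails to vanish (tracked through the resultant of $f_5|_{y_1=0}$ with the generic $y_1$-series of $C_5$), and then $\mathcal N(f;x,y_1)$ is determined by
\[
 \mathcal N(f;x,y_1)=y_1^5+\bigl(\beta_{12}xy_1^2+\beta_{31}x^3y_1+\cdots+\beta_{(\iota)0}x^\iota\bigr)^2 + \text{correction terms from $f_2^5$}.
\]
For generic coefficients the Newton polygon has a single degenerate face of weight vector $P={}^t(1,2)$, and after the toric chart $\sigma=\begin{pmatrix}1&a\\2&b\end{pmatrix}$ the strict transform becomes a curve of the form $v_1^2 + \epsilon u^{5\iota-4\ell}+\cdots$, from which the generic answer $B_{5\iota-21,2}\circ B_{6,3}$ follows by direct Newton-boundary reading.

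The six sub-cases $(1)$--$(6)$ are then obtained by classifying \emph{how} $\mathcal N(f;x,y_1)$ can degenerate further, exactly as in Lemmas \ref{c1} and \ref{c2}. Concretely, for each $\iota\in\{5,6,7,8,9,10\}$ I would factor the face function of the dominant face as a product $\prod (y_1+\alpha_i x^{a_i})^{\nu_i}$ (after using the quadratic structure of $f_5$), list the admissible multiplicity patterns $(\nu_i)$ compatible with $\iota$, and for each pattern either (a) read the non-degenerate type directly, (b) perform a triangular change $y_1\mapsto y_1+\text{poly}(x)$ and handle the resulting second face, or (c) perform a toric modification centered at the weight vector $P$ and iterate on the translated toric coordinates $(u,v_k)$ at each multiple root $\xi_i$. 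Lemmas \ref{c1} and \ref{c2} are exactly this procedure carried out for $\iota=5,6$; I would carry out the identical procedure for $\iota=7,8,9,10$, which gives the lists in items $(3)$--$(6)$ of the proposition.

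The main obstacle is bookkeeping rather than conceptual: for $\iota=9$ and $\iota=10$ several degeneration patterns produce a face function with a triple or quadruple root, and after the first toric modification the singularity at the resulting $\xi_i$ is still not in Brieskorn--Pham form, so one must decide between applying Lemma \ref{termination1}/\ref{termination4} at $\xi_i$ or performing a further admissible translated-toric change $v_k=v_{k-1}+h(u)$. In particular, the symmetry remark following Lemma \ref{c1} (giving $B_{k_2,2}\circ B_{2,1}\circ(B_{2,1}^2)^{B_{k_1,2}}\sim B_{k_1+4,2}\circ B_{2,1}\circ(B_{2,1}^2)^{B_{k_2-4,2}}$) has to be re-derived at each such $\iota$ to trim the apparently larger list to the minimal one stated in items $(3b)$--$(6b)$; this is the only step where care is needed beyond direct Newton polygon computation.
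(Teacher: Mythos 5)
Your proposal follows essentially the same route as the paper: $\iota=3$ via Lemma \ref{cuspidal}, case (I) via case (2) of Lemma \ref{termination4}, and case (II) by the Newton-boundary/toric-modification analysis of Lemmas \ref{c1} and \ref{c2} (including the symmetry trimming $B_{k_2,2}\circ B_{2,1}\circ(B_{2,1}^2)^{B_{k_1,2}}\sim B_{k_1+4,2}\circ B_{2,1}\circ(B_{2,1}^2)^{B_{k_2-4,2}}$), extended by the identical procedure to $\iota=7,\dots,10$ — which is exactly what the paper does, as it likewise only states that the remaining cases are carried out ``in the exact same way.'' The only minor looseness is that you fold the $\iota=4$ subcase of (II) (which the paper treats separately, giving $B_{8,5}$ directly from a single non-degenerate face) into the general expansion, but this does not affect the argument.
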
    
%%%%%%%%%%%%%%%%%
%%%   (b-2) %%%%%
%%%%%%%%%%%%%%%%%
\noindent {\bf{(b-2)}}
Assume that $C_2$ is a union of two lines passing through the origin
 $(m_2=2)$ and
 the tangent cone of $C_5$ consists of
a double line $\{y=0\}$ and a single line is $\{x=0\}$.\\
So we assume that
$f_5(x,y)=b_{12} xy^2+b_{40}x^4+b_{04}y^4+({\text{higher terms}})$. 
We assume two lines of $C_2$ are defined by  $\ell_1:y+\alpha_1 x=0$, 
$\ell_2:\alpha_2 y+x=0$ and we put 
$\iota_{i}=I(\ell_i,C_5;O)\ge3$ $(i= 1,2)$.
Then we have $\iota=\iota_1+\iota_2 \geq 6$.
%%%%%%%%%%%%%%%%%%%%%%%%%%%%%%
If $\iota=6$, we have $(\iota_1,\iota_2)=(3,3)$
$(\alpha_1,\alpha_2 \, \ne0)$.
If $\iota\ge7$, 
we have several possibility of $(\iota_1,\iota_2)$.
\begin{center}
%WinTpicVersion3.08
\unitlength 0.1in
\begin{picture}( 24.3700,  7.7000)(  5.2000,-13.3000)
% STR 2 0 3 0
% 3 835 900 835 1000 5 0
% $(3,3)$
\put(8.3500,-10.0000){\makebox(0,0){$(3,3)$}}%
% VECTOR 2 0 3 0
% 2 1045 930 1205 850
% 
\special{pn 8}%
\special{pa 1046 930}%
\special{pa 1206 850}%
\special{fp}%
\special{sh 1}%
\special{pa 1206 850}%
\special{pa 1136 862}%
\special{pa 1158 874}%
\special{pa 1154 898}%
\special{pa 1206 850}%
\special{fp}%
% VECTOR 2 0 3 0
% 2 1045 1070 1207 1145
% 
\special{pn 8}%
\special{pa 1046 1070}%
\special{pa 1208 1146}%
\special{fp}%
\special{sh 1}%
\special{pa 1208 1146}%
\special{pa 1156 1100}%
\special{pa 1160 1124}%
\special{pa 1138 1136}%
\special{pa 1208 1146}%
\special{fp}%
% STR 2 0 3 0
% 3 1430 710 1430 810 5 0
% $(4,3)$
\put(14.3000,-8.1000){\makebox(0,0){$(4,3)$}}%
% VECTOR 2 0 3 0
% 2 1635 730 1795 650
% 
\special{pn 8}%
\special{pa 1636 730}%
\special{pa 1796 650}%
\special{fp}%
\special{sh 1}%
\special{pa 1796 650}%
\special{pa 1726 662}%
\special{pa 1748 674}%
\special{pa 1744 698}%
\special{pa 1796 650}%
\special{fp}%
% VECTOR 2 0 3 0
% 2 1635 870 1797 945
% 
\special{pn 8}%
\special{pa 1636 870}%
\special{pa 1798 946}%
\special{fp}%
\special{sh 1}%
\special{pa 1798 946}%
\special{pa 1746 900}%
\special{pa 1750 924}%
\special{pa 1728 936}%
\special{pa 1798 946}%
\special{fp}%
% STR 2 0 3 0
% 3 1430 1060 1430 1160 5 0
% $(3,4)$
\put(14.3000,-11.6000){\makebox(0,0){$(3,4)$}}%
% VECTOR 2 0 3 0
% 2 1640 1090 1800 1010
% 
\special{pn 8}%
\special{pa 1640 1090}%
\special{pa 1800 1010}%
\special{fp}%
\special{sh 1}%
\special{pa 1800 1010}%
\special{pa 1732 1022}%
\special{pa 1752 1034}%
\special{pa 1750 1058}%
\special{pa 1800 1010}%
\special{fp}%
% VECTOR 2 0 3 0
% 2 1640 1230 1802 1305
% 
\special{pn 8}%
\special{pa 1640 1230}%
\special{pa 1802 1306}%
\special{fp}%
\special{sh 1}%
\special{pa 1802 1306}%
\special{pa 1750 1260}%
\special{pa 1754 1284}%
\special{pa 1734 1296}%
\special{pa 1802 1306}%
\special{fp}%
% STR 2 0 3 0
% 3 2030 545 2030 645 5 0
% $(5,3)$
\put(20.3000,-6.4500){\makebox(0,0){$(5,3)$}}%
% STR 2 0 3 0
% 3 2030 895 2030 995 5 0
% $(4,4)$
\put(20.3000,-9.9500){\makebox(0,0){$(4,4)$}}%
% STR 2 0 3 0
% 3 2035 1210 2035 1310 5 0
% $(3,5)$
\put(20.3500,-13.1000){\makebox(0,0){$(3,5)$}}%
% STR 2 0 3 0
% 3 3165 920 3165 1020 5 0
% $(5,5)$
\put(31.6500,-10.2000){\makebox(0,0){$(5,5)$}}%
% STR 2 0 3 0
% 3 2620 715 2620 815 5 0
% $(5,4)$
\put(26.2000,-8.1500){\makebox(0,0){$(5,4)$}}%
% STR 2 0 3 0
% 3 2620 1065 2620 1165 5 0
% $(4,5)$
\put(26.2000,-11.6500){\makebox(0,0){$(4,5)$}}%
% VECTOR 2 0 3 0
% 2 2245 960 2405 880
% 
\special{pn 8}%
\special{pa 2246 960}%
\special{pa 2406 880}%
\special{fp}%
\special{sh 1}%
\special{pa 2406 880}%
\special{pa 2336 892}%
\special{pa 2358 904}%
\special{pa 2354 928}%
\special{pa 2406 880}%
\special{fp}%
% VECTOR 2 0 3 0
% 2 2245 630 2407 705
% 
\special{pn 8}%
\special{pa 2246 630}%
\special{pa 2408 706}%
\special{fp}%
\special{sh 1}%
\special{pa 2408 706}%
\special{pa 2356 660}%
\special{pa 2360 684}%
\special{pa 2338 696}%
\special{pa 2408 706}%
\special{fp}%
% VECTOR 2 0 3 0
% 2 2245 1330 2405 1250
% 
\special{pn 8}%
\special{pa 2246 1330}%
\special{pa 2406 1250}%
\special{fp}%
\special{sh 1}%
\special{pa 2406 1250}%
\special{pa 2336 1262}%
\special{pa 2358 1274}%
\special{pa 2354 1298}%
\special{pa 2406 1250}%
\special{fp}%
% VECTOR 2 0 3 0
% 2 2245 1030 2407 1105
% 
\special{pn 8}%
\special{pa 2246 1030}%
\special{pa 2408 1106}%
\special{fp}%
\special{sh 1}%
\special{pa 2408 1106}%
\special{pa 2356 1060}%
\special{pa 2360 1084}%
\special{pa 2338 1096}%
\special{pa 2408 1106}%
\special{fp}%
% VECTOR 2 0 3 0
% 2 2795 1150 2955 1070
% 
\special{pn 8}%
\special{pa 2796 1150}%
\special{pa 2956 1070}%
\special{fp}%
\special{sh 1}%
\special{pa 2956 1070}%
\special{pa 2886 1082}%
\special{pa 2908 1094}%
\special{pa 2904 1118}%
\special{pa 2956 1070}%
\special{fp}%
% VECTOR 2 0 3 0
% 2 2795 870 2957 945
% 
\special{pn 8}%
\special{pa 2796 870}%
\special{pa 2958 946}%
\special{fp}%
\special{sh 1}%
\special{pa 2958 946}%
\special{pa 2906 900}%
\special{pa 2910 924}%
\special{pa 2888 936}%
\special{pa 2958 946}%
\special{fp}%
\end{picture}%
\end{center}
Above diagram depend only the numbers 
$(\alpha_1,\alpha_2,b_{40},b_{04})$.
%For example, $(5,3)$ is obtained by $\alpha_1=b_{40}=0$ and $\alpha_2, \ b_{04}\ne0$.
%%%%%%%%%%%%%%%%%%%%%%%%%%%%%%
\begin{proposition}
Suppose that $C_{2}$ is a union of two lines and the tangent cone of the
quintic $C_5$ consists of  a double line and a single line. 
Then we have the following possibilities.
\begin{enumerate}
\item If $\iota=6$,
then we have \begin{enumerate}
\item $(C,O)\sim {(B_{3,2}^2)}^{B_{4,2}}\circ B_{2,6}$, 
\item $(C,O)\sim B_{8,4} \circ  B_{2,6}$ and 
$B_{k,2} \circ B_{4,2} \circ B_{2,6}\ (5\le k \le 12)$.
\end{enumerate}
%%%%%%%%%%%%%%%%%%%%%%%%%%%%%%%%%%%%%
\item If $\iota=7$,
then we have two cases:
$(\iota_1,\iota_2)=(4,3)$ or $(3,4)$.
\begin{enumerate}
\item If $(\iota_1,\iota_2)=(4,3)$, we have 
\begin{enumerate}
\item $(C,O)\sim {(B_{3,2}^2)}^{B_{4,2}} \circ B_{2,11}$,
\item $(C,O)\sim B_{8,4} \circ B_{2,11}$ and 
$B_{k,2} \circ B_{4,2} \circ  B_{2,11}\,(5\le k \le 10)$.
\end{enumerate}
\item If $(\iota_1,\iota_2)=(3,4)$, we have 
 $(C,O)\sim {(B_{3,2}^2)}^{B_{9,2}}\circ B_{2,6}$.
\end{enumerate}
%%%%%%%%%%%%%%%%%%%%%%%%%%%%%%%%%%%%%
\item If $\iota=8$,
then we have two cases: 
$(\iota_1,\iota_2)=(5,3)$ or $(4,4)$ or $(3,5)$.
\begin{enumerate}
\item If $(\iota_1,\iota_2)=(5,3)$, we have 
$(C,O)\sim (B_{3,2}^2)^{B_{4,2}}\circ B_{2,16}$.
\item If $(\iota_1,\iota_2)=(4,4)$, we have 
$(C,O)\sim {(B_{3,2}^2)}^{B_{9,2}} \circ B_{2,11}$. 
\item If $(\iota_1,\iota_2)=(3,5)$, we have 
\begin{enumerate}
\item  $(C,O)\sim (B_{4,2}^2)^{2B_{5,2}}\circ B_{2,6}$,
\item  $(C,O)\sim {(B_{5,2}^2)}^{B_{6,2}}\circ  B_{2,6}$, 
${(B_{6,2}^2)}^{2{B}_{1,2}} \circ B_{2,6}$ and $B_{13,4} \circ B_{2,6}$.
\end{enumerate}
\end{enumerate}
\item If $\iota=9$,
then we have two cases:
$(\iota_1,\iota_2)=(5,4)$ or $(4,5)$.
\begin{enumerate}
\item If $(\iota_1,\iota_2)=(5,4)$, we have 
$(C,O)\sim {(B_{3,2}^2)}^{B_{9,2}} \circ B_{2,16}$. 
\item If $(\iota_1,\iota_2)=(4,5)$, we have 
\begin{enumerate}
\item $(C,O)\sim (B_{4,2}^2)^{2B_{5,2}}\circ B_{2,11}$,
\item $(C,O)\sim {(B_{5,2}^2)}^{B_{6,2}}\circ  B_{2,11}$, 
${(B_{6,2}^2)}^{2{B}_{1,2}} \circ B_{2,11}$ and 
$ B_{13,4} \circ B_{2,11}$.
\end{enumerate}
\end{enumerate}
If $\iota=10$, 
then we have
\begin{enumerate}
\item $(C,O)\sim (B_{4,2}^2)^{2B_{5,2}}\circ B_{2,16}$,
\item $(C,O)\sim {(B_{5,2}^2)}^{B_{6,2}}\circ  B_{2,16}$, 
      ${(B_{6,2}^2)}^{2{B}_{1,2}} \circ B_{2,16}$ and $B_{13,4} \circ B_{2,16}$.
      \end{enumerate}
\end{enumerate}
\end{proposition}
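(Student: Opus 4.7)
The plan is to adapt the strategies used in Propositions~\ref{caseII-3} and~\ref{CaseIII-2}, where one of $C_2, C_5$ already had a non-reduced tangent cone, to the present situation in which both factors do. In the given coordinates
\[
f_5(x,y) = b_{12}xy^2 + b_{40}x^4 + b_{04}y^4 + (\text{higher terms})
\]
with tangent cone $xy^2 = 0$, and $f_2 = \ell_1 \ell_2$, the first observation is that $I(\ell_j, C_5; O) = m_5 = 3$ unless $\ell_j$ is tangent to a component of the tangent cone of $C_5$: thus $\iota_1 \ge 4$ iff $\alpha_1 = 0$ (so $\ell_1 : y = 0$ is tangent to the double component), and $\iota_2 \ge 4$ iff $\alpha_2 = 0$ (so $\ell_2 : x = 0$ is tangent to the simple component). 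This explains the list of admissible pairs $(\iota_1,\iota_2)$ drawn in the preamble of the statement and pins down, for each pair, the vanishing pattern of the coefficients of $f_5$.

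For each pair $(\iota_1, \iota_2)$, I would change to local coordinates $(x_1, y_1)$ absorbing into $y_1$ whatever triangular correction is needed to put $\ell_1 = \{y_1 = 0\}$ (and analogously for the other line), keeping track of how $f_5$ transforms. The Newton polygon of $f = f_5^2 + f_2^5$ then has at most three compact faces: two outer faces whose non-degeneracy yields directly pieces of Brieskorn--Pham type $B_{2,\,5\iota_1 - c_1}$ and $B_{5\iota_2 - c_2,\,2}$ by Theorem~2.1 of~\cite{Oka-milnor}, and a middle face controlling the interaction of $f_5^2$ with the interior vertex $(5,5)$ contributed by $f_2^5 = c(x_1 y_1)^5 + \cdots$. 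When the middle face is non-degenerate the central local type is read off directly; when it is degenerate, one toric modification with the canonical weight vector $P$, followed by admissible translated toric coordinates at each intersection of the strict transform with $\hat E(P)$, either terminates the resolution or reduces it to an already-classified non-degenerate germ of the form treated in Lemma~\ref{termination1} or Lemma~\ref{termination4}.

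The hard part will be the subcases with $\iota_2 = 5$ (items (3)(c), (4)(b) and (5) of the proposition). Here the line $\ell_2$ has maximal tangency with the simple component $x=0$ of the tangent cone of $C_5$, and the middle face after the first toric modification can itself be degenerate in several distinct ways according to coincidences among the weighted leading roots, exactly as in Lemma~\ref{c1} and in the analysis of $(C_5,O)\sim B_{5,2}$ carried out in Proposition~\ref{caseII-2}. I would therefore reuse the list of central types already classified in Proposition~\ref{caseII-2}(3)--(8), concatenating each of them via the $\circ$ notation with the outer piece $B_{2,\,5\iota_1-c_1}$ coming from $\ell_1$; this yields precisely the list $(B_{4,2}^2)^{2B_{5,2}} \circ B_{2,6}$, $(B_{5,2}^2)^{B_{6,2}} \circ B_{2,6}$, $(B_{6,2}^2)^{2B_{1,2}} \circ B_{2,6}$, $B_{13,4} \circ B_{2,6}$ appearing in item (3)(c), and its $B_{2,11}$ and $B_{2,16}$ analogues in items (4)(b) and (5). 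A final Bezout check $\iota_1 + \iota_2 = \iota \le 10$ ensures that the enumeration is complete and that no further topological type appears.
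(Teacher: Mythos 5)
Your overall strategy --- enumerate the pairs $(\iota_1,\iota_2)$ through the coefficient conditions $\alpha_i=0$, $b_{40}=0$, $b_{04}=0$, then analyse $\Gamma(f;x,y)$ face by face with triangular coordinate changes and a single toric modification where a face function is a perfect square --- is indeed the argument the paper intends here (the paper omits the proof, declaring it parallel to Proposition \ref{caseII-2}), and your identification of when $\iota_j\ge 4$ is correct. But two of your concrete claims would fail if executed. First, the combinatorial picture is not ``two non-degenerate outer faces plus a middle face through the interior vertex $(5,5)$.'' The vertex $(2,4)$ coming from $(b_{12}xy^2)^2$ always lies strictly below the segment of $\Gamma(f_2^5)$, so $(5,5)$ never appears on $\Gamma(f)$; moreover \emph{both} faces of $\Gamma(f)$ are initially squares of faces of $f_5$ (for the paper's own example $f=x^5y^5+(y^5+xy^2+x^5)^2$ the steep face function is $y^4(x+y^3)^2$). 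The factor $B_{2,5\iota_1-9}$ is not read off from the initial polygon: it only emerges after an iterated triangular change $x\mapsto x+cy^2+\cdots$ which keeps producing square face functions until the face finally reaches a vertex of $\Gamma(f_2^5)$ (e.g.\ $(0,20)$ in the example, giving $B_{2,16}$), and the position of that terminating vertex is exactly where the tangency data of $C_2$ enters. Your sketch gives no mechanism for this, and it is the step that produces the heights $6,11,16$.

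Second, and more seriously, the ``central types'' for the hard subcases cannot be imported from Proposition \ref{caseII-2}(3)--(8). The superscript in $(B_{n,2}^2)^{\ast}$ records the singularities of the strict transform on the exceptional divisor after the toric modification, and these are controlled by the gap $5\,d(P;f_2)-d(P;f_5^2)$; with $f_2$ smooth (Proposition \ref{caseII-2}) one obtains $(B_{4,2}^2)^{(B_{5\iota-18,2}+B_{2,2})}$, $(B_{5,2}^2)^{B_{5,2}}$, $(B_{6,2}^2)^{(B_{5\iota-27,2}+B_{3,2})}$, $B_{10,4},\dots,B_{25,4}$, none of which coincides with the required $(B_{4,2}^2)^{2B_{5,2}}$, $(B_{5,2}^2)^{B_{6,2}}$, $(B_{6,2}^2)^{2B_{1,2}}$, $B_{13,4}$. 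Concatenating the list of Proposition \ref{caseII-2} with $\circ B_{2,5\iota_1-9}$ would therefore produce an incorrect classification; the toric computation must be redone with $\pi^*\bigl(x^5(y+\alpha_1x)^5\bigr)$ in place of the pullback of a smooth conic. Relatedly, the B\'ezout bound $\iota_1+\iota_2\le 10$ only shows the list of pairs is complete; completeness of the list of topological types \emph{within} each pair still requires exhausting the possible degenerations of each face function and of each strict transform, which is the actual content of the proposition and is not addressed by your plan.
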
 
We omit the proof as it is parallel to that of Proposition \ref{caseII-2}.

%%%%%%%%%%%%%%%%%%%%%%%%
%%%%%%    (c)    %%%%%%%
%%%%%%%%%%%%%%%%%%%%%%%%
\subsubsection{Case III-(c)} In this case, 
we may assume that  the tangent cone of $(C_5,O)$ is given by $y^3=0$.
\\\\
%%%%%%%%%%%%%%%%%%%
%%%%   (c-1)   %%%%
%%%%%%%%%%%%%%%%%%%
\noindent
\textbf{(c-1)} Assume that $m_2=1$.
If $\iota=3$, we have $(C,O)\sim B_{6,5}$ by Lemma \ref{cuspidal}.
Therefore we consider the case $\iota\ge4$.
If $\iota=4$, we have $(C,O)\sim B_{8,5}$ as in  case (b-1).
If $\iota\ge5$, we get the following possibilities.
% as the singularity $(C,O)$. 
%%%%%%%%%%%%%%%%%%%%%%%%%%%%%%%

\begin{proposition}
Suppose that $C_2$ is smooth, $m_5=3$ and
the tangent cone of $C_5$ is 
%consists of 
a line with multiplicity $3$.
Then
the germ $(C,O)$ can be of type $B_{2\iota,5}$ for $\iota=3,4$ and 
\newline
if $\iota\ge 5$, we have the following possibilities.
\begin{enumerate}
\item If $\iota=5$, we have 
 $(C,O)\sim B_{5,10}$ and $B_{k,2} \circ B_{6,3}\ \ (k=5,\cdots,12)$. 
\item If $\iota=6$, we have 
 $(C,O)\sim B_{9,2}  \circ B_{6,3}$ and $B_{12,5}$.
\item If $\iota=7$, we have 
      $(C,O)\sim B_{14,2} \circ B_{6,3}$, $B_{7,2}\circ B_{8,3}$ and $B_{14,5}$.
\item If $\iota=8$, we have 
 $(C,O)\sim B_{19,2} \circ B_{6,3}$, $B_{12,2}  \circ B_{8,3}$ and  $B_{16,5}$.
\item If $\iota=9$, we have 
      $(C,O)\sim B_{24,2}\circ B_{6,3}$, $B_{17,2} \circ B_{8,3}$, 
       $B_{10,2} \circ B_{10,3}$ and $B_{18,5}$.   
\item If $\iota=10$, we have 
       $(C,O)\sim B_{29,2}\circ B_{6,3}$, $B_{22,2} \circ B_{8,3}$,
       $B_{15,2} \circ B_{10,3}$ and $B_{20,5}$.
\end{enumerate}
\end{proposition}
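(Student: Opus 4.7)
The plan is to reduce the entire statement to a case-by-case Newton boundary analysis analogous to what was done for Case III-(b-1), supplemented by toric modifications when the face function degenerates. The cases $\iota=3$ and $\iota=4$ are essentially immediate: for $\iota=3$, Lemma~\ref{cuspidal} applies directly (since $C_2$ is smooth, $C_5$ has multiplicity $3$, and $C_2$ meets $C_5$ with $\iota=3\le qm$ trivially satisfied); for $\iota=4$, I choose coordinates $(x,y_1)$ with $C_2=\{y_1=0\}$, write $f_5=\alpha y_1^3+c x^4+(\text{higher})$ with $c\ne0$, and verify that the Newton boundary of $f=f_5^2+f_2^5$ consists of a single face from $(0,5)$ to $(8,0)$ with weight $(5,8)$; since $\gcd(5,8)=1$ the face function automatically has no multiple factors, and Oka's theorem gives $(C,O)\sim B_{8,5}$.

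For $\iota\ge 5$, I set $C_2=\{y_1=0\}$ and record that $f_5=\alpha y_1^3+\sum_{i+2j\ge 5}e_{ij}x^iy_1^j$ with the constraints $e_{30}=e_{21}=e_{12}=0$ (tangent cone $y_1^3$) and $\text{ord}_x f_5(x,0)=\iota$. I then list the vertices of the Newton polygon of $f$: always $(0,5)$ from $y_1^5$ and $(2\iota,0)$ from $f_5(x,0)^2$, together with possibly $(6,2)$ when $e_{31}\ne 0$ (since $[f_5]_{(1,2),5}=e_{31}x^3y_1+e_{50}x^5$ and $([f_5]_{(1,2),5})^2$ produces $e_{31}^2x^6y_1^2$). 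The generic ``non-degenerate'' case corresponds to either a single face from $(0,5)$ to $(2\iota,0)$ (giving $B_{2\iota,5}$ by Oka), or the face from $(0,5)$ to $(10,0)$ when $\iota=5$ being non-degenerate as a polynomial $P(z)=z^5+(e_{31}z+e_{50})^2$ in $z=y_1/x^2$ (giving $B_{5,10}$).

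When a face degenerates, I apply the admissible toric modification with regular subdivision containing the face weight vectors $(1,2)$ and, for $\iota\ge 6$, $(1,3)$. On each chart I factor out the exceptional contribution and study the strict transform $\tilde C$. The intersections of $\tilde C$ with $\hat E(P_1)$ yield smooth branches that assemble into a $B_{6,3},\,B_{8,3}$, or $B_{10,3}$ factor (depending on how the cubic factor sits on the weighted face). The intersection of $\tilde C$ with $\hat E(P_2)$ (for $\iota\ge 6$) or with the inner exceptional of the second modification (for $\iota=5$ with a double root) gives a cuspidal point $\xi'$ whose local type is $B_{3,2}$ in the modified chart. This type, however, does not equal the $B_{k,2}$ contribution to $(C,O)$: one must push the normalization $u''=t^2,\,w=t^3+\epsilon(t)$ forward through the toric map to the coordinates $(x,y_1)$ and compute the semigroup of the resulting branch.

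The hardest part of the argument is exactly this Puiseux/semigroup analysis, because the naive $t$-expansion of $y_1$ typically contains a nonzero $t^7$ (for $\iota=6$) arising from the interaction between $u^2$ and $v^3$ in the chart $(P_2,E_2)$, and one must recognize that this term is absorbed by $-x^3$ and its corrections $-x^3-x^4-\cdots$ in the semigroup, leaving the first genuinely new exponent at $9$ (giving $B_{9,2}$), or at $5\iota-21$ in general. Once this is set up, the enumerations for $\iota=6,\dots,10$ all follow the same pattern: the generic ``single face'' case produces $B_{2\iota,5}$; the generic ``double face'' case ($e_{31}\ne 0$) produces $B_{5\iota-21,2}\circ B_{6,3}$; imposing an additional condition so that the Newton boundary gets a further break yields the series $B_{5\iota-5\cdot7,2}\circ B_{8,3}$ and $B_{5\iota-5\cdot7-7,2}\circ B_{10,3}$. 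Finally, for the $\iota=5$ case one still needs the extra remark that, after moving the double root of $P(z)$ to the origin and letting the translated coefficient of $x^{11+r}$ in $f(x,0)$ be the first nonzero one, the second face of the new Newton boundary runs from $(6,2)$ to $(11+r,0)$ for $r=0,1,\dots,7$, producing the range $k=5,\dots,12$ in $B_{k,2}\circ B_{6,3}$.
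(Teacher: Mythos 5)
Your overall strategy is the right one and is exactly what the paper intends here (the paper omits this proof, referring to the parallel arguments of Lemma \ref{c1} and Lemma \ref{c2}): put $C_2=\{y_1=0\}$, read off the Newton boundary of $f=f_5^2+f_2^5$ from the vertices $(0,5)$, $(2j,2)$ and $(2\iota,0)$, and resolve the degenerate lower face by triangular changes or a toric modification. Your treatment of $\iota=3,4$ and of the generic $\iota=5$ dichotomy ($B_{5,10}$ versus a double root of $P(z)$) is correct. However, there are three concrete problems. First, your closed formulas for the degenerate subfamilies are wrong: if $e_{31}=\dots=e_{j-1,1}=0$ and $e_{j1}\ne 0$, the second vertex is $(2j,2)$, the branch of $C_5$ cut out by the face $(j,1)$--$(\iota,0)$ of $\Gamma(f_5)$ has order $\iota-j$, and the lower face runs from $(2j,2)$ to $(5(\iota-j),0)$, giving $B_{5\iota-7j,2}\circ B_{2j,3}$ (valid when $\iota>10j/3$); so the three families are $B_{5\iota-21,2}\circ B_{6,3}$, $B_{5\iota-28,2}\circ B_{8,3}$, $B_{5\iota-35,2}\circ B_{10,3}$. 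Your expressions $B_{5\iota-35,2}\circ B_{8,3}$ and $B_{5\iota-42,2}\circ B_{10,3}$ do not reproduce the entries of the proposition (e.g.\ for $\iota=8$ they give $B_{5,2}\circ B_{8,3}$ instead of $B_{12,2}\circ B_{8,3}$).

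Second, the ``$t^7$ absorbed by $-x^3$'' step, which you identify as the hardest part, is not correct as stated and is unnecessary. Taking $y\mapsto y-\psi(x)$ along the (smooth, integer-exponent) branch $\psi$ of $C_5$ determined by the face $(j,1)$--$(\iota,0)$, one has $f_5(x,y+\psi)=e_{j1}x^jy(1+\cdots)+O(y^2)$ with no term free of $y$, so the lower face function is simply $e_{j1}^2x^{2j}y^2+c\,x^{5(\iota-j)}$ with $c=(a_{01}c_\psi)^5\ne0$ and is visibly non-degenerate; equivalently, in the normalization $x=t^2$, $y_1=\psi(t^2)+t^{2j}v(t)$ the only odd powers of $t$ come from $t^{2j}v(t)$ and start at $t^{5(\iota-j)-2j+2j}$ hence at exponent $\tfrac{5(\iota-j)}{2}$ in $x$; there is no $t^7$ term for $\iota=6$, and if one were genuinely present it could not be cancelled by integer powers of $x$ and would force $B_{7,2}$ rather than $B_{9,2}$. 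Third, for $\iota=5$ your list is complete only if $P(z)=cz^5+(e_{31}z+e_{50})^2$ cannot have a root of multiplicity $\ge3$ nor two double roots; both configurations do occur in the parallel case III-(b) (items (3) and (4) of Lemma \ref{c1}) and there produce additional singularity types, so you must rule them out here. A short computation with $P-\tfrac{z}{5}P'=\tfrac{3}{5}e_{31}^2z^2+\tfrac{8}{5}e_{31}e_{50}z+e_{50}^2$ does exclude both, but this step cannot be omitted.
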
 
We omit 
the proof as it is  parallel to Lemma \ref{c1} and Lemma \ref{c2}.

%%%%%%%%%%%%%%%%%%%%%%%%%%%%%
%%%%%%%  (c-2) (2-1)  %%%%%%%
%%%%%%%%%%%%%%%%%%%%%%%%%%%%%
\noindent \textbf{(c-2)}
Assume that $C_2$ is two lines passing through the origin  and 
the tangent cone of quintic $C_5$ 
is defined by $\{y^3=0\}$.
Then we have $\iota \geq 6$ and
we can list the possibilities as in the  following proposition.
%%%%%%%%%%%%%%%%%%%%%%%%%%%%%%%
%%%%%    [Proposition]   %%%%%%
%%%%%%%%%%%%%%%%%%%%%%%%%%%%%%%
\begin{proposition}
Suppose that $C_2$ is two lines passing through the origin
and the tangent cone of quintic $C_5$ 
is defined by $\{y^3=0\}$.
Then we have: %the possibility of $(C,O)$. 
\begin{enumerate}
\item If $\iota=6$, we have
%\begin{enumerate}
 $(C,O)\sim{(B_{4,3}^2)}^{B_{6,2}}$,\
 $ B_{4,2}\circ {(B_{3,2}^2)}^{B_{2,2}}$, 
      $B_{10,6}$ or $B_{6,3}\circ B_{5,3}$.
%\end{enumerate}
\item If $\iota=7$ we have $(C,O)\sim{(B_{4,3}^2)}^{B_{11,2}}$. 
   
\item If $\iota=8$ we have 
%\begin{enumerate}
 $(C,O)\sim B_{9,2}\circ {(B_{3,2}^2)}^{B_{7,2}}$ or 
 $(C,O)\sim {(B_{5,3}^2)}^{B_{10,2}}$. 
%\end{enumerate}
\end{enumerate}
\end{proposition}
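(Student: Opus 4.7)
The plan is to classify the singularities case by case on $(\iota_1,\iota_2)$ with $\iota_i=I(\ell_i,C_5;O)$, where $f_2=\ell_1\ell_2$. After fixing coordinates so that the triple tangent cone of $C_5$ is $\{y^3=0\}$, write
\[
 f_5(x,y)=y^3+(\text{terms of degree}\ge 4),\qquad f_2(x,y)=\ell_1(x,y)\,\ell_2(x,y).
\]
Any line $\ell$ through $O$ transverse to $\{y=0\}$ satisfies $I(\ell,C_5;O)=m_5=3$. Since $C_5$ cannot contain $\{y=0\}$ as a component (else $O$ is not an isolated singularity of $C$), $I(y,C_5;O)=\operatorname{ord}_x f_5(x,0)\in\{4,5\}$. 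As two distinct lines can include $\{y=0\}$ at most once, the admissible pairs are $(3,3)$ for $\iota=6$, $(4,3)$ (up to symmetry) for $\iota=7$, and $(5,3)$ for $\iota=8$; this also explains why $\iota\le 8$ in this sub-case.

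For $\iota=6$, parametrize $\ell_i:y+\alpha_ix=0$ with distinct nonzero $\alpha_i$. The Newton principal part of $f=f_5^2+f_2^5$ has a potential face joining $(0,6)$ (from $y^6\subset f_5^2$) to $(10,0)$ (from $f_2^5$), with intermediate vertices determined by which higher-order monomials appear in $f_5$. For generic $f_5$ the boundary has a single non-degenerate face and $(C,O)\sim B_{10,6}$. Specific vanishing conditions on the coefficients of $f_5$ (corresponding to more degenerate local types of $(C_5,O)$, such as $E_6$, $E_7$ or $E_8$-like) produce additional faces, some of which are degenerate. A toric modification with respect to the canonical regular simplicial subdivision, followed by admissible translated toric coordinates in the sense of Section~2, resolves each degeneration and yields the three remaining types ${(B_{4,3}^2)}^{B_{6,2}}$, $B_{4,2}\circ {(B_{3,2}^2)}^{B_{2,2}}$ and $B_{6,3}\circ B_{5,3}$, by exactly the pattern of calculation developed in Proposition~\ref{caseII-2}.

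For $\iota=7$ and $\iota=8$, we may take $\ell_1=\{y=0\}$ and $\ell_2:y+\alpha x=0$ with $\alpha\ne 0$, so that $f_2=y(y+\alpha x)+(\text{higher})$ and the extra vanishing in $f_5$ makes $I(y,C_5;O)=\iota-3$. The Newton boundary of $f$ has a principal degenerate face whose weight vector is determined by balancing $f_5^2$ against $f_2^5$. After one toric modification, the strict transform $\widetilde{C}$ decomposes into components whose remaining singularities are identified by direct application of Lemmas~\ref{termination1} and~\ref{termination4}. For $\iota=7$ this forces the unique type ${(B_{4,3}^2)}^{B_{11,2}}$; for $\iota=8$, a secondary subface of the Newton boundary after the first modification may or may not be degenerate, producing respectively $B_{9,2}\circ {(B_{3,2}^2)}^{B_{7,2}}$ and ${(B_{5,3}^2)}^{B_{10,2}}$.

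The main obstacle is the $\iota=6$ case, where four distinct topological types must be shown to exhaust the possibilities: each corresponds to a qualitatively different Newton configuration determined by the subtle interplay between the higher-order coefficients of $f_5$ and the fixed quadratic structure of $f_2^5$. For each degeneration one must carefully select admissible translated toric coordinates, verify non-degeneracy of the resulting face functions, and invoke Theorem~2.1 of \cite{Oka-milnor}. The cases $\iota=7$ and $\iota=8$ are more routine, following the template already developed in the preceding propositions of Section~4.
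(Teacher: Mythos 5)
Your overall strategy---splitting on $(\iota_1,\iota_2)$ with $\iota_i=I(\ell_i,C_5;O)$, bounding $\iota\le 8$ via $\operatorname{ord}_xf_5(x,0)\in\{4,5\}$ when one line is the tangent line, and then analysing the Newton boundary of $f=f_5^2+f_2^5$ with toric modifications---is exactly the method the paper intends (it omits this proof as ``parallel to the previous computations''), and your derivation of the admissible pairs $(3,3),(4,3),(5,3)$ is essentially correct. But your $\iota=6$ analysis contains a concrete error: you claim that for generic $f_5$ the boundary is a single non-degenerate face giving $B_{10,6}$. This is backwards. Writing $f_5=y^3+b_{40}x^4+b_{31}x^3y+b_{50}x^5+\cdots$ and $\ell_i:y+\alpha_ix=0$, the generic condition is $b_{40}\ne 0$, and then $\Gamma(f)$ is the single face from $(0,6)$ to $(8,0)$ with weight $P={}^t(3,4)$; since $d(P;f_2^5)=30>24=d(P;f_5^2)$, the face function is $(y^3+b_{40}x^4)^2$, a perfect square and hence \emph{always} degenerate. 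One toric modification yields $B_{30-24,2}=B_{6,2}$ on $\hat E(P)$, so the generic type is $(B_{4,3}^2)^{B_{6,2}}$ (Milnor number $41$), while $B_{10,6}$ ($\mu=45$) is the degeneration requiring $b_{40}=b_{31}=0$ plus a non-degeneracy condition relating $b_{50}$ and $\alpha_1\alpha_2$ --- exactly as recorded in the $\dagger$-lists of Theorem \ref{res} and of the linear-torus classification, where $B_{10,6}$ and $B_{6,3}\circ B_{5,3}$ appear as degenerations of $(B_{4,3}^2)^{B_{6,2}}$. The structural point you missed is that on any face of $\Gamma(f)$ not reached by $\Gamma(f_2^5)$, the face function of a torus curve is automatically the square of $f_{5,\Delta}$ and can never be non-degenerate.

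A related imprecision affects $\iota=8$: the dichotomy between $B_{9,2}\circ(B_{3,2}^2)^{B_{7,2}}$ and $(B_{5,3}^2)^{B_{10,2}}$ is not produced by ``a secondary subface after the first modification'' but is already visible on $\Gamma(f;x,y)$ itself. With $b_{40}=0$, $b_{50}\ne0$, one gets two faces (weights ${}^t(2,3)$ and ${}^t(1,2)$, with face functions $y^2(y^2+b_{31}x^3)^2$ and $x^6(b_{31}y+b_{50}x^2)^2$) when $b_{31}\ne0$, and a single face of weight ${}^t(3,5)$ when $b_{31}=0$; in the latter case $d(P;f_2^5)-d(P;f_5^2)=40-30=10$ gives the exponent in $(B_{5,3}^2)^{B_{10,2}}$. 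Beyond these points the remaining assertions are asserted rather than computed, so as written the proposal does not establish that the listed types exhaust the possibilities; completing it requires the explicit case analysis on the coefficients $b_{40},b_{31},b_{50}$ and $\alpha_1\alpha_2$ for each face, in the manner carried out in Propositions \ref{caseII-2} and \ref{caseII-3}.
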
  
The proof is parallel to the previous computations.

\subsection{Case IV : $m_{5}$ = 4}
We divide Case IV into five subcases by type of tangent cone $T_OC_5$ of $C_{5}$.\\
(a)  $T_OC_5$ consists of distinct four lines. \\
(b)  $T_OC_5$ consists of a double line  and distinct two lines. \\
(c)  $T_OC_5$ consists of a triple line and a simple line.\\
(d)  $T_OC_5$ consists of a line with multiplicity 4. \\
(e)  $T_OC_5$ consists of two double lines.\\

First we remark that if $C_2$ is smooth and 
$C_2$ intersects transversely with $C_5$ at the origin $(\iota=4)$,
$(C,O) \sim B_{8,5}$ by
Lemma \ref{cuspidal}.
So hereafter, we consider the case $C_2$ and the
tangent cone of $C_5$ does not intersect transversely.
First we prepare the following Lemma.
\begin{lemma}\label{termination8} 
Suppose that the conic $C_2$ is smooth and 
let $(x,y_1)$ be a local coordinate system so that $C_2$ is defined 
by $y_1=0$.
We put 
$f_5(x,y_1)=y_1(y_1+c_1x)(y_1+c_2x)(y_1+c_3x)+{\text{(higher terms)}}$.   
Then 
\begin{enumerate}
\item If $c_{i}\ne0$ for $i=1,2,3$, then
$(C,O)\sim B_{2\iota,5}$ for $\iota=4,5$ and 
if $\iota\ge5$, we have two series
$B_{k,2}\circ B_{6,3}$, $5\le k \le10$ for $\iota=5$ 
and $B_{5\iota-21,2}\circ B_{6,3}$ for $\iota = 6,\cdots,10$.

\item If $c_{1}=0$ and  $ c_{i}\ne0$ for $i=2,3$, we have
$(C,O)\sim B_{2\iota,5}$ for $\iota=4,5,6$ and 
$(C,O)\sim B_{5\iota-28,2}\circ B_{8,3}$ for $\iota=7,\cdots, 10$.
\item If $c_{1}=c_2=0$ and  $c_{3}\ne0$, 
we have  $(C,O)\sim B_{2\iota,5}$ for $\iota=4,\cdots, 8$ and 
$(C,O)\sim B_{5\iota-35,2}\circ B_{10,3}$ for $\iota=9,10$.
\item If $c_{i}=0,\ (i=1,2,3)$, we have
$(C,O)\sim B_{2\iota,5}$ for $\iota=4,\cdots, 10$.
\end{enumerate}
\end{lemma}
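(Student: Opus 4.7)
The plan is to fix coordinates $(x, y_1)$ with $C_2 = \{y_1 = 0\}$, so that $f_2 = y_1 + (\text{higher})$, and to distinguish the four sub-cases by the multiplicity $\mu \in \{1, 2, 3, 4\}$ with which $y_1$ divides $(f_5)_4$; cases (1)--(4) correspond to $\mu = 1, 2, 3, 4$, respectively. Writing $f_5(x, 0) = b\,x^{\iota} + (\text{higher})$ pins down the intersection multiplicity. The strategy is uniform: compute the Newton boundary of $f = f_5^2 + f_2^5$ in $(x, y_1)$; where the boundary is non-degenerate, read off the type directly via Theorem 2.1 of \cite{Oka-milnor}; where it is degenerate, apply a triangular change $(x, y_1) \mapsto (x, y_2 = y_1 + \sum_j d_j x^j)$ designed to clear the degenerate terms, so that the resulting Newton boundary splits into non-degenerate faces, exactly as in the proofs of Propositions \ref{caseII-2} and \ref{CaseIII-2}.

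For \textbf{case (1)}, Lemma \ref{termination4} applies directly with $(p, q, m) = (5, 2, 4)$: the threshold $\iota_0 = \tfrac{p}{p-q}(m-1) = 5$ separates $(C, O) \sim B_{2\iota, 5}$ (for $\iota \le 5$) from $(C, O) \sim B_{5\iota-21, 2} \circ B_{6, 3}$ (for $\iota > 5$), with $\beta = p(\iota - m + 1) - q(m - 1) = 5\iota - 21$, matching part (1). The degenerate configurations at the critical value $\iota = 5$ produce the series $B_{k, 2} \circ B_{6, 3}$ with $5 \le k \le 10$ by choosing the extra higher-order coefficient to lie on a specified subvariety, exactly as in Lemma \ref{c1}.

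For \textbf{cases (2) and (3)}, Lemma \ref{termination4} no longer applies because $y_1^2 \mid (f_5)_4$. The central claim of the proof is that the same formula holds after replacing $m$ by the effective multiplicity $m_{\mathrm{eff}} = m + (\mu - 1) = \mu + 3$: the repeated factor $y_1^\mu$ forces $f_5^2$ to contribute terms below the naive line from $(0, 8)$ to $(2(4-\mu), 2\mu)$, shifting the effective threshold to $\iota_0 = \tfrac{5}{3}(\mu + 2)$, namely $20/3$ for $\mu = 2$ and $25/3$ for $\mu = 3$. Below the threshold (i.e., $\iota \le 6$ for $\mu = 2$ and $\iota \le 8$ for $\mu = 3$), one verifies that the Newton boundary of $f$ in $(x, y_1)$ is the single segment from $(0, 5)$ to $(2\iota, 0)$ and that the face polynomial $\alpha y_1^5 + \beta x^{2\iota} + (\text{intermediate monomials})$ is non-degenerate, giving $B_{2\iota, 5}$. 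Above the threshold ($\iota \ge 7$ for $\mu = 2$ and $\iota \ge 9$ for $\mu = 3$), the suitable triangular change $(x, y_2)$ splits the boundary into two non-degenerate faces whose face functions yield $B_{2(m_{\mathrm{eff}} - 1), p-q} = B_{2\mu + 4, 3}$ (i.e., $B_{8, 3}$ or $B_{10, 3}$) on the lower face, and $B_{5\iota - 7\mu - 14, 2}$ (i.e., $B_{5\iota - 28, 2}$ or $B_{5\iota - 35, 2}$) on the upper face.

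\textbf{Case (4)} is the simplest: $\mu = 4$ gives $m_{\mathrm{eff}} = 7$ and $\iota_0 = 10$, coinciding with the maximum $\iota$ permitted by B\'ezout. Hence for every admissible $\iota \le 10$ the Newton boundary of $f$ is the single segment from $(0, 5)$ to $(2\iota, 0)$, and one checks that $f$ is non-degenerate on it, giving $B_{2\iota, 5}$. The main obstacle throughout is the non-degeneracy verification on the principal face for the boundary values $\iota = 6$ in case (2) and $\iota = 8$ in case (3): since the face polynomial carries intermediate monomials arising both from cross terms in $f_5^2$ and from $5 y_1^4 \cdot (\text{h.o.t.\ of } f_2)$ in $f_2^5$, one must verify that it has no multiple roots in $(\mathbb{C}^*)^2$. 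This uses crucially that the nonzero $c_i$ are pairwise distinct; with this in hand, Theorem 2.1 of \cite{Oka-milnor} delivers the stated Brieskorn--Pham type.
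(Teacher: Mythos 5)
Your overall strategy coincides with the paper's: straighten $C_2$ to $\{y_1=0\}$, analyse the Newton boundary of $f=f_5^2+f_2^5$ in $(x,y_1)$, and resolve degenerate faces by triangular changes; your reduction of case (1) to Lemma \ref{termination4} (whose hypotheses hold here, since $y_1=0$ is a simple tangent line of $T_OC_5$ and $5<2\cdot4$) is a clean and legitimate shortcut, and every formula you produce agrees with the statement. The gap is in the ``effective multiplicity'' step for cases (2)--(4). The threshold $\iota_0=\tfrac{5}{3}(\mu+2)$ is governed by the lowest vertex $(\kappa,1)$ of $\Gamma(f_5;x,y_1)$ on the line $\nu_2=1$: the boundary of $f$ breaks at $(2\kappa,2)$ exactly when $\iota>\tfrac{5\kappa}{3}$. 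So your claim amounts to $\kappa=\mu+2$, i.e.\ that the coefficients of $x^jy_1$ in $f_5(x,y_1)$ vanish for $j\le\mu+1$ while the coefficient of $x^{\mu+2}y_1$ is nonzero. Neither is visible from the tangent cone alone: the degree-five part of $f_5$ contributes a term $\beta_{41}x^4y_1$, and if $\beta_{41}\ne0$ were possible in case (3) with $\iota=7$, the boundary would break at $(8,2)$ and give $B_{7,2}\circ B_{8,3}$ instead of the asserted $B_{14,5}$. What saves the statement is that the straightening $y=y_1+\phi(x)$, $\phi(x)=ax^2+\cdots$ with $a\ne0$ (a smooth conic meets its tangent line with multiplicity exactly $2$), couples these coefficients to $A_0(x):=f_5(x,\phi(x))$, whose order is $\iota$. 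Writing $f_5(x,y_1)=\sum_jA_j(x)y_1^j$, one finds in case (3) for instance that $\operatorname{ord}A_0\ge7$ forces $\beta_{41}=0$, and $\operatorname{ord}A_0\ge8$ forces $\beta_{32}=-ac_3$, whence the $x^5$-coefficient of $A_1$ equals $3a^2c_3+2a\beta_{32}=a^2c_3\ne0$; this is what puts $\kappa$ at $\mu+2=5$ and not lower. This coefficient bookkeeping, carried out for each $\mu$, is the actual content of the lemma; your stated justification (``the repeated factor $y_1^\mu$ forces $f_5^2$ to contribute terms below the naive line'') does not rule out an earlier break.

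Two smaller corrections. Since $C_2=\{y_1=0\}$, in these coordinates $f_2=y_1\cdot(\text{unit})$, so $f_2^5$ contributes only the vertex $(0,5)$ and no intermediate monomials to any compact face; for $\iota$ below the threshold the principal face function is the binomial $y_1^5+\beta x^{2\iota}$ and its non-degeneracy is automatic. Consequently your closing claim that the verification ``uses crucially that the nonzero $c_i$ are pairwise distinct'' is both unnecessary and unavailable: no such hypothesis appears in the lemma, which is invoked in Case IV-(e) with $c_2=c_3$, and distinctness of the $c_i$ plays no role in the argument.
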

\begin{proof}
We observe to the Newton boundary $\mathcal{N}(f,x,y_1)$.
We have
\[
\mathcal{N}(f,x,y_1)=y_1^5+c^2x^6y_1^2+2c\beta_{50}x^8y_1+
\beta_{50}^2x^{10},\quad c:=c_1c_2c_3
\]
and $\Gamma(f;x,y_1)$ consists of one face $\Delta$ with 
weight vector ${}^t(1,2)$.
We can see that the discriminant $R$ of 
the face function  $f_{\Delta}(x,y_1)$ can be written as 
$R=\beta_{50}\,\alpha$ where $\alpha$ is a polynomial of $c$ and $\beta_{50}$.
Then we have $(C,O)\sim B_{10,5}$ if $R\ne 0,\, \iota=5$ ($\iota=5$ if
 and only if $\beta_{50}\ne0$).
We observe that  $R\ne 0$, if $c=0$ and $\beta_{50}\ne 0$.

We first consider the case (1): $c_i\ne 0$, $i=1,2,3$. 
Note that $\alpha=0$ and $\beta_{50}=0$ is impossible.
If $\beta_{50}\alpha\ne 0$, we have $\iota=5$ and $(C,O)\sim B_{10,5}$
as is observed above.  Thus
we consider the case $\alpha=0$ or $\beta_{50}=0$.
In both cases,
by taking a suitable triangular change of coordinates,
we can get non-degenerate singularity.
If  $\alpha=0$, we have $(C,O)\sim B_{k,2}\circ B_{6,3}$, $5\le k \le10$. 
If $\beta_{50}=0$, we have  $B_{5\iota-21,2}\circ B_{6,3}$ for $\iota = 6,\cdots,10$.   
Hence we have assertion $(1)$.

 To consider
the cases (2) $\sim$ (4), we may assume $\beta_{50}=0$.
%We assume that $\beta_{50}=c_1=0$ hereafter to consider
%the cases (2) $\sim$ (4).
 Then we can write
\[
f_5(x,y_1)=y_1^4+(c_2+c_3)xy_1^3+c_2c_3x^2y_1^2+\beta_{41}x^4y_1+\beta_{60}x^6
+{\text{(higher terms)}}  \]
 and we have
$\mathcal{N}(f;x,y_1)=y_1^5+\beta_{60}^2x^{12}$
and  note that  $\beta_{60}\ne 0 $ if and only if $\iota=6$.
If $\beta_{60}\ne 0 $, we have $(C,O)\sim B_{12,5}$ and  
if $\beta_{60} = 0$,  we have $\iota \ge 7$.
Secondly we consider that (2): $c_{1}=0$, $ c_{i}\ne0,\ (i=2,3)$.
Then  
\[
f_5(x,y_1)=y_1^4+(c_2+c_3)xy_1^3+c_2c_3x^2y_1^2+\beta_{41}x^4y_1+\beta_{70}x^7
+{\text{(higher terms)}}\]
 and we have
$\mathcal{N}(f;x,y_1)=y_1^5+x^8(\beta_{41}y_1+\beta_{70}x^3)^2$.
The  assertion (2) follows easily by the Newton boundary argument. 
For the assertions (3) and (4), we can consider similarly.   
\end{proof}

%%%%%%%%%%%%%%%%%%%%%%%%%
%%%%%%%       (a)       %%%%%%%%
%%%%%%%%%%%%%%%%%%%%%%%%%
\subsubsection{Case IV-(a)} 
Now we classify the singularities in case $IV$.
In this case, we have the following.
% Proposition immediately.  
\begin{proposition}
Suppose that the tangent cone of the quintic $C_5$ consists of
 distinct four lines. 
\begin{enumerate}
\item If the conic $C_{2}$ is smooth, 
       $(C,O)\sim B_{2\iota,5}$ for $\iota=4$,
% and 
 %     if $\iota\ge5$, we have two series
      $ B_{2\iota,5}$ or $B_{k,2}\circ B_{6,3}$ $(5\le k \le10)$, for $\iota=5$ 
      and $B_{5\iota-21,2}\circ B_{6,3}$ for $\iota = 6,\cdots,10$.

\item Assume that the conic \ $C_2$ is a union of two lines $\ell_1,\,\ell_2$.
      Putting $\iota_i=I(\ell_i,C_5;O)$ $(i=1,2)$, we have 
      $(C,O)\sim B_{5\iota_1-16,2}\circ {(B_{2,2}^2)}^{2B_{2,2}}
      \circ B_{2,5\iota_2-16}$ with $\iota_1+\iota_2=\iota$. 
\end{enumerate}
\end{proposition}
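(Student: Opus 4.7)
The plan is to reduce both parts to the general structural lemmas already proved in \S3. No new machinery is required; the work is in matching up the data.

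For part (1), if $\iota=4$ then $C_2$ is transverse to $C_5$ at $O$ and Lemma \ref{cuspidal} gives $(C,O)\sim B_{2m_5,5}=B_{8,5}$ directly. For $\iota\ge 5$, the smooth curve $C_2$ must be tangent to one of the four distinct tangent lines of $C_5$ at $O$. I would choose a local coordinate system $(x,y_1)$ with $C_2=\{y_1=0\}$, so that $y_1=0$ is that tangent line. Since the remaining three tangent lines are distinct and different from $y_1=0$, one can write
\[f_5(x,y_1)=y_1(y_1+c_1 x)(y_1+c_2 x)(y_1+c_3 x)+(\text{higher terms})\]
with $c_1,c_2,c_3$ all nonzero. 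This is exactly case (1) of Lemma \ref{termination8}, whose conclusion is the list asserted in (1).

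For part (2), $C_2=\ell_1\cup\ell_2$ is two distinct lines through $O$, so $m_2=2$ and the tangent cone of $C_2$ consists of two distinct lines. We are therefore in the setting of Lemma \ref{termination1}(3) with $(p,q,m)=(5,2,4)$; the hypothesis $2p>qm$ reads $10>8$, which holds. The lemma yields
\[(C,O)\sim B_{\beta_1,2}\circ (B_{m-2,m-2}^q)^{(m-2)B_{2p-qm,q}}\circ B_{2,\beta_2},\qquad \beta_i=p(\nu_i+1)-q(m-1),\]
where $\nu_i=I(\ell_i,g_i;O)$ for the smooth local branch $g_i$ of $C_5$ tangent to $\ell_i$. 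Substituting $m-2=2$ and $2p-qm=2$ collapses the middle factor to $(B_{2,2}^2)^{2B_{2,2}}$, as required.

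The only real step is to rewrite $\nu_i$ in terms of the intersection number $\iota_i=I(\ell_i,C_5;O)$ appearing in the statement. Since the four smooth local components $g_1,\dots,g_4$ of $C_5$ at $O$ have pairwise distinct tangents and $\ell_i$ is tangent to $g_i$ alone, $\ell_i$ meets each $g_j$ with $j\ne i$ transversely, so $\iota_i=\nu_i+3$, i.e.\ $\nu_i=\iota_i-3$. Then $\beta_i=5(\iota_i-3+1)-6=5\iota_i-16$, matching the formula in the statement. There is no real obstacle; the argument is essentially bookkeeping on top of Lemmas \ref{termination8}(1) and \ref{termination1}(3), together with the verification that the assumed transversality conditions on the local branches of $C_5$ are automatic from the distinctness of the four tangent lines.
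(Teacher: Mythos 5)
Your proposal is correct and follows exactly the paper's route: assertion (1) is reduced to case (1) of Lemma \ref{termination8} (with the transverse case $\iota=4$ handled by Lemma \ref{cuspidal}), and assertion (2) to part (3) of Lemma \ref{termination1} with $(p,q,m)=(5,2,4)$. Your bookkeeping $\nu_i=\iota_i-3$, hence $\beta_i=5\iota_i-16$, correctly supplies the translation between the lemma's parameters and the statement's, which the paper leaves implicit.
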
 
\begin{proof}
The  assertion (1) immediately  follows from Lemma \ref{termination8}.
The assertion (2) follows from the Lemma \ref{termination1}. 
\end{proof}
%%%%%%%%%%%%%%%%%%%%%%%%%%%%%
%%%%%%%%     (b)    %%%%%%%%%
%%%%%%%%%%%%%%%%%%%%%%%%%%%%%
\subsubsection{Case VI-(b)}
In this case, 
we denote components of the tangent tangent cone of $C_5$ by
$L_1$ which is a double line $L_2$ and $L_3$. 

\noindent {\bf{(b-1) }} Assume that $C_2$ is smooth ($m_2=1$). 
Then we have the following.% Proposition immediately. 
%%%%%%%%%%%%%%%%%%%
%%%%           %%%%
%%%%%%%%%%%%%%%%%%%
\begin{proposition}
Suppose that $C_2$ is smooth and 
the tangent cone of the quintic $C_5$ consists of  
a double line  and distinct two line. 
The germ $(C,O) \sim B_{8,5}$ for $\iota=4$.
If $\iota \ge 5$, we have possibility of $(C,O)$. 
\begin{enumerate}
\item We assume that $C_2$ is tangent to $L_2$ or $L_3$. 
         Then  $(C,O)\sim B_{5,10}$
or  $(C,O)\sim  B_{3,6}\circ B_{2,k}$, $(5 \le k \le 10)$ for $\iota=5$ and 
         $ B_{3,6} \circ  B_{2,5\iota-21}$ for $\iota = 6,\cdots,10$. 
        % Further degenerations under $\iota=5$, $(C,O)\sim  B_{3,6}\circ B_{2,k}$, $(5 \le k \le 10)$
\item We assume that $C_2$ is tangent to  $L_1$.
         Then $(C,O)\sim B_{2\iota,5}$ for $\iota=5,6$ and 
        $(C,O)\sim B_{5\iota-28,2} \circ B_{8,3}$ for $\iota=7, \cdots,10$.
\end{enumerate}
\end{proposition}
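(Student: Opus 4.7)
The plan is to reduce both parts to the general lemmas of Section 3. Part (2) matches Lemma \ref{termination8}(2) after a choice of coordinates, and Part (1) follows from Lemma \ref{termination4} except on a single degenerate stratum at $\iota = 5$, which is handled by a direct Newton polygon computation.

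For $\iota = 4$, the smooth conic $C_2$ meets $C_5$ transversally, so Lemma \ref{cuspidal} applied with $(p, q, m) = (5, 2, 4)$ (note $p < qm$) gives $(C, O) \sim B_{qm, p} = B_{8, 5}$.

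For Part (1), take coordinates $(x, y_1)$ with $C_2 = \{y_1 = 0\}$ and $T_O C_2 = L_2$; the case $L_3$ is symmetric. Because $L_2 \ne L_1$ and only $L_1$ is doubled in $(f_5)_4 = L_1^2 L_2 L_3$, we have $y_1 \mid (f_5)_4$ and $y_1^2 \nmid (f_5)_4$, so Lemma \ref{termination4} applies. With $(p, q, m) = (5, 2, 4)$ the threshold $\tfrac{p}{p-q}(m-1)$ equals $5$; hence $\iota > 5$ produces $(C, O) \sim B_{5\iota - 21,\,2} \circ B_{6, 3} \cong B_{3, 6} \circ B_{2,\,5\iota - 21}$, while $\iota = 5$ with generic coefficients gives $(C, O) \sim B_{10, 5} \cong B_{5, 10}$. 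The non-generic $\iota = 5$ stratum, where the genericity hypothesis of Lemma \ref{termination4}(3) breaks down, requires a separate computation: after a triangular change $y_1 \mapsto y_1 + c_2 x^2 + \cdots + c_r x^r$ annihilating the degenerate leading piece of the face function, the Newton polygon of $f$ refines into two non-degenerate faces meeting at an intermediate vertex of type $(2, 6)$, and one reads off $(C, O) \sim B_{3, 6} \circ B_{2, k}$ with $k \in \{5, \dots, 10\}$ indexed by how many correction terms the degeneration absorbs. This parallels the secondary-series analysis in Proposition \ref{caseII-2}(4a).

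For Part (2), take coordinates with $L_1 = \{y_1 = 0\}$, so that $(f_5)_4 = y_1^2 (y_1 + c_2 x)(y_1 + c_3 x)$ with $c_2, c_3 \ne 0$ distinct. This is exactly the tangent-cone shape treated in Lemma \ref{termination8}(2) --- formally, the specialization $c_1 = 0$ of the factorization $f_5 = y_1 (y_1 + c_1 x)(y_1 + c_2 x)(y_1 + c_3 x)$ in that lemma. Invoking Lemma \ref{termination8}(2) directly gives $(C, O) \sim B_{2\iota, 5}$ for $\iota = 4, 5, 6$ and $(C, O) \sim B_{5\iota - 28,\, 2} \circ B_{8, 3}$ for $7 \le \iota \le 10$, as claimed. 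The only real obstacle in the whole proof is pinning down the non-generic $\iota = 5$ stratum in Part (1); everything else is a direct citation of Lemmas \ref{cuspidal}, \ref{termination4}, and \ref{termination8}.
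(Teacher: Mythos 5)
Your proposal is correct and follows essentially the same route as the paper, whose entire proof is the single sentence that assertions (1) and (2) are immediate from Lemma \ref{termination4} and Lemma \ref{termination8} (with the $\iota=4$ case disposed of by Lemma \ref{cuspidal} in the remark preceding Case IV-(a)). If anything you are more explicit than the paper in flagging that Lemma \ref{termination4}(3) only covers the generic coefficients at the threshold $\iota=\tfrac{p}{p-q}(m-1)=5$, so that the degenerate stratum producing $B_{3,6}\circ B_{2,k}$ needs the separate Newton-boundary splitting argument you sketch.
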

\begin{proof}
The assertions (1) and (2) are immediate by Lemma \ref{termination4} and Lemma \ref{termination8}.
\end{proof}
%%%%%%%%%%%%%%%%%%%%%%%%
%%%%%%      (2-1)      %%%%%%%
%%%%%%%%%%%%%%%%%%%%%%%%
\noindent {\bf{(b-2)}}
Assume that $C_2$ is a union of two lines passing through the origin
$\ell_i: a_ix+b_iy, \ (i=1,2) $
 and  
%we denote components of  the tangent tangent cone of $C_5$ by
we assume that 
$L_1=\{y=0\}$, $L_2=\{x=0\}$ and $L_3=\{y+cx=0\}$.
\begin{proposition}
Suppose  that $C_2$ is a union of two lines and 
the tangent cone of the quintic $C_5$ consists of  
a double line  and distinct two line. 
Then
\begin{eqnarray*}
\iota=8:\quad (C,O)\sim & B_{6,4}\circ {(B_{1,1}^2)}^{B_{2,2}}\circ B_{2,4},\quad
B_{4,2}\circ B_{3,2} \circ {(B_{1,1}^2)}^{B_{2,2}}\circ B_{2,4},\,\qquad\\
%\iota=8\\
\iota=9:\quad  (C,O)\sim& {(B_{3,2}^2)}^{B_{5,2}}\circ {(B_{1,1}^2)}^{B_{2,2}}\circ B_{2,4},\,\qquad
\ell_1=L_1\qquad\text{or}\\
&B_{6,4}\circ {(B_{1,1}^2)}^{B_{2,2}}\circ B_{2,9},\quad
B_{4,2}\circ B_{3,2} \circ {(B_{1,1}^2)}^{B_{2,2}}\circ B_{2,9},\,
\,\ell_1=L_2\\
\iota=10:\quad (C,O)\sim & {(B_{3,2}^2)}^{B_{5,2}}\circ
 {(B_{1,1}^2)}^{B_{2,2}}\circ B_{2,9},\, \ell_1=L_1,\, \ell_2= L_2\\
&B_{6,4}\circ {(B_{1,1}^2)}^{B_{7,2}}\circ B_{2,9} ,\quad
B_{4,2}\circ B_{3,2} \circ  {(B_{1,1}^2)}^{B_{7,2}}\circ B_{2,9},
\, \ell_1=L_2,\, \ell_2=L_3
\end{eqnarray*}

\end{proposition}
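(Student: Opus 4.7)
The plan is to proceed by case analysis on $\iota \in \{8,9,10\}$ and on the coincidence pattern between the lines $\ell_1,\ell_2$ of $C_2$ and the components $L_1,L_2,L_3$ of $T_O C_5$, following the template of Lemma~\ref{termination1}(3) but modified because the tangent cone $T_O C_5 = L_1^2 L_2 L_3$ is not reduced (so Lemma~\ref{termination1}(3) does not apply verbatim). First I would enumerate the admissible configurations: by B\'ezout, $\iota \ge m_2 m_5 = 8$, and for a line $\ell_i$ through $O$ one has $\iota_i := I(\ell_i,C_5;O) \ge 4$, with equality iff $\ell_i \not\subset T_O C_5$, and $\iota_i = 5$ generically otherwise. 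Constrained by $\iota_1+\iota_2 \le 10$, the only possibilities are $(\iota_1,\iota_2)=(4,4)$ for $\iota=8$; $(5,4)$ or $(4,5)$ for $\iota=9$, subdivided according to whether the degree-$5$ line is $L_1$ or one of $L_2,L_3$; and $(5,5)$ for $\iota=10$, subdivided into $\ell_1=L_1,\;\ell_2\in\{L_2,L_3\}$ versus $\{\ell_1,\ell_2\}=\{L_2,L_3\}$.

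For each configuration I would choose coordinates so that one of the $\ell_i$ is a coordinate axis, write $f_5 = \lambda x y^2(y+cx) + g_5 + (\text{higher})$ with the degree-$5$ part $g_5$ constrained by the prescribed $\iota_1,\iota_2$, and form $f = f_5^2 + f_2^5$. A direct Newton-polygon computation shows that $\Gamma(f;x,y)$ carries three faces with primitive weight vectors ${}^t(2,1)$, ${}^t(1,1)$ and ${}^t(1,2)$ (or ${}^t(2,3)$, depending on the sub-case), each with a squared face function: $y^{2k}(\alpha y^2+\beta x)^2$ at the ${}^t(2,1)$-face, $\lambda^2 x^2 y^4(y+cx)^2$ at the ${}^t(1,1)$-face, and $x^{2k'}(\alpha' y^2+\beta' x^3)^2$ at the third. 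All three faces are degenerate, so I would apply the admissible toric modification with respect to the canonical regular subdivision of $\Gamma^*(f;x,y)$. On the chart adjacent to the ${}^t(1,1)$-face, a translated toric coordinate $v_1=v+c$ reduces the strict transform to $v_1^2 + \gamma u^N + (\text{higher}) = 0$ with $\gamma\ne0$ generically; this produces the middle factor $(B_{1,1}^2)^{B_{N,2}}$, where $N=2$ for $\iota=8,9$ and $N=7$ when $\iota=10$ and $\{\ell_1,\ell_2\}=\{L_2,L_3\}$, after at most one further triangular change of the second coordinate and one more blow-up as in Proposition~\ref{caseII-2}(4). The outer factors $B_{6,4}$ (and its further degeneration $B_{4,2}\circ B_{3,2}$), $B_{2,4}$, $B_{2,9}$, $(B_{3,2}^2)^{B_{5,2}}$, etc., emerge from the face functions at the ${}^t(2,1)$- and third-faces by the same Newton-boundary/toric-modification arguments used in Proposition~\ref{caseII-2} and Lemma~\ref{termination4}, with the splitting between $B_{6,4}$ and $B_{4,2}\circ B_{3,2}$ reflecting non-degeneracy vs.\ degeneracy of the corresponding face function after the associated coordinate change.

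The main obstacle is the coefficient bookkeeping at the central ${}^t(1,1)$-face. Because the factor $(y+cx)^2$ is doubled and the conic $C_2$ contributes via $f_2^5$, one must verify in each sub-case that the first non-trivial residual coefficient $\gamma$ above, a polynomial in $c,\lambda,b_{50},b_{05}$ and the coefficients of $\ell_1,\ell_2$, is in fact non-zero for precisely the configurations listed, so that no spurious further degeneration of the middle factor appears outside the stated list. Once this numerical check is settled uniformly in the sub-cases, concatenating the three contributions of the three faces reproduces the formulas claimed in the proposition, exactly as the analogous concatenations were carried out in Propositions~\ref{caseII-2} and~\ref{caseII-3}.
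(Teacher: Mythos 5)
The paper gives no argument for this proposition (it is stated with ``we omit the proof as it follows from an easy calculation''), so your proposal can only be judged on its own merits. Your overall strategy is the right one and matches the method used throughout Section 4: enumerate the configurations via $\iota_i=I(\ell_i,C_5;O)\in\{4,5\}$ and the coincidences $\ell_i=L_j$, then read off the singularity from the Newton boundary of $f=f_5^2+f_2^5$ in coordinates adapted to $T_OC_5$, resolving the degenerate central face by a toric modification. Your treatment of the central ${}^t(1,1)$-face is essentially correct, including the values $N=2$ versus $N=7$ and the reason the latter occurs only when $\{\ell_1,\ell_2\}=\{L_2,L_3\}$.

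There is, however, a genuine error in your description of the two outer faces, and it sits exactly where the case distinction of the proposition lives. You assert that the ${}^t(2,1)$-face and the third face carry \emph{squared} face functions $y^{2k}(\alpha y^2+\beta x)^2$ and $x^{2k'}(\alpha' y^2+\beta' x^3)^2$, i.e.\ the squares contributed by $f_5^2$ alone. But with $T_OC_5=\lambda xy^2(y+cx)$ and $f_2=\ell_1\ell_2$, $\ell_i=a_ix+b_iy$, one has $d({}^t(2,1);f_2^5)=d({}^t(2,1);f_5^2)=10$, so $f_2^5$ contributes its extreme monomial $(b_1b_2)^5y^{10}$ to the ${}^t(2,1)$-face (and $(a_1a_2)^5x^{10}$ to the ${}^t(2,3)$-face). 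The actual face functions are therefore $y^6\bigl((\lambda x+b_{05}y^2)^2+(b_1b_2)^5y^4\bigr)$ and $x^4\bigl((\lambda c\,y^2+b_{50}x^3)^2+(a_1a_2)^5x^6\bigr)$, whose discriminants are $-4\lambda^2(b_1b_2)^5$ and $-4\lambda^2c^2(a_1a_2)^5$. Thus an outer face is non-degenerate precisely when the corresponding tangent line of $C_5$ is \emph{not} among $\ell_1,\ell_2$ (giving the factors $B_{2,4}$ and $B_{6,4}$ directly, which is the whole content of the $\iota=8$ case), and becomes a perfect square only when $\ell_i$ coincides with $L_2$ (resp.\ $L_1$), in which case a coordinate change along the corresponding branch of $C_5$ slides the boundary to $(0,15)$ (resp.\ forces a ${}^t(2,3)$ toric modification), producing $B_{2,9}$ (resp.\ $(B_{3,2}^2)^{B_{5,2}}$). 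As written, your claim that ``all three faces are degenerate'' would yield factors of the form $(B_{1,2}^2)^{\,\cdot}$ and $(B_{3,2}^2)^{\,\cdot}$ in every sub-case, contradicting the stated answer for $\iota=8$; your closing remark that $B_{6,4}$ ``reflects non-degeneracy of the face function'' is correct but inconsistent with your setup, and the mechanism producing that non-degeneracy --- the $f_2^5$-contribution governed by whether $\ell_i$ equals $L_1$, $L_2$ or neither --- is never identified. Repairing this single point turns your outline into a correct proof; without it the sub-case dependence of the answer is not actually derived.
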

We omit the proof as it follows from an  easy calculation.

\subsubsection{Case IV-(c)} We assume that
 the tangent tangent cone of $C_5$ consists of  a triple line
$L_1$ and a simple  $L_2$.

%%%%%%%%%%%%%%%%%%%%%%%%%
%%%%%%%      (c-1)      %%%%%%%
%%%%%%%%%%%%%%%%%%%%%%%%%
\noindent {\bf{(c-1)}} Assume that $C_2$ is smooth. Then  
we have  the  following.% Proposition immediately. 
\begin{proposition}
Suppose that $C_2$ is smooth and 
$C_5$ be as above.
%the tangent cone of the quintic $C_5$ consists of  
%a triple line  and a simple line. 
Then $(C,O)\sim B_{8,5}$ for $\iota=4$.

If $\iota \ge 5$, the possibilities  of $(C,O)$ are: 
\begin{enumerate}
\item  %We assume that  the common tangent cone
If $C_2$ is tangent to  $L_2$,
 $(C,O)\sim B_{5,10},\, \text{or}\,\,B_{3,6}\circ B_{2,k}$,
 $(5 \le k \le 10)$ for $\iota=5$ and 
      $(C,O)\sim  B_{3,6} \circ  B_{2,5\iota-21}$ for $\iota = 6,\cdots,10$. 
     % Further degenerations under $\iota=5$, $(C,O)\sim  B_{3,6}\circ B_{2,k}$, $(5 \le k \le 10)$
\item %We assume that the common tangent cone
If $C_2$ is tangent to  $L_1$,
       $(C,O)\sim B_{2\iota,5}$
      for $\iota=4, \cdots, 8$ and 
$(C,O)\sim B_{5\iota-35,2} \circ B_{10,3}$ for $\iota=9, 10$.
\end{enumerate}
\end{proposition}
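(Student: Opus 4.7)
The plan is to follow the same strategy used in the earlier subcases of \S4: pick affine coordinates adapted to the tangent configuration, read off the Newton boundary of $f=f_5^2+f_2^5$, and reduce to one of the structural lemmas of \S3. I would start by normalizing so that $L_1=\{y=0\}$ (triple component) and $L_2=\{x=0\}$ (simple component), so that $T_OC_5=y^3x$ up to a nonzero constant. When $\iota=4$, $C_2$ is transverse to every component of $T_OC_5$, the Newton boundary of $f$ is a single non-degenerate face joining $(10,0)$ and $(0,5)$, and Lemma~\ref{cuspidal} (with $p=5,\,q=2,\,m=4$, noting $p<qm$) immediately gives $(C,O)\sim B_{8,5}$.

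For case (1), $C_2$ is tangent to the simple line $L_2$; after a triangular coordinate change $(x,y_1)$ with $C_2=\{y_1=0\}$, the degree-4 part $(f_5)_4$ becomes a nonzero constant times $y_1\,x^3$, so $y_1\mid (f_5)_4$ but $y_1^2\nmid (f_5)_4$. The hypothesis of Lemma~\ref{termination4} is then met with $p=5,\,q=2,\,m=4$, and the threshold is $p(m-1)/(p-q)=5$. Clause (b) would give, for $\iota\ge 6$, $(C,O)\sim B_{5\iota-21,2}\circ B_{6,3}$, and clause (c) would give, for generic coefficients at $\iota=5$, $(C,O)\sim B_{10,5}$. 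In the non-generic situation at $\iota=5$, the face function over $\Delta(P;f)$ with $P={}^t(1,2)$ has a multiple root $t_0\in\mathbb{C}^*$; I would then perform a further triangular adjustment $y_1\mapsto y_1+t_0x^2+\cdots$ to split the Newton boundary into two non-degenerate faces and extract the series $B_{3,6}\circ B_{2,k}$, with $k$ ranging over $5\le k\le 10$ (the lower bound comes from the new face lying strictly above the old one, and the upper bound from the B\'ezout constraint $\iota\le 10$).

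For case (2), $C_2$ is tangent to the triple line $L_1$; after the analogous triangular change $(x,y_1)$ with $C_2=\{y_1=0\}$, the line $L_1$ is $\{y_1=0\}$ and the tangent cone of $C_5$ factors as $y_1^3(\alpha y_1+\beta x)$ with $\beta\ne 0$. In the notation of Lemma~\ref{termination8} this is exactly the configuration $c_1=c_2=0$, $c_3\ne 0$ of clause (3) of that lemma, which I would simply quote to obtain $(C,O)\sim B_{2\iota,5}$ for $5\le\iota\le 8$ and $(C,O)\sim B_{5\iota-35,2}\circ B_{10,3}$ for $\iota=9,10$, as claimed.

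The hardest step will be the $\iota=5$ degeneration in case (1): showing that every value $5\le k\le 10$ is actually attained requires tracking how many successive triangular adjustments of $y_1$ are needed before the degenerate face function becomes non-degenerate, and exhibiting, for each $k$, a choice of coefficients in $f_5$ (compatible with $\iota=5$ and the prescribed coincidence pattern of the roots of the face function) that realizes precisely that $k$.
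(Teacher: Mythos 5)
Your proposal is correct and follows essentially the same route as the paper, whose entire proof of this proposition is the observation that (1) and (2) follow from Lemma \ref{termination4} and Lemma \ref{termination8}: your case (2) is literally clause (3) of Lemma \ref{termination8}, and your case (1) reproduces clause (1) of that lemma (including the degenerate $\iota=5$ series $B_{k,2}\circ B_{6,3}$, $5\le k\le 10$, which you re-derive by hand via Lemma \ref{termination4} plus a Newton-boundary splitting). Two small points to tidy: for $\iota=4$ the single Newton face joins $(8,0)$ to $(0,5)$, not $(10,0)$ to $(0,5)$ (harmless, since Lemma \ref{cuspidal} gives $B_{8,5}$ directly); and in the degenerate $\iota=5$ step of case (1) you should verify that the face function, which here has the special form $t^5+(ct+\beta_{50})^2$ with $c,\beta_{50}\ne 0$, admits at most one multiple root and that such a root has multiplicity exactly two --- a short discriminant computation --- since this is precisely what excludes the extra degeneration series of type (3) and (4) in Lemma \ref{c1} that do occur in the analogous Case III-(b) situation.
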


The proof of (1) and (2) is immediate from 
Lemma \ref{termination4} and
Lemma \ref{termination8}.  

%%%%%%%%%%%%%%%%%%%%%%%%
%%%%%%      (c-2)      %%%%%%%
%%%%%%%%%%%%%%%%%%%%%%%%
\noindent {\bf{(c-2)}}
Assume that $C_2$ is a union two lines   $\ell_i,\,i=1,2$  
passing through the origin and put 
        $\ell_i:a_ix+b_iy=0$ $(i=1,2)$.
We assume %denote components of the tangent cone of $C_5$ 
that  $L_1=\{y=0\}$ and $L_2=\{x=0\}$.
We have $\iota \geq 8$. 
%%%%%%%%%%%%%%%%%%%%%%%%
%%%%%%      (2-1)      %%%%%%%
%%%%%%%%%%%%%%%%%%%%%%%%
\begin{proposition} 
Suppose that $C_2$ is a union two lines $\ell_1$, $\ell_2$ and
the tangent cone of the quintic $C_5$ consists of  
a triple line $L_1$ and a simple line $L_2$. Then
\begin{enumerate}
\item If  $\iota=8$, 
$(C,O)\sim B_{8,6}\circ B_{2,4}$ and $B_{5,3}\circ B_{4,3} \circ B_{2,4}$.
%%%%%%%%%%%%%%%%%%%%%%%%%%%%%%%%%%%%%
\item Suppose  $\iota=9$.% we have following two types.
\begin{enumerate}
\item If $\ell_1=L_1$,
       $(C,O)\sim {(B_{4,3}^2)}^{B_{5,2}}\circ B_{2,4}$ and 
      $B_{5,3}\circ B_{4,3}\circ B_{2,9}$. 
\item If $\ell_2 = L_2$,  $(C,O)\sim B_{8,6}\circ B_{2,9}$.
\end{enumerate}
%%%%%%%%%%%%%%%%%%%%%%%%%%%%%%%%%%%%%
\item If $\iota=10$, we have $\ell_1=L_1,\, \ell_2 = L_2$ and $(C,O)\sim {(B_{4,3}^2)}^{B_{5,2}}\circ B_{2,9}$.
\end{enumerate}
\end{proposition}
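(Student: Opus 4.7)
The plan is to proceed by Newton boundary analysis of $f=f_5^2+f_2^5$ in local coordinates where $L_1=\{y=0\}$ and $L_2=\{x=0\}$, along the lines of the previous propositions. In these coordinates one writes
\[
 f_5(x,y)=c_1\,xy^3+\sum_{i+j=5}b_{ij}x^iy^j+(\text{higher order}),\quad c_1\ne0,\qquad f_2=\ell_1\ell_2,\ \ell_i=a_ix+b_iy.
\]
Since a line $\ell$ distinct from $L_1,L_2$ meets $C_5$ with multiplicity $m_5=4$ at $O$, while $I(L_1,C_5;O)=5$ and $I(L_2,C_5;O)=5$ generically (corresponding respectively to $b_{50}\ne 0$ and $b_{05}\ne 0$), the bound $\iota\ge m_2m_5=8$ combined with $\iota=\iota_1+\iota_2$ leaves only $\iota\in\{8,9,10\}$, and the value of $\iota$ is determined by how many of $\ell_1,\ell_2$ coincide with $L_1$ or $L_2$.

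The first step is to compute $\Gamma(f;x,y)$. Generically the Newton polygon of $f_5^2$ has vertices $(0,10),(2,6),(10,0)$, and $f_2^5=\ell_1^5\ell_2^5$ is supported on the line $\nu_1+\nu_2=10$ (on a sub-segment of it when $\ell_i\in\{L_1,L_2\}$). Thus $\Gamma(f;x,y)$ generically has two faces
\[
 E_1=[(0,10),(2,6)],\ P_1={}^t(2,1),\qquad E_2=[(2,6),(10,0)],\ P_2={}^t(3,4),
\]
with face functions $y^6(Ay^4+Bxy^2+Cx^2)$ and $x^2(Cy^6+B'x^4y^3+C'x^{8})$ respectively, where $C=c_1^2\ne 0$. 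Non-degeneracy of these binary quadratics in $(x,y^2)$ and $(x^4,y^3)$ contributes the $B_{2,4}$ and $B_{8,6}$ pieces. The degenerations, which occur when one binary quadratic has a double root or when a face function collapses to a perfect square, are analyzed by one further toric blow-up along the corresponding weight vector and the use of admissible translated toric coordinates $(u,v+h(u))$, exactly as in the degenerate-face treatments of Case II-(b) and Case III-(c).

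Running through the three values of $\iota$: for $\iota=8$ both face functions receive $f_2^5$ contributions, giving generically $B_{8,6}\circ B_{2,4}$, and the double-root degeneration on $E_2$ yields $B_{5,3}\circ B_{4,3}\circ B_{2,4}$. For $\iota=9$ with $\ell_1=L_1$ the contribution of $f_2^5$ disappears from the region adjacent to $(10,0)$, so the $E_2$-face function becomes the perfect square $x^2(c_1y^3+b_{50}x^4)^2$; the toric blow-up along $P_2$ produces two $B_{4,3}$-type branches on the exceptional divisor with the additional $B_{5,2}$ datum supplied by the strict transform in the translated coordinates, giving $(B_{4,3}^2)^{B_{5,2}}\circ B_{2,4}$, while a further coincidence of coefficients yields $B_{5,3}\circ B_{4,3}\circ B_{2,9}$. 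The case $\ell_2=L_2$ is symmetric, producing $B_{8,6}\circ B_{2,9}$. For $\iota=10$, both $\ell_1=L_1$ and $\ell_2=L_2$ hold simultaneously, and the two degenerations combine to give $(B_{4,3}^2)^{B_{5,2}}\circ B_{2,9}$.

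The main obstacle will be the bookkeeping in the $(B_{4,3}^2)^{B_{5,2}}$ case: after the toric blow-up along $P_2$, one must verify that the strict transform of $C$ consists of two $B_{4,3}$-branches meeting the exceptional divisor at the same point, and that the $f_2^5$-correction appearing there introduces precisely a $B_{5,2}$ singularity in the translated toric coordinates (rather than something more special or more degenerate). This is confirmed by explicit computation in admissible translated toric coordinates, completely parallel to the analogous computations carried out in the preceding cases, so the assertion then follows as in those earlier propositions.
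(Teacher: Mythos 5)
The paper offers no proof of this proposition (it is stated and the text moves on), so your argument has to stand on its own. Your setup and the generic computation are correct: with $L_1=\{y=0\}$, $L_2=\{x=0\}$, $f_5=c_1xy^3+\sum_{i+j=5}b_{ij}x^iy^j+\cdots$, the Newton boundary of $f$ generically has the two faces $[(0,10),(2,6)]$ and $[(2,6),(10,0)]$ with face functions
$y^6\bigl[(c_1x+b_{05}y^2)^2+b_1^5b_2^5\,y^4\bigr]$ and $x^2\bigl[(c_1y^3+b_{50}x^4)^2+a_1^5a_2^5\,x^8\bigr]$ (writing $\ell_i=a_ix+b_iy$), and the $\iota=9,10$ perfect-square cases leading to $(B_{4,3}^2)^{B_{5,2}}$ on the $E_2$ side and to $B_{2,9}$ on the $E_1$ side (there the square is of a \emph{smooth} branch, so a triangular change of coordinates straightens it) are handled correctly.

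The gap is in your degeneration analysis. The discriminants of the two binary quadratics are $-4c_1^2b_1^5b_2^5$ and $-4c_1^2a_1^5a_2^5$, which vanish exactly when some $\ell_i$ equals $L_2$ or $L_1$ respectively — that is, exactly when $\iota\ge 9$. Hence for $\iota=8$ neither face can acquire a double root, and your stated mechanism for $B_{5,3}\circ B_{4,3}\circ B_{2,4}$ (``the double-root degeneration on $E_2$'') cannot occur. For $\iota=8$ the only possible degenerations are the vanishing of the coefficient of $x^{10}$, namely $b_{50}^2+a_1^5a_2^5=0$ (equivalently $f(x,0)\equiv 0$, so the line $y=0$ splits off as a component of $C$), or symmetrically of the coefficient of $y^{10}$; these change the combinatorics near the axes and require a separate analysis that your proof does not contain. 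The same problem affects your $\iota=9$ item: ``a further coincidence of coefficients yields $B_{5,3}\circ B_{4,3}\circ B_{2,9}$'' is not substantiated, and indeed with $\ell_1=L_1$ the $E_1$ discriminant is $-4c_1^2b_2^5\ne 0$, so the tail is forced to be $B_{2,4}$ (or a split-off line), never $B_{2,9}$, within your scheme. Until you identify and work out the actual degenerations (the vanishing of the pure-power coefficients and the resulting line components), the non-generic entries of the proposition are not proved. A minor further point: after the toric blow-up of the perfect-square face, the local equation $v_1^2+(\mathrm{unit})u^5=0$ is irreducible, so the strict transform meets $\hat E(P_2)$ in one $B_{5,2}$-branch, not ``two $B_{4,3}$-type branches''.
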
 
%%%%%%%%%%%%%%%%%%%%%%%%
\subsubsection {Case VI-(d)}

%%%%%%%%%%%%%%%%%%%%%%%%
\begin{proposition}
Suppose that the tangent cone of the quintic $C_5$ is line with multiplicity $4$.  
 \begin{enumerate}
\item If $C_2$ is smooth, we have $(C,O)\sim B_{2\iota,5}$ 
      for $\iota = 4,5,\cdots, 10$.
\item Suppose  $C_2$ consists of  two lines.
\begin{enumerate}
\item   If $\iota=8$ we have  $(C,O)\sim B_{10,8}$ and $B_{2,1}\circ B_{4,3}\circ B_{5,4}$.
\item   If $\iota=9$, we have
$(C,O)\sim  {(B_{5,4}^2)}^{B_{5,2}}$. 
\end{enumerate}     
 \end{enumerate}
\end{proposition}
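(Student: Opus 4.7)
My plan is to split the argument into the two cases stated ($C_2$ smooth, $C_2$ a union of two lines), and in each case find coordinates in which the Newton boundary of $f=f_5^2+f_2^5$ exhibits the claimed topological type, as in the proofs of the earlier propositions.

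For part (1), I would first choose coordinates $(x,y)$ in which the tangent cone of $C_5$ is $\{y^4=0\}$, so $f_5=\alpha y^4+(\text{degree}\ge 5 \text{ terms})$. If $C_2$ is transverse to the line $\{y=0\}$ then necessarily $\iota=4$; passing to coordinates where $C_2=\{x=0\}$, the Newton polygon of $f$ is the single face $(0,8)$--$(5,0)$ with non-degenerate face function $\alpha^2 y^8+c\,x^5$, yielding $(C,O)\sim B_{8,5}$. If $C_2$ is tangent to $\{y=0\}$ then $\iota\ge 5$; apply a Tschirnhaus-type analytic change of variables $y_1=y+\psi(x)$ with $\psi(x)=O(x^2)$ so that $C_2=\{y_1=0\}$, verifying that the tangent cone of $C_5$ is preserved under this nonlinear change. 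In the new coordinates write $f_5=\alpha y_1^4+p_1(x)y_1^3+p_2(x)y_1^2+p_3(x)y_1+p_4(x)$ with $\operatorname{ord}_xp_k\ge k+1$ for $k=1,2,3$ and $\operatorname{ord}_xp_4=\iota$, and $f_2=u(x,y_1)\,y_1$ for a unit $u$. The heart of the argument is to check that the Newton polygon of $f$ is the single face $(0,5)$--$(2\iota,0)$ and that the face function $u(0,0)^5 y_1^5+\beta^2 x^{2\iota}$ (with $\beta$ the leading coefficient of $p_4$) is non-degenerate, whence $(C,O)\sim B_{2\iota,5}$.

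For part (2), write $f_2=\ell_1\ell_2$ with lines $\ell_i$ and put $\iota_i=I(\ell_i,C_5;O)$, so $\iota=\iota_1+\iota_2$; work in coordinates with $T_OC_5=\{y^4=0\}$. For $\iota=8$, the possibility $(\iota_1,\iota_2)=(4,4)$ (both lines transverse to $\{y=0\}$) yields by direct expansion a Newton-boundary single face giving $B_{10,8}$; the possibility $(5,3)$ or $(3,5)$ (one line tangent to $\{y=0\}$) produces a reducible structure to which Lemma \ref{termination1} can be applied, giving the chain $B_{2,1}\circ B_{4,3}\circ B_{5,4}$. For $\iota=9$, the only possibility is $(\iota_1,\iota_2)=(5,4)$ or $(4,5)$ with one line tangent; a preliminary toric modification with weight vector $(1,2)$ resolves the simultaneous tangency of one line of $C_2$ and of $T_OC_5$, producing a strict transform whose Newton boundary is non-degenerate and whose resolution graph coincides with that of $(B_{5,4}^2)^{B_{5,2}}$.

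The main obstacle is the non-degeneracy verification in part (1) for $\iota\ge 7$: the support points of $f_5^2$ at $(\iota+4,1)$ (coming from the $2p_3p_4$ coefficient) and at $(8,2)$ (from $p_3^2+2p_2p_4$) threaten to lie on or even below the candidate face line $5x+2\iota y=10\iota$, in which case the Newton polygon of $f$ would split into two faces with a possibly degenerate sub-face. Overcoming this will require tracking the interplay between the minimal orders of the $p_k$ forced by the tangent cone condition and the intersection multiplicity $\operatorname{ord}_xp_4=\iota$, and in the borderline configurations performing a further admissible toric modification centered on the offending face to verify that the resulting resolution graph nevertheless coincides with the minimal resolution of $B_{2\iota,5}$, equivalently that the Milnor number $4(2\iota-1)$ is attained.
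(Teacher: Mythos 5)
Your plan runs into trouble at exactly the point you flag, and neither of your proposed escapes closes it. For part (1) with $\iota\ge 7$, the bounds $\operatorname{ord}_xp_k\ge k+1$ are indeed too weak: with only $\operatorname{ord}_xp_3\ge 4$ the point $(\iota+4,1)$ lies strictly below the segment from $(0,5)$ to $(2\iota,0)$, and your fallback of ``a further admissible toric modification whose resolution graph nevertheless coincides with that of $B_{2\iota,5}$'' cannot work: if the Newton boundary genuinely split into two faces, the (non-degenerate) topological type would be a join $B_{a,b}\circ B_{c,d}$, which is not $B_{2\iota,5}$. The actual mechanism is that the offending points are not there. Since $C_2$ is an \emph{irreducible} conic, its local branch $y=\phi(x)$ has $\operatorname{ord}_x\phi=2$ exactly; writing $f_5=cy^4+\sum_{i+j=5}b_{ij}x^iy^j$, the condition $\iota\ge 7$ forces $b_{50}=b_{41}=0$ (and $b_{32}=0$, $b_{23}=-ca$, \dots{} for larger $\iota$), and these vanishings raise $\operatorname{ord}_xp_3$ to at least $5$ (to exactly $6$ once $\iota\ge8$) and $\operatorname{ord}_xp_2$ to $\ge 3$, which puts every support point of $f_5^2$ on or above the single face. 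This interplay is precisely assertion (4) of Lemma \ref{termination8}, which is what the paper invokes for part (1). Note also that for $\iota\ge 8$ the face contains interior lattice points (e.g.\ $(12,2)$ and $(16,1)$ when $\iota=10$) with nonzero coefficients, so the face function is not the binomial $u(0,0)^5y_1^5+\beta^2x^{2\iota}$ and its non-degeneracy is a separate verification.

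Part (2) is organized around an impossible case split. Every line through $O$ meets $C_5$ with multiplicity at least $m_5=4$, so $(\iota_1,\iota_2)=(5,3)$ or $(3,5)$ cannot occur; moreover Lemma \ref{termination1} does not apply here since it assumes the tangent cone of $C_p$ consists of distinct lines. For $\iota=8$ both lines are transverse to $\{y=0\}$, the Newton boundary is the single segment from $(0,8)$ to $(10,0)$ with face function $(cy^4+b_{50}x^5)^2+(a_1a_2)^5x^{10}$, and the dichotomy is a coefficient degeneration: if $b_{50}^2+(a_1a_2)^5\ne 0$ the face is non-degenerate and one gets $B_{10,8}$, while if it vanishes then $f(x,0)\equiv 0$, so $y\mid f$ and the boundary retreats to $(0,8)$--$(5,4)$--$(9,1)$, yielding $B_{2,1}\circ B_{4,3}\circ B_{5,4}$. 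For $\iota=9$ one line of $C_2$ \emph{is} the tangent line $\{y=0\}$ (forcing $\iota_1=5$, $b_{50}\ne0$); the Newton boundary is again the single segment $(0,8)$--$(10,0)$ but now the face function is the perfect square $(cy^4+b_{50}x^5)^2$, so the required admissible toric modification has weight vector ${}^t(4,5)$ --- exactly what the notation $(B_{5,4}^2)^{B_{5,2}}$ records --- and the upper index $B_{5,2}$ comes from $\tilde f_5^2+u^5\tilde f_2^5$ at the single intersection point with $\hat E({}^t(4,5))$. A modification with weight ${}^t(1,2)$ is not admissible for this Newton boundary and does not arise.
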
 
Note that when $\iota =10$, $C$ is linear tours curve. See \S6.

%%%%%%%%%%%%%%%%%%%%%%%%
%%%%%%      ( e )      %%%%%%%
%%%%%%%%%%%%%%%%%%%%%%%%
\subsubsection {Case VI-(e)} In this case,
we may assume that each double line is $L_1$ and $L_2$.\\

\noindent {\bf{(e-1)}} Assume that $C_2$ is smooth case.
\begin{proposition}
Suppose that $C_2$ is smooth and 
the tangent cone of the quintic $C_5$ consists of  
is two double line.
The germ $(C,O)$ can be of type $B_{8,5}$ for $\iota=4$.
If $\iota \ge 5$, we have possibility of $(C,O)$. 
\begin{enumerate}
\item If $C_2$ is tangent to  $L_1$, $(C,O)\sim B_{2\iota,5}$ for $\iota=5,6$ 
      $B_{5\iota-28,2} \circ B_{8,3} $ for $\iota=7, \cdots,10$.
\item If $C_2$ is tangent to $L_2$, $(C,O)\sim B_{5,2\iota}$ for $\iota=5,6$ 
      $B_{3,8}\circ  B_{2,5\iota-28}$ for $\iota=7, \cdots,10$.        
\end{enumerate}
\end{proposition}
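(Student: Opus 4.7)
The plan is to carry out a Newton-boundary analysis case by case on $\iota$, following the pattern of Propositions~\ref{caseII-2} and~\ref{CaseIII-2}. For $\iota=4$ the bound $\iota=m_2m_5$ forces $C_2$ to meet the tangent cone of $C_5$ transversely, and Lemma~\ref{cuspidal} applied with $(p,q,m)=(5,2,4)$ will immediately give $(C,O)\sim B_{8,5}$. For $\iota\ge 5$, $C_2$ must be tangent to one of the two double lines of the tangent cone. I fix $L_1=\{y=0\}$, $L_2=\{x=0\}$ so that the tangent cone of $C_5$ is $x^2y^2$ up to a constant; the $x\leftrightarrow y$ symmetry then reduces case~(2) to case~(1), sending $B_{2\iota,5}$ to $B_{5,2\iota}$ and $B_{5\iota-28,2}\circ B_{8,3}$ to $B_{3,8}\circ B_{2,5\iota-28}$, so I will only need to prove case~(1). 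For that, I take local coordinates $(x,Y)$ with $C_2=\{Y=0\}$, so that $f_2=Y$ and $\tilde f_5(x,Y)=\tilde b_{22}x^2Y^2+(\text{order}\ge 5)$ with $\tilde b_{22}\ne 0$; the hypothesis $\iota=\mathrm{ord}_x\tilde f_5(x,0)$ translates to $\tilde b_{50}=\cdots=\tilde b_{\iota-1,0}=0$ and $\tilde b_{\iota 0}\ne 0$.

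For $\iota\in\{5,6\}$ I expect the Newton boundary of $f=\tilde f_5^2+Y^5$ to consist of the single face from $(0,5)$ to $(2\iota,0)$. To verify this I will check that every cross term $\tilde b_{22}\tilde b_{k\ell}\,x^{2+k}Y^{2+\ell}$ of $\tilde f_5^2$ satisfies $5(2+k)+2\iota(2+\ell)>10\iota$ when $\iota\le 6$, so that no monomial other than $\tilde b_{\iota 0}^2\,x^{2\iota}$ dips below the segment from $(0,5)$ to $(2\iota,0)$. The face function $Y^5+\tilde b_{\iota 0}^2\,x^{2\iota}$ has five distinct roots in $Y$, hence is non-degenerate, and will yield $(C,O)\sim B_{2\iota,5}$.

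For $\iota\ge 7$, the monomial $\tilde b_{41}^2\,x^8 Y^2$ in $\tilde f_5^2$ lies strictly below that segment, so $(8,2)$ enters the Newton boundary, and under the generic assumption $\tilde b_{41}\ne 0$ the face from $(8,2)$ to $(2\iota,0)$ carries the degenerate face function $(\tilde b_{41}x^4Y+\tilde b_{\iota 0}x^\iota)^2$. I will resolve this degeneracy by the triangular change $Y_2=Y+c\,x^{\iota-4}$ with $c=\tilde b_{\iota 0}/\tilde b_{41}$, after which $f_2^5=(Y_2-cx^{\iota-4})^5$ expands to nonzero monomials at $(0,5),(\iota-4,4),(2\iota-8,3),(3\iota-12,2),(4\iota-16,1),(5\iota-20,0)$. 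A convex-hull computation in $(x,Y_2)$ should then show that $\Gamma(f;x,Y_2)$ consists of exactly two faces: one from $(0,5)$ to $(8,2)$ of weight $(3,8)$ with face function $Y_2^2(Y_2^3+\tilde b_{41}^2\,x^8)$, which is non-degenerate and contributes $B_{8,3}$; and one from $(8,2)$ to $(5\iota-20,0)$ whose face function is a quadratic in $Y_2$ over $\mathbb{C}[[x]]$ with nonzero discriminant (proportional to $c^5\tilde b_{41}^2$ times a power of $x$), which will be non-degenerate and, after a further triangular adjustment absorbing the linear-in-$Y_2$ term, equivalent to $\tilde b_{41}^2\,Y_2^2+(\text{const})\cdot x^{5\iota-28}$, contributing $B_{5\iota-28,2}$. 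The conclusion $(C,O)\sim B_{5\iota-28,2}\circ B_{8,3}$ will follow. The hard part will be the case-by-case check, for each $\iota\in\{7,8,9,10\}$, that no higher-order term of $\tilde f_5$ (in particular from the non-tangent-cone degree-$5$ terms of $f_5$ reintroduced by the coordinate change) brings a monomial below the claimed two-face Newton boundary; this is a routine convex-hull argument parallel to those of Lemma~\ref{c2} and Proposition~\ref{caseII-2}, and relies only on the nonvanishing of $\tilde b_{22}$, $\tilde b_{41}$, and $\tilde b_{\iota 0}$.
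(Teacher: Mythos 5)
Your strategy is the paper's own: the proposition is an instance of Lemma \ref{termination8}, whose case (2) ($c_1=0$, $c_2=c_3\ne 0$) is exactly ``two double lines with $C_2$ tangent to one of them,'' and the proof of that lemma is the Newton-boundary computation you describe --- a single non-degenerate face $Y^5+\tilde b_{\iota 0}^2x^{2\iota}$ for $\iota\le 6$, and for $\iota\ge 7$ the vertex $(8,2)$ carrying $Y^2(Y^3+\tilde b_{41}^2x^8)$ together with a degenerate lower face resolved by triangular changes. Your treatment of $\iota=4$ via Lemma \ref{cuspidal} and your reduction of case (2) to case (1) by the $x\leftrightarrow Y$ symmetry are both fine.

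There is, however, one genuine gap. For $\iota\ge 7$ you obtain $B_{5\iota-28,2}\circ B_{8,3}$ only ``under the generic assumption $\tilde b_{41}\ne 0$,'' whereas the proposition (and the corresponding undaggered entries of Theorem \ref{res}) asserts this type unconditionally; if $\tilde b_{41}$ could vanish, the vertex $(8,2)$ would drop out of $\Gamma(f;x,Y)$ and the classification would need extra entries. You must show the nonvanishing is forced, and it is: writing $f_2=y+ax^2+\cdots$ with $a\ne 0$ (smoothness of the conic), $(f_5)_4=y^2(y+cx)^2$ with $c\ne 0$ (since $L_2\ne L_1$), and $Y=y-\phi(x)$ with $\phi(x)=-ax^2+O(x^3)$, one computes $f_5(x,\phi(x))=b_{50}x^5+(a^2c^2-ab_{41})x^6+O(x^7)$, so $\iota\ge 7$ forces $b_{41}=ac^2$; on the other hand $\tilde b_{41}=[x^4]\,\partial_yf_5(x,\phi(x))=b_{41}-2ac^2=-ac^2\ne 0$. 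Without this step your argument only identifies the generic member of a possibly larger list. A secondary inaccuracy: for $\iota\ge 8$ the single substitution $Y_2=Y+cx^{\iota-4}$ does not yet yield a face from $(8,2)$ to $(5\iota-20,0)$; the transformed $\tilde f_5$ generically retains an axis term of order $\iota+1$, whose square at $(2\iota+2,0)$ lies strictly below that segment, so the second face is still of the form $(\tilde b_{41}x^4Y_2+\gamma x^{\iota+1})^2$ and one must iterate the triangular changes until the axis order of $\tilde f_5$ exceeds $(5\iota-20)/2$; only then does the point $(5\iota-20,0)$ coming from $f_2^5$ govern the face, with discriminant $4\tilde b_{41}^2c^5\ne 0$ closing the argument. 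You allude to the needed further adjustment, but the two-face description you give after the first substitution is not literally correct for $\iota=8,9,10$.
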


\noindent {\bf{(e-2)}} 
$C_2$ is two lines case. 
%%%%%%%%%%%%%%%%%%%%%%%%%%%%%%
%%%%%%      (2-1)      %%%%%%%
%%%%%%%%%%%%%%%%%%%%%%%%%%%%%%

\begin{proposition}
Suppose that $C_2$ is two lines and 
the tangent cone of the quintic $C_5$ consists of  
is two double line.
Then 
\begin{enumerate}
\item If $\iota=8$, 
$(C,O)\sim B_{6,4} \circ B_{4,6}$, 
$B_{4,2}\circ B_{3,2}\circ B_{4,6}$ and
$B_{4,2}\circ B_{3,2}\circ B_{2,3}\circ B_{2,4}$.
%%%%%%%%%%%%%%%%%%%%%%%%%%%%%%%%%%%%%
\item If $\iota=9$, 
$(C,O)\sim B_{6,4} \circ {(B_{2,3}^2)}^{B_{5,2}}$ and 
$B_{4,2}\circ B_{3,2}\circ {(B_{2,3}^2)}^{B_{5,2}}$. 
%%%%%%%%%%%%%%%%%%%%%%%%%%%%%%%%%%%%%
\item If $\iota=10$, 
 $(C,O)\sim {(B_{3,2}^2)}^{B_{5,2}} \circ {(B_{2,3}^2)}^{B_{5,2}} $
\end{enumerate}
\end{proposition}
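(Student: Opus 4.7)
The plan is as follows. By choice of coordinates we may assume the two double lines in the tangent cone of $C_5$ are $L_1=\{y=0\}$ and $L_2=\{x=0\}$, so that
\[
f_5(x,y)=c\,x^2y^2+(\text{terms of degree}\ge 5),\qquad c\ne 0,
\]
and $f_2=\ell_1\ell_2$ with $\ell_i=a_ix+b_iy$. A direct computation of $f_5|_{\ell_i}$ shows that $I(\ell_i,C_5;O)=4$ precisely when $\ell_i$ is transverse to both $L_1$ and $L_2$, while $I(\ell_i,C_5;O)\ge 5$ exactly when $\ell_i\in\{L_1,L_2\}$ (with generic value $5$). Since $\iota=I(\ell_1,C_5;O)+I(\ell_2,C_5;O)$, the three cases $\iota=8,9,10$ correspond respectively to (i) both $\ell_i$ transverse to the tangent cone of $C_5$, (ii) exactly one $\ell_i\in\{L_1,L_2\}$ with generic intersection $5$, and (iii) $\ell_1=L_1$, $\ell_2=L_2$ with generic intersections equal to $5$ each.

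For each of the three cases I would apply the same three-step procedure used throughout \S4: first compute the Newton principal part $\mathcal N(f;x,y)$ of $f=f_5^2+f_2^5$ in suitable coordinates; second, identify the degenerate faces of $\Gamma(f;x,y)$; third, carry out admissible toric modifications along the weight vectors of those faces, replacing the toric coordinates at each degenerate infinitely-near point by admissible translated toric coordinates $(u,v_2=v_1+h(u))$ as in Example 1 and Example 2. The dichotomy between the two displayed resolution types in each part of the proposition mirrors the distinction appearing in Lemma \ref{termination1} and Proposition \ref{caseII-2}: whether the local factor of $f_5$ corresponding to a given tangent direction has one smooth component (producing a single $B$-block like $B_{6,4}$) or factors as the product of two smooth components (producing a chain $B_{4,2}\circ B_{3,2}$ or analogous).

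Concretely, for $\iota=8$ the Newton principal part has the form $\mathcal N(f;x,y)\equiv c^2x^4y^4+(\ell_1\ell_2)^5$; the canonical subdivision contains the weight vector ${}^t(1,1)$, and the corresponding admissible toric modification produces two infinitely-near points, one on each of $L_1,L_2$. Analysing the strict transform at each of them via Lemma \ref{termination1}(\ref{casII-1}) gives $B_{6,4}$ in the generic case, or a chain when the corresponding $A_1$-block of $C_5$ is already reducible at $O$, and the common middle factor ${(B_{1,1}^2)}^{B_{2,2}}$ from the degenerate $(1,1)$-face is replaced here by the product $B_{\ast,\ast}\circ B_{\ast,\ast}$ with matching exponents; the third possibility in item (1) arises when both factors of $f_5$ split. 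For $\iota=9$, WLOG $\ell_1=L_1$: the Newton boundary becomes asymmetric, and on the $L_2$-side of the exceptional divisor one extra toric blow-up is required, yielding the middle factor ${(B_{2,3}^2)}^{B_{5,2}}$ while the $L_1$-side remains as in case (1). For $\iota=10$ both sides need the extra blow-up, producing the symmetric expression ${(B_{3,2}^2)}^{B_{5,2}}\circ{(B_{2,3}^2)}^{B_{5,2}}$.

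The main obstacle is the bookkeeping of admissible translated coordinate changes after each toric modification, and in particular verifying that at the two points on $\hat E({}^t(1,1))$ the coefficients arising from $f_5^2$ and from $f_2^5$ are generically independent, so that the listed possibilities actually exhaust (and are mutually distinct from) all topological types compatible with the given constraints on $(m_2,m_5,\iota)$. This is the same style of argument as in Proposition \ref{caseII-2}, and once the Newton diagrams are drawn for each of the three intersection patterns the remaining verifications reduce to routine non-degeneracy checks on each face after the explicit coordinate changes.
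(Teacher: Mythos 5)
Your reduction of the three values of $\iota$ to the three intersection patterns of $\ell_1,\ell_2$ with the two double tangent lines is correct, and your outline for $\iota=9,10$ (one, respectively two, degenerate square faces requiring a further toric modification, each yielding a $B_{5,2}$ on the strict transform) is sound. But the analysis you give for $\iota=8$ contains a genuine error. Writing $L_1=\{y=0\}$, $L_2=\{x=0\}$, $f_5=cx^2y^2+\sum_{i+j=5}b_{ij}x^iy^j$ and $f_2=a_{20}x^2+a_{11}xy+a_{02}y^2$ with $a_{20},a_{02}\ne0$, the Newton principal part is not $c^2x^4y^4+f_2^5$: the boundary has the three vertices $(0,10),(4,4),(10,0)$ and two faces with weight vectors ${}^t(3,2)$ and ${}^t(2,3)$, carrying the cross terms $2cb_{05}x^2y^7$ and $2cb_{50}x^7y^2$. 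There is no face with weight ${}^t(1,1)$ (that vector only attains its minimum at the vertex $(4,4)$), so the toric modification you propose along ${}^t(1,1)$, the appeal to part (2) of Lemma \ref{termination1} (which assumes $C_q$ smooth), and the predicted middle factor $(B_{1,1}^2)^{B_{2,2}}$ have no counterpart in the stated answer. In fact the face function on the $(2,3)$-face is $x^4\bigl(c^2y^4+2cb_{50}x^3y^2+(b_{50}^2+a_{20}^5)x^6\bigr)$, whose discriminant as a quadratic form in $(y^2,x^3)$ equals $-4c^2a_{20}^5\ne0$; hence $f$ is automatically non-degenerate on both faces and $B_{6,4}\circ B_{4,6}$ is read off directly from the Newton boundary with no toric modification at all.

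Consequently your proposed mechanism for the degenerations in item (1) --- reducibility of the branch of $C_5$ tangent to $L_1$ --- is also wrong: if $b_{50}=0$ the face function above is still non-degenerate and still gives $B_{6,4}$. The actual degeneration occurs when the outer vertex coefficient $b_{50}^2+a_{20}^5$ (resp.\ $b_{05}^2+a_{02}^5$) vanishes, i.e.\ by cancellation between $f_5^2$ and $f_2^5$, which makes $L_1$ (resp.\ $L_2$) a component of $C$ and replaces $B_{6,4}$ by $B_{4,2}\circ B_{3,2}$ (resp.\ $B_{4,6}$ by $B_{2,3}\circ B_{2,4}$); a Milnor-number check ($61\to 62$) confirms this. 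With this correction the rest of your plan does produce the stated list: for $\iota=9,10$ the faces adjacent to a line $\ell_i\in\{L_1,L_2\}$ receive no contribution from $f_2^5$ and are the exact squares $x^4(cy^2+b_{50}x^3)^2$, resp.\ $y^4(cx^2+b_{05}y^3)^2$, and the difference $d(P;f_2^5)-d(P;f_5^2)=25-20=5$ is what produces the superscript $B_{5,2}$ after the modification.
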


\subsection{Case V: $m_{5}$ = 5.}
Similarly we also divide this case into  
$C_2$ is smooth or two lines.

We have $\iota \geq 5$.
\begin{proposition}\label{caseV}
Suppose that  the multiplicity of the quintic $C_5$ is $5$, 
i.e., $C_5$ consists of five line components. Then
\begin{enumerate}
\item  If $C_2$ is an irreducible conic,  $(C,O)\sim B_{2\iota,5}$ for
       $\iota=5,\cdots 10$. 
\item  If $C_2$ consists of two lines,
        $f$ is a homogeneous polynomial of degree $10$ and 
  $(C,O) \sim B_{10,10}$, i.e., $C$ consists of 10 line components. 
\end{enumerate}
\end{proposition}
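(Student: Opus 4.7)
My plan is to treat the two cases of the proposition separately. Case~(2) is immediate. Since $m_5 = 5$ and $\deg f_5 = 5$, the polynomial $f_5$ is necessarily homogeneous of degree $5$; if moreover $C_2$ consists of two distinct lines through $O$, then $f_2$ is homogeneous of degree $2$, so $f = f_5^2 + f_2^5$ is homogeneous of degree $10$. A reduced homogeneous polynomial in two complex variables factors into pairwise distinct linear forms, hence $C$ is a union of $10$ distinct lines through $O$ and $(C,O)\sim B_{10,10}$.

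For case~(1) the plan is to find a single local coordinate system in which the Newton boundary of $f$ is a single non-degenerate face, then invoke Oka's theorem (Theorem~2.1 of~\cite{Oka-milnor}). Choose coordinates $(x, y_1)$ so that $C_2$ is defined by $y_1 = 0$; after absorbing a local unit into $f_5$ one may take $f_2 = y_1$. Then $y_1 = y - \phi(x)$, where $y = 0$ is the tangent of $C_2$ and $\phi(x) = \phi_2 x^2 + O(x^3)$ with $\phi_2 \ne 0$; a smooth conic whose Taylor expansion along its tangent direction had vanishing quadratic term would factor its tangent line off, contradicting irreducibility. Write $f_5 = y^k\, h(x,y)$ with $h$ homogeneous of degree $5-k$ and $y \nmid h$. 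The integer $k \in \{0, 1, \dots, 5\}$ is the multiplicity of the tangent of $C_2$ among the five tangent lines of $C_5$, and a short intersection computation gives $\iota = (5-k)\cdot 1 + k\cdot 2 = 5+k$, so the target singularity is $B_{10+2k,\,5}$.

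The central computation is the Newton polygon of $f = f_5^2 + y_1^5$ in $(x, y_1)$. The restriction $f_5(x,0) = \phi(x)^k\, h(x, \phi(x))$ has $x$-order exactly $5+k$ with nonzero leading coefficient $\phi_2^k h_{5-k,0}$ (using $\phi_2 \ne 0$ and $h_{5-k,0} \ne 0$), while the lowest-degree part of $f_5$ in $(x, y_1)$ remains $y_1^k\, h(x, y_1)$. Hence the Newton polygon of $f_5^2$ has the vertex $(10-2k, 2k)$ (with coefficient $h_{5-k,0}^2$) and the vertex $(10+2k, 0)$ (with coefficient $(\phi_2^k h_{5-k,0})^2$) on its lower boundary. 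The identity
\[
 5(10-2k) + (10+2k)(2k) - 5(10+2k) \;=\; 4k^2 \;\geq\; 0
\]
shows that $(10-2k, 2k)$ lies weakly above the segment joining $(0, 5)$ and $(10+2k, 0)$, so once the monomial $y_1^5$ coming from $f_2^5$ is added the Newton boundary of $f$ collapses to this single edge.

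The last step is to verify that the face function on this edge is non-degenerate. For $1 \leq k \leq 4$ one has $\gcd(5, 10+2k) = 1$, so the edge has only its two endpoints as lattice points and the face function is $y_1^5 + (\phi_2^k h_{5-k,0})^2\, x^{10+2k}$, which is manifestly non-degenerate. For $k = 0$ and $k = 5$ the edge contains intermediate lattice points, and the routine bookkeeping is to check, using that the support of $f_5^2$ lies weakly above both edges of its Newton polygon, that the only monomials of $f$ meeting the supporting line $5x + (10+2k)y = 50 + 10k$ are the two endpoints. In either case Oka's theorem yields $(C, O) \sim B_{10+2k,\,5} = B_{2\iota,\,5}$. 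The main technical obstacle throughout is precisely this coefficient bookkeeping across the $\phi$-substitution and the squaring, because that is what certifies both the collapse of the middle vertex and the non-vanishing of the leading coefficient at $(10+2k, 0)$.
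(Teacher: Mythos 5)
Your argument is correct. The paper states Proposition \ref{caseV} without proof, so there is no official argument to compare against; your route — adapted coordinates $(x,y_1)$ with $C_2=\{y_1=0\}$, the identity $\iota=5+k$ where $k$ is the multiplicity of the tangent line of $C_2$ in the homogeneous quintic $f_5$, and a direct Newton-polygon computation showing that $\Gamma(f;x,y_1)$ collapses to the single non-degenerate face joining $(0,5)$ and $(10+2k,0)$ — is exactly in the spirit of the paper's Lemmas \ref{Lemma-BT}, \ref{cuspidal}, \ref{termination4} and \ref{termination8}. Indeed, for $\iota=5,6$ (i.e.\ $k\le 1$, so the tangent of $C_2$ is at most a simple factor of $f_5$) the conclusion is literally an instance of Lemma \ref{termination4}(1), since $\iota<\frac{p}{p-q}(m-1)=\frac{20}{3}$; your computation is genuinely needed for $\iota\ge 7$, where that lemma's hypothesis on the tangent cone fails, and it checks out: the inequality $5(10-2k)+(10+2k)(2k)-5(10+2k)=4k^2\ge 0$ does place the middle vertex $(10-2k,2k)$ weakly above the face, the leading coefficients $h_{5-k,0}^2$ and $(\phi_2^kh_{5-k,0})^2$ are nonzero because $(5-k,k)$ and $(5+k,0)$ are vertices of $\Gamma_+(f_5;x,y_1)$, and for $k=0,5$ the only support points on the supporting line are the two endpoints. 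Two cosmetic remarks: one cannot literally ``absorb a local unit into $f_5$'' to force $f_2=y_1$ — rather $f_2=u\,y_1$ with $u(O)\ne 0$, so $f_2^5$ contributes $u(O)^5y_1^5$ plus terms strictly above the face, which is all you use; and it is worth saying explicitly that ``five line components'' must be read with multiplicity (otherwise $k\le 1$ and only $\iota=5,6$ would occur), which is clearly the paper's intent given the analogous subcases in Case IV.
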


\section{List of Classification}
Now we  have  the following list of  local classification.
\begin{theorem}\label{res}
Let $C=\{f=f_2^5+f_5^2=0\}$ is a (2,5)-torus curve.
We assume that $C_2=\{f_2=0\}$ is a reduced conic. ($C_2$ 
is non-reduced case in next section). 
The topological type of $(C,O)$ can be one of the  following
% table 
where $ \dagger (C,O)$ has degenerate series of  $(C,O)$. \\

{\rm{I}}. If $C_5$ is smooth,
 $(C,O)\sim B_{5\iota,2}$, $\iota=1,\cdots, 10$.

%{\small{
%\input{./in-10.tex}\\
%}}

{\rm{II}}. Assume $C_5$ is singular. \\

{\rm (II-1)} Assume  $C_2$ is an irreducible conic.

\vspace{0.3cm}
{\small{
%WinTpicVersion3.08
\unitlength 0.1in
\begin{picture}( 54.0000, 38.3000)(  4.1000,-40.7000)
% LINE 2 0 3 0
% 2 410 750 5810 750
% 
\special{pn 8}%
\special{pa 410 750}%
\special{pa 5810 750}%
\special{fp}%
% STR 2 0 3 0
% 3 580 3130 580 3230 2 0
% 9
\put(5.8000,-32.3000){\makebox(0,0)[lb]{9}}%
% STR 2 0 3 0
% 3 930 610 930 710 2 0
% $B_{3,2}\circ B_{2,3}, \ \ B_{5,4}$
\put(9.3000,-7.1000){\makebox(0,0)[lb]{$B_{3,2}\circ B_{2,3}, \ \ B_{5,4}$}}%
% LINE 2 0 3 0
% 2 410 500 5810 500
% 
\special{pn 8}%
\special{pa 410 500}%
\special{pa 5810 500}%
\special{fp}%
% LINE 2 0 3 0
% 2 410 250 410 4060
% 
\special{pn 8}%
\special{pa 410 250}%
\special{pa 410 4060}%
\special{fp}%
% LINE 2 0 3 0
% 2 810 250 810 4060
% 
\special{pn 8}%
\special{pa 810 250}%
\special{pa 810 4060}%
\special{fp}%
% STR 2 0 3 0
% 3 580 580 580 680 2 0
% 2
\put(5.8000,-6.8000){\makebox(0,0)[lb]{2}}%
% STR 2 0 3 0
% 3 930 920 930 1020 2 0
% $B_{8,2}\circ B_{2,3}, \ \ {(B_{3,2}^2)}^{B_{3,2}}, \ B_{6,5}$
\put(9.3000,-10.2000){\makebox(0,0)[lb]{$B_{8,2}\circ B_{2,3}, \ \ {(B_{3,2}^2)}^{B_{3,2}}, \ B_{6,5}$}}%
% STR 2 0 3 0
% 3 930 1280 930 1380 2 0
% $B_{13,2}\circ B_{2,3}, \ \ B_{6,2}\circ B_{4,3}, \ \  B_{8,5}, \ \ \dagger {(B_{4,2}^2)}^{2B_{2,2}}$
\put(9.3000,-13.8000){\makebox(0,0)[lb]{$B_{13,2}\circ B_{2,3}, \ \ B_{6,2}\circ B_{4,3}, \ \  B_{8,5}, \ \ \dagger {(B_{4,2}^2)}^{2B_{2,2}}$}}%
% STR 2 0 3 0
% 3 930 1640 930 1740 2 0
% $B_{18,2}\circ B_{2,3}, \ \  B_{11,2}\circ B_{4,3}, \ \  \dagger  B_{10,5}, \ \  \dagger {(B_{4,2}^2)}^{B_{7,2}+B_{2,2}}$
\put(9.3000,-17.4000){\makebox(0,0)[lb]{$B_{18,2}\circ B_{2,3}, \ \  B_{11,2}\circ B_{4,3}, \ \  \dagger  B_{10,5}, \ \  \dagger {(B_{4,2}^2)}^{B_{7,2}+B_{2,2}}$}}%
% STR 2 0 3 0
% 3 870 2020 870 2120 2 0
% $B_{23,2}\circ B_{2,3},  \   B_{16,2}\circ B_{4,3}, \  \dagger  B_{9,2}\circ B_{6,3}, \  B_{12,5},\  \dagger {(B_{4,2}^2)}^{B_{12,2}+B_{2,2}}$
\put(8.7000,-21.2000){\makebox(0,0)[lb]{$B_{23,2}\circ B_{2,3},  \   B_{16,2}\circ B_{4,3}, \  \dagger  B_{9,2}\circ B_{6,3}, \  B_{12,5},\  \dagger {(B_{4,2}^2)}^{B_{12,2}+B_{2,2}}$}}%
% STR 2 0 3 0
% 3 870 2370 870 2470 2 0
% $B_{28,2}\circ B_{2,3},  \   B_{21,2}\circ B_{4,3}, \  \dagger  B_{14,2}\circ B_{6,3}, \ \ \dagger {(B_{4,2}^2)}^{B_{17,2}+B_{2,2}},\ \ B_{7,2}\circ B_{8,3}, \ \ B_{14,5} $
\put(8.7000,-24.7000){\makebox(0,0)[lb]{$B_{28,2}\circ B_{2,3},  \   B_{21,2}\circ B_{4,3}, \  \dagger  B_{14,2}\circ B_{6,3}, \ \ \dagger {(B_{4,2}^2)}^{B_{17,2}+B_{2,2}},\ \ B_{7,2}\circ B_{8,3}, \ \ B_{14,5} $}}%
% STR 2 0 3 0
% 3 580 870 580 970 2 0
% 3
\put(5.8000,-9.7000){\makebox(0,0)[lb]{3}}%
% STR 2 0 3 0
% 3 580 1250 580 1350 2 0
% 4
\put(5.8000,-13.5000){\makebox(0,0)[lb]{4}}%
% STR 2 0 3 0
% 3 580 1650 580 1750 2 0
% 5
\put(5.8000,-17.5000){\makebox(0,0)[lb]{5}}%
% STR 2 0 3 0
% 3 580 2010 580 2110 2 0
% 6
\put(5.8000,-21.1000){\makebox(0,0)[lb]{6}}%
% STR 2 0 3 0
% 3 580 2380 580 2480 2 0
% 7
\put(5.8000,-24.8000){\makebox(0,0)[lb]{7}}%
% STR 2 0 3 0
% 3 920 2730 920 2830 2 0
% $B_{33,2}\circ B_{2,3}, \ B_{26,2}\circ B_{4,3},  \  \dagger B_{19,2}\circ B_{6,3}, \  \dagger {(B_{4,2}^2)}^{B_{22,2}+B_{2,2}}, \  B_{12,2}\circ B_{8,3}, \ \ B_{16,5} $
\put(9.2000,-28.3000){\makebox(0,0)[lb]{$B_{33,2}\circ B_{2,3}, \ B_{26,2}\circ B_{4,3},  \  \dagger B_{19,2}\circ B_{6,3}, \  \dagger {(B_{4,2}^2)}^{B_{22,2}+B_{2,2}}, \  B_{12,2}\circ B_{8,3}, \ \ B_{16,5} $}}%
% STR 2 0 3 0
% 3 580 2740 580 2840 2 0
% 8
\put(5.8000,-28.4000){\makebox(0,0)[lb]{8}}%
% STR 2 0 3 0
% 3 920 3090 920 3190 2 0
% $B_{38,2}\circ B_{2,3}, \ \ B_{31,2}\circ B_{4,3}, \ \  \dagger B_{24,2}\circ B_{6,3} , \ \ \dagger {(B_{4,2}^2)}^{B_{27,2}+B_{2,2}} $
\put(9.2000,-31.9000){\makebox(0,0)[lb]{$B_{38,2}\circ B_{2,3}, \ \ B_{31,2}\circ B_{4,3}, \ \  \dagger B_{24,2}\circ B_{6,3} , \ \ \dagger {(B_{4,2}^2)}^{B_{27,2}+B_{2,2}} $}}%
% STR 2 0 3 0
% 3 920 3620 920 3720 2 0
% $B_{43,2}\circ B_{2,3}, \ \ B_{36,2}\circ B_{4,3}, \ \  \dagger B_{29,2}\circ B_{6,3}, \ \  \dagger {(B_{4,2}^2)}^{B_{32,2}+B_{2,2}}$
\put(9.2000,-37.2000){\makebox(0,0)[lb]{$B_{43,2}\circ B_{2,3}, \ \ B_{36,2}\circ B_{4,3}, \ \  \dagger B_{29,2}\circ B_{6,3}, \ \  \dagger {(B_{4,2}^2)}^{B_{32,2}+B_{2,2}}$}}%
% STR 2 0 3 0
% 3 920 3840 920 3940 2 0
% $B_{22,2}\circ B_{8,3}, \ \ B_{15,2}\circ B_{10,3}, \ \ B_{20,5}$
\put(9.2000,-39.4000){\makebox(0,0)[lb]{$B_{22,2}\circ B_{8,3}, \ \ B_{15,2}\circ B_{10,3}, \ \ B_{20,5}$}}%
% STR 2 0 3 0
% 3 530 3730 530 3830 2 0
% 10
\put(5.3000,-38.3000){\makebox(0,0)[lb]{10}}%
% LINE 2 0 3 0
% 2 5810 250 5810 4070
% 
\special{pn 8}%
\special{pa 5810 250}%
\special{pa 5810 4070}%
\special{fp}%
% LINE 2 0 3 0
% 2 410 1080 5810 1080
% 
\special{pn 8}%
\special{pa 410 1080}%
\special{pa 5810 1080}%
\special{fp}%
% LINE 2 0 3 0
% 2 410 1480 5810 1480
% 
\special{pn 8}%
\special{pa 410 1480}%
\special{pa 5810 1480}%
\special{fp}%
% LINE 2 0 3 0
% 2 410 1840 5810 1840
% 
\special{pn 8}%
\special{pa 410 1840}%
\special{pa 5810 1840}%
\special{fp}%
% LINE 2 0 3 0
% 2 410 2210 5810 2210
% 
\special{pn 8}%
\special{pa 410 2210}%
\special{pa 5810 2210}%
\special{fp}%
% LINE 2 0 3 0
% 2 410 2560 5810 2560
% 
\special{pn 8}%
\special{pa 410 2560}%
\special{pa 5810 2560}%
\special{fp}%
% LINE 2 0 3 0
% 2 410 2920 5810 2920
% 
\special{pn 8}%
\special{pa 410 2920}%
\special{pa 5810 2920}%
\special{fp}%
% LINE 2 0 3 0
% 2 410 3460 5810 3460
% 
\special{pn 8}%
\special{pa 410 3460}%
\special{pa 5810 3460}%
\special{fp}%
% LINE 2 0 3 0
% 2 410 4060 5810 4060
% 
\special{pn 8}%
\special{pa 410 4060}%
\special{pa 5810 4060}%
\special{fp}%
% STR 2 0 3 0
% 3 920 3310 920 3410 2 0
% $B_{17,2}\circ B_{8,3}, \ \ B_{10,2}\circ B_{10,3}, \ \ B_{18,5}$
\put(9.2000,-34.1000){\makebox(0,0)[lb]{$B_{17,2}\circ B_{8,3}, \ \ B_{10,2}\circ B_{10,3}, \ \ B_{18,5}$}}%
% LINE 2 0 3 0
% 2 410 250 5810 250
% 
\special{pn 8}%
\special{pa 410 250}%
\special{pa 5810 250}%
\special{fp}%
% STR 2 0 3 0
% 3 580 310 580 410 2 0
% $\iota$
\put(5.8000,-4.1000){\makebox(0,0)[lb]{$\iota$}}%
% STR 2 0 3 0
% 3 2830 350 2830 450 2 0
% $(C,O)$
\put(28.3000,-4.5000){\makebox(0,0)[lb]{$(C,O)$}}%
\end{picture}%\\
}}

The singularities with $\dagger$ have further degenerations as is
 indicated below.

\renewcommand{\labelitemi}{$\dagger$}
\begin{itemize}
%%%%%%%%%%%%%%%%%%%%%%%%%%%%%%%%%%%%% 
\item ${(B_{4,2}^2)}^{2B_{2,2}}\ \ : \ \ 
    B_{10,4}, \ \ B_{k,2} \circ B_{2,5} \ \ (6\le k\le 15 )$
%%%%%%%%%%%%%%%%%%%%%%%%%%%%%%%%%%%%%
\item ${(B_{4,2}^2)}^{B_{7,2}+B_{2,2}}\ \ : \ \
    {(B_{5,2}^2)}^{B_{5,2}} $
%%%%%%%%%%%%%%%%%%%%%%%%%%%%%%%%%%%%%    
\item $ {(B_{4,2}^2)}^{B_{12,2}+B_{2,2}} \ \ : \ \ 
    {(B_{6,2}^2)}^{2B_{3,2}}, \ {(B_{7,2}^2)}^{B_{2,2}},\ B_{15,4}$
%%%%%%%%%%%%%%%%%%%%%%%%%%%%%%%%%%%%%
\item $ {(B_{4,2}^2)}^{B_{17,2}+B_{2,2}} \ \ : \ \ 
    {(B_{6,2}^2)}^{B_{8,2}+B_{3,2}}, \ {(B_{7,2}^2)}^{B_{7,2}}$
%%%%%%%%%%%%%%%%%%%%%%%%%%%%%%%%%%%%%    
\item $ {(B_{4,2}^2)}^{B_{22,2}+B_{2,2}} \ \ : \ \ 
   {(B_{6,2}^2)}^{B_{13,2}+B_{3,2}}, \ {(B_{8,2}^2)}^{2B_{4,2}}, \  
   {(B_{9,2}^2)}^{B_{5,2}}$\\ \hspace{7cm} $B_{20,4}, \ B_{k,2}\circ B_{10,2} \ (k=11,12)$
%%%%%%%%%%%%%%%%%%%%%%%%%%%%%%%%%%%%%
\item ${(B_{4,2}^2)}^{B_{27,2}+B_{2,2}} \ \ : \ \ 
   {(B_{6,2}^2)}^{B_{18,2}+B_{3,2}}, \ {(B_{8,2}^2)}^{B_{9,2}+B_{4,2}}, \  
   {(B_{9,2}^2)}^{B_{10,2}} $
\item $ {(B_{4,2}^2)}^{B_{32,2}+B_{2,2}} \ \ : \ \ 
   {(B_{6,2}^2)}^{B_{23,2}+B_{3,2}}, \ 
   {(B_{8,2}^2)}^{B_{13,2}+B_{4,2}}, \  
   {(B_{10,2}^2)}^{2B_{5,2}}, 
   {(B_{11,2}^2)}^{B_{6,2}},\\
   \hspace{3cm}
   {(B_{12,2}^2)}^{2B_{1,2}},\ B_{25,4} $
\item $ B_{10,5}\ \ : \ \ 
    B_{k,2}\circ B_{6,3}\, (5\le k \le 12),\ 
    B_{k,3}\circ B_{4,2}\, (7\le k \le 11),\     
    B_{3,1}\circ B_{5,2}\circ B_{4,2},\\ 
    \hspace{1.7cm} B_{3,1}\circ B_{7,2}\circ B_{4,2},\
    B_{k,2}\circ B_{3,1}\circ B_{4,2}\,(k=7,8,9),\
    B_{k_2+4,2}\circ B_{2,1}\circ (B_{2,1}^2)^{B_{k_1,2}}$,\\
   \hspace{1.6cm} $(k_1,k_2)\in 
    \{(k_1,k_2)\mid k_2-4\le  k_1\le 13-k_2,\, 5\le k_2 \le 7\}\cup\{(4,8),(5,9)\}$.

    %%%%%%%%%%%%%%%%%%%%%%%%%%%%%%%%%%%%%%%%%%
\item $ B_{9,2}\circ B_{6,3}\ \ : \ \ 
   {(B_{5,2}^2)}^{B_{1,2}} \circ B_{2,1}, \
%%%%%%%%%%%%%%%%%%%%%%%%%%%%%%%%%%%%%%%%%%
   B_{9,2}\circ B_{2,1}\circ {(B_{2,1}^2)}^{B_{k,2}} \ \ (1\le k \le 8 ) $
%%%%%%%%%%%%%%%%%%%%%%%%%%%%%%%%%%%%%%%%%%
\item $ B_{14,2}\circ B_{6,3}\ \ : \ \ 
   {(B_{5,2}^2)}^{B_{1,2}}\circ B_{2,1}, \ B_{13,2}\circ B_{2,1}, \ 
  B_{14,2}\circ B_{2,1}\circ {(B_{2,1}^2)}^{B_{k,2}} \ \ (1\le k \le 7 ) $
%%%%%%%%%%%%%%%%%%%%%%%%%%%%%%%%%%%%%%%%%%
\item $ B_{19,2}\circ B_{6,3}\ \ : \ \ 
   B_{12,2}\circ {(B_{3,1}^2)}^{B_{1,2}} \circ B_{2,1}, \
   {(B_{7,2}^2)}^{B_{4,2} }\circ  B_{2,1},\
 B_{19,2}\circ B_{2,1}\circ {(B_{2,1}^2)}^{B_{k,2}} \ \ (1\le k \le 6 ) $
 
   \item $B_{24,2}\circ B_{6,3}\ \ : \ \ 
   B_{17,2}\circ {(B_{3,1}^2)}^{B_{1,2}} \circ B_{2,1}, \
   {(B_{8,2}^2)}^{2B_{2,2}}\circ  B_{1,2}, \ B_{18,4} \circ B_{2,1}, \\ 
   \hspace{2.5cm}    B_{24,2}\circ B_{2,1}\circ {(B_{2,1}^2)}^{B_{k,2}} \ \ (1\le k \le 5 ) $
  \item $ B_{29,2}\circ B_{6,3}\ \ : \ \ 
   B_{22,2}\circ {(B_{3,1}^2)}^{B_{1,2}} \circ B_{2,1}, \
   B_{22,2}\circ {(B_{4,1}^2)}^{B_{2,2}}\circ B_{2,1}, \
   {(B_{9,2}^2)}^{B_{5,2}}\circ B_{2,1}, \\
\hspace{2.5cm}   B_{29,2}\circ B_{2,1}\circ {(B_{2,1}^2)}^{B_{k,2}} \ \  (1\le k \le 5, \ k\ne 4) $
    \end{itemize}
%%%%%%%%%%%%%%%%%%%%%%%%%%%%%%%%%%%%%%%%%%%%%%%%%%%%%%%%%%%%%%%%%%%%%%%%%%%%%%%%%%%%%%%%%%%%%%%%%%%%%%%%%%%%%%%%%%%%%%%%%%%%%%%%%%%%%%%%%%%%%%%%%%%%%%%%%%%%%%%%%%%%% 

{\rm (II-2)}  Assume $C_2$  is two distinct lines.\\

{\small{
%WinTpicVersion3.08
\unitlength 0.1in
\begin{picture}( 54.0000, 44.6300)(  4.1000,-46.1600)
% LINE 2 0 3 0
% 2 410 170 5810 170
% 
\special{pn 8}%
\special{pa 410 170}%
\special{pa 5810 170}%
\special{fp}%
% LINE 2 0 3 0
% 2 410 170 410 4616
% 
\special{pn 8}%
\special{pa 410 170}%
\special{pa 410 4616}%
\special{fp}%
% LINE 2 0 3 0
% 2 780 170 780 4616
% 
\special{pn 8}%
\special{pa 780 170}%
\special{pa 780 4616}%
\special{fp}%
% STR 2 0 3 0
% 3 570 494 570 584 2 0
% 4
\put(5.7000,-5.8400){\makebox(0,0)[lb]{4}}%
% STR 2 0 3 0
% 3 880 773 880 863 2 0
% $B_{13,2}\circ B_{2,8}, \ \ {(B_{3,2}^2)}^{B_{13,2}}$
\put(8.8000,-8.6300){\makebox(0,0)[lb]{$B_{13,2}\circ B_{2,8}, \ \ {(B_{3,2}^2)}^{B_{13,2}}$}}%
% STR 2 0 3 0
% 3 570 737 570 827 2 0
% 5
\put(5.7000,-8.2700){\makebox(0,0)[lb]{5}}%
% STR 2 0 3 0
% 3 880 1025 880 1115 2 0
% $B_{18,2}\circ B_{2,8}, \ \ B_{13,2}\circ B_{2,13}, \ \  \dagger {(B_{4,2}^2)}^{2B_{7,2}}$
\put(8.8000,-11.1500){\makebox(0,0)[lb]{$B_{18,2}\circ B_{2,8}, \ \ B_{13,2}\circ B_{2,13}, \ \  \dagger {(B_{4,2}^2)}^{2B_{7,2}}$}}%
% STR 2 0 3 0
% 3 880 1223 880 1313 2 0
% $B_{6,2}\circ {(B_{1,1}^2)}^{B_{4,2}}\circ B_{2,6}, \ \ \dagger {(B_{3,2}^2)}^{B_{4,2}}\circ B_{2,6}, \ \  \dagger {(B_{4,3}^2)}^{B_{6,2}}$
\put(8.8000,-13.1300){\makebox(0,0)[lb]{$B_{6,2}\circ {(B_{1,1}^2)}^{B_{4,2}}\circ B_{2,6}, \ \ \dagger {(B_{3,2}^2)}^{B_{4,2}}\circ B_{2,6}, \ \  \dagger {(B_{4,3}^2)}^{B_{6,2}}$}}%
% STR 2 0 3 0
% 3 570 1115 570 1205 2 0
% 6
\put(5.7000,-12.0500){\makebox(0,0)[lb]{6}}%
% STR 2 0 3 0
% 3 880 1493 880 1583 2 0
% $B_{23,2}\circ B_{2,8}, \ \ B_{18,2}\circ B_{2,13}, \ \ \dagger B_{16,2}\circ{(B_{2,1}^2)}^{B_{7,2}}$
\put(8.8000,-15.8300){\makebox(0,0)[lb]{$B_{23,2}\circ B_{2,8}, \ \ B_{18,2}\circ B_{2,13}, \ \ \dagger B_{16,2}\circ{(B_{2,1}^2)}^{B_{7,2}}$}}%
% STR 2 0 3 0
% 3 880 1709 880 1799 2 0
% $B_{11,2}\circ {(B_{1,1}^2)}^{B_{4,2}}\circ B_{2,6}, \ \ {(B_{3,2}^2)}^{B_{9,2}}\circ B_{2,6}, \ \ \dagger {(B_{3,2}^2)}^{B_{4,2}}\circ B_{2,11},\,\ {(B_{4,3}^2)}^{B_{11,2}}$
\put(8.8000,-17.9900){\makebox(0,0)[lb]{$B_{11,2}\circ {(B_{1,1}^2)}^{B_{4,2}}\circ B_{2,6}, \ \ {(B_{3,2}^2)}^{B_{9,2}}\circ B_{2,6}, \ \ \dagger {(B_{3,2}^2)}^{B_{4,2}}\circ B_{2,11},\,\ {(B_{4,3}^2)}^{B_{11,2}}$}}%
% STR 2 0 3 0
% 3 560 1574 560 1664 2 0
% 7
\put(5.6000,-16.6400){\makebox(0,0)[lb]{7}}%
% STR 2 0 3 0
% 3 880 2015 880 2105 2 0
% $B_{23,2}\circ B_{2,13}, \ B_{18,2}\circ B_{2,18}, \ B_{11,2}\circ{(B_{1,1}^2)}^{B_{4,2}}\circ B_{2,11},\ B_{6,4}\circ B_{6,4}, B_{10,8}$
\put(8.8000,-21.0500){\makebox(0,0)[lb]{$B_{23,2}\circ B_{2,13}, \ B_{18,2}\circ B_{2,18}, \ B_{11,2}\circ{(B_{1,1}^2)}^{B_{4,2}}\circ B_{2,11},\ B_{6,4}\circ B_{6,4}, B_{10,8}$}}%
% STR 2 0 3 0
% 3 880 2474 880 2564 2 0
% $B_{4,2}\circ {(B_{2,2}^2)}^{2B_{2,2}}\circ B_{2,4}, \ \dagger B_{6,4}\circ {(B_{1,1}^2)}^{2B_{2,2}}\circ B_{2,4}, \ \dagger B_{8,6}\circ B_{2,4}, \ B_{4,2}\circ {(B_{3,2}^2)}^{B_{7,2}}$
\put(8.8000,-25.6400){\makebox(0,0)[lb]{$B_{4,2}\circ {(B_{2,2}^2)}^{2B_{2,2}}\circ B_{2,4}, \ \dagger B_{6,4}\circ {(B_{1,1}^2)}^{2B_{2,2}}\circ B_{2,4}, \ \dagger B_{8,6}\circ B_{2,4}, \ B_{4,2}\circ {(B_{3,2}^2)}^{B_{7,2}}$}}%
% LINE 2 0 3 0
% 2 410 3644 5810 3644
% 
\special{pn 8}%
\special{pa 410 3644}%
\special{pa 5810 3644}%
\special{fp}%
% STR 2 0 3 0
% 3 550 2249 550 2339 2 0
% 8
\put(5.5000,-23.3900){\makebox(0,0)[lb]{8}}%
% STR 2 0 3 0
% 3 870 3014 870 3104 2 0
% ${(B_{3,2}^2)}^{B_{9,2}}\circ B_{2,16}, \ \ \dagger {(B_{4,2}^2)}^{2B_{5,2}}\circ B_{2,11}, \ \ B_{9,2}\circ {(B_{2,2}^2)}^{2B_{2,2}}\circ B_{2,4}$
\put(8.7000,-31.0400){\makebox(0,0)[lb]{${(B_{3,2}^2)}^{B_{9,2}}\circ B_{2,16}, \ \ \dagger {(B_{4,2}^2)}^{2B_{5,2}}\circ B_{2,11}, \ \ B_{9,2}\circ {(B_{2,2}^2)}^{2B_{2,2}}\circ B_{2,4}$}}%
% STR 2 0 3 0
% 3 870 3221 870 3311 2 0
% $\dagger B_{6,4}\circ {(B_{1,1}^2)}^{B_{2,2}}\circ B_{2,9}, \ \ {(B_{3,2}^2)}^{B_{5,2}}\circ {(B_{1,1}^2)}^{B_{2,2}}\circ B_{2,4}, \ \ \dagger B_{6,4}\circ {(B_{1,1}^2)}^{B_{7,2}}\circ B_{2,4}$
\put(8.7000,-33.1100){\makebox(0,0)[lb]{$\dagger B_{6,4}\circ {(B_{1,1}^2)}^{B_{2,2}}\circ B_{2,9}, \ \ {(B_{3,2}^2)}^{B_{5,2}}\circ {(B_{1,1}^2)}^{B_{2,2}}\circ B_{2,4}, \ \ \dagger B_{6,4}\circ {(B_{1,1}^2)}^{B_{7,2}}\circ B_{2,4}$}}%
% STR 2 0 3 0
% 3 540 3077 540 3167 2 0
% 9
\put(5.4000,-31.6700){\makebox(0,0)[lb]{9}}%
% STR 2 0 3 0
% 3 840 3788 840 3878 2 0
% $B_{23,2}\circ B_{2,23}, \ \ B_{16,2}\circ{(B_{1,1}^2)}^{B_{4,2}}\circ B_{2,16}$
\put(8.4000,-38.7800){\makebox(0,0)[lb]{$B_{23,2}\circ B_{2,23}, \ \ B_{16,2}\circ{(B_{1,1}^2)}^{B_{4,2}}\circ B_{2,16}$}}%
% STR 2 0 3 0
% 3 840 3986 840 4076 2 0
% $\dagger {(B_{4,2}^2)}^{2B_{5,2}}\circ B_{2,16}, \ \ B_{9,2}\circ{(B_{2,2}^2)}^{2B_{2,2}}\circ B_{2,9}, \ \ \dagger B_{6,4}\circ {(B_{1,1}^2)}^{B_{7,2}}\circ B_{2,9}$
\put(8.4000,-40.7600){\makebox(0,0)[lb]{$\dagger {(B_{4,2}^2)}^{2B_{5,2}}\circ B_{2,16}, \ \ B_{9,2}\circ{(B_{2,2}^2)}^{2B_{2,2}}\circ B_{2,9}, \ \ \dagger B_{6,4}\circ {(B_{1,1}^2)}^{B_{7,2}}\circ B_{2,9}$}}%
% STR 2 0 3 0
% 3 840 4211 840 4301 2 0
% ${(B_{3,2}^2)}^{B_{5,2}}\circ {(B_{1,1}^2)}^{B_{2,2}}\circ B_{2,9}, \ \ B_{6,4}\circ {(B_{1,1}^2)}^{B_{7,2}}\circ B_{2,4}$
\put(8.4000,-43.0100){\makebox(0,0)[lb]{${(B_{3,2}^2)}^{B_{5,2}}\circ {(B_{1,1}^2)}^{B_{2,2}}\circ B_{2,9}, \ \ B_{6,4}\circ {(B_{1,1}^2)}^{B_{7,2}}\circ B_{2,4}$}}%
% STR 2 0 3 0
% 3 510 4121 510 4211 2 0
% 10
\put(5.1000,-42.1100){\makebox(0,0)[lb]{10}}%
% STR 2 0 3 0
% 3 870 2798 870 2888 2 0
% $B_{23,2}\circ B_{2,18}, \ \ B_{16,2}\circ{(B_{1,1}^2)}^{B_{4,2}}\circ B_{2,11}$
\put(8.7000,-28.8800){\makebox(0,0)[lb]{$B_{23,2}\circ B_{2,18}, \ \ B_{16,2}\circ{(B_{1,1}^2)}^{B_{4,2}}\circ B_{2,11}$}}%
% LINE 2 0 3 0
% 2 410 647 5810 647
% 
\special{pn 8}%
\special{pa 410 648}%
\special{pa 5810 648}%
\special{fp}%
% LINE 2 0 3 0
% 4 5810 170 5810 3734 5810 3734 5810 4616
% 
\special{pn 8}%
\special{pa 5810 170}%
\special{pa 5810 3734}%
\special{fp}%
\special{pa 5810 3734}%
\special{pa 5810 4616}%
\special{fp}%
% LINE 2 0 3 0
% 2 410 899 5810 899
% 
\special{pn 8}%
\special{pa 410 900}%
\special{pa 5810 900}%
\special{fp}%
% LINE 2 0 3 0
% 2 410 1367 5810 1367
% 
\special{pn 8}%
\special{pa 410 1368}%
\special{pa 5810 1368}%
\special{fp}%
% LINE 2 0 3 0
% 2 410 1862 5810 1862
% 
\special{pn 8}%
\special{pa 410 1862}%
\special{pa 5810 1862}%
\special{fp}%
% LINE 2 0 3 0
% 2 410 2654 5810 2654
% 
\special{pn 8}%
\special{pa 410 2654}%
\special{pa 5810 2654}%
\special{fp}%
% STR 2 0 3 0
% 3 870 3455 870 3545 2 0
% $\dagger {(B_{4,3}^2)}^{B_{5,2}}\circ B_{2,4}, \ \  {(B_{5,4}^2)}^{B_{5,2}}, \ \ \dagger B_{6,4}\circ {(B_{2,3}^2)}^{B_{5,2}},\ \ B_{8,6}\circ B_{2,9}$
\put(8.7000,-35.4500){\makebox(0,0)[lb]{$\dagger {(B_{4,3}^2)}^{B_{5,2}}\circ B_{2,4}, \ \  {(B_{5,4}^2)}^{B_{5,2}}, \ \ \dagger B_{6,4}\circ {(B_{2,3}^2)}^{B_{5,2}},\ \ B_{8,6}\circ B_{2,9}$}}%
% STR 2 0 3 0
% 3 830 4445 830 4535 2 0
% ${(B_{4,3}^2)}^{B_{5,2}}\circ B_{2,9}, \ \ {(B_{3,2}^2)}^{B_{5,2}}\circ {(B_{2,3}^2)}^{B_{5,2}} $
\put(8.3000,-45.3500){\makebox(0,0)[lb]{${(B_{4,3}^2)}^{B_{5,2}}\circ B_{2,9}, \ \ {(B_{3,2}^2)}^{B_{5,2}}\circ {(B_{2,3}^2)}^{B_{5,2}} $}}%
% LINE 2 0 3 0
% 2 410 4616 5810 4616
% 
\special{pn 8}%
\special{pa 410 4616}%
\special{pa 5810 4616}%
\special{fp}%
% STR 2 0 3 0
% 3 880 2240 880 2330 2 0
% $B_{16,2}\circ {(B_{1,1}^2)}^{B_{4,2}}\circ B_{2,6},  \ {(B_{3,2}^2)}^{B_{9,2}}\circ B_{2,11}, \ {(B_{3,2}^2)}^{B_{4,2}}\circ B_{2,16}, \  \dagger {(B_{4,2}^2)}^{2B_{5,2}}\circ B_{2,6}$
\put(8.8000,-23.3000){\makebox(0,0)[lb]{$B_{16,2}\circ {(B_{1,1}^2)}^{B_{4,2}}\circ B_{2,6},  \ {(B_{3,2}^2)}^{B_{9,2}}\circ B_{2,11}, \ {(B_{3,2}^2)}^{B_{4,2}}\circ B_{2,16}, \  \dagger {(B_{4,2}^2)}^{2B_{5,2}}\circ B_{2,6}$}}%
% STR 2 0 3 0
% 3 880 521 880 611 2 0
% $B_{8,2}\circ B_{2,8}, \ \ \dagger {(B_{3,2}^2)}^{B_{8,2}}$
\put(8.8000,-6.1100){\makebox(0,0)[lb]{$B_{8,2}\circ B_{2,8}, \ \ \dagger {(B_{3,2}^2)}^{B_{8,2}}$}}%
% LINE 2 0 3 0
% 2 410 395 5810 395
% 
\special{pn 8}%
\special{pa 410 396}%
\special{pa 5810 396}%
\special{fp}%
% STR 2 0 3 0
% 3 570 233 570 323 2 0
% $\iota$
\put(5.7000,-3.2300){\makebox(0,0)[lb]{$\iota$}}%
% STR 2 0 3 0
% 3 2850 251 2850 341 2 0
% $(C,O)$
\put(28.5000,-3.4100){\makebox(0,0)[lb]{$(C,O)$}}%
\end{picture}%\\
}}

The singularities with $\dagger$ have further degenerations as is
  indicated below.
\begin{itemize} 
 \item $ {(B_{3,2}^2)}^{B_{8,2}}\ \ : \ \ 
    {(B_{4,2}^2)}^{ 2B_{2,2}}, \ \ B_{10,4}, \ \ B_{k,2} \circ B_{2,5} \ \ (6\le k \le 15 ) \hspace{4cm} $
 \item $  {(B_{4,2}^2)}^{ 2B_{7,2}}\ \ : \ \ 
    {(B_{5,2}^2)}^{B_{10,2}}, \ {(B_{6,2}^2)}^{2B_{3,2}}, \
    {(B_{7,2}^2)}^{B_{2,2}}, \ B_{15,4} $

 \item $  B_{16,2}\circ {(B_{2,1}^2)}^{B_{7,2}}\ \ : \ \ 
    {(B_{5,2}^2)}^{B_{15,2}} $

 \item $ {(B_{3,2}^2)}^{B_{4,2}}\circ B_{2,6},\quad B_{8,4}\circ B_{2,6},\quad 
 B_{k,2}\circ B_{4,2}\circ B_{2,6}\ \ (5\le k\le 12)$
 
 \item $  {(B_{3,2}^2)}^{B_{4,2}}\circ B_{2,11}\ \ : \ \ 
   B_{8,4}\circ B_{2,11}, \quad B_{k,2}\circ B_{4,2}\circ B_{2,11}\ \ (5\le k\le 10)$

 \item $ {(B_{4,2}^2)}^{2B_{5,2}}\circ B_{2,6}\ \ : \ \ 
   {(B_{5,2}^2)}^{B_{6,2}}\circ B_{2,6}, \ 
    {(B_{6,2}^2)}^{2B_{1,2}} \circ B_{2,6}, \quad B_{13,4}\circ B_{2,6}$
    
 \item $ {(B_{4,2}^2)}^{2B_{5,2}}\circ B_{2,11}\ \ : \ \ 
   {(B_{5,2}^2)}^{B_{6,2}}\circ B_{2,11}, \ 
   {(B_{6,2}^2)}^{2B_{1,2}} \circ B_{2,11}, \quad B_{13,4}\circ B_{2,11}$
   
 \item $  {(B_{4,2}^2)}^{2B_{5,2}}\circ B_{2,16}\ \ : \ \ 
   {(B_{5,2}^2)}^{B_{6,2}}\circ B_{2,16}, \ 
   {(B_{6,2}^2)}^{2B_{1,2}} \circ B_{2,16}, \quad B_{13,4}\circ B_{2,16}$
   
 \item $   {(B_{4,3}^2)}^{B_{6,2}}\ \ : \ \ 
                 B_{10,6}, \quad B_{6,3}\circ B_{5,3}$
%%%%%%%%%%%%%%%%%%%%%%%%%%%%%%%%%%%%%%%%%%
 \item $  B_{6,4}\circ {(B_{1,1}^2)}^{B_{2,2}}\circ B_{2,9}\ \ : \ \ 
B_{4,3}\circ B_{3,2}\circ {(B_{1,1}^2)}^{B_{2,2}}\circ B_{2,9}$               
%%%%%%%%%%%%%%%%%%%%%%%%%%%%%%%%%%%%%%%%%%    
 \item $  B_{6,4}\circ {(B_{1,1}^2)}^{B_{7,2}}\circ B_{2,4}\ \ : \ \ 
B_{4,3}\circ B_{3,2}\circ {(B_{1,1}^2)}^{B_{7,2}}\circ B_{2,4}$
%%%%%%%%%%%%%%%%%%%%%%%%%%%%%%%%%%%%%%%%%%    
 \item $  B_{6,4}\circ {(B_{1,1}^2)}^{B_{7,2}}\circ B_{2,9}\ \ : \ \ 
B_{4,3}\circ B_{3,2}\circ {(B_{1,1}^2)}^{B_{7,2}}\circ B_{2,9}$
%%%%%%%%%%%%%%%%%%%%%%%%%%%%%%%%%%%%%%%%%%
 \item $  B_{8,6}\circ B_{2,4}\ \ : \ \ 
B_{5,4}\circ B_{4,3}\circ B_{2,4}$
%%%%%%%%%%%%%%%%%%%%%%%%%%%%%%%%%%%%%%%%%%
 \item $  B_{8,6}\circ B_{2,9}\ \ : \ \ 
 B_{5,4}\circ B_{4,3}\circ B_{2,9}$
%%%%%%%%%%%%%%%%%%%%%%%%%%%%%%%%%%%%%%%%%%
 \item $  B_{6,4}\circ B_{4,6}\ \ : \ \ 
B_{4,2}\circ B_{3,2}\circ B_{2,6},\quad B_{4,2}\circ B_{3,2}\circ B_{2,3}\circ B_{2,4}
$
%%%%%%%%%%%%%%%%%%%%%%%%%%%%%%%%%%%%%%%%%%
 \item $B_{6,4}\circ {(B_{2,3}^2)}^{B_{5,2}}\ \ : \ \ 
B_{4,2}\circ B_{3,2}\circ {(B_{2,3}^2)}^{B_{5,2}}$
    \end{itemize}                             
                 
\end{theorem}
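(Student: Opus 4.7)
The proof is essentially a compilation: Theorem \ref{res} simply tabulates, organized by the invariant $\iota=I(C_2,C_5;O)$, every topological type that appeared in the case-by-case analysis of Sections 4. So my plan is to verify that every row of the tables reproduces the conclusions of the relevant propositions, and that the tables together are exhaustive.

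The organization I would follow is the one already implicit in Section 4. Fix $m_5\in\{1,\dots,5\}$. Case I $(m_5=1)$ gives exactly the first table line by Proposition \ref{caseI} via Lemma \ref{Lemma-BT}. For $m_5\ge 2$ I first subdivide by the combinatorial type of the tangent cone $T_OC_5$ (the subcases (a),(b),\dots\ listed at the start of each Case), and then further by $m_2\in\{1,2\}$ and, if needed, by the mutual position of the components of $C_2$ and of $T_OC_5$. Within each such branch the value of $\iota$ is bounded by B\'ezout ($m_2m_5\le\iota\le 10$) together with the tangency constraints, and I read the singularity type off the appropriate proposition: Proposition \ref{caseII-1}/\ref{caseII-2}/\ref{caseII-3} for $m_5=2$, the four propositions of Case III for $m_5=3$, the five propositions of Case IV for $m_5=4$, and Proposition \ref{caseV} for $m_5=5$. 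Table (II-1) collects the branches in which $C_2$ is an irreducible conic, and Table (II-2) those in which $C_2$ splits into two lines; the $\dagger$-entries flag the generic type in each cell, and the bullet list beneath each table records the degenerations produced in the corresponding proposition.

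To turn this into a proof I would proceed row by row. For a row indexed by $\iota$ in Table (II-1), I collect every non-degenerate type appearing for some $m_5$ and some tangent-cone configuration with $m_2=1$ and that value of $\iota$; the content of the row should equal the disjoint union of those propositions' conclusions. The same procedure applies to Table (II-2) with $m_2=2$. The $\dagger$-lists are obtained by pulling together the further degenerations attached to a given generic type in the relevant propositions (e.g.\ $(B_{4,2}^2)^{2B_{2,2}}$ degenerating to $B_{10,4}$ or $B_{k,2}\circ B_{2,5}$ is exactly part (3)(a) of Proposition \ref{caseII-2} and part (2) of Proposition-bis). Since each proposition has already been established by direct Newton-boundary/toric-modification computation, this step is purely bookkeeping.

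The main obstacle is not any one calculation but \emph{completeness and non-redundancy}: I must ensure (i) that every geometric configuration of $(C_2,C_5;O)$ has been enumerated by some branch of the case analysis, and (ii) that a given singularity type appearing in two different branches (typically because the same Newton-boundary model can arise from different tangency patterns) is listed only once. For (i) I would argue that the division by $m_5$, then by the partition type of $T_OC_5$, then by $m_2$, and finally by the tangency data $\iota_i=I(\ell_i,C_5;O)$, exhausts all possibilities up to linear change of coordinates, because these are precisely the discrete invariants that determine the Newton boundary of $f=f_5^2+f_2^5$ in some admissible coordinate system. For (ii) I would check coincidences by comparing resolution graphs: two singularities with identical minimal embedded resolution graph are topologically equivalent, and this is the criterion I would use to collapse duplicate entries (this is the same symmetry already exploited in the Remark following Lemma \ref{c1}). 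Once (i) and (ii) are verified, the theorem follows immediately from the earlier propositions.
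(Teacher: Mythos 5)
Your proposal matches the paper's treatment exactly: Theorem \ref{res} is stated there with no separate proof, being simply the tabulation, reorganized by $\iota$ and by the reducibility of $C_2$, of the conclusions of Propositions \ref{caseI} through \ref{caseV} and the intervening lemmas of Section 4. The bookkeeping you describe (row-by-row collection of types, plus the completeness and non-redundancy checks via the tangent-cone/multiplicity invariants and resolution graphs) is precisely what the paper leaves implicit, so your argument is correct and follows the same route.
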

\renewcommand{\labelitemi}{$\bullet$}
 
\section{LINEAR TORUS CURVE OF TYPE (2,5)}
\begin{definition}
Let $C$ be a torus curve of type $(2,5)$ 
which has a defining polynomial $f$ which can be 
written as $f(x,y)={f_2(x,y)}^5+{f_5(x,y)}^2$ where $\deg f_j=j$ $(j=2,5)$.
If $f_2(x,y)=\ell(x,y)^2$ for some linear form $\ell$,
then the curve $C$ is called a linear torus curve. 
\end{definition}
Let $\ell (x,y) = (ax+by+c)^2$. We may assume that 
$f_2(x,y)=-y^2$ by a linear change of coordinates so that 
$f(x,y)$ is a product of quintic forms $f(x,y)=(f_5(x,y)+y^5))(f_5(x,y)-y^5)$.  
It is easy to observe that 
the inner singularities of $C$ are on $\{y=0\} \cap C_5$.
%%%%%%%%%%%%%%%%%%%%%%%%%%%%%%%%%%%%%%%%%%
%%%%%%%%%%%%%%%%%%%%%%%%%%%%%%%%%%%%%%%%%%
%%%%%%%%%%%%%%%%%%%%%%%%%%%%%%%%%%%%%%%%%%
\subsection{Local Classification}
In this section we determine local singularity of   
linear torus curve type (2,5). 
Similarly, %to section 6,
we divide into five cases.
If $C_5$ is smooth, we already have the singularity $(C,O)$
form Lemma \ref{Lemma-BT}.
Hence we consider that 
the multiplicity of $C_5$ is larger than 2. 

\subsection{Case L-II: $m_{5}$ = 2.}
In this case, the tangent cone of $C_5$ has two types and we have $\iota=4,6,8,10$. 
\begin{proposition}
Suppose  $m_5=2$. 
%Then the germ $(C,O)$ can be type, 

\begin{enumerate}
\item If the tangent cone of $C_5$ consists of  two distinct  lines,
$(C,O)\sim B_{5\iota-12,2}\circ B_{2,8}$ for $\iota=4,6,8,10$.

\item Assume that  the tangent cone of $C_5$ is a line with multiplicity 2. 
\begin{enumerate}
\item If $(C_5,O)\sim B_{3,2}$,   
\begin{enumerate}
\item $(C,O)\sim {(B_{3,2}^2)}^{B_{8,2}}$ for $\iota=4$ and 
\item $(C,O)\sim {(B_{3,2}^2)}^{B_{18,2}}$ for $\iota=6$.
\end{enumerate}
\item If $(C_5,O)\sim B_{4,2}$,   
\begin{enumerate}
\item  $(C,O)\sim {(B_{4,2}^2)}^{2B_{2,2}}$ for $\iota=4$,
\item  $(C,O)\sim {(B_{4,2}^2)}^{2B_{12,2}}$ for $\iota=8$ and
\item  $(C,O)\sim {(B_{4,2}^2)}^{B_{22,2}+B_{12,2}}$ for $\iota=10$.
\end{enumerate}
\item If $(C_5,O)\sim B_{5,2}$,   
\begin{enumerate}
\item  $(C,O)\sim B_{10,4}$ and $B_{k,2}\circ B_{2,5}\ (6\le k \le 15)$ for $\iota=4$
\item  $(C,O)\sim{(B_{5,2}^2)}^{B_{20,2}}$ for $\iota=8$. 
\item  $(C,O)\sim{(B_{5,2}^2)}^{B_{30,2}}$ for $\iota=10$. 
\end{enumerate}
\item If $(C_5,O)\sim B_{6,2}$,   
$(C,O)\sim {(B_{6,2}^2)}^{2B_{8,2}}$ for $\iota=8$.
%%%%%%%%%%%%%%%%%%%%%%%%%%%%%%%
\item If $(C_5,O)\sim B_{7,2}$,   
$(C,O)\sim {(B_{7,2}^2)}^{B_{12,2}}$ for $\iota=8$.
%%%%%%%%%%%%%%%%%%%%%%%%%%%%%%%
\item If $(C_5,O)\sim B_{8,2}$,   
$(C,O)\sim {(B_{8,2}^2)}^{2B_{4,2}}$ for $\iota=8$.
%%%%%%%%%%%%%%%%%%%%%%%%%%%%%%%
\item If $(C_5,O)\sim B_{9,2}$,   
$(C,O)\sim {(B_{9,2}^2)}^{B_{4,2}}$ for $\iota=8$.
%%%%%%%%%%%%%%%%%%%%%%%%%%%%%%%
\item If $(C_5,O)\sim B_{10,2}$,   
$(C,O)\sim B_{20,5}$ and $B_{k,2}\circ B_{10,3}\ (11\le k \le 13)$ for $\iota=8$.
\end{enumerate}
\end{enumerate}
\end{proposition}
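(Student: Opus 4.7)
The plan is to parallel the analysis of Proposition \ref{caseII-2}, specialized to the linear-torus setting $f_2=-y^2$. Two observations drive the argument. First, $\iota=I(-y^2,f_5;O)$ equals $2\,I(y,f_5;O)$: the presentation of $\mathcal{O}/(y^2,f_5)$ as a $\mathbb{C}\{x\}$-module has matrix $\begin{pmatrix} p & 0 \\ q & p \end{pmatrix}$ with $p(x)=f_5(x,0)$ and $q(x)=\partial_y f_5|_{y=0}$, whose determinant is $p^2$; hence the length of the cokernel is $2\,\mathrm{ord}(p)$. Consequently $\iota$ is always even, which explains why only $\iota=4,6,8,10$ appear. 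Second, the identity $f=f_5^2+f_2^5=(f_5-y^5)(f_5+y^5)$ splits $(C,O)$ into two quintic germs $C^{\pm}=\{f_5\pm y^5=0\}$ with the same tangent cone and multiplicity at $O$ as $C_5$; this is the source of the doubled singularities $(B_{k,2}^2)^{\cdots}$ in the conclusion.

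For case~(1), where the tangent cone of $C_5$ consists of two distinct lines $\ell_1,\ell_2$, the linear change $x\to x+cy$ (which preserves $f_2$) arranges either both $\ell_i$ transverse to $\{y=0\}$ (giving $\iota=4$) or $\ell_1=\{y=0\}$, in which case the branch of $C_5$ tangent to $\ell_1$ has contact order $k\in\{2,3,4\}$ with $\{y=0\}$ and $\iota=2(k+1)\in\{6,8,10\}$. In each sub-case the Newton boundary of $f=f_5^2-y^{10}$ has two faces with perfect-square face functions, and an admissible toric modification refining the dual Newton diagram reveals the strict transform as two components $\widetilde{C^\pm}$ that meet tangentially over each intersection with an exceptional divisor; computing the contact orders there yields $(C,O)\sim B_{5\iota-12,2}\circ B_{2,8}$, where the $B_{2,8}$ factor corresponds to the tangent direction that stays transverse to $\{y=0\}$.

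For case~(2), where the tangent cone of $C_5$ is $\ell^2$ and $(C_5,O)\sim B_{k,2}$, I split along the topological type of $(C_5,O)$ and the relative position of $\ell$ and $\{y=0\}$. After a local analytic change bringing $f_5$ to the normal form $\alpha y_1^2+\beta x^k+(\text{higher weighted terms})$ with weight vector $P$ of $(C_5,O)$, the Newton boundary of $f_5^2-y^{10}$ has a degenerate face of weight $P$ with face function $(\alpha y_1^2+\beta x^k)^2$, resolved by one toric modification with weight $P$. In the relevant chart $\pi^* f=u^{2d(P;f_5)}(\widetilde f_5-u^{\alpha}v^{\beta})(\widetilde f_5+u^{\alpha}v^{\beta})$, and the two strict transform components meet the exceptional divisor tangentially; the contact order (computed in translated toric coordinates $(u,v_2)$) determines the resulting $B_{n,2}$ singularity. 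The main obstacle is the bookkeeping of sub-cases: each of the eight topological types $(C_5,O)\sim B_{k,2}$, $k=3,\ldots,10$, admits several values of $\iota$, and in some sub-cases (e.g.\ $(C_5,O)\sim B_{5,2}$ with $\iota=4$) the non-degenerate face acquires further degeneracies along coefficient loci, producing one-parameter series such as $B_{k,2}\circ B_{5,2}$ for $6\le k\le 15$; these are resolved exactly as in Proposition \ref{caseII-2} by triangular changes of coordinates combined with further toric modifications.
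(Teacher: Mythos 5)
Your proposal is correct and takes essentially the same route as the paper, whose own proof simply defers assertion (1) to the Newton boundary argument of Lemma \ref{termination1} and assertion (2) to the computations of Proposition \ref{caseII-2}; your two structural observations (that $\iota=2\,I(\{y=0\},C_5;O)$ is necessarily even, and that $f$ factors as $(f_5-y^5)(f_5+y^5)$) are exactly the facts the paper records at the start of \S 6. The only slip is cosmetic: in your case (2) the two strict-transform components are tangent to \emph{each other} at a point of $\hat{E}(P)$ but meet the exceptional divisor transversally, which is what the superscript in $(B_{k,2}^2)^{B_{n,2}}$ records.
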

\begin{proof}
The assertion (1) is shown by the  Newton boundary argument
(cf. Lemma \ref{termination1}).
The assertion (2) is mainly computational.
See that Proposition \ref{caseII-2}. 
 \end{proof}
\subsection{Case L-III: $m_{5}$ = 3.}
In this case,  the tangent cone of $C_5$ has three types and we have 
$\iota=2k, (k=3, 4 ,5)$.

\begin{proposition}
Suppose $m_5=3$. 
%Then the germ $(C,O)$ can be one of the following 

\begin{enumerate}
\item If the tangent cone of $C_5$ consists of three  distinct  lines.
$(C,O)\sim {(B_{3,3}^2)}^{3B_{4,2}}$ and $B_{16,2}\circ {(B_{2,2}^2)}^{2B_{4,2}}$
and $B_{26,2}\circ {(B_{2,2}^2)}^{2B_{4,2}}$
for $\iota=6,8,10$.

\item Suppose the tangent cone of $C_5$ is a double line and a single line. 
\begin{enumerate}
\item If $ \iota =6$,
      $(C,O)\sim B_{6,2}\circ {(B_{2,3}^2)}^{B_{4,2}}$, 
      $B_{8,4}\circ B_{2,6}$ and 
      $B_{k,2}\circ B_{4,2}\circ B_{2,6}, \ (5\le k \le 10)$.
%%%%%%%%%%%%%%%%%%%%%%%%%%%%%%%%%%%%%%%%%%
\item $\iota=8$.
   \begin{enumerate}
   \item If the tangent cone of $C_5$ is $\{xy^2=0\}$, then 
   $(C,O)\sim {(B_{3,2}^2)}^{B_{14,2}} \circ B_{2,6}$.
   \item If the tangent cone of $C_5$ is $\{x^2y=0\}$, then 
   $(C,O)\sim B_{16,2}\circ {(B_{2,3}^2)}^{B_{4,2}}$,
   $B_{16,2}\circ B_{4,8}$ and 
   $B_{16,2}\circ B_{2,4}\circ B_{2,k},\ ({5\le k\le 10})$.
   \end{enumerate}
\item  $\iota=10$. 
   \begin{enumerate}
   \item If the tangent cone of $C_5$ is $\{xy^2=0\}$, then 
   $(C,O)\sim {(B_{4,2}^2)}^{2B_{10,2}} \circ B_{2,6}$,
   ${(B_{5,2}^2)}^{B_{16,2}} \circ B_{2,6}$, 
   ${(B_{6,2}^2)}^{2B_{6,2}} \circ B_{2,6}$,
   ${(B_{8,2}^2)}^{2B_{2,2}} \circ B_{2,6}$, 
   $B_{18,2} \circ B_{2,6}$ and $B_{19,2} \circ B_{2,6}$.  
   %%%%%%%%%%%%%%%%%%%%%%%%%%%%%%%%%%%%%%%%%%
   \item  If the tangent cone of $C_5$ is $\{x^2y=0\}$, then 
   $(C,O)\sim B_{26,2}\circ {(B_{2,3}^2)}^{B_{4,2}}$,
   $B_{26,2}\circ B_{4,8}$ and $B_{26,2}\circ B_{2,4}\circ B_{2,k},\ ({5\le k\le 9})$.
   \end{enumerate}
   \end{enumerate}
\item Suppose  the tangent cone of $C_5$ is a line with triple line.
\begin{enumerate} 
   \item If $\iota=6$, 
    $(C,O)\sim {(B_{4,3}^2)}^{B_{6,2}}$,
   $B_{4,2}\circ {(B_{3,2}^2)}^{B_{2,2}}$,
   $B_{10,6}$ and $B_{3,6}\circ B_{5,3}$.
   \item If $\iota=8$, $(C,O)\sim {(B_{4,3}^2)}^{B_{16,2}}$.
   \item If $\iota=10$,  $(C,O)\sim B_{4,2}\circ {(B_{3,2}^2)}^{B_{12,2}}$
   and ${(B_{5,3}^2)}^{B_{20,2}}$.
 \end{enumerate}    
\end{enumerate}
\end{proposition}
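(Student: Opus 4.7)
The plan is to treat Case L-III in parallel with Case III of Section 4, exploiting the simplification $f_2 = -y^2$ so that $f(x,y) = f_5(x,y)^2 - y^{10}$. Since $C_2 = \{y^2 = 0\}$ is a doubled line, the intersection multiplicity $\iota = I(C_2, C_5; O) = 2\,I(\{y=0\}, C_5; O)$ is always even; reducedness of $C$ at $O$ forces $y \nmid f_5$, so this intersection number is finite. The strategy is to analyze the Newton boundary of $f_5^2 - y^{10}$ in a well-chosen coordinate system, apply Theorem 2.1 of \cite{Oka-milnor} when $f$ is non-degenerate, and otherwise carry out the toric modifications developed in Section 4.

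For part (1), the tangent cone $f_{5,3}$ factors as a product $\prod_{i=1}^{3}\ell_i$ of three distinct linear forms. If $y$ is not among them, then $\iota = 6$; if some $\ell_i$ equals $y$, the extra tangency contributes a correction $k = I(\{y=0\}, \text{branch}) \ge 2$ to $\iota$, giving $\iota = 8$ or $10$. The Newton boundary of $f$ has a single face of weight ${}^t(1,1)$ with face function $(f_{5,3})^2$, which is degenerate because each $\ell_i$ appears to multiplicity $2$. One toric blow-up of weight ${}^t(1,1)$ separates the three branches on the central $\hat{E}(P)$, and further resolving the three $B_{4,2}$-type singularities (produced by the local factorization $f_5 \pm y^5$ along each branch) yields the three listed topological types. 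This is essentially the $C_2$-is-a-doubled-line specialization of Lemma \ref{termination1}(3).

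Parts (2) and (3) follow the same pattern, mirroring Cases III-(b) and III-(c) of Section 4. In each sub-case one first chooses a triangular change of coordinates so that the relevant component of $T_O C_5$ becomes $\{y=0\}$ or $\{x=0\}$, then examines the Newton boundary of $f_5^2 - y^{10}$: its non-degenerate faces give immediate Brieskorn--Pham contributions, while degenerate faces require the admissible translated toric coordinate technique from the proof of Proposition \ref{caseII-2}. For part (2), the weight vector of the degenerate face is ${}^t(1,2)$ and, depending on whether $\{y=0\}$ aligns with the double line or the simple line of $T_O C_5$, one obtains different intersection jumps, producing the distinct lists for $\iota = 6, 8, 10$ (including the further ``dagger'' degenerations of $B_{8,4}$ and $(B_{4,2}^2)^{2B_{10,2}}$ that mirror those in Propositions \ref{caseII-2} and \ref{caseII-3}). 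For part (3), the weight vector is ${}^t(1,3)$ and the analysis is analogous, with further degenerations parallel to the $B_{4,3}$-class singularities arising in Case III-(c).

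The main obstacle is the case-by-case bookkeeping of degenerations: once the Newton boundary is set up correctly, each sub-case reduces to a direct toric modification computation, but one must systematically enumerate all possibilities for which coefficients vanish on each face and follow them through further iterations of the algorithm. The explicit lists from the analogous non-linear propositions serve as templates; one only has to specialize them to $C_2 = \{y^2 = 0\}$, restrict to even values of $\iota$, and verify that the branches of $C = \{f_5 + y^5 = 0\} \cup \{f_5 - y^5 = 0\}$ meet the exceptional configuration at the points predicted by the Newton-boundary analysis.
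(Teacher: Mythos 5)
The paper states this proposition with no proof at all; the only guidance is the proof of the preceding Case L-II proposition (``Newton boundary argument\dots mainly computational, see Proposition \ref{caseII-2}''), and your plan --- write $f=f_5^2-y^{10}$, split $C$ into the two quintics $f_5\pm y^5$, note that $\iota$ is even, and run the Newton-boundary / admissible-toric-modification machinery of Section 4 in the chosen coordinates --- is exactly that method. So in spirit your proposal matches the paper.

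One concrete slip, though, in your treatment of part (1). Your description ``the Newton boundary of $f$ has a single face of weight ${}^t(1,1)$ with face function $(f_{5,3})^2$\dots resolving the three $B_{4,2}$-type singularities yields the three listed topological types'' is only correct for $\iota=6$. When one of the three tangent lines is $y=0$ (the cases $\iota=8,10$), the weight-$(1,1)$ face function is $y^2(\ell_2\ell_3)^2$ and the Newton boundary acquires a second compact face (of weight ${}^t(1,2)$, resp.\ ${}^t(1,3)$) carrying the branch tangent to $\{y=0\}$; only the two transverse branches produce $B_{4,2}$'s on the central divisor, which is why the stated answers are $B_{16,2}\circ(B_{2,2}^2)^{2B_{4,2}}$ and $B_{26,2}\circ(B_{2,2}^2)^{2B_{4,2}}$ --- with $(B_{2,2}^2)$, not $(B_{3,3}^2)$, and with the tangent branch contributing the non-degenerate factor $B_{16,2}$ resp.\ $B_{26,2}$ (two smooth branches of $f_5+y^5$ and $f_5-y^5$ meeting with contact order $8$ resp.\ $13$, seen after a triangular change of coordinates making the second face non-degenerate). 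Your general algorithm would detect this once the two-face Newton polygon is drawn, but as written the sketch predicts $(B_{3,3}^2)^{3B_{4,2}}$-type answers for all three values of $\iota$, which contradicts the statement being proved. The rest (parts (2) and (3) mirroring Cases III-(b), III-(c)) is consistent with the paper's intent, modulo the same caveat that each sub-case requires the full bookkeeping you defer.
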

%%%%%%%%%%%%%%%%%%%%%%%%%%%%%%%%%%%%%
\subsection{Case L-IV: $m_{5}$ = 4.}
%In this case, the tangent cone of $C_5$ has five types and we have $\iota=8,10$.
%%%%%%%%%%%%%%%%%%%%%%%%%%%%%%%%%%%%%
\begin{proposition}
Suppose that the multiplicity of the quintic $C_5$ is 4. 
%Then the germ $(C,O)$ can be one of the following. 

\begin{enumerate}
\item Assume  the tangent cone of $C_5$ is distinct four lines then
\begin{enumerate}
     \item If $\iota =8 $, 
     then $(C,O)\sim {(B_{4,4}^2)}^{ 4B_{2,2}}$.
     \item If $\iota=10$, then
     $(C,O)\sim B_{14,2}\circ {(B_{3,3}^2)}^{3B_{2,2}}$.
     \end{enumerate}
\item Assume the tangent cone of $C_5$ is a double line and distinct two
      lines.
     \begin{enumerate}
     \item If $\iota=8$, 
       $(C,O)\sim {(B_{2,2}^2)}^{2B_{2,2}}\circ B_{4,6}$ and 
     ${(B_{2,2}^2)}^{2B_{2,2}}\circ B_{2,3}\circ B_{2,4}$
     \item Suppose  $\iota=10$. 
\begin{enumerate}
    \item If the tangent cone of $C_5$ is $\{x^2y(y+cx)=0\}$,
            $(C,O)\sim B_{14,2}\circ {(B_{1,1}^2)}^{B_{2,2}}\circ B_{4,6}$ and 
           $B_{14,2}\circ {(B_{1,1}^2)}^{B_{2,2}}\circ B_{2,3}\circ B_{2,4}$.
    \item  If the tangent cone of $C_5$ is $\{xy^2(y+cx)=0\}$,
           $(C,O)\sim {(B_{3,2}^2)}^{B_{10,2}}\circ {(B_{2,2}^2)}^{ 2B_{2,2}}\circ B_{2,4}$. 
   \end{enumerate}     
   \end{enumerate}
\item  Assume the  tangent cone of $C_5$ is a triple line and another line. 
 \begin{enumerate}
     \item If $\iota =8$, 
           then $(C,O)\sim B_{4,2}\circ B_{6,8}$ and 
           $B_{4,2}\circ B_{3,4}\circ B_{2,3}\circ B_{1,2}$.
     \item If $\iota=10$,
     \begin{enumerate}
     \item The tangent cone of $C_5$ is $\{x^3y=0\}$,
           we have $B_{14,2}\circ B_{6,8}$ and $B_{14,2}\circ B_{3,4}\circ B_{2,3}\circ B_{1,2}$.
     \item The tangent cone of $C_5$ is $\{xy^3=0\}$,
           we have ${(B_{4,3}^2)}^{B_{10,2}}\circ B_{2,4}$.
     \end{enumerate}
 \end{enumerate}  
    \item   Assume the tangent cone of $C_5$ is a line with multiplicity 4.
 \begin{enumerate}
    \item If $\iota=8$,
           then $(C,O)\sim B_{8,10}$ and $B_{4,5}\circ B_{3,4}\circ B_{1,2}$.
    \item If $\iota=10$, we have $(C,O)\sim {(B_{4,5}^2)}^{B_{10,2}}$.
 \end{enumerate}     
    \item  Assume the tangent cone of $C_5$ consists of  two double lines. 
\begin{enumerate}
     \item If $\iota=8$, 
      then $(C,O)\sim {(B_{2,2}^4)}^{2B_{10,2}}$, 
           ${(B_{1,1}^4)}^{B_{10,2}}\circ B_{4,6}$ and 
           ${(B_{1,1}^4)}^{B_{10,2}}\circ B_{2,3}\circ B_{2,4}$.
     \item If $\iota=10$, we have 
           $(C,O)\sim {(B_{3,2}^2)}^{B_{10,2}}\circ B_{4,6}$ and 
           ${(B_{3,2}^2)}^{B_{10,2}}\circ B_{2,3}\circ B_{2,4}$.  
\end{enumerate}     
\end{enumerate}
\end{proposition}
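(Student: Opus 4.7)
The plan is to exploit the factorization available for linear torus curves. Normalizing so that $f_2=-y^2$, we have $f = f_5^{\,2}-y^{10} = (f_5-y^5)(f_5+y^5)$, so $(C,O)$ decomposes as the union of two quintic germs $Q_\pm:\,f_5\pm y^5=0$. Since $m_5=4<5$, the degree-$4$ part of $f_5\pm y^5$ coincides with that of $f_5$, and both $Q_\pm$ have the same tangent cone as $C_5$ at $O$. Thus the classification of $(C,O)$ is determined by the types of $(Q_\pm,O)$, their mutual intersection configuration, and how they are glued along the exceptional divisor after resolution.

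For each of the five tangent-cone types in (1)--(5), I would first put $T_OC_5$ in a standard form—for instance $\prod_{i=1}^4(y-c_ix)$ in (1), $x^2y(y+cx)$ or $xy^2(y+cx)$ in (2), $x^3y$ or $xy^3$ in (3), $y^4$ in (4), and $x^2y^2$ in (5). Next I would write $f=f_5^{\,2}-y^{10}$ explicitly to the order dictated by $\iota$ and compute the Newton boundary $\Gamma(f;x,y)$. On generic faces $f$ is non-degenerate and contributes a Brieskorn--Pham factor via Theorem~2.1 of \cite{Oka-milnor}; on degenerate faces I would perform the canonical toric modification, pass to admissible translated toric coordinates around each intersection of the strict transform with the exceptional divisor, and iterate. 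The explicit form of $f_5\pm y^5$ on each chart reveals how $Q_+$ and $Q_-$ separate above $O$, and the outcome is assembled using the $\circ$ and $(\cdot)^{(\cdot)}$ notation from \S3.

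The essential tools are Lemma \ref{termination1}, Lemma \ref{termination4}, and especially Lemma \ref{termination8}, which already describe the generic behavior when $C_p$ is a quartic with a prescribed tangent cone meeting a smooth $C_q$. The subcases here correspond to the ``non-generic'' situations $2p=qm$ of those lemmas, where the extra constraint $f_2=-y^2$ forces predictable cancellation and therefore an additional blow-up. For example, subcase~(1) with $\iota=8$ follows by a single toric modification with weight $P={}^t(1,1)$: the strict transform splits into four $A_1$ points, yielding $(B_{4,4}^{\,2})^{4B_{2,2}}$. Subcase~(4) is the analogue of Case IV-(d) with the horizontal face dominating. The intermediate subcases (2), (3), (5) are handled by iterating the Newton-boundary argument used in Propositions~\ref{caseII-2}, \ref{CaseIII-2} and the Case IV propositions.

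The main obstacle will be the bookkeeping in subcases (2)-(ii) and (5) at $\iota=10$, where after the first toric modification the strict transform has two branches, one from $Q_+$ and one from $Q_-$, meeting the same point of the exceptional divisor with identical tangent direction. At such a point one must choose admissible translated toric coordinates carefully to separate $Q_+$ and $Q_-$; the ensuing second blow-up produces the compound symbols $(B_{3,2}^{\,2})^{B_{10,2}}\circ\cdots$ appearing in the statement. Once this separation is performed, the remainder of the analysis is routine Newton-boundary manipulation, and the lists in (1)--(5) follow by enumerating the possible vanishing conditions on the subleading coefficients of $f_5$; the fact that $f$ must be exactly of the product form $(f_5-y^5)(f_5+y^5)$ rigidly constrains these coefficients and rules out the extra degenerate series that appear in the non-linear Case IV.
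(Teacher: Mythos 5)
Your approach---normalizing to $f_2=-y^2$, factoring $f=(f_5-y^5)(f_5+y^5)$ into two quintic germs with the same tangent cone as $C_5$, and then running the Newton-boundary/toric-modification machinery of Lemmas \ref{termination1}, \ref{termination4}, \ref{termination8} case by case on the five tangent-cone types---is exactly the method the paper uses throughout Section 6; the paper in fact states this proposition with no proof at all, deferring to the same ``mainly computational'' Newton-boundary arguments used for Cases L-II and L-III. Your one fully worked instance (subcase (1) with $\iota=8$: a single toric modification with weight ${}^t(1,1)$ splits the strict transform into four $A_1$ points, giving $(B_{4,4}^2)^{4B_{2,2}}$) is correct, so the proposal matches the paper's route, with the caveat that, like the paper, it leaves the remaining enumerations as routine but unexecuted computations.
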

\renewcommand{\labelitemi}{$\dagger$}
Putting together the above classifications, we have:
\begin{theorem}
Let $C$ be a linear torus curve type (2,5).
The $(C,O)$ is described as follows.

\vspace{0.3cm}
\begin{center}
 \small{%WinTpicVersion3.08
\unitlength 0.1in
\begin{picture}( 48.0000, 32.2000)(  4.1000,-34.0000)
% STR 2 0 3 0
% 3 490 250 490 350 2 0
% $\iota$
\put(4.9000,-3.5000){\makebox(0,0)[lb]{$\iota$}}%
% STR 2 0 3 0
% 3 700 500 700 600 2 0
% $B_{10,2}$
\put(7.0000,-6.0000){\makebox(0,0)[lb]{$B_{10,2}$}}%
% LINE 2 0 3 0
% 2 410 180 5210 180
% 
\special{pn 8}%
\special{pa 410 180}%
\special{pa 5210 180}%
\special{fp}%
% LINE 2 0 3 0
% 2 410 400 5210 400
% 
\special{pn 8}%
\special{pa 410 400}%
\special{pa 5210 400}%
\special{fp}%
% LINE 2 0 3 0
% 2 410 180 410 3400
% 
\special{pn 8}%
\special{pa 410 180}%
\special{pa 410 3400}%
\special{fp}%
% LINE 2 0 3 0
% 2 610 180 610 3400
% 
\special{pn 8}%
\special{pa 610 180}%
\special{pa 610 3400}%
\special{fp}%
% STR 2 0 3 0
% 3 480 480 480 580 2 0
% 2
\put(4.8000,-5.8000){\makebox(0,0)[lb]{2}}%
% LINE 2 0 3 0
% 2 410 3400 5210 3400
% 
\special{pn 8}%
\special{pa 410 3400}%
\special{pa 5210 3400}%
\special{fp}%
% LINE 2 0 3 0
% 2 5210 180 5210 3400
% 
\special{pn 8}%
\special{pa 5210 180}%
\special{pa 5210 3400}%
\special{fp}%
% LINE 2 0 3 0
% 2 410 650 5210 650
% 
\special{pn 8}%
\special{pa 410 650}%
\special{pa 5210 650}%
\special{fp}%
% STR 2 0 3 0
% 3 700 760 700 860 2 0
% $B_{20,2}, \ \ B_{8,2}\circ B_{2,8}, \ \ \dagger {(B_{3,2}^2)}^{B_{8,2}}$
\put(7.0000,-8.6000){\makebox(0,0)[lb]{$B_{20,2}, \ \ B_{8,2}\circ B_{2,8}, \ \ \dagger {(B_{3,2}^2)}^{B_{8,2}}$}}%
% LINE 2 0 3 0
% 2 410 900 5210 900
% 
\special{pn 8}%
\special{pa 410 900}%
\special{pa 5210 900}%
\special{fp}%
% STR 2 0 3 0
% 3 700 1030 700 1130 2 0
% $B_{30,2}, \ \ B_{18,2}\circ B_{2,8}, \ \ \dagger {(B_{3,2}^2)}^{B_{18,2}}$
\put(7.0000,-11.3000){\makebox(0,0)[lb]{$B_{30,2}, \ \ B_{18,2}\circ B_{2,8}, \ \ \dagger {(B_{3,2}^2)}^{B_{18,2}}$}}%
% STR 2 0 3 0
% 3 700 1270 700 1370 2 0
% $ \dagger B_{6,2}\circ  {(B_{2,3}^2)}^{B_{4,2}} , \ \  \dagger {(B_{4,3})}^{B_{6,2}},\ \ \dagger {(B_{3,3}^2)}^{3B_{4,2}}$
\put(7.0000,-13.7000){\makebox(0,0)[lb]{$ \dagger B_{6,2}\circ  {(B_{2,3}^2)}^{B_{4,2}} , \ \  \dagger {(B_{4,3})}^{B_{6,2}},\ \ \dagger {(B_{3,3}^2)}^{3B_{4,2}}$}}%
% LINE 2 0 3 0
% 2 410 1430 5210 1430
% 
\special{pn 8}%
\special{pa 410 1430}%
\special{pa 5210 1430}%
\special{fp}%
% STR 2 0 3 0
% 3 700 1570 700 1670 2 0
% $B_{40,2} , \ \ B_{28,2}\circ B_{2,8} , \ \ \dagger {(B_{4,2}^2)}^{2B_{12,2}} , \ \ \dagger B_{8,10}, \ \  {(B_{4,3}^2)}^{B_{16,2}}, \ \ {(B_{4,4}^2)}^{4B_{2,2}} $
\put(7.0000,-16.7000){\makebox(0,0)[lb]{$B_{40,2} , \ \ B_{28,2}\circ B_{2,8} , \ \ \dagger {(B_{4,2}^2)}^{2B_{12,2}} , \ \ \dagger B_{8,10}, \ \  {(B_{4,3}^2)}^{B_{16,2}}, \ \ {(B_{4,4}^2)}^{4B_{2,2}} $}}%
% STR 2 0 3 0
% 3 700 1800 700 1900 2 0
% ${(B_{3,2}^2)}^{B_{14,2}}\circ B_{2,6}, \ \ \dagger B_{16,2} \circ {(B_{2,3}^2)}^{B_{4,2}},\ \ \dagger {(B_{2,2}^2)}^{2B_{2,2}}\circ B_{4,6}$
\put(7.0000,-19.0000){\makebox(0,0)[lb]{${(B_{3,2}^2)}^{B_{14,2}}\circ B_{2,6}, \ \ \dagger B_{16,2} \circ {(B_{2,3}^2)}^{B_{4,2}},\ \ \dagger {(B_{2,2}^2)}^{2B_{2,2}}\circ B_{4,6}$}}%
% STR 2 0 3 0
% 3 700 2030 700 2130 2 0
% $\dagger B_{4,2}\circ B_{6,8}, \ \ \dagger {(B_{2,2}^4)}^{2B_{10,2}},\ \ B_{16,2}\circ {(B_{2,2}^2)}^{2B_{4,2}}$
\put(7.0000,-21.3000){\makebox(0,0)[lb]{$\dagger B_{4,2}\circ B_{6,8}, \ \ \dagger {(B_{2,2}^4)}^{2B_{10,2}},\ \ B_{16,2}\circ {(B_{2,2}^2)}^{2B_{4,2}}$}}%
% LINE 2 0 3 0
% 2 410 2210 5210 2210
% 
\special{pn 8}%
\special{pa 410 2210}%
\special{pa 5210 2210}%
\special{fp}%
% STR 2 0 3 0
% 3 700 2330 700 2430 2 0
% $B_{50,2} , \ \ B_{38,2}\circ B_{2,8}, \ \  \dagger {(B_{4,2}^2)}^{B_{22,2}+B_{12,2}}, \ \ {(B_{4,3}^2)}^{B_{10,2}}\circ B_{2,4}$
\put(7.0000,-24.3000){\makebox(0,0)[lb]{$B_{50,2} , \ \ B_{38,2}\circ B_{2,8}, \ \  \dagger {(B_{4,2}^2)}^{B_{22,2}+B_{12,2}}, \ \ {(B_{4,3}^2)}^{B_{10,2}}\circ B_{2,4}$}}%
% STR 2 0 3 0
% 3 700 2630 700 2730 2 0
% $\dagger {(B_{4,2}^2)}^{2B_{10,2}}\circ B_{2,6}, \ \ B_{26,2} \circ {(B_{2,3}^2)}^{B_{4,2}}, \ \ B_{10,10},\ \ \dagger B_{14,2}\circ {(B_{1,1}^2)}^{B_{2,2}}\circ B_{4,6}$
\put(7.0000,-27.3000){\makebox(0,0)[lb]{$\dagger {(B_{4,2}^2)}^{2B_{10,2}}\circ B_{2,6}, \ \ B_{26,2} \circ {(B_{2,3}^2)}^{B_{4,2}}, \ \ B_{10,10},\ \ \dagger B_{14,2}\circ {(B_{1,1}^2)}^{B_{2,2}}\circ B_{4,6}$}}%
% STR 2 0 3 0
% 3 700 2910 700 3010 2 0
% $ \dagger B_{4,2}\circ {(B_{3,2}^2)}^{B_{12,2}}, \ \ B_{14,2}\circ {(B_{3,3}^2)}^{3B_{2,2}}, \ \ {(B_{4,5}^2)}^{B_{10,2}}, \ \  \dagger B_{14,2}\circ B_{6,8}$
\put(7.0000,-30.1000){\makebox(0,0)[lb]{$ \dagger B_{4,2}\circ {(B_{3,2}^2)}^{B_{12,2}}, \ \ B_{14,2}\circ {(B_{3,3}^2)}^{3B_{2,2}}, \ \ {(B_{4,5}^2)}^{B_{10,2}}, \ \  \dagger B_{14,2}\circ B_{6,8}$}}%
% STR 2 0 3 0
% 3 480 730 480 830 2 0
% 4
\put(4.8000,-8.3000){\makebox(0,0)[lb]{4}}%
% STR 2 0 3 0
% 3 480 1140 480 1240 2 0
% 6
\put(4.8000,-12.4000){\makebox(0,0)[lb]{6}}%
% STR 2 0 3 0
% 3 480 1750 480 1850 2 0
% 8
\put(4.8000,-18.5000){\makebox(0,0)[lb]{8}}%
% STR 2 0 3 0
% 3 430 2620 430 2720 2 0
% 10
\put(4.3000,-27.2000){\makebox(0,0)[lb]{10}}%
% STR 2 0 3 0
% 3 2490 250 2490 350 2 0
% $(C,O)$
\put(24.9000,-3.5000){\makebox(0,0)[lb]{$(C,O)$}}%
% STR 2 0 3 0
% 3 690 3210 690 3310 2 0
% $\dagger {(B_{3,2}^2)}^{B_{10,2}}\circ {(B_{2,2}^2)}^{2B_{2,2}}\circ B_{2,4}, \ \ B_{26,2}\circ {(B_{2,2}^2)}^{2B_{4,2}}, \ \  {(B_{4,5}^2)}^{B_{10,2}}    $
\put(6.9000,-33.1000){\makebox(0,0)[lb]{$\dagger {(B_{3,2}^2)}^{B_{10,2}}\circ {(B_{2,2}^2)}^{2B_{2,2}}\circ B_{2,4}, \ \ B_{26,2}\circ {(B_{2,2}^2)}^{2B_{4,2}}, \ \  {(B_{4,5}^2)}^{B_{10,2}}    $}}%
\end{picture}%}
\end{center}
\begin{itemize} 
 \item ${(B_{3,2}^2)}^{B_{8,2}}\ \ : \ \ 
         B_{10,4}, \ \ {(B_{4,2}^2)}^{2B_{2,2}}, \ \  
         B_{k,2} \circ B_{5,2} \ \ (6\le k \le 15)$
 \item ${(B_{3,2}^2)}^{B_{18,2}}\ \ : \ \ 
       {(B_{5,2}^2)}^{B_{20,2}}, \ \ {(B_{6,2}^2)}^{2B_{8,2}}, \ \
       {(B_{7,2}^2)}^{B_{12,2}}, \ \ {(B_{8,2}^2)}^{2B_{4,2}}, \ \
       {(B_{9,2}^2)}^{B_{4,2}},\\ \ B_{20,5}, \ \
        B_{k,2}\circ B_{10,3}\ (1\le k \le 13)$
%%%%%%%%%%%%%%%%%%%%%%%%%%%%%%%%%%%%%
 \item ${(B_{4,2}^2)}^{B_{22,2}+B_{12,2}}\ \ : \ \
        {(B_{5,2}^2)}^{B_{30,2}}$
%%%%%%%%%%%%%%%%%%%%%%%%%%%%%%%%%%%%%
 \item $B_{6,2}\circ {(B_{2,3}^2)}^{B_{4,2}}\ \ : \ \
 B_{8,4}\circ B_{2,6},\quad B_{k,2}\circ B_{4,2}\circ B_{2,6}\ (5\le k \le 10)$
%%%%%%%%%%%%%%%%%%%%%%%%%%%%%%%%%%%%%
 \item $B_{16,2}\circ {(B_{2,3}^2)}^{B_{4,2}}\ \ : \ \
 B_{16,2}\circ B_{4,8},\quad B_{16,2}\circ B_{2,4}\circ B_{2,k}\ (5\le k \le 10) $
%%%%%%%%%%%%%%%%%%%%%%%%%%%%%%%%%%%%%
 \item $B_{26,2}\circ {(B_{2,3}^2)}^{B_{4,2}}\ \ : \ \
 B_{26,2}\circ B_{4,8},\quad B_{26,2}\circ B_{2,4}\circ B_{2,k}\ (5\le k \le 9) $
%%%%%%%%%%%%%%%%%%%%%%%%%%%%%%%%%%%%%
 \item $  {(B_{4,2}^2)}^{2B_{10,2}}\circ B_{2,6}\ \ : \ \ 
          {(B_{5,2}^2)}^{B_{16,2}}\circ B_{2,6}, \ \
          {(B_{6,2}^2)}^{2B_{6,2}}\circ B_{2,6}, \ \ 
          {(B_{8,2}^2)}^{2B_{2,2}}\circ B_{2,6}, \ \ 
          \\ B_{18,2}\circ B_{2,6}, \ \ B_{19,2}\circ B_{2,6}$
%%%%%%%%%%%%%%%%%%%%%%%%%%%%%%%%%%%%%
 \item $  {(B_{4,3}^2)}^{B_{6,2}}\ \ : \ \
  B_{4,2}\circ {(B_{3,2}^2)}^{B_{2,2}}, \ \ B_{10,6}, 
  \ \  B_{6,3}\circ B_{5,3}$
%%%%%%%%%%%%%%%%%%%%%%%%%%%%%%%%%%%%%
 \item $  B_{4,2}\circ {(B_{3,2}^2)}^{B_{12,2}}\ \ : \ \
{(B_{5,3}^2)}^{B_{20,2}}$
%%%%%%%%%%%%%%%%%%%%%%%%%%%%%%%%%%%%%
 \item ${(B_{2,2}^2)}^{2B_{2,2}}\circ B_{4,6}\ \ : \ \
 B_{2,4}\circ B_{2,3}\circ {(B_{2,2}^2)}^{2B_{2,2}}\circ B_{2,3}\circ B_{2,4}$
%%%%%%%%%%%%%%%%%%%%%%%%%%%%%%%%%%%%%
% \item ${(B_{2,2}^2)}^{2B_{2,2}}\circ B_{2,3}\circ B_{2,4}\ \ : \ \
% B_{2,4}\circ B_{2,3}\circ {(B_{2,2}^2)}^{2B_{2,2}}$
%%%%%%%%%%%%%%%%%%%%%%%%%%%%%%%%%%%%%
\item $B_{14,2}\circ {(B_{1,1}^2)}^{B_{2,2}}\circ B_{4,6}
\ \ : \ \
B_{14,2}\circ {(B_{1,1}^2)}^{B_{2,2}}\circ B_{2,3}\circ B_{2,4}$
%%%%%%%%%%%%%%%%%%%%%%%%%%%%%%%%%%%%%
\item $B_{4,2}\circ B_{6,8}\ \ : \ \
      B_{4,2}\circ B_{3,4}\circ B_{2,3}\circ B_{1,2}$.
%%%%%%%%%%%%%%%%%%%%%%%%%%%%%%%%%%%%%
\item $B_{14,2}\circ B_{6,8}\ \ : \ \    
B_{14,2}\circ B_{3,4}\circ B_{2,3}\circ B_{1,2}$ 
%%%%%%%%%%%%%%%%%%%%%%%%%%%%%%%%%%%%%
\item $B_{8,10}\ \ : \ \ B_{4,5}\circ B_{3,4}\circ B_{1,2}$
%%%%%%%%%%%%%%%%%%%%%%%%%%%%%%%%%%%%%
\item ${(B_{2,2}^4)}^{2B_{10,2}}\ \ : \ \ 
    ,{(B_{1,1}^4)}^{B_{10,2}}\circ B_{4,6}, \ \ {(B_{1,1}^4)}^{B_{10,2}}\circ B_{2,3}\circ B_{2,4}$
%%%%%%%%%%%%%%%%%%%%%%%%%%%%%%%%%%%%%
\item ${(B_{3,2}^2)}^{B_{10,2}}\circ B_{4,6}\ \ : \ \
       {(B_{3,2}^2)}^{B_{10,2}}\circ B_{2,3}\circ B_{2,4}$      
\end{itemize}
\end{theorem}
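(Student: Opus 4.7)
The plan is to obtain this theorem as a bookkeeping consolidation of the three preceding propositions (Cases L-II, L-III, L-IV) rewritten in terms of $\iota$ rather than in terms of $m_5$ and the shape of the tangent cone. First I would set up the normalized form: after a linear change of coordinates, a linear torus curve has defining equation
\[
 f(x,y)=f_5(x,y)^2-y^{10}=(f_5(x,y)+y^5)(f_5(x,y)-y^5),
\]
so $f_2(x,y)=-y^2$, and every inner singularity lies on $C_5\cap\{y=0\}$. The central arithmetic observation is that $I(C_2,C_5;O)=2\,I(\{y=0\},C_5;O)$, hence $\iota$ is automatically even, which is why the table lists only $\iota=2,4,6,8,10$. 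The case $\iota=2$ occurs exactly when $C_5$ is smooth and transverse to $y=0$, and Lemma~\ref{Lemma-BT} gives $(C,O)\sim B_{10,2}$ directly.

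Next, for $\iota\ge 4$ I would match each row of the table to the propositions already proved: Case L-II furnishes all entries with $m_5=2$ (both tangent-cone subcases), Case L-III furnishes those with $m_5=3$, and Case L-IV furnishes those with $m_5=4$. The case $m_5=5$ is incompatible with the linear torus hypothesis since then $C$ would be a product of ten lines (covered by the last remark in Case V; here it corresponds to the case $\iota=10$, $m_5=5$, where the singularity $B_{10,10}$ appears). Thus the list in the theorem is obtained by taking the union, for each even $\iota$, of the generic singularities produced in each of those propositions, ordered by $m_5$ and by type of tangent cone of $C_5$. The $\dagger$-entries simply transcribe the lists of further degenerations proved in each proposition.

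The main technical obstacle is ensuring completeness and non-redundancy of this union. Concretely I must verify three things: (i) for each $\iota$ every geometric configuration $(m_5,\ T_OC_5)$ compatible with the constraint $m_2m_5\le\iota\le 10$ and with $\iota$ even has appeared exactly once; (ii) configurations that give \emph{identical} topological types across different $(m_5,T_OC_5)$ are identified (for instance the $B_{10,4}$-type that appears both in Case L-II with $(C_5,O)\sim B_{5,2}$ and as a degeneration of ${(B_{3,2}^2)}^{B_{8,2}}$); (iii) the further degenerations listed under $\dagger$ are precisely those inherited from Propositions of \S4 via the reduction in which the quintic factor $f_5+y^5=0$ (and its partner) dictates the Newton boundary after a suitable translated toric coordinate change. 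Step (ii) is the most delicate, because a given degenerate Newton-boundary singularity can be reached from several different initial configurations; I would handle it by comparing weighted dual graphs via the canonical toric modification, which by Theorem~2.1 of~\cite{Oka-milnor} determines the topological type uniquely for non-degenerate germs. Once these identifications are made, the table is simply the disjoint organization of Propositions L-II, L-III, L-IV by the value of~$\iota$, and the $\dagger$-list is the transcription of their degeneration series.
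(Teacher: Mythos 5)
Your proposal is correct and matches the paper's own treatment: the paper offers no argument for this theorem beyond the phrase ``Putting together the above classifications,'' i.e.\ it is exactly the consolidation of the Case L-II, L-III, L-IV propositions (plus the smooth-$C_5$ case via the Brieskorn--Pham lemma and the homogeneous $m_5=5$ case giving $B_{10,10}$) reorganized by the even values of $\iota$. Your added observations --- that $\iota=2I(\{y=0\},C_5;O)$ forces $\iota$ even, and that coincident topological types arising from different tangent-cone configurations must be identified --- are sound refinements of the same bookkeeping.
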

\renewcommand{\labelitemi}{$\bullet$}
\section{Appendix}
In this section, 
we give some examples of singularities which were obtained in the
previous section.\\

\noindent{\bf{Example-Case I.}}
We assume that the quintic $C_5$ is smooth at $O$.
Then we have $(C,O)\sim B_{5\iota,2},\,1\le \iota \le 10$.
The following example
\[C:\,f(x,y)=(y^5+y+x^2)^2+(y+x^2)^5\]
corresponds to $\iota=10$ and $(C,O)\sim A_{49}$ and Milnor number $\mu=49$.
This is due to \cite{BenoitTu}.\\

\noindent{\bf{Example-Case II-(a).}}
We assume that the tangent cone of the quintic $C_5$
consists of distinct two lines.
\begin{equation*}
\begin{split}
 &{\bf{(a{\text{-}}1)}}\quad C:\,f(x,y)=(y-x^2)^5+(y^5+xy-x^3)^2, 
\ \ (C,O)\sim B_{43,2}\circ B_{2,3},\  \iota=10, \ \mu=51.\\&
{\bf{(a{\text{-}}2)}}\quad C:\,f(x,y)=x^5y^5+(y^5+yx+x^5)^2,
\quad (C,O)\sim B_{23,2}\circ B_{2,23},
\ \iota=10,\ \mu=51.
\end{split}
\end{equation*}

\noindent{\bf{Example-Case II-(b).}}
We assume that the tangent cone of the quintic $C_5$ 
consists of a line of multiplicity 2.
\begin{equation*}
\begin{split}
&{\bf{(b{\text{-}}1)}}
\quad C_t:\,f(x,y)=(y+x^2)^5+(-y^5+2xy^3+(2x^3+1)y^2+(2+t)x^2y+(1+t)x^4)^2,\\&
\qquad\qquad \ (C_t,O)\sim {(B_{4,2}^2)}^{B_{32,2}+B_{2,2}},\ \iota=10, \ \mu=55.
 {\text { This singularity 
 degenerate into:}}\\&
\qquad\quad\  C_0:\,f(x,y)=(y+x^2)^5+(-y^5+2xy^3+(2x^3+1)y^2+2x^2y+x^4)^2,\\&
\qquad\qquad\ (C_0,O)\sim B_{25,4},
 \iota=10, \ \mu=72.\\&
 {\bf{(b{\text{-}}2)}}\quad C:\,f(x,y)=(y^2+xy)^5+(y^2+x^5)^2,\quad
 (C,O)\sim {(B_{5,2}^2)}^{ B_{15,2}},\quad \iota=7,\ \mu=42.
\end{split}
\end{equation*}

\noindent{\bf{Example-Case III-(a).}}
We assume that the tangent cone of the quintic $C_5$ 
consists of distinct three lines.
\begin{equation*}
\begin{split}
&{\bf{(a{\text{-}}1)}}
\quad C:\,f(x,y)=(y+x^2)^5+(y^5+xy^3+(x^3+x)y^2+(x^3+x^2)y+x^4)^2
\\&\qquad \qquad\ \qquad 
 (C,O)\sim B_{36,2}\circ B_{4,3},\quad \iota=10,\ \ \mu=56.
\\&{\bf{(a{\text{-}}2)}}
\quad  C:\,f(x,y)=x^5y^5+(y^5+xy^2+x^2y+x^5)^2\\&\qquad \qquad\ \qquad 
(C,O)\sim B_{16,2}\circ {(B_{1,1}^2)}^{ B_{4,2}}\circ B_{2,16},
\quad \iota=10,\ \ \mu=57.
\end{split}
\end{equation*}

\noindent{\bf{Example-Case III-(b).}}
We assume that the tangent cone of the quintic $C_5$ 
consists of a double lines and a single line.
\begin{equation*}
\begin{split}
&{\bf{(b{\text{-}}1)}}
\quad C:\,f(x,y)=(y+x^2)^5+(y^5+y^2x-x^5)^2,\ (C,O)\sim B_{29,2}\circ B_{6,3},\ \iota=10,\ \mu=61.
\\&{\bf{(b{\text{-}}2)}}
\ \ C:\,f(x,y)=x^5y^5+(y^5+y^2x+x^5)^2,\  (C,O)\sim {(B_{4,2}^2)}^{2B_{5,2}}\circ B_{2,16},\ \iota=10,\ \mu=61.
\end{split}
\end{equation*}

\noindent{\bf{Example-Case III-(c).}}
We assume that the tangent cone of the quintic $C_5$ 
consists of a line with multiplicity 3.
\begin{equation*}
\begin{split}
&{\bf{(c{\text{-}}1)}}
\quad C:\,f(x,y)=(y+x^2)^5+(y^5+y^3+y^2x^2+yx^3+x^5)^2,\\&\qquad \qquad\ \qquad 
(C,O)\sim B_{29,2}\circ B_{6,3},\quad \iota=10, \ \mu=61.
\\&{\bf{(c{\text{-}}2)}}
\quad C:\,f(x,y)=x^5y^5+(y^3+x^5)^2,\quad (C,O)\sim {(B_{5,3}^2)}^{ B_{10,2}}
,\quad \iota=8, \ \mu=55.
\end{split}
\end{equation*}

\noindent{\bf{Example-Case IV-(a).}}
We assume that the tangent cone of the quintic $C_5$ 
consists of distinct four lines.
\begin{equation*}
\begin{split}
&{\bf{(a{\text{-}}1)}}
\quad C:\,f(x,y)=(y^5+y^4x+xy^3+2y^2x^3+x^3y+x^5)^2+(y^2+y+x^2)^5,
\\&
\qquad \qquad  (C,O)\sim B_{29,2}\circ B_{6,3},\quad \iota=10, \ \mu=61.
\\&{\bf{(a{\text{-}}2)}}\quad C:\,f(x,y)=x^5y^5+(y^5+y^3x+yx^3+x^5)^2,\\&
\qquad \qquad 
(C,O)\sim B_{14,2}\circ {(B_{2,2}^2)}^{2B_{2,2}}\circ B_{2,14}
,\quad \iota=10,\ \mu=67.
\end{split}
\end{equation*}

\noindent{\bf{Example-Case IV-(b).}}
We assume that the tangent cone of the quintic $C_5$ 
consists of a double line and distinct two lines.
\begin{equation*}
\begin{split}
&{\bf{(b{\text{-}}1)}}
\quad C:\,f(x,y)=(y^5+y^4+x^2y^3+x^2y^2+x^4y)^2+(y+x^2)^5,\\&
\qquad \qquad 
(C,O)\sim B_{22,2}\circ B_{6,3},\quad \iota=10,\ \mu=66.
\\&{\bf{(b{\text{-}}2)}}\quad C:\,
f(x,y)=(2y^5+(x^2+x)y^3-y^2x^2+2x^5)^2+(yx-x^2)^5,\\&
\qquad \qquad 
(C,O)\sim B_{6,4}\circ {(B_{1,1}^2)}^{ B_{7,2}}\circ B_{2,9}
,\quad \iota=10,\ \mu=69.
\end{split}
\end{equation*}

\noindent{\bf{Example-Case IV-(c).}}
We assume that the tangent cone of the quintic $C_5$ 
consists of a triple line and a single line.
\begin{equation*}
\begin{split}
&{\bf{(c{\text{-}}1)}}
\quad C:\,f(x,y)=(2y^5+y^4x+(x^2+x)y^3+3y^2x^3)^2+(y^2+(x+1)y+3x^2)^5
\\& \qquad\qquad \qquad 
(C,O)\sim B_{15,2}\circ B_{10,3},\quad \iota=10,\ \mu=71.
\\&{\bf{(c{\text{-}}2)}}\quad C:\,
f(x,y)=(y^5+y^3x+x^5)^2+y^5x^5,
\ (C,O)\sim {(B_{4,3}^2)}^{B_{5,2}}\circ B_{2,9},\ \iota=10,\ \mu=71.
\end{split}
\end{equation*}

\noindent{\bf{Example-Case IV-(d).}}
We assume that the tangent cone of the quintic $C_5$ 
consists of line with multiplicity 4.
\begin{equation*}
\begin{split}
&{\bf{(d{\text{-}}1)}}
\quad C:\,f(x,y)=(2y^5+(x+1)y^4+x^2y^3)^2+(y^2+(x+1)y+x^2)^5
\\&\qquad \qquad  \qquad 
(C,O)\sim B_{20,5}, \quad \iota=10,\ \mu=76. 
\\&{\bf{(d{\text{-}}2)}}\quad C:\,f(x,y)=(y^4+x^5)^2+(y^2+yx)^5,
\quad (C,O)\sim {(B_{5,4}^2)}^{B_{5,2}}, \quad \iota=9,\ \mu=68. 
\end{split}
\end{equation*}

\noindent{\bf{Example-Case IV-(e).}}
We assume that the tangent cone of the quintic $C_5$ 
consists of two double lines.
\begin{equation*}
\begin{split}
\quad C:\,f(x,y)=(y^5+y^2x^2+x^5)^2+y^5x^5,\
 (C,O)\sim {(B_{3,2}^2)}^{B_{5,2}}\circ {(B_{2,3}^2)}^{B_{5,2}},
 \ \iota=10,\ \mu=71. 
\end{split}
\end{equation*}

%\noindent{\bf{Example-Case V.}}
%We assume that the quintic $C_5$ 
%$consists of five lines.
%\begin{equation*}
%\begin{split}
%\quad C:\,f(x,y)=(y^5+x^5)^2+(y^2+yx+x^2)^5,\quad
% (C,O)\sim B_{10,10}, \quad \iota=10,\ \mu=81. 
%\end{split}
%\end{equation*}

$Acknowledgement.$
I would like to express my deepest gratitude to 
Professor Mutsuo Oka
who has proposed this problem and navigated me during the preparation
of this paper.
%\bibliographystyle{abbrv}
%\bibliography{./Kawasimabib}
%\input paper-1.bbl
\def\cprime{$'$} \def\cprime{$'$} \def\cprime{$'$} \def\cprime{$'$}
  \def\cprime{$'$}

\end{document}